\title[]{On large periodic traveling wave solutions to the free boundary Stokes and Navier-Stokes equations }
\author{Seyed A. Banihashemi}
\address{Department of Mathematics, University of Maryland, College Park, MD 20742}
\email[S. Banihashemi]{sabani@umd.edu}
\author{Huy Q. Nguyen}
\address{Department of Mathematics, University of Maryland, College Park, MD 20742}
\email[H. Nguyen]{hnguye90@umd.edu}
\newcommand{\bq}{\begin{equation}}
\newcommand{\eq}{\end{equation}}
\newcommand{\bqa}{\begin{eqnarray*}}
\newcommand{\eqa}{\end{eqnarray*}}
\theoremstyle{plain}
\newtheorem{theo}{Theorem}[section]
\newtheorem{prop}[theo]{Proposition}
\newtheorem{lemm}[theo]{Lemma}
\newtheorem{coro}[theo]{Corollary}
\newtheorem{defi}[theo]{Definition}
\theoremstyle{definition}
\newtheorem{rema}[theo]{Remark}
\newtheorem{nota}[theo]{Notation}
\DeclareMathOperator{\Id}{I}
\DeclareSymbolFont{pletters}{OT1}{cmr}{m}{sl}
\DeclareMathSymbol{s}{\mathalpha}{pletters}{`s}
\def\tt{\theta}
\def\eps{\varepsilon}
\def\na{\nabla}
\def\la{\left\lvert}
\def\lA{\left\lVert}
\def\ra{\right\vert}
\def\rA{\right\Vert}
\def\mez{\frac{1}{2}}
\def\tdm{\frac{3}{2}}
\def\fdm{\frac{5}{2}}
\def\Rr{\mathbb{R}}
\def\T{\mathbb{T}}
\def\Nn{\mathbb{N}}
\def\Zz{\mathbb{Z}}
\def\Dd{\mathbb{D}}
\def\cF{\mathcal{F}}
\def\cN{\mathcal{N}}
\def\cP{\mathcal{P}}
\def\cT{\mathcal{T}}
\def\cG{\mathcal{G}}
\def\cH{\mathcal{H}}
\def\cL{\mathcal{L}}
\def\ld{\lambda}
\def\rH{\mathring{H}}
\def\p{\partial}
\def\na{\nabla}
\def\wc{\rightharpoonup}
\def\om{\omega}
\def\etanorm{\|\eta\|_{H^{s+\tdm}(\T^d)}}
\def\dv{\text{div }}
\numberwithin{equation}{section}
\begin{document}
\newcommand{\Huy}[1]{{\color{orange} \textbf{H:} #1}}
\newcommand{\Seyed}[2]{{\color{red} \textbf{S:} #2}}
\begin{abstract}
We study the free boundary problem for a finite-depth layer of viscous incompressible fluid in arbitrary dimension, modeled by the Stokes or Navier-Stokes equations. In addition to the gravitational field acting in the bulk, the free boundary  is acted upon by surface tension and an external stress tensor posited  to be in traveling wave form. We prove that for any isotropic  stress tensor with periodic profile, there exists a locally unique periodic traveling wave solution, which can have large amplitude.  Moreover, we prove that  the constructed traveling wave solutions are asymptotically stable for the dynamic free boundary Stokes equations. Our proofs rest on the analysis of the nonlocal normal-stress to normal-Dirichlet operators for the Stokes and Navier-Stokes equations in domains of Sobolev regularity. 
\end{abstract}

\keywords{traveling waves, free boundary, Stokes equations, Navier-Stokes equations, asymptotic stability}

\noindent\thanks{\em{ MSC Classification: 35R35, 35Q35, 35Q30, 35C07, 35B35.}}

\maketitle

\tableofcontents

\section{Introduction}
This paper is concerned with the existence and stability of traveling surface waves for {\it viscous} fluids. The corresponding theory for inviscid fluids, since the pioneering work \cite{Stokes} of Stokes, has flourished into a subject of immense impact in fluid mechanics and partial differential equations \cite{Ebbandflow}. The viscous theory, on the other hand, has only started recently after the work \cite{LeoniTice} of Leoni and Tice, providing  the first construction of solitary waves for the free boundary incompressible Navier-Stokes equations. A distinction between the two theories is that besides gravity and  surface tension, some types of viscous traveling waves cannot be generated without additional  external forces of traveling wave form. Experimental works  \cite{DCDA1, DCDA2, MasnadiDuncan, ParkCho1} have  observed  the appearance of traveling surface waves when a tube blowing air on a viscous fluid is translated uniformly above the surface. Motivated by this, \cite{LeoniTice} constructed  small solitary waves in the presence of  either a small stress tensor acting on the free surface or a small bulk force. This construction has been extended in various directions,  including periodic and tilted configurations \cite{KoganemaruTice1}, multi-layer configuration \cite{StevensonTice1}, the vanishing wave speed limit \cite{StevensonTice3}, the Navier-slip boundary conditions on the bottom \cite{KoganemaruTice2}, and the compressible Navier-Stokes equations \cite{StevensonTice2}. For Darcy flows, a similar construction was done in \cite{NguyenTice} which furthermore proved  the asymptotic stability of small periodic traveling waves.  In the absence of additional external forces, all of the above viscous free boundary problems admit a trivial solution with a flat free boundary. {\it Small} traveling wave solutions were  constructed perturbatively from the trivial one when {\it small} external forces are applied. We mention that the recent work  \cite{StevensonTice5} provides the first  construction of bore wave solutions to the free boundary Navier-Stokes equations on an inclined plane. This also provides the first type of viscous traveling surface waves sustained by gravity alone.

In view of the aforementioned results, a natural question is whether {\it large} traveling surface waves can be generated by {\it large} external forces. This is a large-data well-posedness issue for quasilinear problems. Large periodic traveling surface waves for Darcy flows have been constructed  in \cite{Nguyen2023-capillary} and \cite{BN} via two different methods.  By means of  global continuation based upon the Leray-Schauder degree theory, \cite{Nguyen2023-capillary}  produced large waves  in the presence of surface tension, and the waves can assume any given speed. Surface tension was instrumental in gaining the compactness needed for the global implicit function method in \cite{Nguyen2023-capillary}. On the other hand,  by discovering a hidden ellipticity, \cite{NguyenStevenson} extended the construction in \cite{Nguyen2023-capillary} to the 2D problem without surface tension. The second method introduced in \cite{BN} is based on perturbations of  large steady states to produce slowly traveling waves that are  of arbitrary size and  asymptotically stable. In the present paper,  we adopt the method in \cite{BN} to construct large periodic traveling wave solutions for the free boundary Stokes and Navier-Stokes equations with surface tension. Moreover, we  prove that the constructed solutions are asymptotically stable for the dynamic Stokes problem. This extension from Darcy flows to Stokes and Navier-Stokes flows  is not a  trivial undertaking and necessitates a number of  properties of the  normal-stress to normal-Dirichlet operator - the analog of the well studied Dirichlet to Neumann operator for potential flows. The  results obtained herein for this operator are of independent interest. Our  stability result for Stokes flows together with the one in \cite{BN} for Darcy flows shows a stark contrast between periodic traveling surface waves for viscous and inviscid fluids. Indeed, the classical Stokes wave solutions \cite{Stokes} of the water wave problem for inviscid fluids have been proven to be unstable with respect to various periodic perturbations in the longitudinal direction \cite{BenjaminFeir, BM, NguyenStrauss, ChenSu} and  the transverse direction \cite{CNS1, CNS2}.
\subsection{The free boundary Stokes and Navier-Stokes equations}
We consider a finite-depth layer of a viscous incompressible fluid above a flat bottom:
\[
\Omega_\eta = \{(x,y) \in \mathbb{T}^d\times \mathbb{R} : -b< y < \eta(x, t)\},
\]
where $\eta(x, t): \mathbb{T}^d\times [0, \infty) \to (-b,\infty)$ parametrizes the free boundary, and $\{y=-b\}$ is the flat bottom.  We  denote by
\[
   \Sigma_f = \{(x,y) \in \mathbb{T}^d\times \mathbb{R} : y= f(x)\}
   \]
the surface parametrized by $f: \T^d\to \Rr$. The fluid is acted upon by gravity $-g e_y$, surface tension,  and an external stress tensor $\cT(x, y, t): \Omega_{\eta}\to \Rr^{d+1}$ on free boundary $\Sigma_\eta$.  The  evolution of the fluid and the free boundary is governed by the system
   \begin{equation}\label{sys:S-NS}
    \begin{cases}
   \alpha(\p_t v+v\cdot\nabla v) - \Delta v + \nabla P = -ge_y \qquad & \text{in }\Omega_\eta, \\
    \nabla\cdot v= 0 \qquad & \text{in }\Omega_\eta, \\
    (PI - \mathbb{D}v)\mathcal{N} = (\sigma \cH(\eta)+ \cT)\mathcal{N} \qquad & \text{on }\Sigma_\eta, \\
    \p_t\eta=v\cdot\mathcal{N}  \qquad &\text{on } \Sigma_\eta, \\
    v=0 \qquad & \text{on }\Sigma_{-b},
    \end{cases}
\end{equation}
where $\mathcal{N} = (-\nabla_x\eta , 1)$ is the normal to $\Sigma_\eta$, $\mathbb{D}v = \nabla v + (\nabla v)^T$ is the strain rate  tensor, $\cH(\eta)= -\dv (\frac{\nabla_x\eta}{\sqrt{1+|\nabla_x \eta|^2}})$ is the mean curvature of $\Sigma_\eta$, and $\sigma>0$ is the surface tension coefficient.  Here $\alpha=0$ corresponds to the Stokes equations, and $\alpha=1$ the Navier-Stokes equations.   It is convenient to incorporate gravity into the pressure by setting  $p=P+gy$. Toward the construction of large traveling wave solutions, we consider external stress tensors of the traveling wave form 
\bq\label{form:stress}
\cT(x, y, t)=\cT_0(x- \gamma e_1 t),\quad \cT_0: \T^d\to \Rr^{d+1},
\eq
where $\gamma \in \Rr$ is the speed and we have assumed without loss of generality that the propagation  direction is $x_1$. Stress tensors of the  form  \eqref{form:stress} are uniform in the vertical direction. Traveling wave solutions of \eqref{sys:S-NS} are given by 
\[
\eta(x, t)=\eta(x-\gamma e_1t),\quad v(x, y, t)=v(x-\gamma e_1t, y),\quad p(x, t)=p(x-\gamma e_1t, y),
\]
where the profile functions solve the system 
\begin{equation}\label{sys:main}
    \begin{cases}
    \alpha(v-\gamma e_1)\cdot\nabla v - \Delta v + \nabla p = 0 \qquad & \text{in }\Omega_\eta, \\
    \nabla\cdot v= 0 \qquad & \text{in }\Omega_\eta, \\
    (pI - \mathbb{D}v)\mathcal{N} = (\sigma \cH(\eta)+ g\eta + \cT_0)\mathcal{N} \qquad & \text{on }\Sigma_\eta, \\
    v\cdot\mathcal{N} = - \gamma\partial_1 \eta \qquad &\text{on } \Sigma_\eta, \\
    v=0 \qquad & \text{on }\Sigma_{-b}.
    \end{cases}
\end{equation}
\subsection{Main results}
Our first main result concerns the existence of {\it large} periodic  traveling wave solutions to the Stokes and Navier-Stokes equations \eqref{sys:main}. To this end,  we observe that for any given scalar function  $\phi: \T^d\to \Rr$, the  system \eqref{sys:main} with isotropic stress tensor \bq\cT_0=\phi I \eq
admits the (steady) solution  
\bq\label{unperturbed}
\gamma=0,~v=0,~ p=0,~ \eta_*=(\sigma \cH + gI)^{-1}(-\phi),
\eq
 Our idea is to perturb the {\it large} solution \eqref{unperturbed} to obtain slowly traveling wave solutions. Since the strength $\phi$ of the stress tensor can be arbitrarily large, so can the size of the traveling wave profile $\eta$. 

\begin{theo}\label{theo1:intro}
Let $(d+1)/2<s\in \Nn$, and assume that $\phi\in H^{s-\mez}(\T^d)\cap C^{1}(\T^d)$ with  $\min_{\T^d} (-\phi)>-g b$. Then there exist $\delta_0=\delta_0(\| \phi\|_{H^{s-\mez}\cap C^{1}})>0$  such that for any $ \delta\in (0,  \delta_0)$, there is some $\gamma_\delta>0$ such that for all $|\gamma|< \gamma_\delta$, the traveling wave system \eqref{sys:main} with $\alpha \in \{0, 1\}$ and $\cT_0=\phi I$ has a unique solution $U=(v, p, \eta)$ in the ball $B(U_*, \delta)$ in ${}_0H^{s+1}_\sigma(\Omega_\eta)\times H^s(\Omega_\eta)\times H^{s+\tdm}(\T^d)$, where $U_*=(0, 0, \eta_*)$ and the space ${}_0H^{s+1}_\sigma(\Omega_\eta)$ is defined by \eqref{div free space}.
\end{theo}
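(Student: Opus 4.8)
The plan is to reformulate the traveling-wave system \eqref{sys:main} as a fixed-point problem around the steady state $U_*=(0,0,\eta_*)$ and apply the implicit function theorem, treating $\gamma$ as the bifurcation parameter. First I would fix a reference domain by flattening $\Omega_\eta$ to a strip via a diffeomorphism depending on $\eta$ (for example the graph/harmonic-type change of variables used for the Dirichlet--Neumann analysis earlier in the paper), so that the unknowns $(v,p,\eta)$ all live on fixed function spaces; the norm on ${}_0H^{s+1}_\sigma(\Omega_\eta)$ in \eqref{div free space} is designed exactly to make this work with the divergence constraint preserved. In these variables, the map $\Phi(U,\gamma)$ collecting the bulk momentum equation, incompressibility, the stress balance on $\Sigma_\eta$, the kinematic condition $v\cdot\cN=-\gamma\p_1\eta$, and the no-slip condition on the bottom is a $C^1$ (indeed smooth) map between Banach spaces near $(U_*,0)$, with $\Phi(U_*,0)=0$ by \eqref{unperturbed}. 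The regularity assumption $(d+1)/2<s$ ensures $H^s(\Omega_\eta)$ is an algebra and that products and the trace/curvature nonlinearities are well defined and smooth, while $\min_{\T^d}(-\phi)>-gb$ guarantees $\eta_*=(\sigma\cH+gI)^{-1}(-\phi)>-b$, so $\Omega_{\eta_*}$ is a genuine (non-degenerate) fluid domain.

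The heart of the argument is to show that the partial differential $D_U\Phi(U_*,0)$ is an isomorphism. At $U_*$ the linearization decouples: the bulk equations become the stationary Stokes system $-\Delta v+\nabla p=0$, $\nabla\cdot v=0$ on the fixed domain $\Omega_{\eta_*}$ with $v=0$ on $\Sigma_{-b}$; the linearized stress balance on $\Sigma_{\eta_*}$ reads $(pI-\mathbb{D}v)\cN=(\sigma\cH'(\eta_*)+g)\dot\eta\,\cN+(\text{tangential terms})$; and the kinematic condition linearizes to $v\cdot\cN=0$ (since $\gamma=0$ at the base point, the $-\gamma\p_1\eta$ term contributes nothing to $D_U\Phi$). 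Because the Stokes problem with a pure normal-stress datum and zero normal velocity is exactly the setting in which the normal-stress to normal-Dirichlet operator of the paper is defined, inverting $D_U\Phi(U_*,0)$ reduces to solving, for the surface unknown $\dot\eta$, an equation of the schematic form $\mathcal{S}(\eta_*)\dot\eta=(\sigma\cH'(\eta_*)+g)\dot\eta + (\text{data})$, where $\mathcal{S}(\eta_*)$ is this nonlocal operator. Here one uses: (i) the coercivity/positivity of $\mathcal{S}(\eta_*)$ coming from the dissipation identity for Stokes (so $\mathcal{S}(\eta_*)\ge 0$, with kernel only the constants from the fixed-volume constraint), together with the positivity of $\sigma\cH'(\eta_*)+g$ as a second-order elliptic operator with a positive zeroth-order part; and (ii) the sharp mapping properties of $\mathcal{S}(\eta_*)$ on the Sobolev-regularity domain $\Omega_{\eta_*}$, established earlier, to run elliptic regularity and gain the $H^{s+\tdm}(\T^d)$ control on $\dot\eta$ and the corresponding $H^{s+1}\times H^s$ control on $(v,p)$.

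With the isomorphism in hand, the implicit function theorem yields, for each small $\gamma$, a unique solution $U(\gamma)$ in a fixed ball around $U_*$, with $U(0)=U_*$; continuity of $\gamma\mapsto U(\gamma)$ then gives, for each prescribed radius $\delta\in(0,\delta_0)$, a threshold $\gamma_\delta>0$ such that $U(\gamma)\in B(U_*,\delta)$ for $|\gamma|<\gamma_\delta$, which is the stated conclusion; the dependence of $\delta_0$ on $\|\phi\|_{H^{s-1/2}\cap C^1}$ enters only through the size of the neighborhood on which $\Phi$ is $C^1$ and $D_U\Phi$ is invertible, i.e.\ through $\|\eta_*\|_{H^{s+\tdm}}$ and $\dist(\eta_*,-b)$. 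The main obstacle I anticipate is Step two: proving that $D_U\Phi(U_*,0)$ is invertible \emph{in the low-regularity class} $H^{s+1}\times H^s\times H^{s+\tdm}$ requires the full strength of the mapping and coercivity properties of the normal-stress to normal-Dirichlet operator $\mathcal{S}(\eta_*)$ on domains of merely Sobolev regularity — in particular quantifying how its operator bounds and its lower bound depend on $\|\eta_*\|_{H^{s+\tdm}}$ and on $\dist(\eta_*,\{-b\})$ — and for $\alpha=1$ additionally checking that the convective term $\alpha(v-\gamma e_1)\cdot\nabla v$, which vanishes to second order at $U_*$ when $\gamma=0$, does not alter the linearization and is handled by the algebra property of $H^s$; the Navier-Stokes case therefore costs no extra work at the level of $D_U\Phi$, only in the nonlinear remainder estimates.
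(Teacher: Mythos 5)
Your proposal takes a genuinely different route from the paper: you want to apply the implicit function theorem to the full system $\Phi(U,\gamma)=0$ in flattened coordinates, whereas the paper reduces everything to a single scalar equation for the free boundary via the normal-stress to normal-Dirichlet operator and then runs a Banach fixed point for $f=\eta-\eta_*$ (\cref{large traveling wave for Stokes}, \cref{large traveling wave for NS}), explicitly \emph{in order to avoid} justifying the $C^1$ regularity of the nonlocal map. This is where your argument has a genuine gap: the assertion that $\Phi(U,\gamma)$ is ``$C^1$ (indeed smooth)'' near $(U_*,0)$ and that $D_U\Phi(U_*,0)$ is an isomorphism on the low-regularity spaces $H^{s+1}\times H^s\times H^{s+\tdm}$ is precisely the technical crux, and you give no argument for it. The mapping properties you invoke are not available ``from earlier'' independently of the construction: the Sobolev-domain regularity theory for the $\gamma$-Stokes problem with prescribed normal stress (\cref{reg for stokes with stress}) and, crucially, with prescribed normal trace and zero tangential stress (\cref{reg for stokes with Dirichlet}) is what yields the isomorphism $\Psi_0[\eta_*]:\rH^{s-\mez}\to\rH^{s+\mez}$ (\cref{bound for Psi and Psi inverse}); it is \emph{not} obtained from coercivity. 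In fact the coercivity of $-\Psi_0[\eta_*]$ (\cref{coercivity for Psi eta}) is proved by a compactness argument that already uses the boundedness of $\Psi_0[\eta_*]^{-1}$, so your proposed mechanism ``coercivity of $\mathcal{S}(\eta_*)$ plus positivity of $\sigma\cH'(\eta_*)+g$'' is circular in this framework, and by itself it would at best give invertibility in the energy space, not the quantitative control at $H^{s+\tdm}$ boundary regularity that the theorem requires.

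There is also a structural error in your description of the linear step. The linearized surface problem is governed by the composition $\Psi_0[\eta_*]T_{\eta_*}$ (not an identity of the schematic form $\mathcal{S}(\eta_*)\dot\eta=(\sigma\cH'(\eta_*)+g)\dot\eta+\text{data}$), and on the naive spaces this composition is \emph{not} an isomorphism: its range consists of mean-zero functions (divergence theorem plus no-slip), and it has a one-dimensional kernel spanned by $T_{\eta_*}^{-1}(1)$, corresponding to constant pressure shifts. Any IFT formulation must build this mean-zero/constant bookkeeping into the choice of domain and target (e.g.\ checking $\int_{\T^d}T_{\eta_*}^{-1}(1)\neq 0$), or else sidestep it as the paper does by composing $T_{\eta_*}^{-1}\circ\Psi_0[\eta_*]^{-1}$ inside a contraction map, which additionally requires the $\eta$-contraction estimates for $\Psi_0[\eta_1]-\Psi_0[\eta_2]$ of \cref{linearization and contraction for Psi} and, for $\alpha=1$, the small-data nonlinear operators $\Phi_\gamma[\eta]$, $\Xi_\gamma[\eta]$ of \cref{prop of Phi and Xi}. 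Two further ingredients you take for granted are the existence, $H^{s+\tdm}$ regularity, and lower bound $\eta_*+b\ge c^0$ of the base state from $\phi\in H^{s-\mez}\cap C^1$ (\cref{prop:wp:gc} plus the maximum-principle argument). Your observation that the convective term does not affect the linearization is correct and is a real attraction of the full-system formulation (it would spare the $\Phi_\gamma$, $\Xi_\gamma$ machinery), but only if the differentiability and Fredholm issues above were actually carried out.
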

In other words, in any sufficiently small neighborhood of $\eta_*$ there exists a unique traveling wave $\eta$ with sufficiently small speed $\gamma$. Some remarks on \cref{theo1:intro} are in order.  
\begin{itemize}
\item[(1)]  \cref{theo1:intro} combines \cref{large traveling wave for Stokes} and \cref{large traveling wave for NS} for the Stokes and Navier-Stokes problems, respectively.   It is a consequence of the proof of these theorems that the mapping $\gamma\mapsto U$ is Lipschitz continuous. 
\item[(2)] Our proof, which relies on physical space variables, can be  adapted to the case of non-flat bottoms $\{y=b(x)\}$, $b:\T^d\to \Rr$.
\item[(3)] For $\eta\in H^{s+\tdm}(\T^d)$, the condition  $s>(d+1)/2$ is minimal to guarantee that the free boundary has bounded curvature. 
\item[(4)] We will  construct traveling wave solutions using the contraction mapping method.  Surface tension is needed for the associated nonlinear map to be an endomap, as will be explained in \cref{sec:proofThm1}.  \cref{theo1:intro} thus leaves open the existence of large periodic traveling waves without surface tension. On the other hand, the existence of large solitary waves is open both  in the presence and absence of surface tension. 
\end{itemize}
Our  second main result concerns the stability of a class of large periodic traveling waves for the free boundary Stokes equations. 
\begin{theo}\label{theo2:intro} Let $(d+1)/2+1<s\in \Nn$ and suppose that  $\eta_*\in  H^{s+\frac52}(\T^d)$. Then there exist a positive constant  $\gamma^\dag$ and a nonincreasing function $\om: \Rr_+\to \Rr_+$, both depending on $\eta_*$,  such that the following holds.  Any traveling wave solution $\eta_w\in H^{s+\tdm}(\T^d)$ of the free boundary Stokes equations with speed $\gamma \in (-\gamma^\dag, \gamma^\dag)$ and satisfying 
\bq\label{cd:stab} 
\| \eta_w-\eta_*\|_{H^{s+\tdm}(\T^d)}\le \om(\|\eta_w\|_{H^{s+\tdm}})
\eq
is asymptotically stable with respect to perturbations in $\rH^{s+1}(\T^d)$. Moreover, the perturbed solution converges exponentially fast to $\eta_w$ as $t\to \infty$. 
 \end{theo}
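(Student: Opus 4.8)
The plan is to linearize the dynamic free boundary Stokes problem around the traveling wave $\eta_w$ in the moving frame $x \mapsto x - \gamma e_1 t$, derive a closed scalar evolution equation for the free surface perturbation $\zeta = \eta - \eta_w$, and prove exponential decay of $\zeta$ in $\rH^{s+1}(\T^d)$ via an energy estimate. The key reduction is that for the Stokes equations (with $\alpha = 0$) the velocity and pressure are \emph{slaved} to $\eta$: given the instantaneous domain $\Omega_\eta$ and the stress boundary data $(\sigma\cH(\eta) + g\eta + \phi)\mathcal N$, the stationary Stokes system determines $v$, and then $\p_t \eta = v\cdot\mathcal N$ becomes a single (nonlocal, quasilinear) equation for $\eta$. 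This is precisely where the normal-stress to normal-Dirichlet operator $\mathcal{S}_\eta$ of the excerpt enters: writing $\p_t\eta = -\mathcal{S}_\eta\big(\sigma\cH(\eta) + g\eta + \phi\big) + (\text{lower order transport})$, where $\mathcal{S}_\eta$ maps the normal component of the applied stress to the normal component of the resulting boundary velocity. The traveling wave $\eta_w$ solves the stationary version $-\gamma\p_1\eta_w = -\mathcal{S}_{\eta_w}\big(\sigma\cH(\eta_w)+g\eta_w+\phi\big)$; subtracting, the perturbation satisfies $\p_t\zeta = -\mathcal{D}_w\zeta + \mathcal{R}(\zeta)$, where $\mathcal{D}_w := \mathcal{S}_{\eta_w}\circ\big(-\sigma\Delta_{\eta_w} + g\big)$ is the linearization (with $-\sigma\Delta_{\eta_w}$ the linearization of the mean curvature $\sigma\cH$, a self-adjoint elliptic operator of order $2$, modified by the metric on $\Sigma_{\eta_w}$) and $\mathcal{R}$ collects the quadratic-and-higher remainder together with the transport term $\gamma\p_1\zeta$ and the variation of the domain.

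Next I would establish the spectral/coercivity properties of the leading operator $\mathcal{D}_w$. By the results on $\mathcal{S}_\eta$ assumed from the earlier part of the paper, $\mathcal{S}_{\eta_w}$ is a positive, self-adjoint (with respect to a suitable inner product built from the boundary metric), order $-1$ operator that is bounded below on the zero-mean subspace; composed with the order $+2$ operator $-\sigma\Delta_{\eta_w}+g$ it yields an operator $\mathcal{D}_w$ of order $+1$ which is positive and coercive: $\langle \mathcal{D}_w\zeta,\zeta\rangle_{\rH^{s+1}} \gtrsim \|\zeta\|_{\rH^{s+3/2}}^2$ for $\zeta$ in the appropriate space, modulo controllable lower-order terms. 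The transport term $\gamma\p_1\zeta$ is skew-adjoint to leading order, hence contributes nothing to the real part of the energy identity; for small $|\gamma| < \gamma^\dag$ its commutator with the energy weights is absorbed by the coercivity gain. This is the standard mechanism: Stokes is \emph{parabolic} at the level of the free surface, of order $\tfrac32$ smoothing, so one expects the decay rate to come from the bottom of the spectrum of $\mathcal{D}_w$ restricted to mean-zero data (the zero mode of $\p_t\eta = v\cdot\mathcal N$ corresponds to the conserved volume, which is fixed by matching to $\eta_w$).

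The energy method then runs as follows. Apply $\Lambda^{s+1}$ (or iterate $\p^\beta$, $|\beta|\le s+1$) to the equation for $\zeta$, pair with $\Lambda^{s+1}\zeta$, and integrate over $\T^d$: the leading term gives $-c\|\zeta\|_{\rH^{s+3/2}}^2$, while the remainder $\mathcal{R}(\zeta)$ — which is at least quadratic in $\zeta$ and its derivatives (using that $\phi$, hence $\eta_w$, is fixed, so all the $\eta$-dependence of $\mathcal{S}_\eta$, $\cH(\eta)$, and $\Omega_\eta$ is differentiated against $\zeta = \eta - \eta_w$) plus the size-$\gamma$ transport correction — is estimated by $C\big(\|\eta_w\|_{H^{s+3/2}}\big)\|\zeta\|_{H^{s+3/2}}\|\zeta\|_{H^{s+1}} \cdot \big(\|\zeta\|_{H^{s+3/2}} + \gamma\big)$, using the product and composition estimates in Sobolev spaces of domain regularity and the bound $s > (d+1)/2 + 1$ to have enough room for the paradifferential remainders. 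Choosing the threshold $\om(\|\eta_w\|_{H^{s+3/2}})$ small enough and $\gamma^\dag$ small enough, the remainder is absorbed by half of the coercive term, yielding $\tfrac{d}{dt}\|\zeta\|_{\rH^{s+1}}^2 \le -c_0\|\zeta\|_{\rH^{s+1}}^2$, whence exponential decay by Grönwall; a standard continuity/bootstrap argument propagates the smallness condition for all $t \ge 0$. The function $\om$ is nonincreasing in $\|\eta_w\|_{H^{s+3/2}}$ because larger base waves require smaller perturbations to stay within the regime where the constants controlling $\mathcal{S}_{\eta_w}$ and the change-of-variables are uniform.

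The main obstacle I anticipate is \textbf{two-fold}: first, proving the coercivity of $\mathcal{D}_w = \mathcal{S}_{\eta_w}\circ(-\sigma\Delta_{\eta_w}+g)$ at the high Sobolev level $\rH^{s+1}$ rather than just $L^2$ — this requires commuting $\Lambda^{s+1}$ past both the (nonlocal, variable-coefficient) operator $\mathcal{S}_{\eta_w}$ and the curvature linearization, controlling the commutators, which in turn needs the sharp mapping and symbol-smoothness properties of $\mathcal{S}_\eta$ on domains of finite Sobolev regularity (the contribution the authors advertise as "of independent interest"); second, the fact that $\mathcal{S}_\eta$ depends on the \emph{unknown} domain $\Omega_\eta$, so the difference $\mathcal{S}_{\eta} - \mathcal{S}_{\eta_w}$ must be estimated in operator norm by $\|\zeta\|$ with a constant depending only on $\|\eta_w\|$ and the ellipticity of the Stokes problem — a Lipschitz-in-domain estimate for the normal-stress-to-normal-Dirichlet map, which is the genuinely new analytic input beyond the Darcy case of \cite{BN} (there the analogous operator is the Dirichlet–Neumann map, for which such estimates are classical). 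Everything else — the skew-symmetry of transport, Grönwall, the volume constraint — is routine once these two estimates are in hand.
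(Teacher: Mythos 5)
Your overall architecture (reduce to a scalar evolution for the surface via the normal-stress to normal-Dirichlet operator, energy estimates with coercivity of the leading operator, absorb the remainder for small data and small $\gamma$) matches the paper's, but your choice to linearize around $\eta_w$ itself, with leading operator $\mathcal{D}_w=\mathcal{S}_{\eta_w}\circ(-\sigma\Delta_{\eta_w}+g)$, runs into two concrete obstructions that the paper is specifically engineered to avoid. First, your coercivity input is not available in the form you need: the coercive estimate $\langle -\Psi_0[\eta_w]\chi,\chi\rangle\gtrsim\|\chi\|_{H^{-1/2}}^2$ is proved in the paper only by a compactness/contradiction argument, so its constant $c_{\Psi_0[\eta_w]}$ depends on the \emph{function} $\eta_w$, not merely on $\|\eta_w\|_{H^{s+3/2}}$. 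With your linearization, the smallness thresholds you extract (your $\om$) would inherit this dependence, and you could not conclude the statement as formulated, where $\om$ is a fixed function of $\|\eta_w\|_{H^{s+3/2}}$ depending only on $\eta_*$; this is exactly the issue flagged in the introduction (contrast with the Darcy case, where a quantitative coercivity of the Dirichlet--Neumann operator is known). Second, there is a regularity mismatch: the top-order ($H^{s+1}$) energy estimate requires the commutator bounds $[\Psi_0[\cdot],\p^\alpha]$ with $|\alpha|=s+1$ and $[T_\cdot,\p^\alpha]$ with $|\alpha|=s$, which demand the base surface to lie in $H^{s+5/2}$. The traveling wave $\eta_w$ is only in $H^{s+3/2}$, so these commutators are not at your disposal for $\mathcal{D}_w$; only $\eta_*$ has the extra regularity $H^{s+5/2}$ assumed in the theorem.

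The paper's fix for both problems is the structural step you are missing: write the perturbation equation as $\p_t f=\gamma\p_1 f+\Psi_0[\eta_*]T_{\eta_*}f+N(f)$, i.e.\ take the \emph{fixed, smoother} steady state $\eta_*$ as the linearization point, and move $\big(\Psi_0[\eta_w]T_{\eta_w}-\Psi_0[\eta_*]T_{\eta_*}\big)f$ together with the domain-variation terms into $N(f)$. These extra terms are \emph{linear} in $f$ (not quadratic, as your remainder estimate assumes) but carry the small prefactor $\|\eta_w-\eta_*\|_{H^{s+3/2}}$ via the contraction estimates for $\Psi_0$ and $T$; this is precisely where hypothesis \eqref{cd:stab} enters, and it yields a dissipation constant $c_{\Psi_0[\eta_*]}$ and commutator constants depending only on $\eta_*$ and on $\|\eta_w\|_{H^{s+3/2}}$. (Relatedly, you would also need to supply a solvability theory for the almost-linearized evolution -- the paper does this by vanishing viscosity plus an energy built from the self-adjoint form $\langle\p^\alpha f,T_{\eta_*}\p^\alpha f\rangle$ -- before the Gr\"onwall/bootstrap step; "standard continuity argument" presupposes this.) Without the change of linearization point, or an independent quantitative coercivity estimate for $\Psi_0[\eta_w]$ and commutator estimates valid for $\eta_w\in H^{s+3/2}$ only, your argument does not close.
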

We refer to \cref{theo:stab} for a more precise dependence of $\gamma^\dag$ and $\om$ on $\eta_*$. The condition \eqref{cd:stab} imposes a size restriction on  $\eta_w-\eta_*$ but not on $\eta_w$, thereby allowing for large $\eta_w$. In particular, \cref{theo1:intro} provides a class of  traveling waves  satisfying condition \eqref{cd:stab}. Indeed,  for any given  $\phi\in H^{s-\mez}(\T^d)$ with $(d+1)/2+1<s\in \Nn$, \cref{theo1:intro} yields  the existence of  a unique traveling wave $\eta_w$ with speed $\gamma\in (-\gamma_\delta, \gamma_\delta)$ provided   
\[
\| \eta_w-\eta_*\|_{H^{s+\tdm}}<\delta<\delta_0(\| \phi\|_{H^{s-\mez}}).
\]
It follows that $\| \eta_w\|_{H^{s+\tdm}}< \delta_0(\| \phi\|_{H^{s-\mez}})+ \| \eta_*\|_{H^{s+\tdm}}$, and hence \eqref{cd:stab} is satisfied if 
\bq\label{cd:stab:2}
\delta\le \om\big(\delta_0(\| \phi\|_{H^{s-\mez}})+ \| \eta_*\|_{H^{s+\tdm}}\big).
\eq
We note that the right-hand side of \eqref{cd:stab:2} depends only on $\phi$ since $\eta_*=(\sigma \cH + gI)^{-1}(-\phi)$. We thus obtain the following corollary. 
\begin{coro}\label{coro:intro} Let $(d+1)/2+1<s\in \Nn$ and  $\phi\in H^{s+\mez}(\T^d)$. Then any traveling wave solution $\eta_w$ given in \cref{theo1:intro} for the free boundary Stokes equations  is asymptotically stable in $\rH^{s+1}(\T^d)$ provided 
\bq\label{cd:stab:3}
\begin{cases}
\| \eta_w-\eta_*\|_{H^{s+\tdm}}<\delta< \min\left\{\delta_0(\| \phi\|_{H^{s-\mez}}),  \om\big(\delta_0(\| \phi\|_{H^{s-\mez}})+ \| \eta_*\|_{H^{s+\tdm}}\big)\right\},\\
|\gamma|<\min\{\gamma_\delta, \gamma^\dag\}. 
\end{cases}
\eq
\end{coro}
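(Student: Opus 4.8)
The plan is to derive the corollary by combining \cref{theo1:intro} and \cref{theo2:intro}, making precise the heuristic discussion that precedes the statement; no hard analytic step is involved, only a check of hypotheses and a tracking of the threshold functions.

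First I would verify that \cref{theo1:intro} applies to the given $\phi$. Since $s>(d+1)/2+1$ we have $s-\mez>\tfrac d2+1$, so the Sobolev embedding $H^{s-\mez}(\T^d)\hookrightarrow C^1(\T^d)$ holds with $\|\phi\|_{C^1}\les\|\phi\|_{H^{s-\mez}}$; hence $\phi\in H^{s+\mez}(\T^d)\subset H^{s-\mez}(\T^d)\cap C^1(\T^d)$, and (with the standing assumption $\min_{\T^d}(-\phi)>-gb$ carried over from \cref{theo1:intro}) both $\delta_0$ and $\gamma_\delta$ may be taken to depend on $\|\phi\|_{H^{s-\mez}}$ alone. \cref{theo1:intro} then furnishes, for each admissible $\delta<\delta_0(\|\phi\|_{H^{s-\mez}})$ and each $|\gamma|<\gamma_\delta$, the traveling wave $\eta_w$ lying in $B(U_*,\delta)$, so in particular $\|\eta_w-\eta_*\|_{H^{s+\tdm}}<\delta$.

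Next I would upgrade the regularity of the background profile so that \cref{theo2:intro} applies. The capillary operator $\sigma\cH+gI$ is a second-order elliptic operator, so its inverse gains two derivatives; from $-\phi\in H^{s+\mez}(\T^d)$ we obtain $\eta_*=(\sigma\cH+gI)^{-1}(-\phi)\in H^{s+\fdm}(\T^d)$, which is exactly the standing hypothesis of \cref{theo2:intro}. That theorem then provides a speed threshold $\gamma^\dag>0$ and a nonincreasing modulus $\om:\Rr_+\to\Rr_+$, both determined by $\eta_*$ and hence ultimately by $\phi$ (since $\eta_*=(\sigma\cH+gI)^{-1}(-\phi)$).

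Finally I would close the loop. By the triangle inequality and the first step,
\[
\|\eta_w\|_{H^{s+\tdm}}\le\|\eta_w-\eta_*\|_{H^{s+\tdm}}+\|\eta_*\|_{H^{s+\tdm}}<\delta_0(\|\phi\|_{H^{s-\mez}})+\|\eta_*\|_{H^{s+\tdm}},
\]
so monotonicity of $\om$ gives $\om(\|\eta_w\|_{H^{s+\tdm}})\ge\om\big(\delta_0(\|\phi\|_{H^{s-\mez}})+\|\eta_*\|_{H^{s+\tdm}}\big)$. Under the bound on $\delta$ in the first line of \eqref{cd:stab:3} this yields
\[
\|\eta_w-\eta_*\|_{H^{s+\tdm}}<\delta<\om\big(\delta_0(\|\phi\|_{H^{s-\mez}})+\|\eta_*\|_{H^{s+\tdm}}\big)\le\om(\|\eta_w\|_{H^{s+\tdm}}),
\]
which is exactly condition \eqref{cd:stab}; together with $|\gamma|<\gamma^\dag$ from the second line of \eqref{cd:stab:3}, \cref{theo2:intro} applies and yields asymptotic stability of $\eta_w$ in $\rH^{s+1}(\T^d)$ with exponential convergence. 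The only step that is not entirely mechanical is the elliptic-regularity gain ensuring $\eta_*\in H^{s+\fdm}$, together with the bookkeeping that aligns the thresholds $\delta_0$, $\gamma_\delta$, $\gamma^\dag$, $\om$; there is no genuine obstacle, and indeed the hypothesis $\phi\in H^{s+\mez}$ (rather than merely $H^{s-\mez}$) is exactly what this regularity gain requires.
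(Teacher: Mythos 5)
Your proposal is correct and follows essentially the same route as the paper: combine \cref{theo1:intro} with \cref{theo2:intro}, bound $\|\eta_w\|_{H^{s+\tdm}}$ by $\delta_0(\|\phi\|_{H^{s-\mez}})+\|\eta_*\|_{H^{s+\tdm}}$ via the triangle inequality, and use the monotonicity of $\om$ to reduce \eqref{cd:stab} to \eqref{cd:stab:3}. Your explicit observations that $H^{s-\mez}\hookrightarrow C^1$ absorbs the $C^1$ norm of $\phi$ and that the hypothesis $\phi\in H^{s+\mez}$ is precisely what yields $\eta_*\in H^{s+\fdm}$ (via \cref{prop:wp:gc} applied with $s$ replaced by $s+1$), so that \cref{theo2:intro} is applicable, are correct and fill in details the paper leaves implicit.
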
 
In other words, upon shrinking the neighborhood of $\eta_*$ and the absolute speed $|\gamma|$ so as to satisfy \eqref{cd:stab:3}, all traveling waves  are asymptotically stable for dynamic free boundary Stokes equations. 
\subsection{Method of proof}  
The proofs of both \cref{theo1:intro} and  \cref{theo2:intro} rely on the analysis of  nonlocal  operators that map the normal stress  to the normal trace of the velocity for the Stokes and Navier-Stokes equations. For the flat free boundary $\eta=0$, \cite{LeoniTice} developed the asymptotics  of the symbol of this operator (for the Stokes equations) in the small frequency regime, leading to a tailored scale of Sobolev spaces in which small solitary waves exist. For our purpose of constructing large waves, we will develop a physical-space-based approach to analyze these operators in non-flat domains of Sobolev regularity.
\subsubsection{Construction of large traveling waves}\label{sec:proofThm1}
 We first  consider the $\gamma$-Stokes problem with prescribed normal stress that is {\it parallel} to $\cN$: 
\bq\label{sys:linOp}
\begin{cases}
    -\gamma \partial_1 v - \Delta v + \nabla p = 0,\quad  \nabla\cdot v= 0  &\quad\text{in~} \Omega_\eta, \\
    (pI - \mathbb{D}v)(\cdot, \eta(\cdot))\mathcal{N}(\cdot) = \chi(\cdot)\cN(\cdot) &\quad\text{on~}\T^d, \\
    v=0 &\quad \text{on~} \Sigma_{-b},
\end{cases}
\eq
where $\chi: \T^d\to \Rr$ is a scalar function. This leads to the {\it linear  normal-stress to normal-Dirichlet} operator 
\bq\label{Psi:intro}
\Psi_\gamma[\eta]\chi :=v\vert_{\Sigma_\eta}\cdot \cN.
\eq
Then, for the Stokes case,    the system \eqref{sys:main} with $\alpha=0$ can be recast into the following single equation for $\eta$:
\bq\label{eq:tw:intro}
\mathcal{G}(\gamma, \eta):= \Psi_0[\eta]\Big((\sigma \cH + gI)(\eta)+\phi\Big)+\gamma\p_1\eta=0
\eq
Our goal is then to construct a solution   $\eta$ near $\eta_*=(\sigma \cH + gI)(-\phi)$. This requires good understanding of the operator $\Psi_\gamma[\eta]$.  In order to obtain the continuity of $\Psi_\gamma[\eta]$ in Sobolev spaces, we will consider the more general problem 
\bq\label{sys:linOp}
\begin{cases}
    -\gamma \partial_1 v - \Delta v + \nabla p = 0, \quad  \nabla\cdot v= 0 &\quad\text{in~} \Omega_\eta, \\
    (pI - \mathbb{D}v)(\cdot, \eta(\cdot))\mathcal{N}(\cdot) = k&\quad\text{on~}\T^d, \\
    v=0 &\quad \text{on~} \Sigma_{-b},
\end{cases}
\eq
 where  $k: \T^d\to \Rr^{d+1}$ is a vector-valued function. Assume that 
 \bq\label{reg:eta:intro}
 \eta\in H^{s+\tdm}(\T^d)~\text{and~}\phi\in H^{s-\mez}(\T^d)\cap C^{1}(\T^d)~\text{with~}(d+1)/2<s\in \Nn.
 \eq
  We will prove in \cref{reg for stokes with stress} that 
    \bq\label{reg:intro}
    \|v\|_{H^{\sigma+1}(\Omega_\eta)} + \|p\|_{H^{\sigma}(\Omega_\eta)} \leq C(\|\eta\|_{H^{s+\tdm}(\T^d)}) \|k\|_{H^{\sigma-\mez}(\T^d)},\quad \sigma \in [0, s+1],
\eq
 provided $|\gamma|<\gamma^*=\gamma^*(\| \eta\|_{W^{1, \infty}})$. This is an improvement over the classical regularity result which assumes  $\eta\in W^{s+2, \infty}(\T^d)$. We note that $\sigma=s+1$ is the maximal regularity allowed for $k$ in \eqref{reg:intro} since $\cN\in H^{s+\mez}(\T^d)$. In particular, for $k=\chi \cN$ as in \eqref{sys:linOp} we can take $\chi\in H^{\sigma-\mez}(\T^d)$  with $\sigma \in [0, s+1]$. By virtue of the regularity estimate \eqref{reg:intro} and the trace inequality, we deduce that $\Psi_\gamma[\eta]\chi=v\vert_{\Sigma_\eta}\cdot \cN\in H^{\min\{\sigma+\mez, s+\mez\}}$, which is limited by the $H^{s+\mez}$ regularity of $\cN$ even though $\sigma$ can be taken in $(s, s+1]$. Thus 
 \bq\label{cont:Psi:intro}
 \Psi_\gamma[\eta]: H^{\sigma-\mez}(\T^d)\to H^{\sigma+\mez}(\T^d)\text{~ is continuous if ~}  \sigma \in [0, s]~\text{and~}|\gamma|\le \gamma^*.
 \eq

The continuity \eqref{cont:Psi:intro} with $\sigma=s$ this implies $\cG: \Rr\times H^{s+\tdm}(\T^d)\to H^{s+\tdm}(\T^d)$.  In order to construct a solution $(\gamma, \eta)$ of \eqref{Psi:intro} near $(0, \eta_*)$, $\eta_*=(\sigma \cH + gI)^{-1}(-\phi)\in H^{s+\tdm}(\T^d)$ via the implicit function theorem, the key condition  is the invertibility of the  Fr\'echet derivative $\p_\eta \cG(0, \eta_*)$. A formal calculation yields  $D_\eta \cG(0, \eta_*)f= \Psi_0[\eta_*]T_{\eta_*}f$, where $T_{\eta_*}:=D_\eta(\sigma \cH(\eta_*) + g\eta_*)$ is a second order elliptic operator (see \eqref{def: a_ij}). Therefore, the invertibility of $\p_\eta \cG(0, \eta_*)$ is equivalent to the invertibility of $\Psi_0[\eta_*]: H^{s-\mez}(\T^d)\to H^{s+\mez}(\T^d)$. More generally, given $\eta \in H^{s+\tdm}(\T^d)$ and $|\gamma|\le \gamma^*$, we aim to establish  the invertibility of  $\Psi_\gamma[\eta]: H^{s-\mez}(\T^d)\to H^{s+\mez}(\T^d)$. To this end, given a {\it scalar} function $h: \T^d\to \Rr$, we need to find a scalar function $\chi: \T^d\to \Rr$ such that the solution $(v, p)$ of \eqref{sys:linOp} satisfies $v\vert_{\Sigma_\eta}\cdot \cN=h$. This tacitly requires that the normal stress  $(pI - \mathbb{D}v)(\cdot, \eta(\cdot))\mathcal{N}(\cdot)=\chi \cN$ is {\it parallel} to $\cN$, a property not satisfied by all solutions of 
\begin{equation}\label{sys:invOp:0}
    \begin{cases}
    -\gamma \partial_1 v + \dv(pI - \Dd v) = 0,\quad  \nabla\cdot v= 0  \qquad &\text{in } \Omega_\eta, \\
    v(\cdot,\eta(\cdot))\cdot \cN(\cdot) = h(\cdot) \qquad &\text{on }\T^d, \\
    v=0 \qquad &\text{on } \Sigma_{-b}.
    \end{cases}
\end{equation}
Fortunately, when supplementing the underdetermined problem \eqref{sys:invOp:0} with the desired condition 
\bq\label{parallel}
\cN^\perp\Big((pI - \mathbb{D}v)(\cdot, \eta(\cdot))\mathcal{N}(\cdot)\Big) = 0\qquad \text{on }\T^d,
\eq
we obtain and well-posed problem, where $\cN^\perp$ denotes the projection onto the hypersurface orthogonal to $\cN$. See \cref{weak sol for stokes with Dirichlet}. Moreover, we will prove in \cref{reg for stokes with Dirichlet} the regularity 
\bq
    \|v\|_{H^{\sigma+1}(\Omega_\eta)} + \|p\|_{H^{\sigma}(\Omega_\eta)} \leq C(\|\eta\|_{H^{s+\tdm}(\T^d)})\|h\|_{H^{\sigma+\mez}(\T^d)},\quad \sigma \in [0, s].
\eq
This in turn implies that 
\bq\label{invert:Psi:intro}
\Psi_\gamma[\eta]: \rH^{\sigma-\mez}(\T^d)\to \mathring{H}^{\sigma+\mez}(\T^d)
\eq
is an isomorphism for all $\sigma \in [0, s]$. In particular, the case $\sigma=s$ provides the desired invertibility of $D_\eta \cG(0, \eta_*): H^{s+\tdm}(\T^d)\to \rH^{s+\mez}(\T^d)$ when $\eta_*\in H^{s+\tdm}(\T^d)$. For the actual construction of  traveling waves, to avoid the technical issue of justifying the $C^1$ regularity of the nonlocal map $\cG$, we will employ the Banach fixed point  method. As a trade-off, this approach requires in addition to the invertibility \eqref{invert:Psi:intro} contraction estimates for $\Psi_\gamma[\eta_1]-\Psi_\gamma[\eta_2]$, which will be established in \cref{linearization and contraction for Psi}.

We remark that surface tension is necessary for the above proof. Indeed, without surface tension $\cG$ becomes 
\[
 \cG(\gamma, \eta)=\Psi_0[\eta](g\eta +\phi)+\gamma\p_1\eta
 \]
and $\cG: \Rr\times H^{s+\tdm}(\T^d)\to H^{s+\mez}(\T^d)$  by virtue of the continuity \eqref{invert:Psi:intro} . Moreover, we have  $\eta_*=-\frac{1}{g} \phi$ and $D_\eta\cG(0, \eta_*)=g\Psi_0[\eta_*]$ which fails to be an isomorphism from from $\rH^{s+\tdm}(\T^d)$ to $\rH^{s+\mez}(\T^d)$ in view of \eqref{invert:Psi:intro} with $\sigma=s$.

As for the free boundary Navier-Stokes problem, we consider the Navier-Stokes version of   \eqref{sys:linOp}:
\bq\label{sys:nonlinOp}
\begin{cases}
    -\gamma \partial_1 v - \Delta v + \nabla p = -v\cdot \na v,\quad  \nabla\cdot v= 0  &\quad\text{in~} \Omega_\eta, \\
    (pI - \mathbb{D}v)(\cdot, \eta(\cdot))\mathcal{N}(\cdot) = \chi(\cdot)\cN(\cdot) &\quad\text{on~}\T^d, \\
    v=0 &\quad \text{on~} \Sigma_{-b}.
\end{cases}
\eq
 The corresponding {\it nonlinear} normal-stress to normal-Dirichlet operator defined as in \eqref{Psi:intro} is denoted by $\Phi_\gamma[\eta]$. In order to establish the well-posedness of \eqref{sys:nonlinOp}, we consider the nonhomogeneous version of \eqref{sys:linOp} with a forcing term $f$ in the $\gamma$-Stokes equation. Then, by choosing $f=-v\cdot \na v$ for any given $v\in H^{s+1}(\Omega_\eta)$, we obtain via the Banach fixed point method that \eqref{sys:nonlinOp} has a unique small solution $v$ for  $\chi$ sufficiently small in $H^{s-\mez}(\T^d)$. This results in the continuity 
 \bq
 \Phi_\gamma[\eta]: B_{H^{s-\mez}(\T^d)}(0, \delta^0)\to H^{s+\mez}(\T^d)
 \eq
for some $\delta^0>0$. The smallness of $v$ is not an obstruction for our construction of traveling waves since we are   perturbing the solution   \eqref{unperturbed} which has $v=0$. By an analogous method, we can show that the inverse of $\Phi_\gamma[\eta]$ is well-defined on a small ball of $\rH^{s+\mez}(\T^d)$ and establish  contraction estimates for  $\Phi_\gamma[\eta_1]-\Phi_\gamma[\eta_2]$. 
\subsubsection{Stability of large traveling waves}
Using the operator $\Psi_\gamma[\eta]$ we can recast the dynamic free boundary Stokes equations as 
\bq
\p_t \eta=\gamma \p_1 \eta+\Psi_0[\eta]\Big((\sigma \cH + gI)(\eta)+\phi\Big).
\eq
Fixing a traveling wave solution $\eta_w$ with speed $\gamma$, we aim to establish its asymptotic stability. Denoting the perturbation by $f=\eta-\eta_w$ and discarding certain $O(f^2)$ terms, we consider   the ``almost linearized'' problem 
\bq\label{lin:intro:0}
\p_t f=\gamma \p_1 \eta+\Psi_0[\eta_w]T_{\eta_w}f+S(f), 
\eq
\bq
S(f):= \Psi_0[\eta_w+f]\big((\sigma \cH + gI)(\eta_w)+\phi\big)-\Psi_0[\eta_w]\big((\sigma \cH + gI)(\eta_w)+\phi\big).
\eq
For the genuine linearized problem, $S$ must be replaced with the shape-derivative 
\[
\lim_{\eps \to 0} \frac{1}{\eps}\Big(\Psi_\gamma[\eta_w+\eps f](\chi) -\Psi_\gamma[\eta_w](\chi)\Big).
\]
We choose to work with the simpler operator  $S$ so as to exploit our contraction estimates for $\Psi_0[\eta_w+f]-\Psi_0[\eta_w]$.  Both $\Psi_0[\eta_w]T_{\eta_w}$ and $S$ are first-order  operators. Our strategy for proving  the ``linear'' asymptotic stability  of $f=0$  in $\rH^{s+1}(\T^d)$ relies on the following properties of $\Psi_0[\eta_w]$:

(i) $-\Psi_0[\eta_w]$ satisfies the coercive estimate  
\bq\label{coercive:intro}
 \langle -\Psi_0[\eta_w] \chi , \chi \rangle_{H^\mez(\T^d),H^{-\mez}(\T^d)}  \geq c_{\Psi_0[\eta_w] }\|\chi\|^2_{H^{-\mez}(\T^d)}.
\eq

(ii) The commutator estimate 
\bq\label{cmtt:intro}
 \|[\Psi_0[\eta_w],\partial^\alpha]\chi\|_{H^{\tdm}(\T^d)}\leq C(\|\eta_w\|_{H^{s+\tdm}(\T^d)})\|\chi\|_{H^{s+\mez}(\T^d)},\quad  |\alpha|= s+1,
\eq 
holds for $\eta_w\in H^{s+\frac52}$ with $(d+1)/2<s\in \Nn$.

 Assuming  above properties, let us demonstrate the stability.  First, we integrate  \eqref{lin:intro:0}  against $T_{\eta_w} f$ and substitute $\chi=T_{\eta_w}f$ in (i) to obtain  
\bq\label{stabest:low}
 \langle \p_t f , T_{\eta_w} f \rangle_{H^\mez,H^{-\mez}} \le -c_{\Psi_0[\eta_w] }\| T_{\eta_w} f\|_{H^{-\mez}}^2+  \langle S(f) , T_{\eta_w} f \rangle_{H^\mez,H^{-\mez}}.
 \eq
Next, for any $|\alpha|=s$, we commute \eqref{lin:intro:0} with $\p^\alpha$ and use (ii), then integrate the resulting equation against $T_{\eta_w} \p^\alpha f$. This yields 
 \bq\label{stabest:high}
 \begin{aligned}
 \langle \p_t \p^\alpha f , T_{\eta_w} \p^\alpha f \rangle_{H^\mez,H^{-\mez}}&\le -c_{\Psi_0[\eta_w] }\| T_{\eta_w} \p^\alpha f\|_{H^{-\mez}}^2+ \langle\Psi_0[\eta_w][\p^\alpha, T_{\eta_w}]f, T_{\eta_w} \p^\alpha f \rangle_{H^\mez,H^{-\mez}}\\
 &\qquad+  \langle \p^\alpha S(f) , T_{\eta_w} \p^\alpha f \rangle_{H^\mez,H^{-\mez}}\\
 &\le -c_{\Psi_0[\eta_w] }\| T_{\eta_w} \p^\alpha f\|_{H^{-\mez}}^2+ C(\| \eta_w\|_{H^{s+\frac52}})\| f\|_{H^{s+\mez}} \| T_{\eta_w} \p^\alpha f \|_{H^{-\mez}}\\
 &\qquad+  \langle \p^\alpha S(f) , T_{\eta_w} \p^\alpha f \rangle_{H^\mez,H^{-\mez}}.
 \end{aligned}
 \eq
 The product $\| f\|_{H^{s+\mez}} \| T_{\eta_w} \p^\alpha f \|_{H^{-\mez}}$ can be treated by interpolation and Young's inequality, where the lower regularity part can be absorbed by a large multiple $A$ of the good term $c_{\Psi_0[\eta_w] }\| T_{\eta_w} f\|_{H^{-\mez}}^2$ in \eqref{stabest:low}.  Then invoking the fact that $T_{\eta_w}$ is a self-adjoint second-order elliptic operator of divergence form, we deduce 
 \bq\label{stabest:sum}
 \mez \frac{d}{dt}\left(A\langle  f , T_{\eta_w} f \rangle_{H^\mez,H^{-\mez}}+ \langle \p^\alpha f , T_{\eta_w} \p^\alpha f \rangle_{H^\mez,H^{-\mez}} \right)\le -\frac{c_{\Psi_0[\eta_w] }}{C_1} \| f\|_{H^{s+\tdm}}^2+C_1\| S(f)\|_{H^{s+\mez}}^2,
 \eq
 where $C_1=C_1(\| \eta_w\|_{H^{s+\frac52}})$. As for $S$, our contraction estimates for $\Psi_0$ give 
\bq\label{est:S}
\| S(f)\|_{H^{s+\mez}}\le C(\| \eta_w\|_{H^{s+\tdm}})\| \eta_w-\eta_*\|_{H^{s+\tdm}}\| f\|_{H^{s+\tdm}}
\eq
so long as $\| f\|_{H^{s+\tdm}}<1$. Combining \eqref{stabest:sum} and \eqref{est:S}, we deduce the exponential decay of  $H^{s+1}$ provided the smallness condition 
\bq\label{stabcd:intro}
\| \eta_w-\eta_*\|_{H^{s+\tdm}}< \frac{c_{\Psi_0[\eta_w] }}{C_2(\| \eta_w\|_{H^{s+\frac52}})}
\eq
for some nondecreasing function $C_2: \Rr_+\to \Rr_+$.

We note however that \eqref{stabcd:intro}  does not necessarily imply the stability condition \eqref{cd:stab} unless the coercive constant $c_{\Psi_0[\eta_w]}$ depends only on the $H^{s+\tdm}$ norm of $\eta_w$ and not the entire function $\eta_w$. Unfortunately, our proof of the coercive estimate  (i) is indirect and only yields the latter.  It is worth mentioning that for  Darcy flows \cite{BN}, $ -\Psi_\gamma[\eta_w] $ is replaced with the Dirichlet to Neumann operator $G[\eta_w]$ which satisfies a  {\it quantitative} coercive estimate \cite{Nguyen2023-coercivity}. Here, we overcome the lack of quantitative dependence of $c_{\Psi_0[\eta_w]}$ on $\eta_w$ by replacing $\Psi_0[\eta_w]T_{\eta_w}f$ in \eqref{lin:intro:0} with $\Psi_0[\eta_*]T_{\eta_*}f$ at the expense of the additional linear operator $Rf:= (\Psi_0[\eta_w]T_{\eta_w}-\Psi_0[\eta_*]T_{\eta_*})f$. Invoking again our contraction estimates for $\Psi_0$, we see that $R$ obeys the same bound \eqref{est:S} as $S$ does. On the other hand, the stability condition \eqref{stabcd:intro} is then modified to 
\bq
\| \eta_w-\eta_*\|_{H^{s+\tdm}}< \frac{c_{\Psi_0[\eta_*] }}{C_3(\| \eta_w\|_{H^{s+\frac32}})}
\eq 
which is of the desired form \eqref{cd:stab} since $\eta_*$ is fixed. 
\subsection{Organization of the paper} In  \cref{sec:Stokes}, we develop the theory of weak solutions and regularity for the $\gamma$-Stokes equations with prescribed normal stress or normal trace in non-flat domains. Utilizing the results obtained in  \cref{sec:Stokes}, we analyze  the linear  and nonlinear normal-tress to normal-Drirchlet operators in \cref{sec:Psi}. We prove the bijectivity  and continuity, contraction, coercive, and commutator estimates for these operators in Sobolev spaces. After establishing  the Sobolev solvability of the capillary-gravity operator and properties of its  the linearization,  we prove  the main \cref{theo1:intro} in \cref{sec:construction}. \cref{sec:stab} is devoted to the stability analysis of traveling wave solutions to the free boundary Stokes equations. We first prove  the well-posedness of the linear dynamics, then establish  the nonlinear asymptotic stability, proving  \cref{theo2:intro}. \cref{appendix:lifting} contains results on liftings and Lagrange multipliers needed for the weak solution theory in  \cref{sec:Stokes}. \cref{appendix:productestimates} gathers product rules in various domains and spaces. Finally, we record basic features of paradifferential calculus in \cref{appendix:para}.


\section{$\gamma-$Stokes problems in non-flat domains}\label{sec:Stokes}
We  assume throughout this section  that $\eta \in L^{\infty}(\T^d)$ satisfies
\bq \label{eta lower bound}
\inf_{x\in \T^d}(\eta(x)+b)\ge c^0>0.
\eq
For any $s\geq 1$, we define the following Sobolev spaces
\bq\label{zero on the bottom space}
{}_0H^s(\Omega_\eta) := \{u\in H^s (\Omega_\eta): \quad u\vert_{\Sigma_{-b}} =0\},
\eq 
\bq\label{div free space}
{}_0H^s_\sigma(\Omega_\eta) := \{u\in {}_0H^s (\Omega_\eta; \Rr^{d+1}): \quad \dv u=0 \},
\eq
\bq
{}_0H^s_\cN(\Omega_\eta) := \left\{u\in {}_0H^s(\Omega_\eta; \Rr^{d+1}): \quad u(\cdot,\eta(\cdot))\cdot\cN(\cdot) =0 \text{ 
 on }\T^d\right\},
\eq
\bq
\mathring{H}^s(\T^d):=\left \{u\in H^s(\T^d): \quad \int_{\T^d} u =0 \right\}.
\eq
\begin{nota}
\begin{itemize}
\item $\na$ is a row vector. If $F$ is vector field, then $(\na F)_{ij}=\p_jF_i$.
\end{itemize}
\end{nota}


\subsection{Weak solutions}
We first consider the following $\gamma-$Stokes system with prescribed normal stress condition:
\begin{equation}\label{sys:stokes with stress}
    \begin{cases}
    -\gamma \partial_1 v + \dv(pI - \Dd v) = f \qquad & \text{in }\Omega_\eta, \\
    \nabla\cdot v= g \qquad & \text{in }\Omega_\eta, \\
    (pI - \mathbb{D}v)(\cdot, \eta(\cdot))\mathcal{N}(\cdot) = k(\cdot) \qquad &\text{on }\T^d, \\
    v=0 \qquad & \text{on }\Sigma_{-b},
    \end{cases}
\end{equation}
where $f\in \bigl( {}_0H^1(\Omega_\eta)\bigr)^*$, $g\in L^2(\Omega_\eta)$, and $k\in (H^{-\mez}(\T^d))^{d+1}$.

A simple computation shows that a weak formulation for (\ref{sys:stokes with stress}) is to find a $v\in {}_0H^1(\Omega_\eta) $ and $p\in L^2(\Omega_\eta)$ such that $\dv v = g$ and 
\bq\label{weak stokes with stress}
\int_{\Omega_\eta } \mez \Dd v: \Dd u - p\dv u - \gamma \partial_1 v\cdot u = \langle f,u\rangle_{\Omega_\eta} - \langle k,u(\cdot,\eta(\cdot))\rangle_{\Sigma_\eta} 
\eq
for all $u\in {}_0H^1(\Omega_\eta) $, where $\langle \cdot, \cdot \rangle_{\Omega_\eta}$ denotes the pairing between ${}_0H^1(\Omega_\eta)$ and its dual $\bigl(  {}_0H^1(\Omega_\eta)\bigr)^*$, and $\langle \cdot, \cdot \rangle_{\Sigma_\eta}$ means the pairing between $H^{-\mez}(\T^d)$ and $H^{\mez}(\T^d)$.
\begin{prop}\label{weak sol for stokes with stress}
    Suppose $\eta\in W^{1,\infty}(\T^d)$. Then there exists a $\gamma^*= \gamma^*(d, b, c^0, \|\eta\|_{W^{1,\infty}(\T^d)})>0$ such that for any $f\in \bigl(  {}_0H^1(\Omega_\eta)\bigr)^*$, $g\in L^2(\Omega_\eta)$, and $k\in (H^{-\mez}(\T^d))^{d+1}$, provided that $|\gamma|\le \gamma^*$ there exists a unique pair of $v\in {}_0H^1(\Omega_\eta) $ and $p\in L^2(\Omega_\eta)$ such that $\dv v = g$ and (\ref{weak stokes with stress}) is satisfied. Moreover, we have
    \bq \label{base energy estimate for stokes with stress}
    \|v\|_{{}_0H^1(\Omega_\eta)} + \|p\|_{L^2(\Omega_\eta)} \leq C(\|\eta\|_{W^{1,\infty}(\T^d)}) \bigl( \|f\|_{\bigl( {}_0H^1(\Omega_\eta)\bigr)^*} + \|g\|_{L^2(\Omega_\eta)} + \|k\|_{H^{-\mez}(\T^d)}\bigr)
    \eq
    for some $C:\Rr^+ \to \Rr^+$ depending on $(d,b,c^0)$. 
\end{prop}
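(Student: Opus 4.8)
The plan is to apply a Lax--Milgram type argument to the bilinear form arising in the weak formulation, after first reducing to the divergence-free case via a lifting of the constraint $\dv v = g$, and then handling the first-order term $-\gamma\p_1 v$ perturbatively for small $\gamma$. First I would use the results of \cref{appendix:lifting} to pick $w\in {}_0H^1(\Omega_\eta;\Rr^{d+1})$ with $\dv w = g$ and $\|w\|_{{}_0H^1(\Omega_\eta)}\le C(\|\eta\|_{W^{1,\infty}})\|g\|_{L^2(\Omega_\eta)}$; writing $v = w + \tilde v$ with $\tilde v\in {}_0H^1_\sigma(\Omega_\eta)$ reduces \eqref{weak stokes with stress} to finding $\tilde v$ in the divergence-free space satisfying
\bq
\int_{\Omega_\eta}\tfrac12\Dd\tilde v:\Dd u - \gamma\p_1\tilde v\cdot u = \langle F,u\rangle_{\Omega_\eta}-\langle k,u(\cdot,\eta(\cdot))\rangle_{\Sigma_\eta},\quad \forall u\in {}_0H^1_\sigma(\Omega_\eta),
\eq
where $F\in\bigl({}_0H^1(\Omega_\eta)\bigr)^*$ absorbs $f$ and the $w$-contributions and obeys $\|F\|_*\le \|f\|_* + C(\|\eta\|_{W^{1,\infty}})\|g\|_{L^2}$. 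Note that on the divergence-free space the pressure term $\int p\,\dv u$ vanishes, so $p$ is eliminated from this reduced problem.

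Next I would establish coercivity of $a(\tilde v,u):=\int_{\Omega_\eta}\tfrac12\Dd\tilde v:\Dd u - \gamma\p_1\tilde v\cdot u$ on ${}_0H^1_\sigma(\Omega_\eta)$. The symmetric part $\int_{\Omega_\eta}\tfrac12|\Dd\tilde v|^2$ controls $\|\tilde v\|_{{}_0H^1(\Omega_\eta)}^2$ by Korn's inequality together with Poincar\'e's inequality (available since $\tilde v$ vanishes on $\Sigma_{-b}$), with a constant depending only on $d,b,c^0$ and $\|\eta\|_{W^{1,\infty}}$ through the geometry of $\Omega_\eta$; I would invoke the flattening change of variables $(x,y)\mapsto(x,\rho(x,y))$ sending $\Omega_\eta$ to a fixed slab to make the dependence explicit. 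The perturbation term is bounded by $|\gamma|\,\|\p_1\tilde v\|_{L^2}\|\tilde v\|_{L^2}\le |\gamma|\,C(\|\eta\|_{W^{1,\infty}})\|\tilde v\|_{{}_0H^1}^2$, so choosing $\gamma^* = \gamma^*(d,b,c^0,\|\eta\|_{W^{1,\infty}})$ small enough makes $a$ coercive on ${}_0H^1_\sigma(\Omega_\eta)$; it is plainly bounded there. The right-hand side is a bounded linear functional on ${}_0H^1_\sigma(\Omega_\eta)$ by duality and the trace inequality $\|u(\cdot,\eta(\cdot))\|_{H^{1/2}(\T^d)}\le C(\|\eta\|_{W^{1,\infty}})\|u\|_{{}_0H^1(\Omega_\eta)}$. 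Lax--Milgram then yields a unique $\tilde v\in {}_0H^1_\sigma(\Omega_\eta)$, with the asserted bound on $\|v\|_{{}_0H^1(\Omega_\eta)}$ after adding back $w$.

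To recover the pressure, I would test \eqref{weak stokes with stress} against general $u\in {}_0H^1(\Omega_\eta)$: the functional $u\mapsto \langle F,u\rangle - \langle k,u(\cdot,\eta(\cdot))\rangle_{\Sigma_\eta} - \int_{\Omega_\eta}\tfrac12\Dd v:\Dd u + \gamma\p_1 v\cdot u$ vanishes on ${}_0H^1_\sigma(\Omega_\eta)$, hence by the surjectivity of $\dv : {}_0H^1(\Omega_\eta)\to L^2(\Omega_\eta)$ (equivalently, the inf--sup/Ne\v{c}as inequality, again furnished by \cref{appendix:lifting}) it is represented by some $p\in L^2(\Omega_\eta)$ via $u\mapsto \int_{\Omega_\eta} p\,\dv u$, with $\|p\|_{L^2}\le C(\|\eta\|_{W^{1,\infty}})$ times the norm of that functional, giving the stated estimate; uniqueness of $p$ follows since $\dv$ has dense range (it is onto $L^2$). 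The main obstacle is bookkeeping the dependence of all constants---Korn, Poincar\'e, the trace inequality, and the Bogovskii/Ne\v{c}as constant---on $\eta$ only through $\|\eta\|_{W^{1,\infty}}$ (and $d,b,c^0$), which is exactly what forces the flattening-diffeomorphism computation and the quantitative lifting statements isolated in \cref{appendix:lifting}; once those are in hand the rest is the standard saddle-point argument with a small antisymmetric perturbation.
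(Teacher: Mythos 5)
Your proposal is correct and follows essentially the same route as the paper: lift the divergence constraint via the Bogovskii-type result in \cref{appendix:lifting}, apply Lax--Milgram to the bilinear form on ${}_0H^1_\sigma(\Omega_\eta)$ using Korn and Poincar\'e plus smallness of $|\gamma|$ for coercivity, and recover the pressure from the functional vanishing on ${}_0H^1_\sigma(\Omega_\eta)$ via the Lagrange-multiplier statement (\cref{coro: lagrange multiplier for pressure}). The only difference is cosmetic ordering (the paper treats $g=0$ first and then lifts, you lift first), which does not affect the argument.
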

\begin{proof}
    First, consider the case where $g=0$. Let $B:{}_0H^1_\sigma(\Omega_\eta) \times {}_0H^1_\sigma(\Omega_\eta) \to \Rr$ be the bilinear form 
    \bq\label{bilinearB}
    B(v, u) = \int_{\Omega_\eta } \mez \Dd v: \Dd u - \gamma \partial_1 v\cdot u . 
    \eq
    Obviously, $B$ is a bounded operator and due to Korn's and Poincare's inequalities we have
$$
\begin{aligned}
\|u\|^2_{{}_0H^1(\Omega_\eta)}&\leq c_*(\|\eta\|_{W^{1,\infty}(\T^d)})\|\Dd u\|^2_{L^2(\Omega_\eta)},\\  
 \|u\|^2_{{}_0H^1(\Omega_\eta)}&\leq c_*(\|\eta\|_{W^{1,\infty}(\T^d)})\|\nabla u\|^2_{L^2(\Omega_\eta)},
\end{aligned}
$$
    where $c_*:\Rr^+ \to \Rr^+$ depends only on $(d,b,c^0)$. It follows that 
    \[
    B(u, u)\ge \frac{1}{2c_*}(1-|\gamma|2c_*)\| u\|_{{}_0H^1(\Omega_\eta)}^2,
    \]
    and hence 
    \bq\label{def:gamma*}
   B(u, u)\ge \frac{1}{4c_*}\| u\|_{{}_0H^1(\Omega_\eta)}^2\quad\text{if~} |\gamma|\le \gamma^*:= (4c_*(\|\eta\|_{W^{1,\infty}(\T^d)}))^{-1}.
    \eq
     Moreover, the mapping
    \[
 {}_0H^1_\sigma(\Omega_\eta) \ni     u\mapsto  \langle f,u\rangle_{\Omega_\eta} - \langle k,u(\cdot,\eta(\cdot))\rangle_{\Sigma_\eta} 
    \]
    is bounded by means of the trace theorem.  Therefore, by virtue of the Lax-Milgram theorem, there exists a unique $v\in {}_0H^1_\sigma(\Omega_\eta)  $ such that 
    $$B(v,u) =  \langle f,u\rangle_{\Omega_\eta} - \langle k,u(\cdot,\eta(\cdot))\rangle_{\Sigma_\eta}\quad \forall u\in {}_0H^1_\sigma(\Omega_\eta).$$
Then, the functional  $\Lambda\in \bigl(  {}_0H^1(\Omega_\eta)\bigr)^* $ defined by  
\[
 \langle \Lambda ,u\rangle_{\Omega_\eta}=B(v,\cdot) -\langle f,\cdot\rangle_{\Omega_\eta} + \langle k,\cdot\rangle_{\Sigma_\eta}
 \]
 vanishes on ${}_0H^1_\sigma(\Omega_\eta)$. By  Corollary \ref{coro: lagrange multiplier for pressure}, there exists a  unique $p\in L^2(\Omega_\eta)$ such that 
    $$\int_{\Omega_\eta} p \dv u = \langle \Lambda ,u\rangle_{\Omega_\eta}\quad\forall u\in {}_0H^1(\Omega_\eta);$$
   moreover, $p$ satisfies 
    $$\|p\|_{L^2(\Omega_\eta)} \leq c(\|\eta\|_{W^{1,\infty}(\T^d)}) \|\Lambda\|_{\bigl(  {}_0H^1(\Omega_\eta)\bigr)^*}.$$
  Thus  $(v,p)$ satisfy (\ref{weak stokes with stress}) and (\ref{base energy estimate for stokes with stress}) with $g=0$.

    For the inhomogeneous case $g\not\equiv 0$, using Proposition \ref{solving div problem} we find some $w\in {}_0H^1(\Omega_\eta)$ with $\dv w =g$ satisfying 
    \bq \label{bound for w}
    \|w\|_{{}_0H^1(\Omega_\eta)} \leq c(\|\eta\|_{W^{1,\infty}(\T^d)}) \|g\|_{L^2(\Omega_\eta)}.
    \eq
     From the  homogeneous case, there exists a unique pair of  $v_0\in {}_0H^1_\sigma(\Omega_\eta) $ and $p\in L^2(\Omega_\eta)$ satisfying (\ref{weak stokes with stress}) with $f$ replaced with  $\bar{f}:= f-B(w,\cdot)$. Consequently $v=v_0+w$ satisfies (\ref{weak stokes with stress}) and the bound (\ref{base energy estimate for stokes with stress}) follows from (\ref{bound for w}).  

\end{proof}
Next, we consider the $\gamma-$Stokes system with Navier boundary condition:
\begin{equation}\label{sys:stokes with Dirichlet}
    \begin{cases}
    -\gamma \partial_1 v + \dv(pI - \Dd v) = f \qquad &\text{in } \Omega_\eta, \\
    \nabla\cdot v= g \qquad & \text{in }\Omega_\eta, \\
    \cN^\perp(pI - \mathbb{D}v)(\cdot, \eta(\cdot))\mathcal{N}(\cdot) = l(\cdot) \qquad &\text{on }\T^d, \\
    v(\cdot,\eta(\cdot))\cdot \cN(\cdot) = h(\cdot) \qquad &\text{on }\T^d, \\
    v=0 \qquad &\text{on } \Sigma_{-b},
    \end{cases}
\end{equation}
where $f\in \bigl( {}_0H^1(\Omega_\eta)\bigr)^*$, $g\in L^2(\Omega_\eta)$, $l\in H^{-\mez}(\T^d)$, $h\in H^{\mez}(\T^d)$, and $\cN^\perp:= I - \langle \cdot, \cN \rangle\frac{\cN}{|\cN|^2}$ is the projection on the hyperplane orthogonal to $\cN$. 
Moreover, assume that  $l$ and $h$ satisfy the compatibility conditions
\begin{align}\label{compatibility for l}
l\cdot \cN =0 \qquad \text{on } \T^d,\\
\label{compatibility for h and g}
\int_{\T^d} h=\int_{\Omega_\eta} g.
\end{align}
 Multiplying  \eqref{sys:stokes with Dirichlet}$_1$ by an arbitrary $u\in{}_0H^1_\cN(\Omega_\eta) $ and formally integrating by parts, we obtain
$$\int_{\Omega_\eta } \mez \Dd v: \Dd u - p\dv u - \gamma \partial_1 v\cdot u = \langle f,u\rangle_{\Omega_\eta} - \langle l+ \cN^T(pI - \Dd v)\cN \frac{\cN}{|\cN|^2},u(\cdot,\eta(\cdot))\rangle_{\Sigma_\eta}.$$
Since $u\cdot \cN =0$,  $(v,p)$ formally satisfy 
\bq \label{weak stokes with Dirichlet}
\int_{\Omega_\eta } \mez \Dd v: \Dd u - p\dv u - \gamma \partial_1 v\cdot u = \langle f,u\rangle_{\Omega_\eta} - \langle l,u(\cdot,\eta(\cdot))\rangle_{\Sigma_\eta}\quad\forall u\in{}_0H^1_\cN(\Omega_\eta).
\eq

Therefore, we define $v\in {}_0H^1(\Omega_\eta)$ and $p\in L^2(\Omega_\eta)$ to be a weak solutions to (\ref{sys:stokes with Dirichlet}) if $v(\cdot,\eta(\cdot))\cdot \cN(\cdot) = h(\cdot) $, $\dv v = g$, and \eqref{weak stokes with Dirichlet} is satisfied. 

\begin{prop}\label{weak sol for stokes with Dirichlet}
    Suppose $\eta \in W^{2,\infty}(\T^d)$. Let $f\in \bigl(  {}_0H^1(\Omega_\eta)\bigr)^*$, $g\in L^2(\Omega_\eta)$, $l\in H^{-\mez}(\T^d)$, and $h\in H^{\mez}(\T^d)$, where $l$, $g$, and $h$ satisfy (\ref{compatibility for l}) and (\ref{compatibility for h and g}). If  $|\gamma|\le \gamma^*$, given by \eqref{def:gamma*}, then there exists a unique pair of $v\in {}_0H^1(\Omega_\eta) $ and $p\in \mathring{L}^2(\Omega_\eta)$ with $\dv v = g$, and $v(\cdot,\eta(\cdot))\cdot \cN(\cdot) = h(\cdot)$, and satisfying (\ref{weak stokes with Dirichlet}). Moreover, we have
    \bq \label{base energy estimate for stokes with Dirichlet}
    \|v\|_{{}_0H^1(\Omega_\eta)} + \|p\|_{L^2(\Omega_\eta)} \leq C(\|\eta\|_{W^{2,\infty}(\T^d)}) \bigl( \|f\|_{\bigl(  {}_0H^1(\Omega_\eta)\bigr)^*} + \|g\|_{L^2(\Omega_\eta)} + \|l\|_{H^{-\mez}(\T^d)}+ \|h\|_{H^{\mez}(\T^d)}\bigr)
    \eq
    for some $C:\Rr^+ \to \Rr^+$ depending on $(d,b,c^0)$.
\end{prop}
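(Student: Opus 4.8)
The plan is to mimic the proof of \cref{weak sol for stokes with stress}: first remove the inhomogeneous divergence and normal-trace data with a lift, then solve the reduced problem on a closed subspace of ${}_0H^1_\cN(\Omega_\eta)$ via Lax-Milgram, and finally recover the pressure by a de Rham/Lagrange-multiplier argument. Two features are new relative to \cref{weak sol for stokes with stress}. First, the test space is ${}_0H^1_\cN(\Omega_\eta)$ rather than ${}_0H^1_\sigma(\Omega_\eta)$, so the term $-\int_{\Omega_\eta}p\,\dv u$ no longer vanishes in \eqref{weak stokes with Dirichlet} and the pressure must be reconstructed a posteriori; this also means $p$ is determined only modulo constants, since adding a constant $c$ to $p$ changes the left side of \eqref{weak stokes with Dirichlet} by $-c\int_{\Sigma_\eta}u\cdot\cN=0$, which is why we normalize $p\in\mathring{L}^2(\Omega_\eta)$. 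Second, one has to lift a prescribed normal trace $v(\cdot,\eta(\cdot))\cdot\cN(\cdot)=h$ on the curved interface, which is where the hypothesis $\eta\in W^{2,\infty}(\T^d)$ (needed so that $\cN\in W^{1,\infty}(\T^d)$) and the compatibility condition \eqref{compatibility for h and g} enter.

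First I would invoke the lifting results of \cref{appendix:lifting} (cf. Proposition~\ref{solving div problem}) to produce $w\in {}_0H^1(\Omega_\eta)$ with $\dv w=g$ and $w(\cdot,\eta(\cdot))\cdot\cN(\cdot)=h$, together with the bound $\|w\|_{{}_0H^1(\Omega_\eta)}\le C(\|\eta\|_{W^{2,\infty}(\T^d)})\bigl(\|g\|_{L^2(\Omega_\eta)}+\|h\|_{H^{\mez}(\T^d)}\bigr)$; here \eqref{compatibility for h and g} is precisely the solvability condition furnished by the divergence theorem. Writing $v=v_0+w$ reduces matters to finding $v_0$ in the Hilbert space $V:=\{u\in {}_0H^1_\cN(\Omega_\eta):\dv u=0\}$ with
\[
B(v_0,u)=\langle f,u\rangle_{\Omega_\eta}-\langle l,u(\cdot,\eta(\cdot))\rangle_{\Sigma_\eta}-B(w,u)\qquad\forall u\in V,
\]
where $B$ is the bilinear form \eqref{bilinearB}. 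Every element of $V$ vanishes on $\Sigma_{-b}$, so Korn's and Poincar\'e's inequalities yield $\|u\|_{{}_0H^1(\Omega_\eta)}^2\le c_*(\|\eta\|_{W^{1,\infty}(\T^d)})\|\Dd u\|_{L^2(\Omega_\eta)}^2$ exactly as in \cref{weak sol for stokes with stress}, whence $B(u,u)\ge\frac{1}{4c_*}\|u\|_{{}_0H^1(\Omega_\eta)}^2$ for $|\gamma|\le\gamma^*$ with the same $\gamma^*$ as in \eqref{def:gamma*}. Since $B$ is bounded and the right-hand side is a bounded functional on $V$ (the $l$-term by the trace theorem), Lax-Milgram yields a unique $v_0\in V$, hence a unique $v=v_0+w$ with $\dv v=g$, $v(\cdot,\eta(\cdot))\cdot\cN(\cdot)=h$, and $\|v\|_{{}_0H^1(\Omega_\eta)}\le C(\|\eta\|_{W^{2,\infty}(\T^d)})\bigl(\|f\|_{({}_0H^1(\Omega_\eta))^*}+\|g\|_{L^2(\Omega_\eta)}+\|l\|_{H^{-\mez}(\T^d)}+\|h\|_{H^{\mez}(\T^d)}\bigr)$.

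For the pressure I would introduce $\Lambda\in({}_0H^1_\cN(\Omega_\eta))^*$ by setting $\langle\Lambda,u\rangle:=B(v,u)-\langle f,u\rangle_{\Omega_\eta}+\langle l,u(\cdot,\eta(\cdot))\rangle_{\Sigma_\eta}$, which by construction vanishes on $V$. Since every $u\in{}_0H^1_\cN(\Omega_\eta)$ satisfies $\int_{\Omega_\eta}\dv u=\int_{\Sigma_\eta}u\cdot\cN=0$ and since $\dv:{}_0H^1_\cN(\Omega_\eta)\to\mathring{L}^2(\Omega_\eta)$ is onto (the mixed-boundary-condition divergence lemma of \cref{appendix:lifting}), the Lagrange-multiplier argument (cf. Corollary~\ref{coro: lagrange multiplier for pressure}) produces a unique $p\in\mathring{L}^2(\Omega_\eta)$ with $\int_{\Omega_\eta}p\,\dv u=\langle\Lambda,u\rangle$ for all $u\in{}_0H^1_\cN(\Omega_\eta)$, together with $\|p\|_{L^2(\Omega_\eta)}\le c(\|\eta\|_{W^{2,\infty}(\T^d)})\|\Lambda\|_{({}_0H^1_\cN(\Omega_\eta))^*}$. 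Rearranging gives \eqref{weak stokes with Dirichlet}, and combining the bound for $\|\Lambda\|$ with that for $\|v\|_{{}_0H^1}$ yields \eqref{base energy estimate for stokes with Dirichlet}. Uniqueness is standard: a difference of two solutions lies in $V$; testing \eqref{weak stokes with Dirichlet} against it annihilates the pressure and the data, and coercivity forces the velocities to coincide; then $\int_{\Omega_\eta}p\,\dv u=0$ for all $u\in{}_0H^1_\cN(\Omega_\eta)$ together with the surjectivity of $\dv$ onto $\mathring{L}^2(\Omega_\eta)$ forces the mean-zero pressures to coincide.

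The abstract scheme is parallel to \cref{weak sol for stokes with stress}, so the genuine work lies in the two ingredients imported from \cref{appendix:lifting}: constructing a lift $w$ that realizes the prescribed normal trace on $\Sigma_\eta$ -- this is what forces the regularity jump to $\eta\in W^{2,\infty}$ so that $\cN\in W^{1,\infty}$, and it uses \eqref{compatibility for h and g} -- and establishing the surjectivity of $\dv$ from ${}_0H^1_\cN(\Omega_\eta)$ onto $\mathring{L}^2(\Omega_\eta)$ with a quantitative inf-sup bound, which legitimizes the de Rham representation of $p$ and pins it down in $\mathring{L}^2$. Within the proof proper, the one point requiring care is bookkeeping of the test space: because we test only against $u$ with $u\cdot\cN=0$, the full normal stress never appears in \eqref{weak stokes with Dirichlet}, which is exactly why only the tangential datum $l$ (subject to \eqref{compatibility for l}, $l\cdot\cN=0$) enters and why the pressure is determined only up to an additive constant.
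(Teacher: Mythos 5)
Your proposal is correct and follows essentially the same route as the paper: lift the inhomogeneous divergence and normal-trace data, apply Lax--Milgram with the coercivity \eqref{def:gamma*} on ${}_0H^1_\cN(\Omega_\eta)\cap{}_0H^1_\sigma(\Omega_\eta)$, and recover $p\in\mathring{L}^2(\Omega_\eta)$ via the Lagrange-multiplier result \cref{Lagrange for pressure in HcN}. The only cosmetic difference is that your single lift $w$ is realized in the paper as the two-step sum $v_h+v_g$ via \cref{lifting neumann} and \cref{lifting div in HcN} (not \cref{solving div problem}, which controls only the divergence), precisely the appendix ingredients you identify at the end.
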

\begin{proof}
    By the virtue of \cref{lifting neumann}, there exists $v_h \in {}_0H^1(\Omega_\eta)$ such that $v_h(\cdot,\eta(\cdot))\cdot\cN = h$, and 
    $$\|v_h\|_{H^1(\Omega_\eta)} \leq c(\|\eta\|_{W^{2,\infty}(\T^d)})\|h\|_{H^{\mez}(\T^d)}.$$
    Let $g_h = \dv v_h$. Then the divergence theorem implies $\int_{\T^d} h = \int_{\Omega_\eta} g_h$, 
    which together with the compatibility condition  \eqref{compatibility for h and g} implies $g-g_h \in \mathring{L}^2(\Omega_\eta)$.
  By \cref{lifting div in HcN}, there exists $v_g\in {}_0H^1_\cN(\Omega_\eta)$ with $\dv v_g = g-g_h $ and
    $$\|v_g\|_{H^1(\Omega_\eta)} \leq c(\|\eta\|_{W^{2,\infty}(\T^d)})\bigl(\|h\|_{H^{\mez}(\T^d)}+ \|g\|_{L^2(\Omega_\eta)}\bigr).$$
        
    Let $B: {}_0H^1(\Omega_\eta) \times {}_0H^1(\Omega_\eta) \to \Rr$ be the bilinear form 
    $$B(v,u) = \int_{\Omega_\eta} \mez \Dd v: \Dd u - \gamma \partial_1 v \cdot u$$
which is continuous and coercive by  the proof of \cref{weak sol for stokes with stress}. The Lax-Milgram theorem implies that  there exists a unique $v_0\in {}_0H^1_\cN(\Omega_\eta)\cap {}_0H^1_\sigma(\Omega_\eta) $ such that
    $$B(v_0,u) = \langle f,u\rangle_{\Omega_\eta} - \langle l,u(\cdot,\eta(\cdot))\rangle_{\Sigma_\eta} - B(v_h+v_g,u)\quad\forall u\in {}_0H^1_\cN(\Omega_\eta)\cap {}_0H^1_\sigma(\Omega_\eta).$$
    Then, the functional $\Lambda\in ({}_0H^1_\cN(\Omega_\eta))^*$ defined by
    $$\langle \Lambda, u \rangle_{\Omega_\eta}  = B(v_0+v_h+v_g,u) -  \langle f,u\rangle_{\Omega_\eta} + \langle l,u(\cdot,\eta(\cdot))\rangle_{\Sigma_\eta}$$
    vanishes on ${}_0H^1_\cN(\Omega_\eta)\cap {}_0H^1_\sigma(\Omega_\eta)$. By \cref{Lagrange for pressure in HcN}, there exists a unique $p\in \mathring{L}^2(\Omega_\eta)$ such that
    $$\int_{\Omega_\eta} p\dv u = \langle \Lambda, u \rangle_{\Omega_\eta} \quad \forall u\in {}_0H^1_\cN(\Omega_\eta); $$
    moreover, $p$ satisfies
    \begin{multline*}
        \|p\|_{L^2(\Omega_\eta)} \leq c(\|\eta\|_{W^{2,\infty}(\T^d)}) \|\Lambda\|_{\bigl({}_0H^1_\cN(\Omega_\eta)\bigr)^*} \\
        \leq c(\|\eta\|_{W^{2,\infty}(\T^d)})\bigl( \|f\|_{\bigl(  {}_0H^1(\Omega_\eta)\bigr)^*} + \|g\|_{L^2(\Omega_\eta)} + \|l\|_{H^{-\mez}(\T^d)}+ \|h\|_{H^{\mez}(\T^d)}  \bigr)  .
    \end{multline*}
    This shows that $v=v_0+v_h+v_g$ and $p$ are the desired solution pair.
    
\end{proof}
\begin{rema}
    Since the system (\ref{sys:stokes with Dirichlet}) is invariant under $p\mapsto p+c$, the uniqueness statement in \cref{weak sol for stokes with Dirichlet} holds for  $p\in \mathring{L}^2(\Omega_\eta)$. 
\end{rema}


\subsection{Regularity of weak solutions}
Next, we prove regularity results for weak solutions to the Stokes systems \eqref{sys:stokes with stress} and \eqref{sys:stokes with Dirichlet}. To this end we first  flatten the fluid domain via a Lipschitz diffeomorphism. 
Let $\Omega:=\T^d\times(-b,0)\ni (x, z)$, $x=(x_1,...,x_d)\in \T^d$, and $z\in (-b,0)$.
Define the map $\cF_\eta:\Omega\to \Omega_\eta$ by
\bq \label{cF definitoin}
\cF_\eta(x,z) = (x, \varrho(x,z)),
\eq
where
\bq\label{def:varrho}
\varrho(x,z) = \frac{b+z}{b}e^{\delta z|D|}\eta(x)+z
\eq
and $\delta>0$ is a small constant to be chosen later.

With the convention of $(\nabla F)_{ij}= \partial_j F_i$, we have
\bq
        \nabla\cF_{\eta}(x,z) = \begin{bmatrix}
            I_{d\times d}& 0_{d\times 1} \\
            \frac{b+z}{b}e^{\delta z|D|}\nabla \eta(x) & \frac{1}{b}e^{\delta z|D|}\eta(x)+ \frac{b+z}{b}\delta|D|e^{\delta z|D|}\eta(x) +1
        \end{bmatrix}
    \eq
    \bq
    J(x,z) := \det(\nabla_{x,z} \cF_\eta) = \partial_z \varrho(x,z) = \frac{1}{b}e^{\delta z|D|} \eta(x) +   \frac{b+z}{b}\delta|D|e^{\delta z|D|}\eta(x) + 1.
    \eq
For any $f: \Omega_\eta \to \Rr^m$ and $\Tilde{f} = f\circ \cF_\eta$, we have
\bq\label{change:nabla}
(\nabla_{x,y} f)\circ \cF_\eta(x, z) = \nabla_{x,z} \Tilde{f}(x, z)A(x,z),
\eq
where 
\bq\label{definition of A}
        A(x,z):= (\nabla \cF_\eta(x,z))^{-1} = \begin{bmatrix}
            I_{d\times d} & 0_{d\times 1} \\
            -\frac{1}{J(x,z)}\frac{b+z}{b}e^{\delta z|D|}\nabla \eta(x) & \frac{1}{J(x,z)}
        \end{bmatrix}.
\eq
Moreover, if $v: \Omega_\eta\to \Rr^{d+1}$ and $\tilde{v}=v\circ \cF_\eta$, then 
\bq\label{change:div}
(\na_{x, y}\cdot v)\circ \cF_\eta(x, z)=(\na_{x, z} \tilde{v}):A^T.
\eq
\begin{lemm} \label{lower bound for J}
    Suppose   $\eta \in H^{s}(\T^d)$ with $s>1+d/2$. There exists $\delta^*=\delta^*(\|\eta\|_{H^{s}(\T^d)})>0$ such that if $\delta< \delta^*$, then $\p_z\varrho(x,z)\geq M=M(b,c^0)>0$ for all $(x,z)\in \Omega$. In particular, $\cF_\eta$ is a diffeomorphism.
\end{lemm}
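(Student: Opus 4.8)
The plan is to split $\p_z\varrho=J$ into its value at $\delta=0$, which is bounded below by \eqref{eta lower bound}, plus a remainder that tends to $0$ in $L^\infty(\Omega)$ as $\delta\to0$, uniformly in $z$. Concretely, one writes
\[
\p_z\varrho(x,z)=\frac{b+\eta(x)}{b}+\frac1b\big(e^{\delta z|D|}\eta-\eta\big)(x)+\frac{b+z}{b}\,\delta|D|e^{\delta z|D|}\eta(x).
\]
The first term is $\ge c^0/b$ by \eqref{eta lower bound}, and $0<(b+z)/b<1$ for $z\in(-b,0)$, so it suffices to bound the last two terms in $L^\infty(\Omega)$ by a constant times $\delta\|\eta\|_{H^s(\T^d)}$.

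I would estimate these on the Fourier side. Since $z\le0$ on $\Omega$, the multiplier $e^{\delta z|\xi|}$ is bounded by $1$, and the elementary inequality $|e^{-t}-1|\le t$ for $t\ge0$ gives $|e^{\delta z|\xi|}-1|\le\delta|z||\xi|\le\delta b|\xi|$. Hence, uniformly in $z\in(-b,0)$,
\[
\|e^{\delta z|D|}\eta-\eta\|_{H^{s-1}(\T^d)}\le\delta b\|\eta\|_{H^s(\T^d)},\qquad\|\delta|D|e^{\delta z|D|}\eta\|_{H^{s-1}(\T^d)}\le\delta\|\eta\|_{H^s(\T^d)}.
\]
Because $s>1+d/2$, we have $s-1>d/2$, so the Sobolev embedding $H^{s-1}(\T^d)\hookrightarrow L^\infty(\T^d)$ converts these into $L^\infty$ bounds of size $C\delta\|\eta\|_{H^s(\T^d)}$ with $C=C(d,s)$. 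Combining with the previous paragraph, $\p_z\varrho(x,z)\ge c^0/b-2C\delta\|\eta\|_{H^s(\T^d)}$ on $\Omega$; choosing $\delta^*:=c^0\big(4bC\|\eta\|_{H^s(\T^d)}\big)^{-1}$ yields $\p_z\varrho\ge c^0/(2b)=:M(b,c^0)>0$ whenever $\delta<\delta^*$.

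Finally, I would conclude the diffeomorphism claim: for each fixed $x$, the map $z\mapsto\varrho(x,z)$ has derivative $\p_z\varrho>0$, hence is strictly increasing, with $\varrho(x,-b)=-b$ and $\varrho(x,0)=\eta(x)$, so it is a homeomorphism of $(-b,0)$ onto $(-b,\eta(x))$; thus $\cF_\eta(x,z)=(x,\varrho(x,z))$ is a bijection of $\Omega$ onto $\Omega_\eta$. Since $\eta\in H^s(\T^d)\subset W^{1,\infty}(\T^d)$, we have $\na\cF_\eta\in L^\infty$, and $\det\na\cF_\eta=\p_z\varrho\ge M>0$, so $\cF_\eta$ is bi-Lipschitz. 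The only point that requires care — and the reason $\delta$ is kept as a free parameter — is that $M$ must depend only on $(b,c^0)$; this is arranged precisely by absorbing all the $\|\eta\|_{H^s(\T^d)}$-dependence into the threshold $\delta^*$. Beyond this bookkeeping there is no genuine analytic difficulty: the only ingredients are the multiplier bound $|e^{-t}-1|\le t$ and the endpoint Sobolev embedding, which is where the hypothesis $s>1+d/2$ enters.
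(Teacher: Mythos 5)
Your proof is correct and follows essentially the same route as the paper: the same decomposition of $\p_z\varrho$ into $\frac{b+\eta}{b}$ plus two remainder terms bounded by $C(d,s)\,\delta\|\eta\|_{H^s(\T^d)}$ uniformly in $(x,z)$, followed by choosing $\delta^*$ to absorb the $\|\eta\|_{H^s}$-dependence, and the same monotonicity-plus-positive-Jacobian argument for the diffeomorphism claim. The only cosmetic difference is how the uniform $L^\infty$ bound is obtained: the paper uses the mean value theorem together with $\|\widehat{|D|\eta}\|_{L^1}\le c(d,s)\|\eta\|_{H^s}$, while you use the multiplier inequality $|e^{-t}-1|\le t$ and the embedding $H^{s-1}(\T^d)\hookrightarrow L^\infty(\T^d)$ --- the same Cauchy--Schwarz on the Fourier side in disguise.
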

\begin{proof}
We have
 \begin{align*}
 \p_z\varrho(z, z)&= 1+ \frac{1}{b}e^{\delta z|D|}\eta(x)+\delta \frac{b+z}{b}e^{\delta z|D|}|D|\eta(x)\\
 &= \frac{\eta+b}{b}+  \frac{1}{b}(e^{\delta z|D|}\eta(x)-\eta(x))+\delta \frac{b+z}{b}e^{\delta z|D|}|D|\eta(x),
 \end{align*}
 where $\eta+b\ge c_0$ by \eqref{eta lower bound}. Clearly 
 \[
\left| \frac{b+z}{b}e^{\delta z|D|}|D|\eta(x)\right|\le c_d |\widehat{|D|\eta}\|_{L^1}.
 \]
In addition,  using the mean value theorem, we deduce 
 \[ 
 \left|e^{\delta z|D|}\eta(x)-\eta(x)\right|=\left |\delta \int_0^z  e^{\delta r|D|}|D|\eta(x) dr\right|\le \delta bc_d |\widehat{|D|\eta}\|_{L^1}.
 \]
 Then, invoking the embedding $|\widehat{|D|\eta}\|_{L^1}\le c(d, s) \| \eta\|_{H^s}$ for $s>1+\frac{d}{2}$, we obtain 
 \[
 \p_z\varrho(x, z)\ge \frac{c_0}{b}-\delta c(d, s)\| \eta\|_{H^s}.
 \]
 It follows that $\p_z\varrho\ge M:= \frac{c_0}{2b}>0$ if $\delta\le \delta_*:=  \frac{c_0}{2bc(d, s)\| \eta\|_{H^s}}$.  This in turn implies that $\varrho(x,\cdot)$ is increasing in $z$ for any fixed $x\in \T^d$, so that $\cF_\eta$ is a bijection. On the other hand, since $J(x,z) = \partial_z \varrho\geq M>0$, $\cF_\eta$ is a local diffeomorphism, hence it must be a global diffeomorphism.   
\end{proof}
Combining the  standard smoothing effect of the kernel $e^{z|D|}$ ($z\le 0$) and linear interpolation, we have 
\begin{lemm}\label{diffeo regularity}
   Suppose  $\eta\in H^{s}(\T^d)$ with $s\ge -\mez$. Then $\cF_\eta \in H^{s+\mez}(\Omega)$ and $J\in H^{s-\mez}(\Omega)$. Moreover, we have the bounds
    \bq\label{regularity bound for cF}
    \|\cF_\eta\|_{H^{s+\mez}(\Omega)} \leq C(\|\eta\|_{H^s}+1),
    \eq
    \bq\label{regularity bound for J}
    \|J\|_{H^{s-\mez}(\Omega)} \leq C(\|\eta\|_{H^{s}}+1),
    \eq
    where the positive constant $C$ depends only on $(d,b,s)$.    
\end{lemm}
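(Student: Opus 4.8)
The plan is to isolate a single smoothing estimate for the Poisson kernel and then reduce the lemma to it by elementary algebra. Write $w:=e^{\delta z|D|}\eta$, regarded as a function on $\Omega=\T^d\times(-b,0)$, and note that $\partial_z w=\delta|D|w=e^{\delta z|D|}(\delta|D|\eta)$. Since $\varrho(x,z)-z=\frac{b+z}{b}w(x,z)$ and, by \eqref{def:varrho},
\[
J(x,z)-1=\partial_z\bigl(\varrho-z\bigr)=\frac1b\,w+\frac{b+z}{b}\,\partial_z w ,
\]
it suffices to prove the single claim
\[
(\star)\qquad \bigl\|e^{\delta z|D|}\phi\bigr\|_{H^{\sigma+\mez}(\Omega)}\le C(d,b,s,\delta)\,\|\phi\|_{H^\sigma(\T^d)}\qquad\text{for }\sigma\ge -\mez .
\]
Indeed, $(\star)$ with $\sigma=s$ applied to $\phi=\eta$ gives $w\in H^{s+\mez}(\Omega)$, and with $\sigma=s-1$ applied to $\phi=\delta|D|\eta$ gives $\partial_z w\in H^{s-\mez}(\Omega)$ (the corner $s-1<-\mez$ being handled by duality, or directly from $J-1=\partial_z(\varrho-z)$ and the boundedness of $\partial_z\colon H^{s+\mez}(\Omega)\to H^{s-\mez}(\Omega)$). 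Multiplication by the polynomial $1+z/b$ is bounded on every $H^\tau(\Omega)$ (Leibniz for $\tau\ge0$, duality for $\tau<0$), so $\varrho-z\in H^{s+\mez}(\Omega)$ and $J-1\in H^{s-\mez}(\Omega)$ with bounds $\lesssim\|\eta\|_{H^s}$; adding $\|z\|_{H^{s+\mez}(\Omega)}$ and $\|1\|_{H^{s-\mez}(\Omega)}$, finite on the bounded set $\Omega$, produces the additive constants in \eqref{regularity bound for cF}--\eqref{regularity bound for J}. (Here ``$\cF_\eta\in H^{s+\mez}(\Omega)$'' is understood through its nontrivial component $\varrho$, the identity component contributing only to the additive constant.)

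To prove $(\star)$ I would use the standard equivalence $H^\tau(\Omega)\simeq L^2\bigl((-b,0);H^\tau(\T^d)\bigr)\cap H^\tau\bigl((-b,0);L^2(\T^d)\bigr)$ for $\tau\ge0$, together with the elementary bound
\[
\int_{-b}^0 e^{2\delta z|\xi|}\,dz\le\min\Bigl\{b,\tfrac{1}{2\delta|\xi|}\Bigr\}\le C(b,\delta)\,\L{\xi}^{-1}.
\]
Expanding $\phi=\sum_\xi\wh\phi(\xi)e^{i\xi\cdot x}$, so that $e^{\delta z|D|}\phi=\sum_\xi\wh\phi(\xi)e^{\delta z|\xi|}e^{i\xi\cdot x}$, the tangential part is
\[
\int_{-b}^0\bigl\|e^{\delta z|D|}\phi\bigr\|_{H^{\sigma+\mez}(\T^d)}^2\,dz=\sum_\xi\L{\xi}^{2\sigma+1}|\wh\phi(\xi)|^2\int_{-b}^0 e^{2\delta z|\xi|}\,dz\lesssim\sum_\xi\L{\xi}^{2\sigma}|\wh\phi(\xi)|^2=\|\phi\|_{H^\sigma}^2 ,
\]
and, when $m:=\sigma+\mez\in\Nn$, using $\partial_z^j e^{\delta z|D|}\phi=\delta^j|D|^j e^{\delta z|D|}\phi$ and the same integral bound gives $\|\partial_z^j e^{\delta z|D|}\phi\|_{L^2(\Omega)}^2\lesssim\sum_\xi\L{\xi}^{2j-1}|\wh\phi(\xi)|^2\le\|\phi\|_{H^{m-\mez}}^2$ for $0\le j\le m$; hence $e^{\delta z|D|}\colon H^{m-\mez}(\T^d)\to H^m\bigl((-b,0);L^2(\T^d)\bigr)$ is bounded for every $m\in\Nn$. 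Interpolating this family in $m$ — this is the ``linear interpolation'' referred to in the statement — yields the normal part of $(\star)$ for all real $\sigma\ge-\mez$, and combining the two parts proves $(\star)$.

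The whole proof becomes routine once $(\star)$ is established, so the main point — and the only step that is not pure bookkeeping — will be $(\star)$ itself, i.e. the half-derivative gain of the Poisson kernel, together with its extension to fractional order by interpolation; the corner cases $s-\mez<0$ and the identity component of $\cF_\eta$ are handled as indicated above. I should note that the constants produced carry a factor $\delta^{-1}$; since $\delta$ is a fixed structural parameter of the change of variables \eqref{cF definitoin}, this dependence is absorbed into $C(d,b,s)$, consistently with the statement.
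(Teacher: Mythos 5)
Your proposal is correct and follows essentially the same route the paper intends: the paper's own proof consists precisely of invoking the standard half-derivative smoothing of the Poisson kernel $e^{\delta z|D|}$ (your estimate $(\star)$, proved by the Fourier computation with $\int_{-b}^0 e^{2\delta z|\xi|}\,dz\lesssim \langle\xi\rangle^{-1}$) together with linear interpolation, and your reduction of $\varrho-z$ and $J-1$ to $(\star)$, including the low-regularity corner via the boundedness of $\partial_z\colon H^{s+\mez}(\Omega)\to H^{s-\mez}(\Omega)$, is the natural bookkeeping the paper leaves implicit. Your closing remark about the $\delta^{-1}$ factor in the constant is apt and consistent with the paper's convention of treating $\delta$ as a fixed structural parameter.
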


\begin{lemm}\label{regularity for A}
     Suppose $\eta\in H^{s+\tdm}(\T^d)$ with $s>(d+1)/2$. Recall that $A$ is defined in \eqref{definition of A}.
     
    (i) We have  $A\in H^{s+1}(\Omega)$, with 
    \bq\label{regularity bound for A}
    \|A\|_{H^{s+1}(\Omega)}\leq C(\|\eta\|_{H^{s+\tdm}(\T^d)}),
    \eq
    where $C:\Rr^+\to \Rr^+$  depends only on $(d,b,c^0,s)$.
    
    (ii)  Define $B:\Omega\to\Rr$ by
    \bq\label{B definition}
    B(x,z) :=  \sum_{k=1}^{d+1} A^2_{(d+1)k}(x,z).
    \eq
For all  $l \in \Nn$ and $\sigma \in [0, s+1]$, if $F\in H^\sigma(\Omega)$ then
    \bq\label{product estimate for B}
        \|B^{-l}F\|_{H^{\sigma}(\Omega)}\leq C(\|\eta\|_{H^{s+\tdm}(\T^d)})\|F\|_{H^{\sigma(\Omega)}},
    \eq
    where $C:\Rr^+\to \Rr^+$ depends only on $(d,b,c^0,s,\sigma,l)$.
\end{lemm}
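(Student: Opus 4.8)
The plan is to read the (at most $d+1$) nonconstant entries of $A$ directly off \eqref{definition of A} and to deduce everything from the facts that $H^{s+1}(\Omega)$ is a multiplicative algebra and is stable under composition with smooth functions — both available since $\Omega=\T^d\times(-b,0)\subset\Rr^{d+1}$ and $s+1>(d+1)/2$ (recall $s>(d+1)/2$). For (i), note from \eqref{definition of A} that the only entries of $A$ which are not constant are $A_{(d+1)(d+1)}=1/J$ and, for $1\le i\le d$, $A_{(d+1)i}=-(\p_{x_i}\varrho)/J$, where $\varrho$ is given by \eqref{def:varrho} so that $\p_{x_i}\varrho=\frac{b+z}{b}e^{\delta z|D|}\p_{x_i}\eta$. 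First I would fix $\delta<\delta^*$ as in \cref{lower bound for J}, applied with exponent $s+\tdm$ (which exceeds $1+d/2$ since $s>(d+1)/2$), so that $\delta^*=\delta^*(\|\eta\|_{H^{s+\tdm}(\T^d)})$ and $J(x,z)\ge M=M(b,c^0)>0$ on $\Omega$. Next, applying \cref{diffeo regularity} with regularity exponent $s+\tdm$ and using \eqref{regularity bound for cF}, \eqref{regularity bound for J}, I get $\cF_\eta\in H^{s+2}(\Omega)$ and $J\in H^{s+1}(\Omega)$ with norms $\le C(\|\eta\|_{H^{s+\tdm}})$; in particular each $\p_{x_i}\varrho$, being a component of $\na_{x,z}\cF_\eta$, lies in $H^{s+1}(\Omega)$, and $\|J\|_{L^\infty(\Omega)}\le C(\|\eta\|_{H^{s+\tdm}})$ by Sobolev embedding. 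Writing $1/J=G(J)$ with $G(t)=1/t$ smooth on $[M,\infty)$, a standard Sobolev composition estimate (see \cref{appendix:productestimates}) gives $1/J\in H^{s+1}(\Omega)$ with norm $\le C(\|\eta\|_{H^{s+\tdm}})$; multiplying by $\p_{x_i}\varrho$ and using that $H^{s+1}(\Omega)$ is an algebra yields $A_{(d+1)i}\in H^{s+1}(\Omega)$ with the same kind of bound. This proves \eqref{regularity bound for A}.

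For (ii), I would first compute from \eqref{definition of A} and \eqref{B definition} that
\[
B=\frac{1}{J^2}\Bigl(1+\Bigl|\tfrac{b+z}{b}e^{\delta z|D|}\na\eta\Bigr|^2\Bigr)=\frac{1}{J^2}\Bigl(1+\sum_{i=1}^{d}(\p_{x_i}\varrho)^2\Bigr).
\]
By part (i) and the algebra property, $B\in H^{s+1}(\Omega)$ with $\|B\|_{H^{s+1}(\Omega)}\le C(\|\eta\|_{H^{s+\tdm}})$. The key quantitative point is the lower bound on $B$: since the parenthesis is $\ge1$ and $\|J\|_{L^\infty(\Omega)}\le C(\|\eta\|_{H^{s+\tdm}})$, we have $B\ge J^{-2}\ge c_1>0$ on $\Omega$ with $c_1=c_1(\|\eta\|_{H^{s+\tdm}})$. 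Hence for each $l\in\Nn$, writing $B^{-l}=G_l(B)$ with $G_l(t)=t^{-l}$ smooth on $[c_1,\infty)$, the composition estimate again gives $B^{-l}\in H^{s+1}(\Omega)$ with $\|B^{-l}\|_{H^{s+1}(\Omega)}\le C(\|\eta\|_{H^{s+\tdm}})$.

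Finally, for $F\in H^{\sigma}(\Omega)$ with $\sigma\in[0,s+1]$, the product rule $H^{s+1}(\Omega)\cdot H^{\sigma}(\Omega)\hookrightarrow H^{\sigma}(\Omega)$ — valid since $s+1>(d+1)/2$ and recorded in \cref{appendix:productestimates} — gives
\[
\|B^{-l}F\|_{H^{\sigma}(\Omega)}\le C\,\|B^{-l}\|_{H^{s+1}(\Omega)}\,\|F\|_{H^{\sigma}(\Omega)}\le C(\|\eta\|_{H^{s+\tdm}})\,\|F\|_{H^{\sigma}(\Omega)},
\]
which is \eqref{product estimate for B}. I do not expect a real obstacle in this lemma; the one step that needs genuine care is the bookkeeping that every constant depends on $\eta$ only through $\|\eta\|_{H^{s+\tdm}(\T^d)}$. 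This hinges on having first fixed $\delta$ via \cref{lower bound for J} (so that $M$, and thus all constants produced by the composition estimates, depend only on $\|\eta\|_{H^{s+\tdm}}$ and on fixed parameters), and on invoking the algebra, composition, and product estimates on $\Omega$ always with the fixed exponent $s+1>(d+1)/2$.
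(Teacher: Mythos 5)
Your proposal is correct and follows essentially the same route as the paper: fix $\delta$ via \cref{lower bound for J}, get $J\in H^{s+1}(\Omega)$ with $\inf J\ge M$ from \cref{diffeo regularity}, bound $\inf B\ge \|J\|_{L^\infty}^{-2}\ge C(\|\eta\|_{H^{s+\tdm}})^{-1}$, and conclude with the Sobolev product/composition machinery of \cref{appendix:productestimates}. The only difference is cosmetic: the paper packages the composition-with-inverse step as a single citation of \cref{product estimate for inverse} (applied with $s+1$ in place of $s$) to dispatch (i) and (ii) simultaneously, whereas you unpack that lemma into explicit algebra and smooth-composition estimates.
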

\begin{proof}
  By Lemmas \ref{lower bound for J} and  \ref{diffeo regularity}, we have $J\in H^{s+1}(\Omega)$ and  $\inf J \geq M(b, c^0) > 0$.    Moreover,  combing \eqref{definition of A} and \eqref{regularity bound for J} yields  
      $$\inf_{(x,z)\in \Omega} B \geq \inf_{(x,z)\in \Omega} \frac{1}{J^2} \geq \|J\|_{L^\infty}^{-2}\geq C\|J\|_{H^{s+1}}^{-2} \geq C(\|\eta\|_{H^{s+\tdm}}+1)^{-2}>0.$$
    Therefore, both (i) and  (ii) follow from  Lemma \ref{product estimate for inverse}.
\end{proof}
\begin{lemm}\label{composition regularity}
    Suppose $\eta\in H^{s+\tdm}(\T^d)$ with $(d+1)/2<s\in \Nn$.  Then for all  $\sigma \in [0, s+2]$, if $F\in H^\sigma(\Omega_\eta)$ then
    \bq \label{flat to non-flat bound}
        \|F\circ \cF_\eta\|_{H^{\sigma}(\Omega)} \leq C(\|\eta\|_{H^{s+\tdm}(\T^d)})\|F\|_{H^{\sigma}(\Omega_\eta)},
    \eq
    \bq \label{non-flat to flat bound}
        \|F\|_{H^{\sigma}(\Omega_\eta)} \leq C(\|\eta\|_{H^{s+\tdm}(\T^d)}) \|F\circ \cF_\eta\|_{H^{\sigma}(\Omega)},
    \eq
    where $C:\Rr^+\to \Rr^+$ depends only on $(d,b,c^0,s,\sigma)$. 
\end{lemm}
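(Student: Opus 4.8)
The strategy is to reduce both estimates to the two regimes $\sigma$ integer and $\sigma$ fractional, and to track how the change of variables $\cF_\eta$ acts on Sobolev norms via the chain rule \eqref{change:nabla} together with the regularity of $A$ and $J$ established in \cref{regularity for A} and \cref{diffeo regularity}. First I would prove the elementary $L^2$ endpoint: since $J = \p_z\varrho \ge M(b,c^0) > 0$ and $J \le \|J\|_{L^\infty} \le C(\|\eta\|_{H^{s+\tdm}})$ by the embedding $H^{s+1}(\Omega)\hookrightarrow L^\infty(\Omega)$ (valid because $s+1 > (d+1)/2$), the substitution formula $\int_{\Omega_\eta}|F|^2 = \int_\Omega |F\circ\cF_\eta|^2 J$ gives both \eqref{flat to non-flat bound} and \eqref{non-flat to flat bound} at $\sigma = 0$ with constants depending only on $\|\eta\|_{H^{s+\tdm}}$.

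Next I would handle integer $\sigma = m \le s+2$ by induction on $m$, differentiating $F\circ\cF_\eta$. By \eqref{change:nabla}, $\na(F\circ\cF_\eta) = ((\na F)\circ\cF_\eta)\,(\na\cF_\eta)$, so one derivative of $F\circ\cF_\eta$ is a sum of products of entries of $\na\cF_\eta$ — which lie in $H^{s+1}(\Omega)$ by \cref{regularity for A}\,(i) applied to $A^{-1}=\na\cF_\eta$ (or directly from \cref{diffeo regularity}) — with first derivatives of $F$ composed with $\cF_\eta$. Iterating $m$ times expresses $\p^\beta(F\circ\cF_\eta)$, $|\beta| = m$, as a finite sum of terms of the form $P(\na\cF_\eta, \na^2\cF_\eta,\dots)\cdot((\p^\gamma F)\circ\cF_\eta)$ with $|\gamma|\le m$, where each coefficient is a polynomial in derivatives of $\cF_\eta$ up to order $m \le s+2$, hence up to $H^{s+2-(m-1)}$ regularity for the $(m-1)$-th derivative of $\cF_\eta$. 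The key point is that $\na\cF_\eta \in H^{s+1}(\Omega)$ and $s+1 > (d+1)/2 = \dim(\Omega)/2$, so $H^{s+1}(\Omega)$ is an algebra and the product estimates of \cref{appendix:productestimates} bound each term by $C(\|\eta\|_{H^{s+\tdm}})\|(\p^\gamma F)\circ\cF_\eta\|_{H^{m-|\gamma|}(\Omega)}$; the inductive hypothesis then controls $\|(\p^\gamma F)\circ\cF_\eta\|_{H^{m-|\gamma|}}$ by $\|\p^\gamma F\|_{H^{m-|\gamma|}(\Omega_\eta)} \le \|F\|_{H^m(\Omega_\eta)}$. This proves \eqref{flat to non-flat bound} for integer $\sigma$; \eqref{non-flat to flat bound} follows identically, using that $\cF_\eta^{-1}$ has the same regularity (its Jacobian matrix is $A\circ\cF_\eta^{-1}$, and \cref{regularity for A}\,(i) plus the already-proven forward composition bound show $A\circ\cF_\eta^{-1}$ and hence $\cF_\eta^{-1}$ retain $H^{s+1}$-level regularity).

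Finally, for non-integer $\sigma\in[0,s+2]$ I would interpolate between consecutive integer endpoints: writing $\sigma = (1-\theta)m + \theta(m+1)$, the linear operators $F\mapsto F\circ\cF_\eta$ and $F\mapsto F\circ\cF_\eta^{-1}$ are bounded $H^m\to H^m$ and $H^{m+1}\to H^{m+1}$ with the stated constants, so the complex (or real) interpolation theorem for Sobolev spaces on $\Omega$ and $\Omega_\eta$ gives boundedness $H^\sigma\to H^\sigma$ with the interpolated — hence still $C(\|\eta\|_{H^{s+\tdm}})$ — constant. One technical check here is that interpolation is legitimate on the bounded Lipschitz-regular (indeed $H^{s+\mez}$) domain $\Omega_\eta$; this is standard since $\Omega_\eta$ is an $H^{s+3/2}$ graph domain with $s+3/2 > (d+1)/2 + 1$, admitting a bounded Sobolev extension operator, and $\Omega$ is a flat slab. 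The main obstacle is purely bookkeeping: verifying in the integer step that every polynomial coefficient $P(\na\cF_\eta,\dots)$ that arises has high enough Sobolev regularity to multiply the lowest-regularity factor $(\p^\gamma F)\circ\cF_\eta$ (worst case $|\gamma| = m$, factor in $L^2$) — this is exactly where one uses that the \emph{first} derivative $\na\cF_\eta$ already sits in $H^{s+1}$, a full $3/2$ derivatives better than the $H^{s-\mez}$ one would naively expect, which is the content of the smoothing in \eqref{def:varrho} exploited in \cref{regularity for A}.
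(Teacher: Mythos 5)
Your overall route is the same as the paper's: the $\sigma=0$ case by the change of variables and the two-sided bounds on $J$, integer $\sigma$ by induction using the chain rule \eqref{change:nabla} together with $\nabla\cF_\eta, A\in H^{s+1}(\Omega)$ and the product estimates, and linear interpolation for non-integer $\sigma$. One cosmetic difference: the paper never performs the full Fa\`a di Bruno expansion of $\p^\beta(F\circ\cF_\eta)$; at each inductive step it differentiates only once, writing $\p_i(F\circ\cF_\eta)=\sum_j((\p_jF)\circ\cF_\eta)\,\p_i\cF_\eta^j$ and bounding this in $H^\sigma$ by \cref{product estimate for domain} plus the induction hypothesis applied to $\p_jF\in H^\sigma(\Omega_\eta)$. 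This sidesteps exactly the bookkeeping you flag as the main obstacle (coefficients containing top-order derivatives of $\cF_\eta$ of low regularity), so you may wish to adopt it.

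There is, however, a genuine flaw in your justification of \eqref{non-flat to flat bound}. You argue that $\cF_\eta^{-1}$ has $H^{s+1}$-level Jacobian because $A\circ\cF_\eta^{-1}$ inherits regularity from $A\in H^{s+1}(\Omega)$ ``by the already-proven forward composition bound.'' But the forward bound \eqref{flat to non-flat bound} applied to $G:=A\circ\cF_\eta^{-1}$ gives $\|A\|_{H^\sigma(\Omega)}=\|G\circ\cF_\eta\|_{H^\sigma(\Omega)}\le C\|G\|_{H^\sigma(\Omega_\eta)}$, which is the wrong direction; controlling $\|A\circ\cF_\eta^{-1}\|_{H^\sigma(\Omega_\eta)}$ by $\|A\|_{H^\sigma(\Omega)}$ is precisely an instance of \eqref{non-flat to flat bound}, i.e.\ the statement you are trying to prove, so the argument as written is circular. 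The repair is to avoid $\cF_\eta^{-1}$ altogether and work in the flat domain, as the paper does: from \eqref{change:nabla}, $(\p_jF)\circ\cF_\eta=\sum_k\p_k(F\circ\cF_\eta)A_{kj}$, so \cref{product estimate for domain} and \eqref{regularity bound for A} give $\|(\p_jF)\circ\cF_\eta\|_{H^\sigma(\Omega)}\le C(\|\eta\|_{H^{s+\tdm}})\|F\circ\cF_\eta\|_{H^{\sigma+1}(\Omega)}$, and the induction hypothesis for the reverse bound at level $\sigma$ then yields $\|\p_jF\|_{H^\sigma(\Omega_\eta)}\le C\|(\p_jF)\circ\cF_\eta\|_{H^\sigma(\Omega)}$, closing the induction for $\sigma+1$. (Alternatively, run the two inductions simultaneously; either way, no regularity statement for $\cF_\eta^{-1}$ or $A\circ\cF_\eta^{-1}$ on $\Omega_\eta$ is needed.) The interpolation step and the $L^2$ base case in your proposal are fine.
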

\begin{proof}
    Since  $\sigma\le s+2\in \Nn$, it suffices to prove \eqref{flat to non-flat bound} and \eqref{non-flat to flat bound} for $\sigma \in \Nn$, the non-integer case following by linear interpolation. We first prove  \eqref{flat to non-flat bound} by induction on $\sigma$. The case  $\sigma=0$ follows easily from \cref{lower bound for J}:
    $$\|F\circ \cF_\eta\|_{L^2(\Omega)}^2 = \int_{\Omega_\eta} F(x,y)^2 \frac{1}{J} \leq \frac{2b}{c^0}\|F\|^2_{L^2(\Omega_\eta)}.$$
Assume that \eqref{flat to non-flat bound} holds for some $0\leq\sigma\leq s+1$ and let  $F\in H^{\sigma+1}(\Omega_\eta) $. Using Lemma \ref{product estimate for domain}, the induction hypothesis, and \cref{diffeo regularity}, we have
    \begin{align*}
        \|\partial_i (F\circ \cF_\eta)\|_{H^{\sigma}(\Omega)}& \leq C(\| \eta\|_{W^{1, \infty}(\T^d)}) \sum_{j=1}^{d+1} \|\partial_jF\circ \cF_\eta\|_{H^{\sigma}(\Omega)} \|\partial_i \cF_\eta^j\|_{H^{s+1}(\Omega)} \\
       & \leq C(\| \eta\|_{W^{1, \infty}(\T^d)})C(\etanorm) \sum_{j=1}^{d+1} \|\partial_jF\|_{H^{\sigma}(\Omega_\eta)} \|\cF_\eta^j\|_{H^{s+2}(\Omega)} \\
        & \leq C(\etanorm)\|F\|_{H^{\sigma+1}(\Omega_\eta)}    
    \end{align*}
    which completes the proof of \eqref{flat to non-flat bound}. \eqref{non-flat to flat bound} can be proven analogously  using the estimate \eqref{regularity bound for A} for $\| (\na \cF_\eta)^{-1}\|_{H^{s+1}(\Omega)}$. 
\end{proof}
\begin{rema}
\cref{composition regularity} is valid for all real numbers  $s>(d+1)/2$. See Propositions 2.9 and 2.10 in \cite{LatoccaNguyen}. 
\end{rema}
Now we are ready to prove regularity for weak solutions to the $\gamma$-Stokes system \eqref{sys:stokes with stress}.   If a function $w$ is defined on $\Omega_\eta$, we denote $\Tilde{w}(x, z)=w(\cF_\eta(x, z))$.  
Formally, we may use \eqref{change:nabla} and \eqref{change:div} to rewrite \eqref{sys:stokes with stress} in the flattened domain as
\bq \label{sys: flattened stokes with stress}
     \begin{cases}
        -\nabla_{x,z}\left(\nabla_{x,z}\Tilde{v}_i A+ A^T(\partial_i\Tilde{v})^T \right):A^T + \nabla_{x,z}\Tilde{p}A\cdot e_i - \gamma\nabla_{x,z}\Tilde{v}_iA\cdot e_1 = \Tilde{f}_i & \quad \text{in } \Omega, \\
         \nabla_{x,z}\Tilde{v} : A^T = \Tilde{g} & \quad \text{in } \Omega, \\
         \left[\Tilde{p}I-\left(\nabla_{x,z}\Tilde{v}A+A^T(\nabla_{x,z}\Tilde{v})^T\right)\right] \cN = k& \quad \text{on } \T^{d}, \\ 
         \Tilde{v}=0& \quad \text{on } \Sigma_{-b},
     \end{cases}
     \eq
where the first equation is written for each component. We note that $(v, p)\in {}_0H^1(\Omega_\eta)\cap L^2(\Omega_\eta)$ is a weak solution of \eqref{sys:stokes with stress} if and only if $(\tilde{v}, \tilde{p})\in {}_0H^1(\Omega)\cap L^2(\Omega)$ is a weak solution of \eqref{sys: flattened stokes with stress}.

 In \cref{reg for stokes with stress} below,  we establish a Sobolev regularity result assuming Sobolev regularity of the boundary function $\eta$, namely, $\eta \in H^{s+\tdm}(\T^d)$ with $s>(d+1)/2$. This condition is weaker than the classical condition $\eta \in C^{s+1, 1}(\T^d)$ (see Theorem IV.$7.4$ in \cite{BoyerFabrie} and \cref{remark: classic regularity} below). 
\begin{theo}\label{reg for stokes with stress}
    Let $|\gamma|\leq \gamma^*$ (see \eqref{def:gamma*}), $(d+1)/2<s\in \Nn$, and $ \sigma \in [0, s+1]\cap \Nn$. Suppose $\eta\in H^{s+\tdm}(\T^d)$, $f\in H^{\sigma-1}(\Omega_\eta) \cap ({}_0H^1(\Omega_\eta))^*$, $g\in H^{\sigma}(\Omega_\eta)$ and $k\in H^{\sigma-\mez}(\T^d)$. Then the  weak solution of \eqref{sys:stokes with stress} satisfies 
    \bq\label{high reg estimate for stokes with stress}
    \|v\|_{H^{\sigma+1}(\Omega_\eta)} + \|p\|_{H^{\sigma}(\Omega_\eta)} \leq C(\|\eta\|_{H^{s+\tdm}(\T^d)}) \bigl(\|f\|_{H^{\sigma-1}(\Omega_\eta)\cap ({}_0H^1(\Omega_\eta))^*} + \|g\|_{H^{\sigma}(\Omega_\eta)} + \|k\|_{H^{\sigma-\mez}(\T^d)}\bigr)
    \eq
    for some $C:\Rr^+ \to \Rr^+$ depending only on $(d,b,c^0,s,\sigma)$. 
\end{theo}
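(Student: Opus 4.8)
The plan is to prove \eqref{high reg estimate for stokes with stress} by induction on $\sigma \in [0,s+1]\cap\Nn$, working in the flattened domain $\Omega$ via the diffeomorphism $\cF_\eta$ of \cref{lower bound for J}, since by \cref{composition regularity} the estimate \eqref{high reg estimate for stokes with stress} is equivalent to its analogue for the transformed system \eqref{sys: flattened stokes with stress}. The base case $\sigma=0$ is essentially \cref{weak sol for stokes with stress}: the $L^2$ bounds on $v$ in ${}_0H^1$ and $p$ in $L^2$ follow from the Lax-Milgram/Lagrange-multiplier construction, and one upgrades $v$ to $H^1$ by noting that the flattened equation is a constant-coefficient Stokes-type system perturbed by terms whose coefficients lie in $H^{s+1}(\Omega)\hookrightarrow L^\infty$ (using \cref{regularity for A} and $s>(d+1)/2$), so interior and boundary elliptic regularity for the Stokes system with $L^2$ data give $\tilde v\in H^1$. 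Here I would also need the regularity-up-to-the-boundary statement for the Stokes system with a Neumann-type stress boundary condition; I would invoke the classical result (Theorem IV.7.4 in \cite{BoyerFabrie}) on a smooth reference domain — or better, the flat strip $\Omega$ — and transfer it.

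For the inductive step, assume \eqref{high reg estimate for stokes with stress} holds at level $\sigma-1$ and suppose the data satisfy the hypotheses at level $\sigma$. The standard device is to differentiate the flattened system \eqref{sys: flattened stokes with stress} in a tangential direction $x_j$, $j=1,\dots,d$: setting $w=\partial_{x_j}\tilde v$, $q=\partial_{x_j}\tilde p$, these solve a system of the same form \eqref{sys: flattened stokes with stress} with the same principal part but with new forcing and divergence data that involve $\partial_{x_j}A$, $\partial_{x_j}(A^TA)$ acting on $\tilde v$, $\tilde p$ — all of which, by \cref{regularity for A}, \cref{composition regularity}, and the product estimates in \cref{appendix:productestimates}, are controlled in $H^{\sigma-1}(\Omega)$, respectively $H^{\sigma}(\Omega)$, by $C(\etanorm)$ times the level-$(\sigma-1)$ norm of $(\tilde v,\tilde p)$ plus the data. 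Applying the induction hypothesis to this differentiated system yields control of all purely tangential derivatives $\partial_x^\alpha \tilde v$ with $|\alpha|\le \sigma+1$ in $L^2$, and $\partial_x^\alpha\tilde p$ with $|\alpha|\le\sigma$. It remains to recover the normal ($\partial_z$) derivatives. For this one uses the equations algebraically: the divergence equation $\nabla_{x,z}\tilde v:A^T=\tilde g$ expresses $\partial_z\tilde v_{d+1}$ (after dividing by the coefficient $J^{-1}=A_{(d+1)(d+1)}$, invertible by \cref{lower bound for J}) in terms of $\tilde g$ and tangential derivatives; then the $i$-th momentum equation, which contains $\partial_z^2\tilde v_i$ multiplied by $B=\sum_k A_{(d+1)k}^2$ (invertible with the bound \eqref{product estimate for B}), lets one solve for $\partial_z^2\tilde v_i$ in terms of $\tilde f_i$, $\partial_z\tilde p$, and lower-order / tangential terms; and the momentum equation again, or the boundary condition, recovers $\partial_z\tilde p$. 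Iterating this bootstrap through all mixed derivatives $\partial_z^k\partial_x^\beta$ with $k\le \sigma+1$, $k+|\beta|\le\sigma+1$, and invoking \eqref{product estimate for B} and the product rules at each division, closes the estimate at level $\sigma$.

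Finally one transfers back: by \cref{composition regularity} the flat-domain bound $\|\tilde v\|_{H^{\sigma+1}(\Omega)}+\|\tilde p\|_{H^\sigma(\Omega)}\le C(\etanorm)(\cdots)$ gives \eqref{high reg estimate for stokes with stress}, using also \cref{composition regularity} to pass the data norms $\|f\|_{H^{\sigma-1}(\Omega_\eta)}$, $\|g\|_{H^\sigma(\Omega_\eta)}$, and the trace estimate for $\|k\|_{H^{\sigma-\mez}(\T^d)}$ between $\Omega_\eta$ and $\Omega$. The key point making this work with only $\eta\in H^{s+\tdm}$ rather than $\eta\in C^{s+1,1}$ is that all variable coefficients of the flattened system — entries of $A$, $A^TA$, $J$, $B$ and their reciprocals — lie in $H^{s+1}(\Omega)$ (\cref{regularity for A}), which is an algebra and acts by multiplication on $H^\sigma(\Omega)$ for every $\sigma\in[0,s+1]$; this is exactly what lets the product estimates close the induction up to the critical level $\sigma=s+1$.

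The main obstacle I anticipate is the bookkeeping of the normal-derivative bootstrap near the top regularity level $\sigma=s+1$: when one differentiates the momentum equation the requisite number of times, the worst terms pair a top-order derivative of $\tilde v$ or $\tilde p$ (at the edge of $H^{s+1}$, resp.\ $H^s$) against a coefficient that has been differentiated down to its \emph{own} critical regularity, so the product estimate must be applied in the borderline case $\sigma=s+1$ where one factor sits in $H^{s+1}(\Omega)$ and the other in $H^{\sigma'}(\Omega)$ with $\sigma'\le s+1$; one must check that \cref{appendix:productestimates} (and \eqref{product estimate for B}) indeed covers this endpoint on the strip $\Omega$, using $s+1>(d+1)/2+1>(\dim\Omega)/2$. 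A secondary subtlety is the treatment of the boundary stress condition \eqref{sys: flattened stokes with stress}${}_3$ under tangential differentiation — one must verify that $\partial_{x_j}k\in H^{\sigma-\tdm}(\T^d)$ is controlled, and that the Neumann-type trace regularity theorem for the constant-coefficient Stokes system on the strip applies with the perturbed (but still coercive, for $|\gamma|\le\gamma^*$) bilinear form, which is why the hypothesis $|\gamma|\le\gamma^*$ from \eqref{def:gamma*} is retained.
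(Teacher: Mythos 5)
Your overall architecture — flatten via $\cF_\eta$, induct on $\sigma$, get tangential derivatives by differentiating the flattened system and applying the lower-level estimate, then recover the vertical derivatives algebraically from the divergence equation and the momentum equation using the invertibility of $B$ and the $H^{s+1}$ regularity of $A$, $J$, and finally transfer back with \cref{composition regularity} — is the same as the paper's, and your endpoint product-estimate concerns at $\sigma=s+1$ are handled exactly as you anticipate. However, there is a genuine gap at the first inductive step, from $\sigma=0$ to $\sigma=1$. Your plan is to set $w=\partial_{x_j}\tilde v$, $q=\partial_{x_j}\tilde p$ and "apply the induction hypothesis to the differentiated system," but at that stage you only know $(\tilde v,\tilde p)\in {}_0H^1\times L^2$, so $(w,q)$ are merely distributions: you cannot assert that they are \emph{the} weak solution of the differentiated problem (and hence inherit its estimate) without already knowing $\partial_{x_j}\tilde v\in {}_0H^1(\Omega)$ and $\partial_{x_j}\tilde p\in L^2(\Omega)$ — which is precisely what is to be proven. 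This is why the paper treats $\sigma=1$ separately by the difference-quotient (translation) method in the weak formulation: one tests the translated weak form against $\delta_i^{-h}\tilde u$, controls $\delta_i^h\tilde p$ by constructing a special test function $\tilde w$ with $\nabla_{x,z}\tilde w:A^T=\delta_i^h\tilde p$ via \cref{solving div problem}, then takes $\tilde u=\delta_i^h\tilde v$ and passes to the limit $h\to 0$; only for $\sigma\ge 2$, once $(\tilde v,\tilde p)\in H^2\times H^1$ is known, is the formal differentiation of the system legitimate. Your proposal omits this mechanism entirely, and the pressure term is the delicate point it is designed to handle.

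Your suggested fallback for this step — invoking the classical regularity theorem (Theorem IV.7.4 of \cite{BoyerFabrie}) on the strip and "transferring" it — does not close the gap either: in the original domain that theorem requires $\eta\in W^{\sigma+1,\infty}$-type regularity (cf. \cref{remark: classic regularity}), which is exactly the hypothesis the present theorem avoids, while on the flattened strip the system \eqref{sys: flattened stokes with stress} is \emph{not} a constant-coefficient Stokes system perturbed by small terms: its coefficients $A$, $J$ are only in $H^{s+1}(\Omega)$ and the perturbation is large (since $\eta$ is large), so an appeal to constant-coefficient regularity would require a localization/freezing-of-coefficients argument with its own difference-quotient or commutator bookkeeping that you have not supplied. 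Once the $\sigma=1$ case is established by difference quotients (including the vertical-derivative recovery, which at this level must also be justified in the distributional sense from \eqref{sys: flattened stokes with stress}), the rest of your induction and the transfer back to $\Omega_\eta$ go through as you describe.
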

\begin{proof} 
   For $s>(d+1)/2$, we have $\eta \in H^{s+\tdm}(\T^d)\hookrightarrow W^{2, \infty}(\T^d)$. By virtue of \cref{weak sol for stokes with stress},  there exists a unique weak  solution $(v,p)\in H^1(\Omega_\eta)\times L^2(\Omega_\eta)$ satisfying 
 \bq \label{variest:vp}
    \|v\|_{{}_0H^1(\Omega_\eta)} + \|p\|_{L^2(\Omega_\eta)} \leq C(\|\eta\|_{W^{1,\infty}(\T^d)}) \bigl( \|f\|_{\bigl( {}_0H^1(\Omega_\eta)\bigr)^*} + \|g\|_{L^2(\Omega_\eta)} + \|k\|_{H^{-\mez}(\T^d)}\bigr).
    \eq
    Since $\| f\|_{H^{-1}}\le \| f\|_{({}_0H^1)'}$, this yields  \eqref{high reg estimate for stokes with stress} for $\sigma=0$. For $\sigma \in [1, s+1]$, the norm of $f$ in \eqref{high reg estimate for stokes with stress} reduces to the $H^{\sigma-1}$ norm.  We will prove \eqref{high reg estimate for stokes with stress} for $\sigma \in [1, s+1]$ by induction. 
    
  \underline{ Step 1.} We first consider the case $\sigma=1$. Since $f\in L^2(\Omega_\eta)$, $g\in H^1(\Omega_\eta)$, and $k\in H^{\mez}(\T^d)$, the pairings in the weak formulation (\ref{weak stokes with stress}) are  $L^2$ inner products  of  functions. Thus, we can make the change of variables $(x, y)=\cF_\eta(x, z)$ to write the weak formulation in the flattened domain as
     \begin{multline}\label{weak stokes with stress in flattened domain}
             \int_\Omega \Bigl[ \mez \left(\nabla_{x,z}\tilde{v}A + A^T(\nabla_{x,z}\tilde{v})^T\right) : \left(\nabla_{x,z}\tilde{u}A + A^T(\nabla_{x,z}\tilde{u})^T\right) \\
             \quad- \tilde{p}(\nabla_{x,z}\tilde{u}: A^T) - \gamma(\nabla_{x,z}\tilde{v}Ae_1)\cdot \tilde{u}   \Bigr]J(x,z) dxdz = \\ \int_\Omega (\tilde{f} \cdot\tilde{u}) J(x,z) dxdz - \int_{\T^d} k(x)\cdot \tilde{u}(x,0) dx     
     \end{multline}
    for all $\tilde{u}\in {}_0H^1(\Omega)$, together with the divergence condition $\nabla_{x,z}\tilde{v}:A^T=\tilde{g}$. 
    
    Let $1\leq i\leq d$, and $h\in \Rr\setminus\{0\}$. Then for any $w:\Omega\to \Rr^m$, define $w_i^h(x, z) = w(x+he_i, z)$ and $\delta_i^hw(x, z) = \frac{w_i^h(x,z) - w(x,z)}{h}$. We have the product rule 
    \bq\label{finitequotientproduct}
    \delta^h_i(f_1f_2\dots f_n)=\delta^h_if_1f_2\dots f_n+f_{1i}^h\delta^h_if_2\dots f_n+\dots +\big(\Pi_{j=1}^{n-1}f_{ji}^h\big)\delta^h_if_n
    \eq
    for difference quotients. 
    
Now, in \eqref{weak stokes with stress in flattened domain} we replace $\tilde{u}\in {}_0H^1(\Omega)$ with $\tilde{u}^{-h}_i\in {}_0H^1(\Omega)$  and make change of  variables $(x, z)\mapsto (x+he_i, z)$. After that, we take the difference of the resulting equation and \eqref{weak stokes with stress in flattened domain}, and invoke the formula \eqref{finitequotientproduct}. We obtain 
    \begin{multline}\label{quotient diff integral eq}
         \int_\Omega \Bigl[ \mez\big(\nabla_{x,z}\delta_i^h\tilde{v}A + A^T(\nabla_{x,z}\delta_i^h\tilde{v})^T\big) : \big(\nabla_{x,z}\tilde{u}A + A^T(\nabla_{x,z}\tilde{u})^T\big) - \delta_i^h\tilde{p}(\nabla_{x,z}\tilde{u}: A^T) - \gamma(\nabla_{x,z}\delta_i^h\tilde{v}Ae_1)\cdot \tilde{u} \Bigr]J  \\
         + \Bigl[ \mez \big(\nabla_{x,z}\tilde{v}_i^h\delta_i^hA + \delta_i^hA^T(\nabla_{x,z}\tilde{v}_i^h)^T\big) : \big(\nabla_{x,z}\tilde{u}A + A^T(\nabla_{x,z}\tilde{u})^T\big) - \tilde{p}_i^h(\nabla_{x,z}\tilde{u}: \delta_i^hA^T) - \gamma(\nabla_{x,z}\tilde{v}_i^h\delta_i^hAe_1)\cdot \tilde{u} \Bigr]J   \\
         +  \Bigl[ \mez\big(\nabla_{x,z}\tilde{v}_i^h A_i^h + (A_i^h)^T(\nabla_{x,z}\tilde{v}_i^h)^T\big) : \big(\nabla_{x,z}\tilde{u}\delta_i^h A + \delta_i^hA^T(\nabla_{x,z}\tilde{u})^T\big)   \Bigr]Jdxdz \\
         +  \int_\Omega \Bigl[ \mez\big(\nabla_{x,z}\tilde{v}_i^h A_i^h + (A_i^h)^T(\nabla_{x,z}\tilde{v}_i^h)^T\big): \big(\nabla_{x,z}\tilde{u} A_i^h + (A_i^h)^T(\nabla_{x,z}\tilde{u})^T\big)\\ - \tilde{p}_i^h(\nabla_{x,z}\tilde{u}: (A_i^h)^T) - \gamma(\nabla_{x,z}\tilde{v}_i^h A_i^he_1)\cdot \tilde{u}   \Bigr]\delta_i^hJ dxdz \\
         = \int_\Omega (\delta_i^h\tilde{f} \cdot\tilde{u}) Jdxdz +  \int_\Omega (\tilde{f}_i^h \cdot\tilde{u}) \delta_i^hJ dxdz - \int_{\T^d} \delta_i^h k(x)\cdot \tilde{u}(x,0) dx
    \end{multline}
    for all $\Tilde{u}\in {}_0H^1(\Omega)$. 
    
{\it Bound for $\|\delta_i^h \tilde{p}\|_{L^2(\Omega)}$}. The idea is to prove the existence of a test function $\tilde{u}\in {}_0H^1(\Omega)$ satisfying $\nabla_{x,z}\tilde{u}: A^T=\delta^h_i \tilde{p}$,  so that the main pressure term $\delta^h_i \tilde{p}(\nabla_{x,z}\tilde{u}: A^T)$ on the first line of \eqref{quotient diff integral eq} becomes $|\delta^h_i \tilde{p}|^2$. To this end, we note that since  $q:=(\delta_i^h \tilde{p}) \circ \cF_\eta^{-1} \in L^2(\Omega_\eta) $, by \cref{solving div problem}, there exists $w\in {}_0H^1(\Omega_\eta)$ such that $\dv w = q$. Then $\tilde{w}:= w\circ \cF_\eta \in {}_0H^1(\Omega)$ satisfies 
    \[
    (\nabla_{x,z}\tilde{w}:A^T) = (\dv w)\circ\cF_\eta=  \delta_i^h \tilde{p}
    \]
    as desired.  Moreover, combining \eqref{div problem estimate} and  \cref{composition regularity} yields
    \bq\label{w tilde bound}
    \|\tilde{w}\|_{H^1(\Omega)}\leq C(\|\eta\|_{H^{s+\tdm}(\T^d)}) \|\delta_i^h\tilde{p}\|_{L^2(\Omega)},
    \eq
    where $C$ depends only on $(d,b,c^0)$.  Then we choose $\tilde{u}=\tilde{w}$  in (\ref{quotient diff integral eq}) and successively apply  H\"older's inequality. The highest norm  needed for $A$ and $J$ is    $W^{1, \infty}$  which is finite in view of \eqref{regularity bound for A}, \eqref{regularity bound for J},  and the fact that $s>(d+1)/2$. We obtain
    \[
    \|\delta_i^h\tilde{p}\|^2_{L^2(\Omega)} \leq C(\etanorm)\|\tilde{w}\|_{H^1(\Omega)}\Bigl( \|\delta_i^h\tilde{v}\|_{H^1(\Omega)} + \|\tilde{p}\|_{L^2(\Omega)} + \|\tilde{v}\|_{H^1(\Omega)} + \|\tilde{f}\|_{L^2(\Omega)} + \|k\|_{H^{\mez}(\T^d)}
 \Bigr).
    \]
Invoking  the base estimate \eqref{variest:vp},   \cref{composition regularity}, and \eqref{w tilde bound}, we deduce
    \bq \label{delta p tilde bound}
    \|\delta_i^h\tilde{p}\|_{L^2(\Omega)} \leq C(\etanorm)\Bigl( \|\delta_i^h\tilde{v}\|_{H^1(\Omega)} + \|\tilde{f}\|_{L^2(\Omega)} + \|\tilde{g}\|_{H^1(\Omega)} + \|k\|_{H^{\mez}(\T^d)}
 \Bigr).
    \eq
  {\it Bound for $\|\delta_i^h  \tilde{v}\|_{H^1}$}.   We choose  $\tilde{u}=\delta_i^h \tilde{v}$ in \eqref{quotient diff integral eq}. Arguing as before, we have
    \begin{multline}\label{estv:100}
        |\int_\Omega \Bigl[\mez|(\nabla_{x,z}\delta_i^h\tilde{v}A + A^T(\nabla_{x,z}\delta_i^h\tilde{v})^T|^2 - \gamma(\nabla_{x,z}\delta_i^h\tilde{v}Ae_1)\cdot \delta_i^h\tilde{v} \Bigr]Jdxdz|  \\
        \leq |\int_\Omega \delta_i^h\tilde{p}(\nabla_{x,z}\delta_i^h\tilde{v}: A^T)Jdxdz| + C(\etanorm) \|\delta_i^h \tilde{v}\|_{H^1(\Omega)}\Bigl( \|\tilde{f}\|_{L^2(\Omega)} + \|\tilde{g}\|_{H^1(\Omega)} + \|k\|_{H^{\mez}(\T^d)}
 \Bigr).
    \end{multline}
   In addition,  the divergence condition gives $\nabla_{x,z}\delta_i^h\tilde{v}: A^T = \delta_i^h \tilde{g} - (\nabla_{x,z}\tilde{v}_i^h:\delta_i^hA^T)$, whence 
   \[
   \| \nabla_{x,z}\delta_i^h\tilde{v}: A^T\|_{L^2(\Omega)}\le \| \tilde{g}\|_{H^1(\Omega)}+C(\etanorm)\| v\|_{H^1(\Omega)}.
   \]
   Hence, using Holder's  inequality and  the estimates  \eqref{variest:vp} and \eqref{delta p tilde bound}, we obtain
    \begin{multline}\label{estv:101}
        |\int_\Omega \delta_i^h\tilde{p}(\nabla_{x,z}\delta_i^h\tilde{v}: A^T)Jdxdz| \\ 
        \leq C(\etanorm)\Bigl( \|\delta_i^h\tilde{v}\|_{H^1(\Omega)} + \|\tilde{f}\|_{L^2(\Omega)} + \|\tilde{g}\|_{H^1(\Omega)} + \|k\|_{H^{\mez}(\T^d)}
 \Bigr)\Bigl( \|\tilde{g}\|_{H^1(\Omega)}+ \| f\|_{({}_0H^1(\Omega_\eta))^*}  + \|k\|_{H^{-\mez}(\T^d)}
 \Bigr).
    \end{multline}
    On the other hand, if we set $w=\delta_i^h\tilde{v}\circ \cF_\eta^{-1}$, then 
    \begin{multline}\label{estv:102}
        \int_\Omega \Bigl[\mez|(\nabla_{x,z}\delta_i^h\tilde{v}A + A^T(\nabla_{x,z}\delta_i^h\tilde{v})^T|^2 - \gamma(\nabla_{x,z}\delta_i^h\tilde{v}Ae_1)\cdot \delta_i^h\tilde{v} \Bigr]Jdxdz = \int_{\Omega_\eta}\mez |\Dd w|^2 - \gamma \partial_1 w \cdot w \\
        \geq C\|w\|^2_{H^1(\Omega_\eta)} \geq C(\etanorm)\|\delta_i^h\tilde{v}\|_{H^1(\Omega)}^2,
    \end{multline}
    where in the first inequality above we have used Poincar\'e's and Korn's inequalities, as well as the smallness assumption on $\gamma\le  \gamma^*$, and in the second inequality we have used  \cref{composition regularity}. It follows from \eqref{estv:100}, \eqref{estv:101}, and \eqref{estv:102} that
        \bq
    \|\delta_i^h\tilde{v}\|_{H^1(\Omega)}^2 \leq C(\etanorm) \Bigl( \|\delta_i^h\tilde{v}\|_{H^1(\Omega)} +D
 \Bigr)D,   
    \eq
   where $D= \|\tilde{f}\|_{L^2(\Omega)} + \|\tilde{g}\|_{H^1(\Omega)} + \|k\|_{H^{\mez}(\T^d)}$.
   
After an application of Young's inequality, we obtain the desired estimate for $\delta_i^h\tilde{v}$: 
 \bq\label{estv:H2}
 \|\delta_i^h\tilde{v}\|_{H^1(\Omega)} \leq C(\etanorm) \Bigl( \|\tilde{f}\|_{L^2(\Omega)} + \|\tilde{g}\|_{H^1(\Omega)} + \|k\|_{H^{\mez}(\T^d)}
 \Bigr).
 \eq

Since  \eqref{delta p tilde bound} and \eqref{estv:H2} are uniform in $h$, we deduce that $\partial_i\tilde{v}\in H^1(\Omega)$ and $\partial_i\tilde{p}\in L^2(\Omega)$, with 
\bq\label{estv:H2tan}
\|\partial_i\tilde{v}\|_{H^1(\Omega)} +\|\partial_i\tilde{p}\|_{L^2(\Omega)} \leq C(\etanorm) \Bigl( \|\tilde{f}\|_{L^2(\Omega)} + \|\tilde{g}\|_{H^1(\Omega)} + \|k\|_{H^{\mez}(\T^d)}
 \Bigr),\quad 1\le i\le d.
\eq
{\it Bounds for vertical derivatives}. It remains  to prove that $\partial_z\tilde{v}\in H^1(\Omega)$ and $\partial_z\tilde{p}\in L^2(\Omega)$.  For convenience, we regard $z$ as the $(d+1)^{th}$ independent variable.  Using the first equation in (\ref{sys: flattened stokes with stress}) in the distributional sense, for any $1\leq j\leq d+1$ we can write
     \bq\label{eq for partial d+1 p}
     \partial_{d+1} \tilde{p}A_{(d+1)j} = \partial_{d+1}^2 \tilde{v}_j B + \cP^j,
     \eq
     where $B$ is defined as in \eqref{B definition} and
     \begin{multline*}
         \cP^j = \tilde{f}_j + \sum_{\substack{r,i,k\in \{1,...,d+1\} \\ \{r,i\}\neq \{d+1\}}} \partial_r \partial_i \tilde{v}_jA_{ik}A_{rk}  + \sum_{r,i,k\in \{1,...,d+1\}}  \partial_i\tilde{v}_j\partial_rA_{ik}A_{rk} +\gamma\sum_{k=1}^{d+1}\partial_k\tilde{v}_jA_{k1}-\sum_{k=1 }^d\partial_k\tilde{p}A_{kj}.
     \end{multline*}
     Using the  estimates  \eqref{regularity bound for A}, \eqref{variest:vp}, and \eqref{estv:H2tan}, we find
     \bq\label{bound for cPj}
     \|\cP^j\|_{L^2(\Omega)}\leq C(\|\eta\|_{H^{s+\tdm}(\T^d)})D.
     \eq
    On the other hand, taking $\partial_{d+1}$ of the second equation in \eqref{sys: flattened stokes with stress} yields
    \bq\label{identity:dz:2}
    \sum_{j=1}^{d+1} \partial_{d+1}^2\tilde{v}_jA_{(d+1)j} = \cP,
    \eq
     where 
     \[
     \cP = \partial_{d+1}\tilde{g} - \sum_{\substack{k\in \{1,...,d\} \\ j\in\{1,...,d+1\} }} \partial_{d+1}\partial_{k}\tilde{v}_jA_{kj}- \sum_{k,j=1}^{d+1} \partial_k\tilde{v}_j\partial_{d+1}A_{kj}
     \]
  satisfies
     \bq\label{bound for cP}
     \|\cP\|_{L^2(\Omega)}\leq C(\|\eta\|_{H^{s+\tdm}(\T^d)})D.
     \eq
Now, we multiply (\ref{eq for partial d+1 p}) by $B^{-1}A_{(d+1)j}$, sum over $j$, and add the resulting equation to  \eqref{identity:dz:2} in order to cancel out $ \sum_{j=1}^{d+1} \partial_{d+1}^2\tilde{v}_jA_{(d+1)j}$.  We obtain
     \bq
        \partial_{d+1}\tilde{p} = \cP  +B^{-1} \sum_{j=1}^{d+1}A_{(d+1)j} \cP^j.
     \eq
Consequently,  the estimates  \eqref{bound for cPj}, \eqref{bound for cP},  \eqref{product estimate for B}, and \eqref{regularity bound for A} imply 
    \bq\label{estdzp} 
       \|\partial_{d+1}\tilde{p}\|_{L^2(\Omega)}\leq C(\|\eta\|_{H^{s+\tdm}(\T^d)})D.
    \eq
Combing this and    \eqref{eq for partial d+1 p}  yields
       \bq\label{estdz2v}
    \|\partial_{d+1}^2\tilde{v}_j\|_{L^2(\Omega)}\leq C(\|\eta\|_{H^{s+\tdm}(\T^d)})D.  
    \eq
Combining the estimates \eqref{estv:H2tan}, \eqref{estdzp}, \eqref{estdz2v},  \eqref{flat to non-flat bound}, and \eqref{non-flat to flat bound}, we obtain  \eqref{high reg estimate for stokes with stress} for  $\sigma=1$.

 \underline{ Step 2.}  Next, suppose  that for $1\leq\sigma\leq s$ and for any weak solution $(v,p)$ and any data $(f,g,k)$ as in the hypothesis, we have the estimate
    \bq\label{inductionhypo:reg}
        \|v\|_{H^{\sigma+1}(\Omega_\eta)} + \|p\|_{H^{\sigma}(\Omega_\eta)} \leq C(\|\eta\|_{H^{s+\tdm}(\T^d)}) \bigl( \|f\|_{H^{\sigma-1}(\Omega_\eta)} + \|g\|_{H^{\sigma}(\Omega_\eta)} + \|k\|_{H^{\sigma-\mez}(\T^d)}\bigr).
    \eq
The estimates (\ref{flat to non-flat bound}) and (\ref{non-flat to flat bound}) then imply
    \bq\label{inductionhypo:reg:2}
    \|\Tilde{v}\|_{H^{\sigma+1}(\Omega)} + \|\Tilde{p}\|_{H^{\sigma}(\Omega)} \leq C(\|\eta\|_{H^{s+\tdm}(\T^d)})(\|\Tilde{f}\|_{H^{\sigma-1}(\Omega)} + \|\Tilde{g}\|_{H^{\sigma}(\Omega)} + \|k\|_{H^{\sigma-\mez}(\T^d)}).
    \eq
  
  For any fixed $1\leq j\leq d$, formally taking $\partial_j$ of  (\ref{sys: flattened stokes with stress}) yields
    \bq \label{djsystem}
     \begin{cases}
        -\nabla_{x,z}\big(\nabla_{x,z}\partial_j\Tilde{v}_i A + A^T(\partial_i\partial_j \tilde{v})^T\big):A^T + \nabla_{x,z}\partial_j\Tilde{p}A\cdot e_i - \gamma\nabla_{x,z}\partial_j\Tilde{v}_iA\cdot e_1 = \bar{f}_i & \quad \text{in } \Omega, \\
         \nabla_{x,z}\partial_j\Tilde{v} : A^T = \bar{g}  & \quad \text{in } \Omega, \\
         [\partial_j\Tilde{p}I-(\nabla_{x,z}\partial_j\Tilde{v}A+A^T(\nabla_{x,z}\partial_j\Tilde{v})^T)] \cN = \bar{k}& \quad \text{on } \T^{d}, \\ 
         \partial_j\Tilde{v}=0 & \quad \text{on } \Sigma_{-b},
     \end{cases}
     \eq
     where
     \bq 
     \begin{cases}
        \bar{f}_i = \partial_j \Tilde{f}_i +\nabla_{x,z}\big(\nabla_{x,z}\Tilde{v}_i \partial_jA + \partial_jA^T(\partial_i\tilde{v})^T\big):A^T + \nabla_{x,z}\big(\nabla_{x,z}\Tilde{v}_i A+ A^T(\partial_i\Tilde{v})^T\big):\partial_jA^T, \\
        \qquad \qquad \qquad\qquad \qquad \qquad - \nabla_{x,z}\Tilde{p}\partial_jA\cdot e_i + \gamma\nabla_{x,z}\Tilde{v}_i\partial_jA\cdot e_1  \\
         \bar{g} = \partial_j \Tilde{g} -\nabla_{x,z}\Tilde{v} : \partial_jA^T, \\
         \bar{k}= \partial_j k + [\nabla_{x,z}\Tilde{v}\partial_jA+\partial_jA^T(\nabla_{x,z}\Tilde{v})^T]\cN  -  [\Tilde{p}I-(\nabla_{x,z}\Tilde{v}A+A^T(\nabla_{x,z}\Tilde{v})^T)] \partial_j\cN.
     \end{cases}
     \eq
     Since $\sigma \ge 1$ in \eqref{inductionhypo:reg:2}, we have $(\Tilde{v}, \Tilde{p}) \in H^2(\Omega)\times H^1(\Omega)$. Consequently,  it can be shown that $(\p_j \Tilde{v}, \p_j\Tilde{p})$ is a weak solution of \eqref{djsystem}.   The induction hypothesis \eqref{inductionhypo:reg} then implies 
    \bq\label{estv:103}
    \|\partial_j\Tilde{v}\|_{H^{\sigma+1}(\Omega)} + \|\partial_j\Tilde{p}\|_{H^{\sigma}(\Omega)} \leq C(\|\eta\|_{H^{s+\tdm}(\T^d)})(\|\bar{f}\|_{H^{\sigma-1}(\Omega)} + \|\bar{g}\|_{H^{\sigma}(\Omega)} + \|\bar{k}\|_{H^{\sigma-\mez}(\T^d)}).
    \eq
    Since $\sigma \le s$ and $s>(d+1)/2$, we can apply Lemma \ref{product estimate for domain} to have
    \begin{equation*}
         \begin{split}
             \|(\nabla_{x,z}(\nabla_{x,z}\Tilde{v}_i \partial_jA+\partial_jA^T(\partial_i\tilde{v})^T):A^T)\|_{H^{\sigma-1}} +  \|(\nabla_{x,z}(\nabla_{x,z}\Tilde{v}_i A):\partial_jA^T)\|_{H^{\sigma-1}} \leq & c \|\Tilde{v}\|_{H^{\sigma+1}} \|A\|_{H^{s+1}}^2, \\
             \|\nabla_{x,z}\Tilde{p}\partial_jA\cdot e_i\|_{H^{\sigma-1}}\leq & c\|\Tilde{p}\|_{H^{\sigma}}\|A\|_{H^{s+1}}, \\ 
                 \| \nabla_{x,z}\Tilde{v} : \partial_jA\|_{H^{\sigma}} \leq & c \|\Tilde{v}\|_{H^{\sigma+1}} \|A\|_{H^{s+1}}.
         \end{split}
     \end{equation*}
 Hence,  \eqref{regularity bound for A} and the induction hypothesis yield
     \bq\label{estv:104}
     \|\bar{f}\|_{H^{\sigma-1}(\Omega)}+ \|\bar{g}\|_{H^{\sigma}(\Omega)}\leq C(\|\eta\|_{H^{s+\tdm}(\T^d)})(\|\Tilde{f}\|_{H^{\sigma}(\Omega)} + \|\Tilde{g}\|_{H^{\sigma+1}(\Omega)} + \|k\|_{H^{\sigma+\mez}(\T^d)}).
     \eq
   On the other hand, invoking Lemma \ref{product estimate on torus} and the trace inequality, we find 
     \bq\label{estv:105}
        \|\bar{k}\|_{H^{\sigma-\mez}(\T^d)}\leq C(\|\eta\|_{H^{s+\tdm}(\T^d)})(\|\Tilde{f}\|_{H^{\sigma}(\Omega)} + \|\Tilde{g}\|_{H^{\sigma+1}(\Omega)} + \|k\|_{H^{\sigma+\mez}(\T^d)}).
     \eq
     Inserting \eqref{estv:104} and \eqref{estv:105} in \eqref{estv:103} yields
     \bq\label{bound for horizonthal derivative}
        \|\partial_j\Tilde{v}\|_{H^{\sigma+1}(\Omega)} + \|\partial_j\Tilde{p}\|_{H^{\sigma}(\Omega)} \leq C(\|\eta\|_{H^{s+\tdm}(\T^d)})(\|\Tilde{f}\|_{H^{\sigma}(\Omega)} + \|\Tilde{g}\|_{H^{\sigma+1}(\Omega)} + \|k\|_{H^{\sigma+\mez}(\T^d)}),\quad 1\le j\le d.
     \eq
Using this and the first two equations in \eqref{sys: flattened stokes with stress}, we can    argue similarly  to the case $\sigma=1$ and obtain that  $\partial_z\Tilde{v}\in H^{\sigma+1}(\Omega)$  and $\partial_z\Tilde{p}\in H^{\sigma}(\Omega)$ with the same bound as \eqref{bound for horizonthal derivative}. Therefore, invoking  \eqref{flat to non-flat bound} and \eqref{non-flat to flat bound} again, we complete the induction. 
\end{proof}
\begin{rema}\label{rema:interpolation}
By the linearity of \eqref{sys:stokes with stress} and interpolation between Sobolev spaces, \eqref{high reg estimate for stokes with stress} holds for all $\sigma \in [0, s+1]$ if $f=0$ and for all $\sigma \in [1, s+1]$ otherwise, provided $(d+1)/2<s\in \Nn$. 
\end{rema}
The  proof of \cref{reg for stokes with stress} yields the following regularity statement for the system \eqref{sys: flattened stokes with stress}:
\begin{coro}\label{coro:regularityinflat}
Let $|\gamma|\le \gamma^*$, $(d+1)/2<s\in \Nn$, and $ \sigma\in [0, s+1]\cap \Nn$. Suppose $\eta\in H^{s+\tdm}(\T^d)$, $\tilde{f}\in H^{\sigma-1}(\Omega)\cap ({}_0H^1(\Omega))^*$, $\tilde{g}\in H^{\sigma}(\Omega)$ and $k\in H^{\sigma-\mez}(\T^d)$. Then the system \eqref{sys: flattened stokes with stress} has a unique solution $(\tilde{v},\tilde{p})\in H^{\sigma+1}(\Omega)\times H^\sigma(\Omega)$ satisfying the estimate 
\bq\label{energy est for flattened Stokes with stress}
    \|\Tilde{v}\|_{H^{\sigma+1}(\Omega)} + \|\Tilde{p}\|_{H^{\sigma}(\Omega)} \leq C(\|\eta\|_{H^{s+\tdm}(\T^d)})(\|\Tilde{f}\|_{H^{\sigma-1}(\Omega)\cap ({}_0H^1(\Omega))^*} + \|\Tilde{g}\|_{H^{\sigma}(\Omega)} + \|k\|_{H^{\sigma-\mez}(\T^d)}).
    \eq
     for some $C:\Rr^+ \to \Rr^+$ depending only on $(d,b,c^0,s,\sigma)$.
\end{coro}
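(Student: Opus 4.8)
The statement is, in substance, a transcription of \cref{reg for stokes with stress} onto the flat cylinder $\Omega$ via the diffeomorphism $\cF_\eta$, so the plan is simply to transport data, solution, and estimates back and forth through $\cF_\eta$. First I would set up the correspondence of data. Given flat data $(\tilde f,\tilde g,k)$, put $g:=\tilde g\circ\cF_\eta^{-1}$ and, recalling the change-of-variables identity $\int_\Omega \tilde f\cdot\tilde u\,J = \int_{\Omega_\eta} f\cdot u$ valid when $\tilde f=f\circ\cF_\eta$, $\tilde u=u\circ\cF_\eta$, define the functional $f\in({}_0H^1(\Omega_\eta))^*$ by $\langle f,u\rangle_{\Omega_\eta}:=\langle\tilde f,(u\circ\cF_\eta)J\rangle_{\Omega}$. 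Since $s>(d+1)/2$, we have $J\in H^{s+1}(\Omega)\hookrightarrow W^{1,\infty}(\Omega)$, so multiplication by $J$ is bounded on ${}_0H^1(\Omega)$; together with the boundedness of $u\mapsto u\circ\cF_\eta$ from ${}_0H^1(\Omega_\eta)$ to ${}_0H^1(\Omega)$ supplied by \cref{composition regularity}, this gives $\|f\|_{({}_0H^1(\Omega_\eta))^*}\le C(\etanorm)\|\tilde f\|_{({}_0H^1(\Omega))^*}$, while \cref{composition regularity} itself yields $\|f\|_{H^{\sigma-1}(\Omega_\eta)}\le C(\etanorm)\|\tilde f\|_{H^{\sigma-1}(\Omega)}$ and $\|g\|_{H^{\sigma}(\Omega_\eta)}\le C(\etanorm)\|\tilde g\|_{H^{\sigma}(\Omega)}$ (here $\sigma\le s+1$, so $\sigma-1\le s+2$ and \cref{composition regularity} applies).

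For existence and uniqueness I would feed $(f,g,k)$ into \cref{weak sol for stokes with stress}, obtaining the unique weak solution $(v,p)\in{}_0H^1(\Omega_\eta)\times L^2(\Omega_\eta)$ of \eqref{sys:stokes with stress}. By the equivalence of weak formulations recorded after \eqref{sys: flattened stokes with stress}, $(\tilde v,\tilde p):=(v\circ\cF_\eta,p\circ\cF_\eta)$ is then a weak solution of \eqref{sys: flattened stokes with stress}; and since $u\mapsto u\circ\cF_\eta$ is a linear isomorphism of ${}_0H^1(\Omega_\eta)$ onto ${}_0H^1(\Omega)$ (it preserves the trace condition on $\Sigma_{-b}$ because $\cF_\eta$ maps $\{z=-b\}$ to $\{y=-b\}$), this correspondence between weak solutions is a bijection. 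Hence $(\tilde v,\tilde p)$ is \emph{the} weak solution of \eqref{sys: flattened stokes with stress}, and any solution of \eqref{sys: flattened stokes with stress} lying in $H^{\sigma+1}(\Omega)\times H^{\sigma}(\Omega)$, being a fortiori a weak solution, must coincide with it.

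The estimate \eqref{energy est for flattened Stokes with stress} then follows by applying \cref{reg for stokes with stress} (and, if one wants the full range $\sigma\in[0,s+1]$ rather than only integers, \cref{rema:interpolation}) to get $(v,p)\in H^{\sigma+1}(\Omega_\eta)\times H^{\sigma}(\Omega_\eta)$ with the bound in terms of $\|f\|_{H^{\sigma-1}(\Omega_\eta)\cap({}_0H^1(\Omega_\eta))^*}+\|g\|_{H^{\sigma}(\Omega_\eta)}+\|k\|_{H^{\sigma-\mez}(\T^d)}$, then transporting to $\Omega$ via \cref{composition regularity} (which applies since $\sigma+1\le s+2$) to bound $\|\tilde v\|_{H^{\sigma+1}(\Omega)}+\|\tilde p\|_{H^{\sigma}(\Omega)}$ by $\|v\|_{H^{\sigma+1}(\Omega_\eta)}+\|p\|_{H^{\sigma}(\Omega_\eta)}$, and finally inserting the data bounds from the first paragraph. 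Alternatively, and more directly, one may observe that the proof of \cref{reg for stokes with stress} was carried out entirely on $\Omega$: the estimates \eqref{estv:H2tan}, \eqref{estdzp}, \eqref{estdz2v} at the base level and their inductive counterpart \eqref{bound for horizonthal derivative} together with the vertical-derivative estimate are exactly \eqref{energy est for flattened Stokes with stress} for integer $\sigma$, so the corollary is essentially a bookkeeping of those intermediate bounds.

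The only point requiring genuine care — and the one I would flag as the main obstacle — is the correct definition of the pulled-back distributional forcing $f$ and the verification that $\|f\|_{({}_0H^1(\Omega_\eta))^*}$ is controlled by $\|\tilde f\|_{({}_0H^1(\Omega))^*}$; this is precisely where the hypothesis $s>(d+1)/2$ enters, through the embedding $H^{s+1}(\Omega)\hookrightarrow W^{1,\infty}(\Omega)$ that makes multiplication by the Jacobian $J$ a bounded operation on ${}_0H^1$. Everything else is routine transport of already-established Sobolev estimates across $\cF_\eta$ by means of \cref{composition regularity}.
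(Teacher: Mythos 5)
Your proposal is correct, and in fact it contains the paper's argument as your ``alternative'' remark: the paper proves \cref{reg for stokes with stress} by working entirely on the flattened domain, so it obtains \cref{coro:regularityinflat} simply by reading off the intermediate bounds (\eqref{estv:H2tan}, \eqref{estdzp}, \eqref{estdz2v}, \eqref{bound for horizonthal derivative} and the vertical-derivative step) together with the equivalence of the two weak formulations under $\cF_\eta$. Your primary route is a mild variant: you treat \cref{reg for stokes with stress} as a black box, pull the flat data back to $\Omega_\eta$ (defining the forcing distributionally through $\langle f,u\rangle_{\Omega_\eta}=\langle\tilde f,(u\circ\cF_\eta)J\rangle_{\Omega}$, with $J\in H^{s+1}(\Omega)\hookrightarrow W^{1,\infty}$ justifying boundedness of the pullback on ${}_0H^1$), invoke existence, uniqueness and the estimate on $\Omega_\eta$, and transport back with \cref{composition regularity}. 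Both routes are sound; the black-box transport costs you the extra care with the dual-space forcing at $\sigma=0$ (which you correctly identify and handle), while the paper's bookkeeping route avoids that issue entirely because the estimates were derived on $\Omega$ in the first place. One cosmetic remark: since the corollary restricts to integer $\sigma$, the appeal to interpolation/\cref{rema:interpolation} in your third paragraph is unnecessary.
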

\begin{rema}\label{remark: classic regularity}
   The classical regularity result in Theorem IV.$7.4$ in \cite{BoyerFabrie} requires $\eta\in W^{s+2, \infty}(\T^d)$. Precisely, if  $s\in \Nn$, $ \sigma\in [1,  s+1]$, and $\eta\in W^{s+2,\infty}(\T^d)$, then the weak solution of of \eqref{sys:stokes with stress} satisfies  
    \bq\label{reg:classical} \|v\|_{H^{\sigma+1}(\Omega_\eta)} + \|p\|_{H^{\sigma}(\Omega_\eta)} \leq C(\|\eta\|_{W^{s+2,\infty}(\T^d)}) \bigl( \|f\|_{H^{\sigma-1}(\Omega_\eta)} + \|g\|_{H^{\sigma}(\Omega_\eta)} + \|k\|_{H^{\sigma-\mez}(\T^d)}\bigr).
    \eq
    Moreover, if $f=0$ then $\sigma$ can be taken in $[0, s+1]$. This estimate is weaker than \eqref{high reg estimate for stokes with stress} and can be proven by modifying the above proof as follows. The regularity of $A(x, z)=  (\nabla \cF_\eta(x,z))^{-1} $ is  put in $W^{k, \infty}(\Omega)$ upon replacing  $\varrho$ in \eqref{def:varrho} with $\varrho(x, z)=\frac{b+z}{b}\eta(x)+z$, so that 
    \[
    \| A\|_{W^{k, \infty}(\Omega)}\le C(1+\|\eta\|_{W^{k+1}(\T^d)}).
    \] 
\end{rema}
Next, we prove a regularity result for the $\gamma$-Stokes system \eqref{sys:stokes with Dirichlet} which can be formally written in  the flattened domain as
\bq \label{sys: flattened stokes with Dirichlet}
     \begin{cases}
        -\nabla_{x,z}\big(\nabla_{x,z}\Tilde{v}_i A + A^T(\partial_i\tilde{v})^T\big):A^T + \nabla_{x,z}\Tilde{p}A\cdot e_i - \gamma\nabla_{x,z}\Tilde{v}_iA\cdot e_1 = \Tilde{f}_i & \quad \text{in } \Omega,\quad 1\le i\le d, \\
         \nabla_{x,z}\Tilde{v} : A^T = \Tilde{g}  & \quad \text{in } \Omega, \\
         \cN^\perp[\Tilde{p}I-(\nabla_{x,z}\Tilde{v}A+A^T(\nabla_{x,z}\Tilde{v})^T)] \cN = l& \quad \text{on } \T^{d},\\ 
         \tilde{v}\cdot \cN = h & \quad \text{on } \T^{d},\\ 
         \Tilde{v}=0 & \quad \text{on } \Sigma_{-b}.
     \end{cases}
     \eq
 
    \begin{theo}\label{reg for stokes with Dirichlet}
    Let $|\gamma|\leq \gamma^*$, $(d+1)/2<s\in \Nn$, and $ \sigma \in [0, s]\cap \Nn$. Suppose $\eta\in H^{s+\tdm}(\T^d)$, $f\in H^{\sigma-1}(\Omega_\eta)  \cap ({}_0H^1(\Omega_\eta))^*$,  $g\in H^{\sigma}(\Omega_\eta)$, $l\in H^{\sigma-\mez}(\T^d)$, and $h\in H^{\sigma+\mez}(\T^d)$, where $l$, $g$, and $h$ satisfy (\ref{compatibility for l}) and (\ref{compatibility for h and g}). Then the weak solution   of \eqref{sys:stokes with Dirichlet} satisfies
    \bq\label{high reg estimate for stokes with Dirichlet}
    \begin{aligned}
    \|v\|_{H^{\sigma+1}(\Omega_\eta)} + \|p\|_{H^{\sigma}(\Omega_\eta)} \leq C(\|\eta\|_{H^{s+\tdm}(\T^d)}) \bigl(& \|f\|_{H^{\sigma-1}(\Omega_\eta) \cap ({}_0H^1(\Omega_\eta))^* } \\
    &\quad+ \|g\|_{H^{\sigma}(\Omega_\eta)} + \|l\|_{H^{\sigma-\mez}(\T^d)}+ \|h\|_{H^{\sigma+\mez}(\T^d)}\bigr)
    \end{aligned}
    \eq
    for some $C:\Rr^+ \to \Rr^+$ depending only on $(d,b,c^0,s,\sigma)$. 
\end{theo}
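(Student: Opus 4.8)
The plan is to follow the proof of \cref{reg for stokes with stress} essentially line by line: the base case $\sigma=0$ from the variational estimate; the case $\sigma=1$ by tangential difference quotients in the flattened domain followed by an algebraic recovery of the vertical derivatives from the interior equations; and the range $1\le\sigma\le s$ by an induction in which one applies $\partial_j$, $1\le j\le d$, to the flattened system \eqref{sys: flattened stokes with Dirichlet}. Compared with \cref{reg for stokes with stress}, the genuinely new features are the constrained test-function class ${}_0H^1_\cN$ and the bookkeeping of the normal-trace datum $h$; the rest is a transcription.

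For $\sigma=0$ the bound \eqref{high reg estimate for stokes with Dirichlet} is \eqref{base energy estimate for stokes with Dirichlet} of \cref{weak sol for stokes with Dirichlet}, using $\|f\|_{H^{-1}}\le\|f\|_{({}_0H^1(\Omega_\eta))^*}$. For $\sigma=1$: I would reduce to homogeneous $g$ and $h$ by subtracting the liftings of \cref{lifting neumann} and \cref{lifting div in HcN} (in their $H^s$-scale form, bounded through \cref{composition regularity}), so that $w:=v-v_h-v_g$ is divergence free, lies in ${}_0H^1_\cN(\Omega_\eta)$, and solves \eqref{weak stokes with Dirichlet} against all of ${}_0H^1_\cN(\Omega_\eta)$ with modified, still-controlled data; flattening by $\cF_\eta$ then reduces matters to elliptic regularity for a divergence-form system with test functions in ${}_0H^1_\cN(\Omega)$. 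The one new difficulty is that difference quotients must be handled inside this constrained class: since $\cN=(-\nabla\eta,1)$ depends on $x$, neither a shift $\delta_i^{-h}\tilde u$ of a test function nor a difference quotient $\delta_i^h\tilde w$ of the solution has exactly vanishing normal trace, but in each case the defect is a bounded-multiplication (hence lower-order) expression in the relevant trace --- e.g.\ $-\tilde u_i^{-h}(\cdot,0)\cdot(\delta_i^{-h}\cN)$ --- which, because $\cN\in H^{s+\mez}(\T^d)$ and $s>(d+1)/2$, lies in $H^{\mez}(\T^d)$ with norm $\le C(\etanorm)$ times the pertinent $H^1$-norm (trace theorem and \cref{product estimate on torus}); subtracting a lifting of it (via \cref{lifting neumann}) of comparable size restores membership in ${}_0H^1_\cN$ while contributing only harmless lower-order errors. (Equivalently, one may extend \eqref{weak stokes with Dirichlet} to all of ${}_0H^1(\Omega_\eta)$ via a boundary Lagrange multiplier $\chi\in H^{-\mez}(\T^d)$, the normal component of the normal stress, and estimate $\chi$ alongside $v$ and $p$.) Granting this, the bound for $\|\delta_i^h\tilde p\|_{L^2(\Omega)}$ follows as in \cref{reg for stokes with stress} by testing against a divergence preimage of $\delta_i^h\tilde p$ from \cref{solving div problem}, and one reaches the tangential estimate for $\partial_i\tilde v,\partial_i\tilde p$ ($1\le i\le d$) exactly as in \eqref{estv:H2tan}. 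The vertical derivatives are then recovered verbatim: the first two equations of \eqref{sys: flattened stokes with Dirichlet} are solved algebraically for $\partial_z^2\tilde v$ and $\partial_z\tilde p$ through the coefficient $B=\sum_k A_{(d+1)k}^2$, bounded below by \cref{regularity for A}(ii); no boundary condition is needed here. Pushing back by \eqref{flat to non-flat bound}--\eqref{non-flat to flat bound} gives \eqref{high reg estimate for stokes with Dirichlet} for $\sigma=1$.

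For $1\le\sigma\le s-1$, assume \eqref{high reg estimate for stokes with Dirichlet} at level $\sigma$ and apply $\partial_j$, $1\le j\le d$, to \eqref{sys: flattened stokes with Dirichlet}, obtaining a system of the same type for $(\partial_j\tilde v,\partial_j\tilde p)$ with forcing $\bar f$, divergence datum $\bar g$, tangential-stress datum $\bar l$ (from $\partial_j l$ plus commutators with $\cN^\perp$ and $A$) and normal-trace datum $\bar h=\partial_j h-\tilde v\cdot\partial_j\cN$ (from $\partial_j(\tilde v\cdot\cN)=\partial_j h$); as in \cref{reg for stokes with stress} one checks $(\partial_j\tilde v,\partial_j\tilde p)$ is a weak solution in the sense of \eqref{weak stokes with Dirichlet}. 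By \cref{product estimate for domain}, \cref{product estimate on torus}, \cref{regularity for A}, the trace theorem and the induction hypothesis these data are bounded by $C(\etanorm)\bigl(\|\tilde f\|_{H^\sigma(\Omega)}+\|\tilde g\|_{H^{\sigma+1}(\Omega)}+\|l\|_{H^{\sigma+\mez}(\T^d)}+\|h\|_{H^{\sigma+\tdm}(\T^d)}\bigr)$; here the requirement $\sigma+1\le s$ is exactly what keeps the product $\tilde v(\cdot,0)\cdot\partial_j\cN$ --- an $H^{\sigma+\mez}$ factor times an $H^{s-\mez}$ factor --- in $H^{\sigma+\mez}$, which is why the theorem stops at $\sigma=s$ whereas \cref{reg for stokes with stress} reaches $\sigma=s+1$. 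Applying the level-$\sigma$ estimate to $(\partial_j\tilde v,\partial_j\tilde p)$ gives the tangential bound, the vertical derivative is read off from the interior equations as before, and \eqref{flat to non-flat bound}--\eqref{non-flat to flat bound} close the induction.

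The step I expect to be the main obstacle is the difference-quotient argument at $\sigma=1$ inside the constrained class ${}_0H^1_\cN$: it forces the correction-lifting (or boundary Lagrange-multiplier) device above, and the real work is verifying that the induced errors are truly of lower order --- which is precisely where $s>(d+1)/2$ is used, through the fact that $\cN\in H^{s+\mez}(\T^d)$ maps $H^{\pm\mez}(\T^d)$ into itself under multiplication. Everything else is a routine, if lengthy, adaptation of \cref{reg for stokes with stress}, with $h$ entering only via the lifting at $\sigma=1$ and via the term $\tilde v\cdot\partial_j\cN$ in the induction, the latter accounting for the restriction $\sigma\le s$.
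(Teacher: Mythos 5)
Your proposal is correct in substance and, in its parenthetical form, coincides with the paper's proof; the only genuine divergence is the device you lead with for handling difference quotients in the constrained class. The paper does not reduce to homogeneous $g,h$ via higher-regularity liftings, nor does it correct shifted test functions or $\delta_i^h\tilde v$ so as to stay inside ${}_0H^1_\cN$; instead it extends the weak formulation \eqref{weak stokes with Dirichlet} to \emph{all} of ${}_0H^1(\Omega_\eta)$ by introducing the boundary Lagrange multiplier $\chi\in H^{-\mez}(\T^d)$ from \cref{Lambda representation by k} (your parenthetical alternative), takes difference quotients of that extended identity, and estimates $\delta_i^h\chi$ in $H^{-\mez}$ separately by testing with a function built from Riesz representation and \cref{lifting neumann}. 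This route sidesteps two pieces of bookkeeping that your primary route leaves unverified: (a) \cref{lifting neumann} and \cref{lifting div in HcN} are stated only at the $H^1$ level, so the ``$H^s$-scale'' liftings you invoke to remove $g$ and $h$ would have to be constructed (plausible via Neumann-problem regularity in the flattened domain, but it is an extra lemma); and (b) the divergence-preimage test function used for the pressure bound does not lie in ${}_0H^1_\cN$, so in the constrained-class approach it too must be corrected, whereas with the multiplier the extra pairing $\langle\delta_i^h\chi,\tilde w\cdot\cN\rangle$ is simply absorbed using the $\delta_i^h\chi$ bound. Everything else in your outline --- the base case from \eqref{base energy estimate for stokes with Dirichlet}, the recovery of vertical derivatives from the first two equations of \eqref{sys: flattened stokes with Dirichlet}, and the induction with $\bar h=\partial_j h-\tilde v\cdot\partial_j\cN$ whose product estimate forces $\sigma\le s-1$ at each step and hence caps the theorem at $\sigma=s$ --- matches the paper's argument.
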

\begin{proof}
 
 Since  $\eta\in H^{s+\tdm}(\T^d)\hookrightarrow W^{2,\infty}(\T^d)$ for $s>(d+1)/2$,  by \cref{weak sol for stokes with Dirichlet}, \eqref{sys:stokes with Dirichlet} has a unique weak solution $(v, p)$   satisfying \eqref{base energy estimate for stokes with Dirichlet}, i.e. 
  \bq \label{variest:vp:N}
    \|v\|_{{}_0H^1(\Omega_\eta)} + \|p\|_{L^2(\Omega_\eta)} \leq C(\|\eta\|_{W^{2,\infty}(\T^d)}) \bigl( \|f\|_{\bigl(  {}_0H^1(\Omega_\eta)\bigr)^*} + \|g\|_{L^2(\Omega_\eta)} + \|l\|_{H^{-\mez}(\T^d)}+ \|h\|_{H^{\mez}(\T^d)}\bigr).
    \eq
  This yields \eqref{high reg estimate for stokes with Dirichlet} for $\sigma=0$. The case $\sigma \in [1, s]$ will be proven by induction. 
    
\underline{Step 1.} For the case $\sigma=1$, we  follow a scheme similar to that of Theorem \ref{reg for stokes with stress} for $\sigma=1$.  However, before flattening the domain, we first need to find a weak formulation in terms of test functions in ${}_0H^1(\Omega_\eta)$, as the weak formulation \eqref{weak stokes with Dirichlet} relies on the more restricted space of test functions ${}_0H^1_\cN(\Omega_\eta)$, which lacks invariance under taking difference quotients. To this aim, we define the functional $\Lambda_{(v, p)}\in ({}_0H^1(\Omega_\eta))^*$ via 
\bq\label{def:Ldup}
u\mapsto \int_{\Omega_\eta } \mez \Dd v: \Dd u - p\dv u - \gamma \partial_1 v\cdot u - \langle f,u\rangle_{\Omega_\eta}  + \langle l,u(\cdot,\eta(\cdot))\rangle_{\Sigma_\eta}.
\eq
It follows from \eqref{variest:vp:N} and  Cauchy-Schwarz's inequality that
\bq\label{Lambda bound}
\|\Lambda_{(v, p)}\|_{({}_0H^1(\Omega_\eta))^*}\leq C(\etanorm) \bigl( \|f\|_{({}_0H^1(\Omega_\eta))^*} + \|g\|_{L^2} + \|l\|_{H^{-\mez}} + \|h\|_{H^\mez} \bigr).
\eq
Moreover, the weak formulation (\ref{weak stokes with Dirichlet}) means  that $\Lambda_{(v, p)}\vert_{{}_0H^1_\cN(\Omega_\eta)} =0$. Thus, by Corollary \ref{Lambda representation by k}, there exists a scalar distribution  $\chi\in H^{-\mez}(\T^d)$ such that 
\bq\label{weakform:N:e}
\langle \Lambda_{(v, p)} , u \rangle_{\Omega_\eta} = \langle \chi, u(\cdot,\eta(\cdot))\cdot \cN(\cdot)\rangle_{H^{-\mez},H^\mez}\quad\forall u\in {}_0H^1(\Omega_\eta).
\eq
Moreover,  \eqref{Lambda bound} implies 
    \bq \label{k bound}
    \|\chi\|_{H^{-\mez}}\leq c(\etanorm) \|\Lambda\|_{({}_0H^1(\Omega_\eta))^*}\leq C(\etanorm) \bigl( \|f\|_{({}_0H^1(\Omega_\eta))^*} + \|g\|_{L^2} + \|l\|_{H^{-\mez}} + \|h\|_{H^\mez} \bigr).
    \eq
    We have shown that each weak solution $(v, p)$ of \eqref{sys:stokes with Dirichlet} satisfies the extended weak form \eqref{weakform:N:e} in terms of test functions in ${}_0H^1(\Omega_\eta)$. Since $\sigma=1$, we have $f\in L^2(\Omega_\eta)$, $g\in H^1(\Omega_\eta)$, $l\in H^\mez(\T^d)$ and $h\in H^{\tdm}(\T^d)$, and hence the pairings  \eqref{weakform:N:e}  are  $L^2$ inner products of functions. Thus, we can change variables to obtain the following weak formulation in the flattened domain:
\begin{multline}\label{weak stokes with Dirichlet in flattened domain}
             \int_\Omega \Bigl[ \mez (\nabla_{x,z}\tilde{v}A + A^T(\nabla_{x,z}\tilde{v})^T : \nabla_{x,z}\tilde{u}A + A^T(\nabla_{x,z}\tilde{u})^T ) - \tilde{p}(\nabla_{x,z}\tilde{u}: A^T) - \gamma(\nabla_{x,z}\tilde{v}Ae_1)\cdot \tilde{u}   \Bigr]J dxdz = \\
             +\int_\Omega (\tilde{f} \cdot\tilde{u}) J dxdz - \int_{\T^d} l(x)\cdot \tilde{u}(x,0) dx   + \langle \chi, u(\cdot,\eta(\cdot))\cdot \cN(\cdot)\rangle_{H^{-\mez},H^\mez}    
     \end{multline}
for all $u\in {}_0H^1_\cN(\Omega_\eta) $, together with the divergence condition $(\nabla_{x,z}\tilde{v}:A^T)=\tilde{g}$ and the normal Dirichlet condition $\tilde{v}(x,0)\cdot \cN(x,\eta(x)) = h(x)  $. 
Proceeding as in the proof of Theorem \ref{reg for stokes with stress}, for any horizontal index $1\le i \leq d$, we may take the difference quotient of (\ref{weak stokes with Dirichlet in flattened domain}) to obtain
\begin{multline}\label{quotient diff integral eq'}
         \int_\Omega \Bigl[ \mez \big(\nabla_{x,z}\delta_i^h\tilde{v}A + A^T(\nabla_{x,z}\delta_i^h\tilde{v})^T\big) : \big(\nabla_{x,z}\tilde{u}A + A^T(\nabla_{x,z}\tilde{u})^T\big) - \delta_i^h\tilde{p}(\nabla_{x,z}\tilde{u}: A^T) - \gamma(\nabla_{x,z}\delta_i^h\tilde{v}Ae_1)\cdot \tilde{u}   \Bigr]J \\
         + \Bigl[ \mez \big(\nabla_{x,z}\tilde{v}_i^h\delta_i^hA + \delta_i^hA^T(\nabla_{x,z}\tilde{v}_i^h)^T\big) : \big(\nabla_{x,z}\tilde{u}A + A^T(\nabla_{x,z}\tilde{u})^T\big) - \tilde{p}_i^h(\nabla_{x,z}\tilde{u}: \delta_i^hA^T) - \gamma(\nabla_{x,z}\tilde{v}_i^h\delta_i^hAe_1)\cdot \tilde{u}   \Bigr] J  \\
         +  \Bigl[ \mez \big(\nabla_{x,z}\tilde{v}_i^h A_i^h + (A_i^h)^T(\nabla_{x,z}\tilde{v}_i^h)^T\big) : \big(\nabla_{x,z}\tilde{u}\delta_i^h A + \delta_i^hA^T(\nabla_{x,z}\tilde{u})^T\big)   \Bigr]J dxdz \\
         +  \int_\Omega \Bigl[ \mez\big(\nabla_{x,z}\tilde{v}_i^h A_i^h + (A_i^h)^T(\nabla_{x,z}\tilde{v}_i^h)^T\big) : \big(\nabla_{x,z}\tilde{u} A_i^h + (A_i^h)^T(\nabla_{x,z}\tilde{u})^T\big)\\ - \tilde{p}_i^h(\nabla_{x,z}\tilde{u}: (A_i^h)^T) - \gamma(\nabla_{x,z}\tilde{v}_i^h A_i^he_1)\cdot \tilde{u}   \Bigr]\delta_i^hJ dxdz \\
         = \int_\Omega (\delta_i^h\tilde{f} \cdot\tilde{u}) Jdxdz +  \int_\Omega (\tilde{f}_i^h \cdot\tilde{u}) \delta_i^hJdxdz - \int_{\T^d} \delta_i^h l(x)\cdot \tilde{u}(x,0) dx  \\
         + \langle \delta_i^h \chi, \tilde{u}(\cdot,\eta(\cdot))\cdot \cN(\cdot)\rangle_{H^{-\mez},H^\mez} + \langle \chi^h_i, \tilde{u}(\cdot,\eta(\cdot))\cdot \delta_i^h \cN(\cdot)\rangle_{H^{-\mez},H^\mez}
    \end{multline}
       for all $\Tilde{u}\in {}_0H^1(\Omega)$.

{\it  Bounds for $\|\delta_i^h \tilde{p}\|_{L^2(\Omega)}$ and $\|\delta_i^h\chi\|_{H^{-\mez}}$}. We recall from the proof of \cref{reg for stokes with stress}  that there exists  $w\in {}_0H^1(\Omega_\eta)$ such that  $\tilde{w}:= w\circ \cF_\eta \in {}_0H^1(\Omega)$ satisfies $\nabla_{x,z}\tilde{w}:A^T = \delta_i^h \tilde{p}$ and 
    \bq\label{w tilde bound'}
    \begin{split}
        \|\tilde{w}\|_{H^1(\Omega)}&\leq C(\|\eta\|_{H^{s+\tdm}(\T^d)}) \|\delta_i^h\tilde{p}\|_{L^2(\Omega)} \\
        \|\tilde{w}(\cdot,\eta(\cdot))\cdot \cN(\cdot)\|_{H^\mez} &\leq C(\|\eta\|_{H^{s+\tdm}(\T^d)}) \|\delta_i^h\tilde{p}\|_{L^2(\Omega)}
    \end{split}
    \eq
where $C$ depends only on $(d,b,c^0)$. Choosing $\tilde{u}=\tilde{w}$ in (\ref{quotient diff integral eq'}) and arguing as in the proof of Theorem \ref{reg for stokes with stress}, we obtain
\bq \label{delta p tilde bound'}
    \|\delta_i^h\tilde{p}\|_{L^2(\Omega)} \leq C(\etanorm)\Bigl( \|\delta_i^h\tilde{v}\|_{H^1(\Omega)} + \|\delta^h_i \chi\|_{H^{-\mez}}+ \|\tilde{f}\|_{L^2(\Omega)} + \|\tilde{g}\|_{H^1(\Omega)} + \|l\|_{H^{\mez}(\T^d)} + \|h\|_{H^\tdm}
 \Bigr).
\eq
It remains to bound $\|\delta_i^h\chi\|_{H^{-\mez}}$ on the right-hand side of \eqref{delta p tilde bound'}.  By virtue of Riesz's representation theorem and \cref{lifting neumann},  there exists $\tilde{u}\in{}_0H^1(\Omega)$ such that 
$$\langle \delta_i^h \chi, \tilde{u}(\cdot,\eta(\cdot))\cdot \cN(\cdot)\rangle_{H^{-\mez},H^\mez} = \|\delta_i^h \chi \|_{H^{-\mez}}^2$$
and 
$$\|\tilde{u}\|_{H^1} \leq c(\|\eta\|_{W^{2, \infty}})\|\delta_i^h \chi \|_{H^{-\mez}}\le C(\etanorm)\|\delta_i^h \chi \|_{H^{-\mez}}.$$
With this choice of $\Tilde{u}$ in \eqref{quotient diff integral eq'}, we obtain
\bq\label{delta k bound:0}
    \|\delta_i^h \chi \|_{H^{-\mez}} \leq C(\etanorm)\Bigl( \|\delta_i^h\tilde{v}\|_{H^1(\Omega)} + \|\chi\|_{H^{-\mez}}+ \|\tilde{f}\|_{L^2(\Omega)} + \|\tilde{g}\|_{H^1(\Omega)} + \|l\|_{H^{\mez}(\T^d)} + \|h\|_{H^\tdm}
 \Bigr).
\eq
We note in particular that for the last term in \eqref{quotient diff integral eq'} we have used \cref{product estimate on torus} and the fact that $s>(d+1)/2$ to bound 
\begin{align*}
\left|\langle \chi^h_i, \tilde{u}(\cdot,\eta(\cdot))\cdot \delta_i^h \cN(\cdot)\rangle_{H^{-\mez},H^\mez} \right|&\le C\|\chi\|_{H^{-\mez}} \|\tilde{u}\vert_{\Sigma_\eta}\|_{H^\mez(\T^d)} \| \delta_i^h \cN(\cdot)\|_{H^{s-\mez}}\\
&\le C(\| \eta\|_{H^{s+\tdm}}) \|\chi\|_{H^{-\mez}} \| \tilde{u}\|_{H^1(\Omega_\eta)}.
\end{align*}
Inserting  $H^{-\mez}$ bound \eqref{k bound} for $\chi$ in \eqref{delta k bound:0}, we deduce 
\bq\label{delta k bound}
    \|\delta_i^h \chi \|_{H^{-\mez}} \leq C(\etanorm)\Bigl( \|\delta_i^h\tilde{v}\|_{H^1(\Omega)} + \|\tilde{f}\|_{L^2(\Omega)} + \|\tilde{g}\|_{H^1(\Omega)} + \|l\|_{H^{\mez}(\T^d)} + \|h\|_{H^\tdm}
 \Bigr).
\eq
{\it Bound  for $\|\delta_i^h \tilde{v}\|_{H^1(\Omega)}$}.  We choose $\tilde{u}= \delta_i^h \tilde{v}\in {}_0H^1(\Omega)$ in \eqref{quotient diff integral eq'} and use the fact that the divergence and the normal Dirichlet data imply $\nabla_{x,z}\delta_i^h\tilde{v}: A^T = \delta_i^h \tilde{g} - \nabla_{x,z}\tilde{v}_i^h:\delta_i^hA^T$ and $\delta_i^h\tilde{v}(\cdot,\eta(\cdot))\cdot \cN(\cdot) = \delta_i^h h(\cdot) - \tilde{v}_i^h(\cdot,\eta(\cdot)) \cdot \delta_i^h \cN(\cdot)$. Using in addition \eqref{variest:vp:N}, \eqref{delta p tilde bound'} and  \eqref{delta k bound}, we  then obtain
\bq\label{estfqv:N}
    \|\delta_i^h\tilde{v}\|_{H^1(\Omega)}^2 \leq C(\etanorm)( \|\delta_i^h\tilde{v}\|_{H^1(\Omega)} +D)D,   
\eq
where $D= \|\tilde{f}\|_{L^2(\Omega)} + \|\tilde{g}\|_{H^1(\Omega)} + \|l\|_{H^{\mez}(\T^d)} + \|h\|_{H^\tdm}$. After an application of Young's inequality to \eqref{estfqv:N} and returning to  (\ref{delta p tilde bound'}), we obtain
\bq
\|\partial_i\tilde{v}\|_{H^1(\Omega)} +\|\partial_i\tilde{p}\|_{L^2(\Omega)} \leq C(\etanorm) D,\quad 1\le i\le d.
\eq
The same bounds for the vertical derivatives $\p_z\tilde{v}$ and $\p_z\tilde{p}$ can be obtained as the proof of Theorem \ref{reg for stokes with stress} by using the first two equations in \eqref{sys: flattened stokes with Dirichlet}. This completes the proof of \eqref{high reg estimate for stokes with Dirichlet} for $\sigma=1$.

\underline{Step 2.} We suppose (\ref{high reg estimate for stokes with Dirichlet}) holds for $1\leq \sigma\leq s-1$. 
We fix $1\le j\le d$. By formally taking  $\partial_j$ of \eqref{sys: flattened stokes with Dirichlet},  we find that
\bq\label{Stokes:N:dj}
     \begin{cases}
        -\nabla_{x,z}\big(\nabla_{x,z}\partial_j\Tilde{v}_i A+ A^T(\partial_i\partial_j \tilde{v})^T\big):A^T + \nabla_{x,z}\partial_j\Tilde{p}A\cdot e_i - \gamma\nabla_{x,z}\partial_j\Tilde{v}_iA\cdot e_1 = \bar{f}_i& \quad \text{in } \Omega, \\
         \nabla_{x,z}\partial_j\Tilde{v} : A^T = \bar{g} & \quad \text{in } \Omega, \\
         \cN^\perp[\partial_j\Tilde{p}I-(\nabla_{x,z}\partial_j\Tilde{v}A+A^T(\nabla_{x,z}\partial_j\Tilde{v})^T)] \cN = \bar{k} & \quad \text{on } \T^{d}, \\ 
         \partial_j \tilde{v}\cdot \cN = \bar{h}& \quad \text{on } \T^{d},\\ 
         \partial_j\Tilde{v}=0 & \quad \text{on } \Sigma_{-b},
     \end{cases}
     \eq
     where
      \bq
     \begin{cases}
        \bar{f}_i = \partial_j \Tilde{f}_i +\nabla_{x,z}\big(\nabla_{x,z}\Tilde{v}_i \partial_jA + \partial_j A^T(\partial_i\tilde{v})^T\big):A^T + \nabla_{x,z}\big(\nabla_{x,z}\Tilde{v}_i A+ A^T(\partial_i\tilde{v})^T\big):\partial_jA^T, \\
        \qquad\qquad \qquad \qquad\qquad \qquad \qquad \qquad - \nabla_{x,z}\Tilde{p}\partial_jA\cdot e_i + \gamma\nabla_{x,z}\Tilde{v}_i\partial_jA\cdot e_1,  \\
         \bar{g} = \partial_j \Tilde{g} -\nabla_{x,z}\Tilde{v} : \partial_jA^T, \\
         \bar{l}= \partial_j l +\cN^\perp [\nabla_{x,z}\Tilde{v}\partial_jA+\partial_jA^T(\nabla_{x,z}\Tilde{v})^T]\cN,  \\
         \qquad \qquad \qquad -  \cN^\perp[\Tilde{p}I-(\nabla_{x,z}\Tilde{v}A+A^T(\nabla_{x,z}\Tilde{v})^T)] \partial_j\cN - \partial_j\cN^\perp[\Tilde{p}I-(\nabla_{x,z}\Tilde{v}A+A^T(\nabla_{x,z}\Tilde{v})^T)] \cN, \\ 
         \bar{h} = \partial_jh -\tilde{v}\partial_j\cN.
     \end{cases}
     \eq
     Since $\tilde{v}\in H^{\sigma+1}(\Omega)\subset H^2(\Omega)$ and $\tilde{p}\in H^\sigma(\Omega)\subset H^1(\Omega)$, it can be shown that $(\p_j\tilde{v}, \p_j\tilde{p})$ is a weak solution of \eqref{Stokes:N:dj}. Consequently, the induction hypothesis implies 
\[
\| \p_j\tilde{v}\|_{H^{\sigma+1}(\Omega)}+\| \p_j\tilde{p}\|_{H^\sigma(\Omega)}\le C(\|\eta\|_{H^{s+\tdm}})\bigl(\|\bar{f}\|_{H^{\sigma-1}(\Omega_\eta)} + \|\bar{g}\|_{H^{\sigma}(\Omega_\eta)} + \|\bar{l}\|_{H^{\sigma-\mez}(\T^d)}+ \|\bar{h}\|_{H^{\sigma+\mez}(\T^d)}\bigr).
\]
  For $s>(d+1)/2$, we can apply Sobolev estimates in Section \ref{appendix:productestimates} to estimate the right-hand side, thereby obtaining

    \bq
    \|\partial_j\Tilde{v}\|_{H^{\sigma+1}(\Omega)} + \|\partial_j\Tilde{p}\|_{H^{\sigma}(\Omega)} \leq C(\|\eta\|_{H^{s+\tdm}(\T^d)}) \bigl( \|\tilde{f}\|_{H^{\sigma}(\Omega_\eta)} + \|\tilde{g}\|_{H^{\sigma+1}(\Omega_\eta)} + \|l\|_{H^{\sigma+\mez}(\T^d)}+ \|h\|_{H^{\sigma+\tdm}(\T^d)}\bigr).
    \eq
  We note that since $ \bar{h}$ involves $\p_j\cN\in H^{s-\mez}$, its $H^{\sigma+\mez}$ estimate requires $\sigma \le s-1$.   Finally, the vertical derivatives $\partial_z\Tilde{v}$ and $\p_z\tilde{p}$ can be bounded as before.
\end{proof}


\section{Normal-stress to normal-Dirichlet operators}\label{sec:Psi}
\subsection{The linear normal-stress to normal-Dirichlet operator $\Psi_\gamma[\eta]$} 

For any given $\eta\in W^{1, \infty}(\T^d)$ satisfying \eqref{eta lower bound}, we define the linear {\it normal-stress to normal-Dirichlet} operator $\Psi_\gamma[\eta]$ as follows. For  {\it scalar functions} $\chi: \T^d\to \Rr$,
\bq
\Psi_\gamma[\eta]\chi=v\vert_{\Sigma_\eta}\cdot \cN,
\eq
where $(v, p)$ solves the $\gamma$-Stokes problem with prescribed normal stress condition:
\bq\label{sys: hom stokes with normal stress}
\begin{cases}
    -\gamma \partial_1 v - \Delta v + \nabla p = 0 &\quad\text{in~} \Omega_\eta, \\
    \nabla\cdot v= 0 &\quad\text{in~}  \Omega_\eta, \\
    (pI - \mathbb{D}v)(\cdot, \eta(\cdot))\mathcal{N}(\cdot) = \chi(\cdot)\cN(\cdot) &\quad\text{on~}\T^d, \\
    v=0 &\quad \text{on~} \Sigma_{-b}.
\end{cases}
\eq
It is readily seen that $\Psi_\gamma[\eta](\chi+c)=\Psi_\gamma[\eta]\chi$ for all $c\in \Rr$. The next proposition shows that $\Psi_\gamma[\eta]$ is an isomorphism of order $-1$ between  homogeneous Sobolev spaces in a range  determined by the regularity of $\eta$. 
\begin{prop} \label{bound for Psi and Psi inverse}
     Let $|\gamma|\le \gamma^*$, $(d+1)/2<s\in \Nn$, and suppose $\eta\in H^{s+\tdm}(\T^d)$. Then for all $\sigma\in [0, s]$, $\Psi_\gamma[\eta]: \mathring{H}^{\sigma-\mez}(\T^d)\to \mathring{H}^{\sigma+\mez}(\T^d)$ is an isomorphism with 
    \bq\label{bound for Psi}
    \|\Psi_\gamma[\eta]\|_{H^{\sigma-\mez}(\T^d)\to \mathring{H}^{\sigma+\mez}(\T^d)}\leq C(\|\eta\|_{H^{s+\tdm}(\T^d)})
    \eq
   and 
    \bq\label{bound for Psi inverse}
    \|(\Psi_\gamma[\eta])^{-1}\|_{\mathring{H}^{\sigma+\mez}(\T^d)\to \mathring{H}^{\sigma-\mez}(\T^d)}\leq C(\|\eta\|_{H^{s+\tdm}(\T^d)}),
    \eq
    where $C:\Rr^+\to \Rr^+$ depends only on $(d,b,c^0,s)$.
    
    Moreover, if $(d+1)/2+1<s\in \Nn$ and $\sigma \in [s-1, s]$, then for any $\chi \in H^{\sigma-\mez}(\T^d)$, we have
    \bq\label{tame:contPsi}
    \| \Psi_\gamma[\eta]\chi\|_{H^{\sigma+\mez}(\T^d)}\le C(\|\eta\|_{H^{s+\mez}})\big(\| \chi \|_{H^{\sigma-\mez}} +\| \chi \|_{H^{s-\frac52}}\| \eta\|_{H^{\sigma+\tdm}}\big).
    \eq
    \end{prop}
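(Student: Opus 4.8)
The plan is to deduce \eqref{tame:contPsi} from the flattened $\gamma$-Stokes regularity theory of \cref{sec:Stokes}, exploiting two observations. First, invoking the regularity estimate with the \emph{reduced} threshold $s-1$ in place of $s$ already yields the tame coefficient $C(\|\eta\|_{H^{s+\mez}(\T^d)})$: this is legitimate since $(d+1)/2<s-1\in\Nn$ (here the hypothesis $s>(d+1)/2+1$ enters) and $\eta\in H^{s+\mez}=H^{(s-1)+\tdm}$. Second, the top-order norm $\|\eta\|_{H^{\sigma+\tdm}}$ should enter only through the final multiplication of the interior velocity trace by $\cN$.

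\textbf{Step 1.} I would write $(v,p)$ for the solution of the homogeneous system \eqref{sys: hom stokes with normal stress} and $(\tilde v,\tilde p)=(v\circ\cF_\eta,p\circ\cF_\eta)$, so that $\Psi_\gamma[\eta]\chi(x)=\tilde v(x,0)\cdot\cN(x)$ and $(\tilde v,\tilde p)$ solves \eqref{sys: flattened stokes with stress} with $\tilde f=\tilde g=0$ and $k=\chi\cN$. Applying \cref{coro:regularityinflat} (and \cref{rema:interpolation} to allow non-integer orders) with $s$ replaced by $s-1$ gives, for every $\sigma'\in[0,s]$,
\bq\label{plan:tildev}
\|\tilde v\|_{H^{\sigma'+1}(\Omega)}\le C(\|\eta\|_{H^{s+\mez}(\T^d)})\,\|\chi\cN\|_{H^{\sigma'-\mez}(\T^d)}.
\eq
Since $\cN=(-\na_x\eta,1)$ satisfies $\|\cN\|_{H^{s-\mez}(\T^d)}\le C(\|\eta\|_{H^{s+\mez}})$ with $s-\mez>d/2$, the product rules of \cref{appendix:productestimates} give $\|\chi\cN\|_{H^{r}(\T^d)}\le C(\|\eta\|_{H^{s+\mez}})\|\chi\|_{H^{r}(\T^d)}$ for $0\le r\le s-\mez$. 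Taking $\sigma'=\sigma$ and then $\sigma'=s-2$ in \eqref{plan:tildev} (note $s-2\ge 0$ and $(s-2)-\mez=s-\fdm$) would then produce
\bq\label{plan:twobounds}
\|\tilde v\|_{H^{\sigma+1}(\Omega)}\le C(\|\eta\|_{H^{s+\mez}})\|\chi\|_{H^{\sigma-\mez}},\qquad \|\tilde v\|_{H^{s-1}(\Omega)}\le C(\|\eta\|_{H^{s+\mez}})\|\chi\|_{H^{s-\fdm}}.
\eq

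\textbf{Step 2.} Next I would estimate the boundary product $\tilde v(\cdot,0)\cdot\cN$. By the trace theorem, $\|\tilde v(\cdot,0)\|_{H^{\sigma+\mez}(\T^d)}\le C\|\tilde v\|_{H^{\sigma+1}(\Omega)}$ and $\|\tilde v(\cdot,0)\|_{H^{s-\tdm}(\T^d)}\le C\|\tilde v\|_{H^{s-1}(\Omega)}$. Choosing the pivot index $s_1:=s-\tdm$, which satisfies $s_1>d/2$ precisely because $s>(d+1)/2+1$, together with $\|\cN\|_{H^{s_1}}\le\|\cN\|_{H^{s-\mez}}\le C(\|\eta\|_{H^{s+\mez}})$ and $\|\cN\|_{H^{\sigma+\mez}}\le C(1+\|\eta\|_{H^{\sigma+\tdm}})$, the tame product estimate on $\T^d$ (\cref{appendix:productestimates}) gives
\begin{align*}
\|\Psi_\gamma[\eta]\chi\|_{H^{\sigma+\mez}(\T^d)}&=\|\tilde v(\cdot,0)\cdot\cN\|_{H^{\sigma+\mez}(\T^d)}\\
&\le C\bigl(\|\tilde v(\cdot,0)\|_{H^{\sigma+\mez}}\|\cN\|_{H^{s_1}}+\|\tilde v(\cdot,0)\|_{H^{s_1}}\|\cN\|_{H^{\sigma+\mez}}\bigr)\\
&\le C(\|\eta\|_{H^{s+\mez}})\|\tilde v\|_{H^{\sigma+1}(\Omega)}+C\|\tilde v\|_{H^{s-1}(\Omega)}\bigl(1+\|\eta\|_{H^{\sigma+\tdm}}\bigr).
\end{align*}
Inserting \eqref{plan:twobounds} and absorbing $C\|\chi\|_{H^{s-\fdm}}\le C\|\chi\|_{H^{\sigma-\mez}}$ (valid as $\sigma\ge s-1>s-2$) into the leading term would give exactly \eqref{tame:contPsi}.

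\textbf{Expected difficulty.} I expect the computation itself to be routine once the reduced-threshold reduction is set up; the only delicate point is the arithmetic of Sobolev orders. One must verify that $s-2\ge 0$ and, above all, that the pivot $s_1=s-\tdm$ can be taken simultaneously $>d/2$ (needed for the tame product estimate) while the corresponding trace $\tilde v(\cdot,0)\in H^{s_1}$ is still controlled, via \eqref{plan:tildev}, by $\|\chi\|_{H^{(s_1-\mez)-\mez}}=\|\chi\|_{H^{s-\fdm}}$ — both facts being exactly what $s>(d+1)/2+1$ provides. For $\sigma\le s-1$ no extra term is needed at all (there $\|\chi\cN\|_{H^{\sigma-\mez}}$ is already dominated by $\|\chi\|_{H^{\sigma-\mez}}\|\cN\|_{H^{s-\mez}}$), which is why \eqref{tame:contPsi} is asserted only for $\sigma\in[s-1,s]$.
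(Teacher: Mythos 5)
Your argument for the tame estimate \eqref{tame:contPsi} is essentially the paper's: apply the Stokes regularity theory with the reduced threshold $s'=s-1$ (legitimate precisely because $s>(d+1)/2+1$), once at the order $\sigma$ and once at the low order $s-2$, take traces, and conclude with a tame product against $\cN$ so that $\|\eta\|_{H^{\sigma+\tdm}}$ only enters through $\|\cN\|_{H^{\sigma+\mez}}$. Two small remarks: the paper runs this directly in $\Omega_\eta$ via \cref{reg for stokes with stress} rather than in the flattened domain, which is immaterial; and the product step should be cited as the $L^\infty$-based tame estimate \eqref{tame:product} combined with the embedding $H^{s-\tdm}(\T^d)\subset L^\infty(\T^d)$, since \cref{product estimate on torus} as stated requires $s_0\le \min(s_1,s_2)$ and does not apply with your pivot $s_1=s-\tdm<\sigma+\mez$; the displayed chain of inequalities is nevertheless correct, and it is exactly how the paper closes the estimate.

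The genuine gap is that the proposition asserts much more than \eqref{tame:contPsi}, and none of the rest is addressed. You do not prove the mapping bound \eqref{bound for Psi} for all $\sigma\in[0,s]$ (the easier part: \cref{reg for stokes with stress} at threshold $s$, trace, product with $\cN\in H^{s+\mez}$, plus the divergence theorem to see that $\Psi_\gamma[\eta]\chi$ has mean zero), and, more importantly, you do not prove that $\Psi_\gamma[\eta]:\rH^{\sigma-\mez}\to\rH^{\sigma+\mez}$ is an isomorphism with the inverse bound \eqref{bound for Psi inverse}. Injectivity is not formal: it uses the weak formulation \eqref{weak stokes with stress} tested with $v$ itself and the coercivity of the bilinear form for $|\gamma|\le\gamma^*$ to force $v=0$, hence $p$ constant, hence $\chi$ constant and so $\chi=0$ by the zero-mean normalization. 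Surjectivity requires solving a different boundary value problem: the $\gamma$-Stokes system with prescribed normal trace $v\cdot\cN=h$ and vanishing tangential stress \eqref{sys:stokes with Dirichlet} (\cref{weak sol for stokes with Dirichlet}, \cref{reg for stokes with Dirichlet}), and then recovering a scalar multiplier $\chi$ with $(pI-\Dd v)\cN=\chi\cN$ via the Lagrange-multiplier result \cref{Lambda representation by k}; the quantitative bound \eqref{bound for Psi inverse} then comes from \eqref{high reg estimate for stokes with Dirichlet}. None of these ingredients can be extracted from the tame-estimate computation you set up, so as it stands the proposal establishes only the final estimate of the proposition.
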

\begin{proof}
1. Let $\sigma \in [0, s]$ and $\chi \in H^{\sigma-\mez}(\T^d)$. We proceed to prove \eqref{bound for Psi}. We have $\sigma-\mez\in [-\mez, s-\mez]$ and \cref{product estimate on torus} (with $s_0=s_1=\sigma-\mez$, $s_2=s-\mez$) implies 
\bq\label{est:chiN}
\|\chi \cN\|_{H^{\sigma-\mez}}\le C\| \chi\|_{H^{\sigma-\mez}}\| \cN\|_{H^{s-\mez}}\le C(\| \eta\|_{H^{s+\mez}})\| \chi\|_{H^{\sigma-\mez}}.
\eq
By virtue of \cref{reg for stokes with stress} (see also \cref{rema:interpolation}), \eqref{sys: hom stokes with normal stress} has a unique solution $(v, p)\in H^{\sigma+1}(\Omega_\eta)\cap H^\sigma(\Omega_\eta)$ which satisfies the bound \eqref{high reg estimate for stokes with stress}.  Consequently, $v\vert_{\Sigma_\eta}\in H^{\sigma+\mez}(\T^d)$ by the trace theorem. Since $\sigma+\mez\le s+\mez$ and $\cN\in H^{s+\mez}$, \cref{product estimate on torus} implies that $\Psi_\gamma[\eta]\chi=v\vert_{\Sigma_\eta}\cdot \cN\in H^{\sigma+\mez}$ together with the estimate \eqref{bound for Psi}. It is the $H^{s+\mez}$ regularity of $\eta$ that limits  $\sigma$  to $[0, s]$ even though \cref{reg for stokes with stress} allows for $\sigma\in [0, s+1$].  Since $\na \cdot v=0$ and $v\vert_{\Sigma_{-b}}=0$, we have $\int_{\T^d}v\vert_{\Sigma_\eta}\cdot \cN=0$ by the divergence theorem. We have proven that $\Psi_\gamma[\eta]: H^{\sigma-\mez}(\T^d)\to \rH^{\sigma+\mez}(\T^d)$ for $\sigma \in [0, s]$, together with the norm estimate \eqref{bound for Psi}. 
 
2. Next, we prove that $\Psi_\gamma[\eta]$ is injective on $\rH^{-\mez}(\T^d) $. Suppose that  $\chi \in \rH^{-\mez}(\T^d)$ and $\Psi_\gamma[\eta](\chi)=0$. Using  \cref{weak sol for stokes with stress} and the weak form \eqref{weak stokes with stress}, we find
\bq
\int_{\Omega_\eta}\mez|\Dd v|^2 - \int_{\Omega_\eta} \gamma \partial_1 v \cdot v = -\int_{\T^d} \chi\cN\cdot v  = -\int_{\T^d} \chi\Psi_\gamma[\eta](\chi) =0.
\eq
The left-hand side is the bilinear form \eqref{bilinearB} which is coercive if $|\gamma|\le \gamma^*$. 

It follows that  $v=0$, and hence $p=c$ is a constant. The normal stress condition then implies $\chi =c$. But  $\chi$ has zero mean, so $\chi=0$. Therefore,  $\Psi_\gamma[\eta]$ is injective if  $|\gamma|\le \gamma^*$.

3. For the surjectivity of $\Psi_\gamma[\eta]$, we consider any $\sigma \in [0, s]$ and  an arbitrary function  $h\in \mathring{H}^{\sigma+\mez}(\T^d)$. Then, according to Proposition \ref{weak sol for stokes with Dirichlet}, the $\gamma$-Stokes system \eqref{sys:stokes with Dirichlet} with $f=g=l=0$ has a unique weak solution $(v,p)$ satisfying (\ref{weak stokes with Dirichlet}),  that is, 
\[
\int_{\Omega_\eta } \mez \Dd v: \Dd u - p\dv u - \gamma \partial_1 v\cdot u=0\quad \forall u\in {}_0H^1_\cN(\Omega_\eta).
\]
 \cref{Lambda representation by k} then implies that there exists a unique  {\it  scalar} distribution   $\chi\in H^{-\mez}(\T^d)$ such that 
$$\int_{\Omega_\eta } \mez \Dd v: \Dd u - p\dv u - \gamma \partial_1 v\cdot u = -\langle \chi, u(\cdot,\eta(\cdot))\cdot \cN(\cdot)\rangle_{H^{-\mez},H^\mez} \quad \forall u\in {}_0H^1(\Omega_\eta).$$
Comparing this with the weak form \eqref{weak stokes with stress}, we find that $(v, p)$ is the solution of \eqref{sys:stokes with stress} with $f=g=0$ and $k=(pI - \Dd v)\cN=\chi\cN$. Therefore, $(v, p)$ is the  solution of \eqref{sys: hom stokes with normal stress}. 
Moreover,  according to Theorem \ref{reg for stokes with Dirichlet}, we actually have $(v,p)\in H^{\sigma+1}\times H^\sigma$. Invoking the trace theorem, \cref{prop: composition regularity}, \cref{product estimate on torus}, and the condition that $\sigma\in [0, s]$, we  deduce that $\chi\cN= (pI - \Dd v)\cN \in H^{\sigma-\mez}(\T^d)$ and $\chi=\frac{\chi\cN\cdot \cN}{|\cN|^2}\in H^{\sigma-\mez}(\T^d)$. Upon subtracting  the mean of $\chi$ from $\chi$ and $p$, we conclude that  $\Psi_\gamma[\eta]\chi=h$ with $\chi\in \rH^{\sigma-\mez}(\T^d)$. Therefore, $\Psi_\gamma: \rH^{\sigma-\mez}(\T^d)\to \rH^{\sigma+\mez}(\T^d)$ is surjective and \eqref{bound for Psi inverse} holds.

4. Now we assume $1+(d+1)/2<s\in \Nn$ and $\sigma \in [s-1, s]$ and prove the tame estimate \eqref{tame:contPsi} for any $\chi \in H^{\sigma-\mez}(\T^d)$. We will apply  \cref{reg for stokes with stress} with $s$ replaced by $\Nn\ni s':=s-1>(d+1)/2$. For $\chi \in H^{\sigma-\mez}(\T^d)$, we have $\chi \cN\in H^{\sigma -\mez}(\T^d)$. Since $\sigma \in [s-1, s]=[s', s'+1]$ and $s-2> 0$, \cref{reg for stokes with stress} (with $s'$ in place of $s$) then gives 

 \begin{align*}
&\| v\|_{H^{\sigma+1}(\Omega_\eta)}\le C(\|\eta\|_{H^{s+\mez}})\| \chi \cN\|_{H^{\sigma-\mez}},\\
&\| v\|_{H^{s-1}(\Omega_\eta)}\le C(\|\eta\|_{H^{s+\mez}})\| \chi \cN\|_{H^{s-\frac52}}.
\end{align*}
Since $H^{s+\mez}(\T^d)\subset W^{1, \infty}(\T^d)$ for $s>(d+1)/2$, the trace theorem implies  
\[
\| v\vert_{\Sigma_\eta}\|_{H^{r-\mez}(\T^d)}\le C(\|\eta\|_{H^{s+\mez}})\| v\|_{H^{r}(\Omega_\eta)},\quad r\ge 1.
\]
For $s>(d+1)/2+1$ and $\sigma \in [s-1, s]$, we can apply \cref{product estimate on torus} to deduce 
\bq\label{tame:Psi:vest}
\begin{aligned}
&\| v\vert_{\Sigma_\eta} \|_{H^{\sigma+\mez}(\T^d)}\le  C(\|\eta\|_{H^{s+\mez}})\| \chi \|_{H^{\sigma-\mez}},\\
 &\| v\vert_{\Sigma_\eta} \|_{H^{s-\tdm}(\T^d)}\le C(\|\eta\|_{H^{s+\mez}})\| \chi \|_{H^{s-\frac52}}.
 \end{aligned}
 \eq
Hence,  applying the tame product estimate \eqref{tame:product} to $\Psi_\gamma[\eta]\chi=v\vert_{\Sigma_\eta}\cdot\cN$ and invoking \eqref{tame:Psi:vest}, we obtain 
\begin{align*}
\| \Psi_\gamma[\eta]\chi\|_{H^{\sigma+\mez}}&\le C\| v\vert_{\Sigma_\eta}\|_{H^{\sigma+\mez}}\| \cN\|_{L^\infty}+C\| v\vert_{\Sigma_\eta}\|_{L^\infty}\| \cN\|_{H^{\sigma+\mez}}\\
&\le C(\|\eta\|_{H^{s+\mez}})\big(\| \chi \|_{H^{\sigma-\mez}} +\| \chi \|_{H^{s-\frac52}}(1+\| \eta\|_{H^{\sigma+\tdm}})\big)\\
&\le C(\|\eta\|_{H^{s+\mez}})\big(\| \chi \|_{H^{\sigma-\mez}} +\| \chi \|_{H^{s-\frac52}}\| \eta\|_{H^{\sigma+\tdm}}\big),
\end{align*}
where we have used the embedding $H^{s-\tdm}(\T^d)\subset L^\infty(\T^d)$. This completes the proof of \eqref{tame:contPsi}.
\end{proof}
\begin{rema}
The estimate \eqref{tame:contPsi} with $\sigma=s-1$ is a consequence of the estimate \eqref{bound for Psi} with $s$ replaced by $s':=s-1$ and $\sigma=s'$ - the maximum allowed value of $\sigma$. On the other hand, \eqref{tame:contPsi} with $\sigma \in (s-1, s]$ is a  tame estimate  with respect to $\chi$ and $\eta$. 
\end{rema} 
The next lemma records the fact that the linear operator $\Psi_0[\eta]$ is self-adjoint.
\begin{lemm}\label{lemm: Psi_0 is self-adjoint}
  For any $\chi_1, \chi_2 \in \mathring{H}^{-\mez}(\T^d)$ we have 
    \bq
        \langle \Psi_0[\eta] \chi_1 , \chi_2 \rangle_{H^{\mez},H^{-\mez}}  = \langle \Psi_{0}[\eta] \chi_2 , \chi_1 \rangle_{H^{\mez},H^{-\mez}} .
    \eq
\end{lemm}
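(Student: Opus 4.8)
The idea is the standard symmetry argument: realize $\Psi_0[\eta]\chi_i$ via a weak solution of the $\gamma$-Stokes system with $\gamma=0$, and use one velocity field as the test function in the weak formulation for the other. Concretely, for $i=1,2$ let $(v_i,p_i)\in {}_0H^1(\Omega_\eta)\times L^2(\Omega_\eta)$ be the unique weak solution of \eqref{sys:stokes with stress} with $\gamma=0$, $f=0$, $g=0$, and $k=\chi_i\cN$, given by \cref{weak sol for stokes with stress} (note $0\le\gamma^*$); by definition $\dv v_i=0$ and $\Psi_0[\eta]\chi_i=v_i\vert_{\Sigma_\eta}\cdot\cN$, which lies in $\rH^\mez(\T^d)$ by \cref{bound for Psi and Psi inverse}. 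The weak formulation \eqref{weak stokes with stress} then reads, with $\gamma=0$,
\bq\label{sa:weakform}
\int_{\Omega_\eta}\mez \Dd v_i:\Dd u - p_i\,\dv u = -\langle \chi_i\cN, u(\cdot,\eta(\cdot))\rangle_{\Sigma_\eta}\qquad \forall\, u\in {}_0H^1(\Omega_\eta).
\eq

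Next I would substitute the admissible test function $u=v_2$ into \eqref{sa:weakform} for $i=1$, and $u=v_1$ into \eqref{sa:weakform} for $i=2$. Since $\dv v_1=\dv v_2=0$, the pressure terms drop, leaving
\[
\int_{\Omega_\eta}\mez \Dd v_1:\Dd v_2 = -\langle \chi_1\cN, v_2(\cdot,\eta(\cdot))\rangle_{\Sigma_\eta},\qquad
\int_{\Omega_\eta}\mez \Dd v_2:\Dd v_1 = -\langle \chi_2\cN, v_1(\cdot,\eta(\cdot))\rangle_{\Sigma_\eta}.
\]
The left-hand sides coincide because the bilinear form $\Dd v_1:\Dd v_2$ is symmetric in its two arguments, so
\bq\label{sa:equal}
\langle \chi_1\cN, v_2(\cdot,\eta(\cdot))\rangle_{\Sigma_\eta}=\langle \chi_2\cN, v_1(\cdot,\eta(\cdot))\rangle_{\Sigma_\eta}.
\eq
Finally, since $\chi_i$ is scalar, and multiplication by $\cN$ is bounded on $H^{\mez}(\T^d)$ (as $\cN\in W^{1,\infty}(\T^d)$ under the standing regularity of $\eta$), we may move $\cN$ onto the velocity factor: $\langle \chi_i\cN, v_j(\cdot,\eta(\cdot))\rangle_{\Sigma_\eta}=\langle \chi_i, v_j(\cdot,\eta(\cdot))\cdot\cN\rangle_{H^{-\mez},H^{\mez}}=\langle \chi_i, \Psi_0[\eta]\chi_j\rangle_{H^{-\mez},H^{\mez}}$. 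Plugging this into \eqref{sa:equal} and using that the $H^{-\mez}/H^{\mez}$ duality bracket satisfies $\langle \chi, \phi\rangle_{H^{-\mez},H^{\mez}}=\langle \phi, \chi\rangle_{H^{\mez},H^{-\mez}}$, we obtain $\langle \Psi_0[\eta]\chi_1,\chi_2\rangle_{H^\mez,H^{-\mez}}=\langle \Psi_0[\eta]\chi_2,\chi_1\rangle_{H^\mez,H^{-\mez}}$, which is the claim.

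The only point requiring (minor) care is the transfer of $\cN$ between the two factors of the surface pairing in the last step, i.e.\ justifying that pointwise multiplication by $\cN$ maps $H^{\mez}(\T^d)$ continuously to itself; this is immediate from $\eta\in W^{1,\infty}(\T^d)$ (indeed $\eta\in H^{s+\tdm}(\T^d)$ is far more than enough), so there is no real obstacle. All other steps are purely algebraic manipulations of the weak formulation \eqref{weak stokes with stress}.
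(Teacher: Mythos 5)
Your proposal is correct and follows essentially the same route as the paper: solve the $\gamma=0$ Stokes problem with data $\chi_i\cN$, test the weak formulation \eqref{weak stokes with stress} for $v_1$ with $v_2$ (and vice versa), use $\dv v_j=0$ to drop the pressure terms and the symmetry of $\Dd v_1:\Dd v_2$, then move $\cN$ onto the velocity trace to identify the boundary pairings with $\langle \chi_i,\Psi_0[\eta]\chi_j\rangle_{H^{-\mez},H^{\mez}}$. The paper's proof is the same argument, stated slightly more tersely.
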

\begin{proof}
    Let $(v_i,p_i)\in H^1(\Omega_\eta)\times L^2(\Omega_\eta)$ be the corresponding solutions to the problem (\ref{sys: hom stokes with normal stress}) with $\gamma = 0$ and $\chi=\chi_i$, $i=1,2$. By the weak form \eqref{weak stokes with stress} for $v_1$, we have 
    $$\int_{\Omega_\eta} \mez \Dd v_1 : \Dd v_2 = - \langle \chi_1\cN, v_2 \rangle_{H^{-\mez}, H^\mez} = -\langle  \chi_1, v_2\cdot \cN  \rangle_{H^{-\mez},H^{\mez}} = -\langle  \chi_1, \Psi_{0}[\eta] \chi_2 \rangle_{H^{-\mez},H^{\mez}}.
    $$
  Similarly, the weak formulation  \eqref{weak stokes with stress} for $v_2$ yields
    $$\int_{\Omega_\eta} \mez \Dd v_2 : \Dd v_1 = -\langle  \chi_2, \Psi_{0}[\eta] \chi_1 \rangle_{H^{-\mez},H^{\mez}}$$
    Comparing the above  equalities yields the result.
\end{proof}
Next, we prove contraction estimates for $\Psi_\gamma[\eta_1]-\Psi_\gamma[\eta_2]$.  
\begin{prop} \label{linearization and contraction for Psi}
    Let $(d+1)/2<s\in \Nn$, $\sigma \in [0, s]$, and consider $\eta_1, \eta_2\in H^{s+\tdm}(\T^d)$ with $\inf_{x\in\T^d}(\eta_i +b)\geq c^0_i>0$.   Let 
    \[
   |\gamma|\le \min\left\{ \gamma^*(d, b, c_1^0, \|\eta_1\|_{W^{1, \infty}}), \gamma^*(d,b,c_2^0, \|\eta_2\|_{W^{1, \infty}})\right\},
    \]
where $\gamma^*$ is given by \eqref{def:gamma*}.  Then there exists $C:\Rr^+ \to \Rr^+$ depending only on $(d,b,c_1^0,c_2^0,s,\sigma)$ such that  for any $\chi \in H^{\sigma-\mez}(\T^d)$, we have
        \bq\label{contraction estimate for R eta}
        \|\Psi_\gamma[\eta_1](\chi)-\Psi_\gamma[\eta_2](\chi) \|_{H^{\sigma+\mez}(\T^d)} \leq C(\|(\eta_1,\eta_2)\|_{H^{s+\tdm}(\T^d)})\|\eta_1-\eta_2\|_{H^{s+\tdm}(\T^d)}\|\chi\|_{H^{\sigma-\mez}(\T^d)}. 
    \eq
Moreover, if $(d+1)/2+1< s \in \Nn$, $\sigma \in [s-1,s]$, and $\chi \in H^{\sigma-\mez}(\T^d)$, then we have 
\begin{multline}\label{tame contraction estimate for R eta}
        \|\Psi_\gamma[\eta_1](\chi)-\Psi_\gamma[\eta_2](\chi) \|_{H^{\sigma+\mez}(\T^d)} \\
        \leq C(\|(\eta_1,\eta_2)\|_{H^{s+\mez}(\T^d)})\Bigl\{\|\eta_1-\eta_2\|_{H^{s+\mez}(\T^d)}\|\chi\|_{H^{\sigma-\frac12}(\T^d)}  
        + \|\eta_1-\eta_2\|_{H^{\sigma+\tdm}(\T^d)}\|\chi\|_{H^{s-\frac52}(\T^d)} \\
        + \|\eta_1-\eta_2\|_{H^{s+\mez}(\T^d)}\|\chi\|_{H^{s-\frac52}(\T^d)}\big(\|\eta_1\|_{H^{\sigma+\tdm}(\T^d)} + \|\eta_2\|_{H^{\sigma+\tdm}(\T^d)}\big) 
        \Bigr\} .
\end{multline}
\end{prop}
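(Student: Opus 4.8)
The plan is to flatten both fluid domains to the fixed cylinder $\Omega=\T^d\times(-b,0)$ and to exhibit the difference of the two pulled-back solutions as a weak solution of a single flattened $\gamma$-Stokes system, to which the regularity estimate of \cref{coro:regularityinflat} applies. Let $(v_i,p_i)$ solve \eqref{sys: hom stokes with normal stress} with $\eta=\eta_i$ and normal-stress datum $\chi\cN_i$, where $\cN_i=(-\nabla_x\eta_i,1)$; set $\tilde v_i=v_i\circ\cF_{\eta_i}$, $\tilde p_i=p_i\circ\cF_{\eta_i}$, $A_i=(\nabla\cF_{\eta_i})^{-1}$, $J_i=\det\nabla\cF_{\eta_i}$. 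By \cref{reg for stokes with stress}, $(\tilde v_i,\tilde p_i)$ solves the flattened system \eqref{sys: flattened stokes with stress} with geometry $(A_i,J_i,\cN_i)$, $\tilde f=0$, $\tilde g=0$ and boundary datum $k=\chi\cN_i$. Subtracting the $i=2$ equations from the $i=1$ equations and retaining $(A_1,J_1,\cN_1)$ as the base geometry, $(\tilde v_1-\tilde v_2,\tilde p_1-\tilde p_2)$ is a weak solution of \eqref{sys: flattened stokes with stress} with geometry $(A_1,J_1,\cN_1)$ and forcing $(\bar f,\bar g,\bar k)$ consisting of terms \emph{bilinear} in the geometric differences $A_1-A_2$, $J_1-J_2$, $\cN_1-\cN_2$ (and their first derivatives) and in $\tilde v_2,\tilde p_2$ (and their first and second derivatives); for instance $\bar g=-\nabla_{x,z}\tilde v_2:(A_1-A_2)^T$. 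That this difference is genuinely, not merely formally, a weak solution follows because \cref{reg for stokes with stress} already places each $(\tilde v_i,\tilde p_i)$ in $H^{\sigma+1}(\Omega)\times H^\sigma(\Omega)$. The smallness hypothesis on $\gamma$ guarantees both that $v_2$ exists with its estimate ($|\gamma|\le\gamma^*(\eta_2)$) and that \cref{coro:regularityinflat} applies with base geometry $\eta_1$ ($|\gamma|\le\gamma^*(\eta_1)$).

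Next I would record difference bounds for the geometry. Since $\varrho$ in \eqref{def:varrho} is affine in $\eta$ (the $z$-term being $\eta$-independent), $\cF_{\eta_1}-\cF_{\eta_2}$ is linear in $\eta_1-\eta_2$, and the half-derivative gain of the Poisson kernel $e^{\delta z|D|}$ (as in \cref{diffeo regularity}) gives $\nabla(\cF_{\eta_1}-\cF_{\eta_2})\in H^r(\Omega)$ whenever $\eta_1-\eta_2\in H^{r+\mez}(\T^d)$. Together with the resolvent identity $A_1-A_2=-A_1\,(\nabla\cF_{\eta_1}-\nabla\cF_{\eta_2})\,A_2$, \cref{regularity for A}, and the product rules of \cref{appendix:productestimates}, this yields
\[
\|A_1-A_2\|_{H^{s+1}(\Omega)}+\|J_1-J_2\|_{H^{s}(\Omega)}+\|\cN_1-\cN_2\|_{H^{s+\mez}(\T^d)}\le C\bigl(\|\eta_1\|_{H^{s+\tdm}}+\|\eta_2\|_{H^{s+\tdm}}\bigr)\,\|\eta_1-\eta_2\|_{H^{s+\tdm}}
\]
and analogous bounds at lower regularity. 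Using the ordinary product rules to estimate $(\bar f,\bar g,\bar k)$, once $\|\tilde v_2\|_{H^{\sigma+1}(\Omega)}+\|\tilde p_2\|_{H^\sigma(\Omega)}\le C(\|\eta_2\|_{H^{s+\tdm}})\|\chi\|_{H^{\sigma-\mez}}$ has been obtained from \cref{reg for stokes with stress} and \cref{composition regularity}, and then feeding $(\bar f,\bar g,\bar k)$ into \cref{coro:regularityinflat}, I get $\|\tilde v_1-\tilde v_2\|_{H^{\sigma+1}(\Omega)}+\|\tilde p_1-\tilde p_2\|_{H^\sigma(\Omega)}\le C\,\|\eta_1-\eta_2\|_{H^{s+\tdm}}\|\chi\|_{H^{\sigma-\mez}}$ with $C$ depending on $\|\eta_1\|_{H^{s+\tdm}}+\|\eta_2\|_{H^{s+\tdm}}$. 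Writing
\[
\Psi_\gamma[\eta_1]\chi-\Psi_\gamma[\eta_2]\chi=(\tilde v_1-\tilde v_2)(\cdot,0)\cdot\cN_1+\tilde v_2(\cdot,0)\cdot(\cN_1-\cN_2)
\]
and applying the trace theorem together with the product rule then gives \eqref{contraction estimate for R eta}.

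For the tame estimate \eqref{tame contraction estimate for R eta} the whole argument is rerun with the base Sobolev index lowered to $s':=s-1>(d+1)/2$, so that all multiplicative constants depend only on $\|\eta_i\|_{H^{s'+\tdm}}=\|\eta_i\|_{H^{s+\mez}}$ while $A_i\in H^{s'+1}=H^{s}(\Omega)$. Since \cref{coro:regularityinflat} with base index $s'$ is still linear in the data, all the tame gain is produced by the tame product estimate \eqref{tame:product}, exactly as in the proof of \eqref{tame:contPsi}: it splits $\|\tilde v_2\|_{H^{\sigma+1}}$ (from datum $\chi\cN_2$) into a $\|\chi\|_{H^{\sigma-\mez}}$-part and a $\|\chi\|_{H^{s-\fdm}}\|\eta_2\|_{H^{\sigma+\tdm}}$-part; it splits the geometric difference $A_1-A_2$ at the top regularity $H^{\sigma+1}(\Omega)$ into a $\|\eta_1-\eta_2\|_{H^{\sigma+\tdm}}$-part and a $\|\eta_1-\eta_2\|_{H^{s+\mez}}(\|\eta_1\|_{H^{\sigma+\tdm}}+\|\eta_2\|_{H^{\sigma+\tdm}})$-part (the $L^\infty$-part of $A_1-A_2$ being controlled by $\|\eta_1-\eta_2\|_{H^{s+\mez}}$); and it splits the two boundary terms in the displayed identity for $\Psi_\gamma[\eta_1]\chi-\Psi_\gamma[\eta_2]\chi$ in the same way. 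Distributing the derivatives in each bilinear piece of $(\bar f,\bar g,\bar k)$ so that the geometric difference always carries either the top derivatives or is measured in $L^\infty$, one reads off precisely the three groups of terms on the right-hand side of \eqref{tame contraction estimate for R eta}: (i) low-order geometric difference times high-order $\chi$; (ii) high-order geometric difference times low-order $\chi$; (iii) low-order geometric difference times low-order $\chi$ times high-order $\eta_i$.

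The main obstacle is the bookkeeping in this last step: each bilinear term of $\bar f$, $\bar g$, $\bar k$ and each of the two boundary terms must be split so that no Sobolev norm of $\eta_1-\eta_2$ exceeds $H^{\sigma+\tdm}$, no norm of $\chi$ exceeds $H^{\sigma-\mez}$ (its ``low'' occurrences landing in $H^{s-\fdm}$), and no norm of $\eta_i$ exceeds $H^{\sigma+\tdm}$, which forces one to invoke \cref{reg for stokes with stress} and the trace theorem at the two levels $\sigma$ and $s-1$ (resp.\ $s-\tdm$ for traces) simultaneously and to check that $s>(d+1)/2+1$ already suffices to embed all the ``low'' traces into $L^\infty$. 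Everything else — Korn and Poincar\'e, Lax--Milgram, and the difference-quotient regularity scheme — has already been set up in \cref{sec:Stokes}; the non-tame bound \eqref{contraction estimate for R eta} is the crude special case with constant $C(\|\eta_1\|_{H^{s+\tdm}}+\|\eta_2\|_{H^{s+\tdm}})$ and $\sigma\le s$.
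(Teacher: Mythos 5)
Your overall route—flatten, take the difference of the two pulled-back solutions, view it as a solution of the flattened $\gamma$-Stokes system with base geometry $(A_1,J_1,\cN_1)$ and data $(\bar f,\bar g,\bar k)$ bilinear in the geometric differences and in $(\tilde v_2,\tilde p_2)$, feed this into \cref{coro:regularityinflat}, and then recover the boundary estimate from the identity $\Psi_\gamma[\eta_1]\chi-\Psi_\gamma[\eta_2]\chi=(\tilde v_1-\tilde v_2)\vert_{z=0}\cdot\cN_1+\tilde v_2\vert_{z=0}\cdot(\cN_1-\cN_2)$—is exactly the paper's argument for $\sigma\ge 1$, and your treatment of the tame estimate (rerun everything at the lowered base index $s'=s-1$, use the interior estimates at the two levels $\sigma$ and $s-2$, and let the tame product estimate \eqref{tame:product} on the boundary products produce the three groups of terms) is also the paper's mechanism.

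There is, however, a genuine gap at the low end $\sigma\in[0,1)$, which your statement must cover since \eqref{contraction estimate for R eta} is claimed for all $\sigma\in[0,s]$. Your justification that the difference is ``genuinely, not merely formally, a weak solution'' because $(\tilde v_i,\tilde p_i)\in H^{\sigma+1}(\Omega)\times H^\sigma(\Omega)$ does not work when $\sigma=0$: in that case $\tilde v_2\in H^1$, $\tilde p_2\in L^2$ only, so the boundary datum $\bar k$ (which contains the traces $\tilde p_2\vert_{z=0}$ and $(\nabla_{x,z}\tilde v_2)\vert_{z=0}$) is not even defined, and $\bar f$, which contains second derivatives of $\tilde v_2$, cannot be estimated in $({}_0H^1(\Omega))^*$ by ``ordinary product rules''; hence \cref{coro:regularityinflat} cannot be invoked with data $(\bar f,\bar g,\bar k)$ at $\sigma=0$. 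The paper treats $\sigma=0$ by a separate energy argument at the level of the weak formulations themselves: subtracting the two weak forms gives an identity for $(\tilde v,\tilde p)=(\tilde v_1-\tilde v_2,\tilde p_1-\tilde p_2)$ against arbitrary $\tilde u\in{}_0H^1(\Omega)$ with a remainder functional involving only first derivatives of $\tilde v_2$, $\tilde p_2$ in the bulk and the boundary pairing $\langle\chi(\cN_1-\cN_2),\tilde u(\cdot,0)\rangle$; one then bounds $\|\tilde p\|_{L^2}$ by testing with a Bogovskii-type lifting $\tilde w$ satisfying $\nabla_{x,z}\tilde w:A_1^T=\tilde p$ (\cref{solving div problem}), and bounds $\|\tilde v\|_{H^1}$ by testing with $\tilde u=\tilde v$ together with Korn--Poincar\'e and the smallness $|\gamma|\le\gamma^*$, before concluding via the same boundary identity. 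You need this (or an equivalent) low-regularity argument; the $\sigma\ge1$ scheme does not degenerate to it. Two smaller points: the paper proves \eqref{contraction estimate for R eta} for integer $\sigma$ and obtains non-integer $\sigma$ by linear interpolation (using linearity in $\chi$), a step you should state; and in the tame part you do not need (and should not rely on) a tame splitting of $A_1-A_2$ in the interior estimates—no tame analogue of \eqref{bound for u} is available—since the factors $\|\eta_1-\eta_2\|_{H^{\sigma+\tdm}}$ and $\|\eta_i\|_{H^{\sigma+\tdm}}$ are produced solely by the boundary products with $\cN_1-\cN_2$ and $\cN_2$, while the interior bounds at level $s-1$ only ever contribute constants depending on $\|\eta_i\|_{H^{s+\mez}}$ and the factors $\|\eta_1-\eta_2\|_{H^{s+\mez}}$, $\|\chi\|_{H^{\sigma-\mez}}$, $\|\chi\|_{H^{s-\fdm}}$.
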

\begin{proof}
We prove \eqref{contraction estimate for R eta} for the case when $\sigma$ is an integer. The non-integer case follows from linear interpolation.   

We treat the case $\sigma = 0$ separately.  
Let $(v_i,p_i)\in H^{1}(\Omega_{\eta_i})\times L^2(\Omega_{\eta_i})$ be the weak solutions to \eqref{sys: hom stokes with normal stress} in the sense of \eqref{weak stokes with stress} with  $\eta=\eta_i$, $i\in \{1,2\}$. Set $\tilde{v}_i=v_i\circ \cF_{\eta_i}$, $\tilde{p}_i=p_i\circ \cF_{\eta_i}$, $A_i = (\nabla_{x,z}\cF_{\eta_i})^{-1}$, and $J_i(x,z) = \det(\nabla_{x,z}\cF_{\eta_i})$. In the flatten domain $\Omega$, \eqref{quotient diff integral eq} implies
\begin{multline}
    \int_\Omega \Bigl[ \mez \left(\nabla_{x,z}\tilde{v}_iA_i + A_i^T(\nabla_{x,z}\tilde{v}_i)^T\right) : \left(\nabla_{x,z}\tilde{u}A_i + A_i^T(\nabla_{x,z}\tilde{u})^T\right) \\
             \quad- \tilde{p}_i(\nabla_{x,z}\tilde{u}: A_i^T) - \gamma(\nabla_{x,z}\tilde{v}_iA_ie_1)\cdot \tilde{u}   \Bigr]J_i(x,z) dxdz = - \langle \chi(x)\cN_i,  \tilde{u}(x,0) \rangle_{H^{-\mez}(\T^d), H^\mez(\T^d)}
\end{multline}
for all $\tilde{u}\in {}_0H^1(\Omega)$, and  $\nabla_{x,z}\tilde{v}_i:A_i^T=0$.   
Then  $\tilde{v}=\tilde{v}_1 - \tilde{v}_2$ and $\tilde{p}= \tilde{p}_1- \tilde{p}_2$ satisfy
\begin{multline}
    \int_\Omega \Bigl[ \mez \left(\nabla_{x,z}\tilde{v}A_1 + A_1^T(\nabla_{x,z}\tilde{v})^T\right) : \left(\nabla_{x,z}\tilde{u}A_1 + A_1^T(\nabla_{x,z}\tilde{u})^T\right) \\ - \tilde{p}(\nabla_{x,z}\tilde{u}: A_1^T) - \gamma(\nabla_{x,z}\tilde{v}A_1e_1)\cdot \tilde{u}   \Bigr]J_1(x,z) dxdz = - K(\tilde{u})
\end{multline}
for all  $\tilde{u}\in {}_0H^1(\Omega)$,
where 
\begin{multline}
    K(\tilde{u}) =  \int_\Omega \Bigl[ \mez \left(\nabla_{x,z}\tilde{v}_2(A_1-A_2) + (A_1^T-A_2^T)(\nabla_{x,z}\tilde{v}_2)^T\right) : \left(\nabla_{x,z}\tilde{u}A_1 + A_1^T(\nabla_{x,z}\tilde{u})^T\right) \\ - \tilde{p}_2(\nabla_{x,z}\tilde{u}: (A_1^T-A_2^T) - \gamma(\nabla_{x,z}\tilde{v}_2(A_1-A_2)e_1)\cdot \tilde{u}   \Bigr]J_1(x,z) dxdz \\
    + \int_\Omega \Bigl[ \mez \left(\nabla_{x,z}\tilde{v}_2A_2 + A_2^T(\nabla_{x,z}\tilde{v}_2)^T\right) : \left(\nabla_{x,z}\tilde{u}(A_1-A_2) + (A_1^T-A_2^T)(\nabla_{x,z}\tilde{u})^T\right)   \Bigr]J_1(x,z) dxdz \\
    + \int_\Omega \Bigl[ \mez \left(\nabla_{x,z}\tilde{v}_2A_2 + A_2^T(\nabla_{x,z}\tilde{v}_2)^T\right) : \left(\nabla_{x,z}\tilde{u}A_2 + A_2^T(\nabla_{x,z}\tilde{u})^T\right) \\ - \tilde{p}_2(\nabla_{x,z}\tilde{u}: A_2^T) - \gamma(\nabla_{x,z}\tilde{v}_2A_2e_1)\cdot \tilde{u}   \Bigr](J_1-J_2)(x,z) dxdz \\
    + \langle \chi(x)(\cN_1-\cN_2),  \tilde{u}(x,0) \rangle_{H^{-\mez}(\T^d), H^\mez(\T^d)}.
\end{multline}
We have
\bq\label{bound for A1-A2}
\begin{split}
&\|\cN_1-\cN_2\|_{H^{s+\mez}(\T^d)} \leq \|\eta_1- \eta_2\|_{H^{s+\tdm}(\T^d)},\\
&\|A_1- A_2\|_{H^{s+1}(\Omega)}\leq C(\|\eta_1\|_{H^{s+\tdm}(\T^d)},\|\eta_2\|_{H^{s+\tdm}(\T^d)})\|\eta_1-\eta_2\|_{H^{s+\tdm}(\T^d)},\\
&\|J_1 - J_2\|_{H^{s+1}(\Omega)} \leq  C(\|\eta_1\|_{H^{s+\tdm}(\T^d)},\|\eta_2\|_{H^{s+\tdm}(\T^d)})\|\eta_1-\eta_2\|_{H^{s+\tdm}(\T^d)}.
\end{split}
\eq
Using \cref{solving div problem} we argue as in the proof of \cref{reg for stokes with stress} that  there exists  $\tilde{w} \in  {}_0H^1(\Omega) $ satisfying  $(\nabla_{x,z}\tilde{w}:A_1^T) =   \tilde{p}$, with
\bq\label{tilde:w:contra}
    \|\tilde{w}\|_{H^1(\Omega)}\leq C(\|\eta\|_{H^{s+\tdm}(\T^d)}) \|\tilde{p}\|_{L^2(\Omega)}.
\eq
We choose $\tilde{u} = \tilde{w}$ and apply H\"older's inequality with the aid of \eqref{tilde:w:contra}, \eqref{base energy estimate for stokes with stress} and \cref{composition regularity}. Then we obtain the following bound for $\|\tilde{p}\|_{L^2(\Omega)}$
\bq\label{bound for p tilde in contraction}
    \|\tilde{p}\|_{L^2(\Omega)} \leq C(\|\eta_1\|_{H^{s+\tdm}(\T^d)},\|\eta_2\|_{H^{s+\tdm}(\T^d)})\|\eta_1-\eta_2\|_{H^{s+\tdm}(\T^d)}\|\chi\|_{H^{-\mez}(\T^d)} + C(\|\eta_1\|_{H^{s+\tdm}(\T^d)})\|\tilde{v}\|_{H^1(\Omega)}. 
\eq
Next, we choose $\tilde{u} = \tilde{v}$. Since $\nabla_{x,z}\tilde{v}: A_1^T = \nabla_{x,z}\tilde{v}_2: (A_1^T-A_2^T)$, we have
$$\|\nabla_{x,z}\tilde{v}: A_1^T\|_{L^2(\Omega)}\leq C(\|\eta_1\|_{H^{s+\tdm}(\T^d)},\|\eta_2\|_{H^{s+\tdm}(\T^d)}) \|\eta_1-\eta_2\|_{H^{s+\tdm}(\T^d)}\|\chi\|_{H^{-\mez}(\T^d)}.$$
Using the bound above  in combination with Korn's and Poincar\'e's inequalities and the smallness assumption $|\gamma|\le  \gamma^*$, we find
\bq
    \|\tilde{v}\|^2_{H^1(\Omega)} \leq C(\|\eta_1\|_{H^{s+\tdm}(\T^d)},\|\eta_2\|_{H^{s+\tdm}(\T^d)}) \|\eta_1-\eta_2\|_{H^{s+\tdm}(\T^d)}\|\chi\|_{H^{-\mez}(\T^d)}(\|\tilde{p}\|_{L^2(\Omega)+ \|\tilde{v}\|_{H^1(\Omega)}}).
\eq
It follows from  \eqref{bound for p tilde in contraction} and Young's inequality that
\bq\label{bound for v tilde in contraction}
     \|\tilde{v}\|_{H^1(\Omega)} \leq C(\|\eta_1\|_{H^{s+\tdm}(\T^d)},\|\eta_2\|_{H^{s+\tdm}(\T^d)}) \|\eta_1-\eta_2\|_{H^{s+\tdm}(\T^d)}\|\chi\|_{H^{-\mez}(\T^d)}.
\eq
Finally, since  
\bq\label{contraction in flattened domain}
(\Psi_\gamma[\eta_1] - \Psi_\gamma[\eta_2]) \chi = \tilde{v}_1\vert_{z=0}\cdot (\cN_1 - \cN_2) + \tilde{v}\vert_{z=0}\cdot  \cN_2,
\eq
\eqref{bound for v tilde in contraction}, \eqref{bound for A1-A2}, \eqref{base energy estimate for stokes with stress}, and \cref{composition regularity} imply \eqref{contraction estimate for R eta} with $\sigma=0$.

For $\sigma\geq 1$,  we have  $(v_i,p_i)\in H^{\sigma+1}(\Omega_{\eta_i})\times H^{\sigma}(\Omega_{\eta_i})$. Then $(\tilde{v}_i, \tilde{p}_i)$ satisfy  \eqref{sys: flattened stokes with stress}  in the strong sense (for $\sigma\geq 1$). Hence, 
 $\tilde{v}$ and $\tilde{p}$ satisfy (in the strong sense)
\bq
\begin{cases}
   -\nabla_{x,z}(\nabla_{x,z}\tilde{v} A_1):A_1^T + \nabla_{x,z} \tilde{p} A_1 - \gamma \nabla_{x,z}\tilde{v}A_1 e_1  = \bar{f} &\quad\text{in~} \Omega=\T^d\times (-b, 0), \\
    \nabla_{x,z}\tilde{v}:A_1^T= \bar{g} &\quad\text{in~} \Omega, \\
    (\tilde{p}I - \left(\nabla_{x,z}\tilde{v}A_1 +A_1^T (\nabla_{x,z}\tilde{v})^T)\right)\cN_1= \bar{k} &\quad\text{on~}\Sigma_0, \\
    \tilde{v}=0&\quad\text{on~} \Sigma_{-b},
\end{cases}
\eq
where
\bq
\begin{cases}
    \bar{f}= \nabla_{x,z}(\nabla_{x,z}\tilde{v}_2 (A_1-A_2)):A_1^T+ \nabla_{x,z}(\nabla_{x,z}\tilde{v}_2 A_2):(A_1-A_2)^T - \nabla_{x,z}\tilde{p}_2 (A_1-A_2) \\
    \qquad \qquad \qquad \qquad \qquad \qquad \qquad \qquad  \qquad \qquad \qquad \qquad \qquad \qquad  + \gamma\nabla_{x,z}\tilde{v}_2(A_1-A_2)e_1 
    \\
    \bar{g}=-\nabla_{x,z}\tilde{v}_2: (A_1-A_2)^T \\
    \\
    \bar{k}= \chi(\cN_1- \cN_2)  - \tilde{p}_2\vert_{z=0}(\cN_1 - \cN_2) 
     + \left(\nabla_{x,z}\tilde{v}_2 (A_1-A_2) + (A_1- A_2)^T(\nabla_{x,z}\tilde{v}_2)^T\right)\vert_{z=0}\cN_1 \\
    \qquad \qquad \qquad \qquad \qquad \qquad \qquad \qquad \qquad \qquad 
    + \left(\nabla_{x,z}\tilde{v}_2 A_2 + A_2^T(\nabla_{x,z}\tilde{v}_2)^T\right)\vert_{z=0}(\cN_1 - \cN_2).
\end{cases}
\eq
Applying \cref{coro:regularityinflat} gives
\bq
   \|\tilde{v}\|_{H^{\sigma+1}(\Omega)} + \|\tilde{p}\|_{H^\sigma(\Omega)} \leq C(\|\eta_1\|_{H^{s+\tdm}(\T^d)})(\|\bar{f}\|_{H^{\sigma-1}(\Omega)} + \|\bar{g}\|_{H^{\sigma}(\Omega)} + \|\bar{k}\|_{H^{\sigma-\mez}(\T^d)})
\eq
for $\sigma\in [1, s+1]$.  It follows from \eqref{bound for A1-A2} and  Lemmas   \ref{product estimate on torus} and \ref{product estimate for domain} that 
$$
\|\bar{f}\|_{H^{\sigma-1}(\Omega)} + \|\bar{g}\|_{H^{\sigma}(\Omega)} + \|\bar{k}\|_{H^{\sigma-\mez}(\T^d)} \leq C(\|\eta_1\|_{H^{s+\tdm}(\T^d)},\|\eta_2\|_{H^{s+\tdm}(\T^d)})\|\eta_1-\eta_2\|_{H^{s+\tdm}(\T^d)} \|\chi\|_{H^{\sigma-\mez}(\T^d)},
$$
whence
\bq\label{bound for u}
\|\tilde{v}\|_{H^{\sigma+1}(\Omega)}  \leq \tilde{C}(\|\eta_1\|_{H^{s+\tdm}(\T^d)},\|\eta_2\|_{H^{s+\tdm}(\T^d)})\|\eta_1-\eta_2\|_{H^{s+\tdm}(\T^d)} \|\chi\|_{H^{\sigma-\mez}(\T^d)},\quad \sigma \in [1, s+1].
\eq
Finally, when $\sigma\in [1, s]$, \eqref{contraction estimate for R eta} follows from \eqref{contraction in flattened domain} and \eqref{bound for u}. The restriction $\sigma \le s$ is due to the fact that $\cN_j\in H^{s+\mez}(\T^d)$.

{\bf 2.} We now turn to prove the tame contraction estimate \eqref{tame contraction estimate for R eta} under the stronger regularity condition  $1+(d+1)/2< s \in \Nn$ and $\sigma \in [s-1,s]$. Using \eqref{contraction in flattened domain} and the tame product estimate \eqref{tame:product}, we have
\begin{align*}
    \|\Psi_\gamma[\eta_1](\chi)-\Psi_\gamma[\eta_2](\chi) \|_{H^{\sigma+\mez}(\T^d)}& \leq C(d,s) \Bigl( \|\tilde{v}_1\vert_{z=0}\|_{L^\infty(\T^d)}\|\cN_1-\cN_2\|_{H^{\sigma+\mez}(\T^d)} \\
     \qquad&+ \|\tilde{v}_1\vert_{z=0}\|_{H^{\sigma+\mez}(\T^d)}\|\cN_1-\cN_2\|_{L^\infty(\T^d)} \\
    \qquad&+ \|\tilde{v}\vert_{z=0}\|_{L^\infty(\T^d)}\|\cN_2\|_{H^{\sigma+\mez}(\T^d)}  + \|\tilde{v}\vert_{z=0}\|_{H^{\sigma+\mez}(\T^d)}\|\cN_2\|_{L^\infty(\T^d)}
    \Bigr).
\end{align*}
For $s>(d+1)/2$, we have
\begin{align*}
&  \| \cN_j\|_{L^\infty(\T^d)}\le C(1+\| \eta_j\|_{H^{s+\mez}(\T^d)}),\quad  \| \cN_j\|_{H^{\sigma+\mez}(\T^d)}\le C(1+\| \eta_j\|_{H^{\sigma+\tdm}(\T^d)}),\\
& \|\cN_1-\cN_2\|_{L^\infty(\T^d)} \le C \|\eta_1-\eta_2\|_{H^{s+\mez}(\T^d)},\quad \|\cN_1-\cN_2\|_{H^{\sigma+\mez}(\T^d)} \le C \|\eta_1-\eta_2\|_{H^{\sigma+\tdm}(\T^d)}. 
\end{align*}
The fact that $(d+1)/2 < s-1 \in \Nn$ enables us to apply \cref{coro:regularityinflat} with $s$ replaced by  $s'=s-1$. In particular, the estimate \eqref{energy est for flattened Stokes with stress} with $\sigma=s-2>0$ and  $\sigma\in [s-1, s]=[s', s'+1]$  gives
\begin{align*}
   & \|\tilde{v}_1\vert_{z=0}\|_{L^\infty(\T^d)} \leq C \|\tilde{v}_1\vert_{z=0}\|_{H^{s-\tdm}(\T^d)} \leq C \|\tilde{v}_1\|_{H^{s-1}(\Omega)}  \leq C(\|\eta_1\|_{H^{s+\mez}(\T^d)}) \|\chi\cN_1\|_{H^{s-\frac52}(\T^d)},\\
   & \|\tilde{v}_1\vert_{z=0}\|_{H^{\sigma+\mez}(\T^d)} \leq C \|\tilde{v}_1\|_{H^{\sigma+1}(\Omega)}  \leq C(\|\eta_1\|_{H^{s+\mez}(\T^d)}) \|\chi\cN_1\|_{H^{\sigma-\mez}(\T^d)}.
\end{align*}

Invoking the product rule in \cref{product estimate on torus} yields 
\begin{align*}
   & \|\tilde{v}_1\vert_{z=0}\|_{L^\infty(\T^d)}  \leq C(\|\eta_1\|_{H^{s+\mez}(\T^d)}) \|\chi \|_{H^{s-\frac52}(\T^d)},\\
   & \|\tilde{v}_1\vert_{z=0}\|_{H^{\sigma+\mez}(\T^d)}  \leq C(\|\eta_1\|_{H^{s+\mez}(\T^d)}) \|\chi\|_{H^{\sigma-\mez}(\T^d)}.
\end{align*}
Similarly, applying  \eqref{bound for u} with $s-1$ in place of  $s$, we obtain
\begin{align*}
   &  \|\tilde{v}\vert_{z=0}\|_{L^\infty(\T^d)}\leq C(\|(\eta_1,\eta_2)\|_{H^{s+\mez}(\T^d)})\|\eta_1 - \eta_2\|_{H^{s+\mez}(\T^d)} \|\chi\|_{H^{s-\frac52}(\T^d)},\\
   &  \|\tilde{v}\vert_{z=0}\|_{H^{\sigma+\mez}(\T^d)} \leq C(\|(\eta_1,\eta_2)\|_{H^{s+\mez}(\T^d)})\|\eta_1 - \eta_2\|_{H^{s+\mez}(\T^d)} \|\chi\|_{H^{\sigma-\mez}(\T^d)}.
\end{align*}
Gathering the above estimates leads to \eqref{tame contraction estimate for R eta}.

\end{proof}

\subsection{The nonlinear normal-stress to normal-Dirichlet operator $\Phi_\gamma[\eta]$}
The linear normal-stress to normal-Dirichlet operator $\Psi_\gamma[\eta]$ defined above is associated to the linear Stokes problem \eqref{sys: hom stokes with normal stress}. Our goal in this subsection is to study the corresponding operator for the nonlinear Navier-Stokes problem in the regime of small data while the free boundary $\eta$ can still be large. We assume that $\eta$ satisfies \eqref{eta lower bound} throughout this section. 

We first establish the well-posedness of the $\gamma$-Navier-Stokes problem with prescribed normal stress: 
\begin{equation}\label{sys:navier stokes with stress}
    \begin{cases}
    -\gamma \partial_1 v - \Delta v+ v\cdot \nabla v + \nabla p = f \qquad & \Omega_\eta, \\
    \nabla\cdot v= g \qquad & \Omega_\eta, \\
    (pI - \mathbb{D}v)(\cdot, \eta(\cdot))\mathcal{N}(\cdot) = k(\cdot) \qquad &\T^d, \\
    v=0 \qquad & \Sigma_{-b}.
    \end{cases}
\end{equation}
This will be achieved by a fixed point argument. We define the nonlinear operator  $Tv=u$, where $(u, p)$  is the unique solution to the $\gamma$-Stokes problem
\begin{equation}\label{sys: T definition}
    \begin{cases}
    -\gamma \partial_1 u - \Delta u + \nabla p = f-v\cdot \nabla v  \qquad & \Omega_\eta, \\
    \nabla\cdot u= g \qquad & \Omega_\eta, \\
    (pI - \mathbb{D}u)(\cdot, \eta(\cdot))\mathcal{N}(\cdot) = k(\cdot) \qquad &\T^d, \\
    u=0 \qquad & \Sigma_{-b}.
    \end{cases}
\end{equation}
As such, $v$ is a solution  of \eqref{sys:navier stokes with stress} if and only if $v$ is a fixed point of $T$, the existence of which is the content of the next proposition. 
\begin{prop}\label{T has a fixed point}
    Let $|\gamma|\le \gamma^*$, $(d+1)/2<s\in \Nn$, and suppose $\eta\in H^{s+\tdm}(\T^d)$. There exists 
    \[
    \delta^*=\delta^*(\|\eta\|_{H^{s+\tdm}(\T^d)}, d,b,c^0, s)>0
    \]
     such that the following holds. For all $0< \delta \le \delta^*$, there exists $\delta'=\delta'(\|\eta\|_{H^{s+\tdm}(\T^d)}, d,b,c^0,s)>0$ such that if 
    \bq\label{nonlPsi:cddata}
    \max\{\|f\|_{H^{s-1}(\Omega_\eta)},\|g\|_{H^{s}(\Omega_\eta)},\|k\|_{H^{s-\mez}(\T^d)}\}\leq \delta',
    \eq
    then $T$ maps $\overline{B_{H^{s+1}(\Omega_\eta)}(0,\delta)}$ to itself and is a contraction.  Consequently, the $\gamma$-Navier-Stokes problem \eqref{sys:navier stokes with stress} has a unique solution $(v, p)\in {}_0H^{s+1}(\Omega_\eta)\times H^{s}(\Omega_\eta)$ with $\| v\|_{H^{s+1}(\Omega_\eta)}\le \delta$. Moreover,  $v$ and $p$ satisfy
    \bq\label{estimate for navier stokes with stress}
 \| v\|_{H^{s+1}(\Omega_\eta)}\le  C(\|\eta\|_{H^{s+\tdm}(\T^d)}) \bigl( \|f\|_{H^{s-1}(\Omega_\eta)} + \|g\|_{H^{s}(\Omega_\eta)} + \|k\|_{H^{s-\mez}(\T^d)}\bigr). 
   \eq
\end{prop}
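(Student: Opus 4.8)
The plan is a standard Banach fixed-point argument in the ball $\overline{B_{H^{s+1}(\Omega_\eta)}(0,\delta)}$, with the linear theory of \cref{reg for stokes with stress} supplying the solution operator for \eqref{sys: T definition} and the product estimates of \cref{appendix:productestimates} controlling the quadratic term $v\cdot\nabla v$. First I would fix $\eta\in H^{s+\tdm}(\T^d)$ (so that $\eta\in W^{2,\infty}$ since $s>(d+1)/2$) and $|\gamma|\le\gamma^*$. Given $v\in H^{s+1}(\Omega_\eta)$, the forcing $f-v\cdot\nabla v$ lies in $H^{s-1}(\Omega_\eta)\hookrightarrow ({}_0H^1(\Omega_\eta))^*$: indeed, since $s>(d+1)/2$ the space $H^s(\Omega_\eta)$ is an algebra and $H^{s+1}\cdot H^s\subset H^s\subset H^{s-1}$, so $\cref{product estimate for domain}$ gives $\|v\cdot\nabla v\|_{H^{s-1}(\Omega_\eta)}\le C(\|\eta\|_{H^{s+\tdm}})\|v\|_{H^{s+1}(\Omega_\eta)}^2$. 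Hence \cref{reg for stokes with stress} (case $\sigma=s$) produces a unique $(u,p)\in{}_0H^{s+1}(\Omega_\eta)\times H^s(\Omega_\eta)$ solving \eqref{sys: T definition}, so the map $Tv=u$ is well defined, together with the bound
\bq\label{propplan:linbd}
\|u\|_{H^{s+1}(\Omega_\eta)}+\|p\|_{H^s(\Omega_\eta)}\le C_0(\|\eta\|_{H^{s+\tdm}})\bigl(\|f\|_{H^{s-1}(\Omega_\eta)}+\|g\|_{H^s(\Omega_\eta)}+\|k\|_{H^{s-\mez}(\T^d)}+\|v\|_{H^{s+1}(\Omega_\eta)}^2\bigr).
\eq

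Next I would show $T$ is a self-map. From \eqref{propplan:linbd}, if $\|v\|_{H^{s+1}(\Omega_\eta)}\le\delta$ and the data satisfy \eqref{nonlPsi:cddata} with threshold $\delta'$, then $\|Tv\|_{H^{s+1}(\Omega_\eta)}\le C_0(3\delta'+\delta^2)$. Choosing first $\delta^*=\delta^*(\|\eta\|_{H^{s+\tdm}},d,b,c^0,s):=\frac{1}{2C_0}$, so that for $\delta\le\delta^*$ we have $C_0\delta^2\le\delta/2$, and then $\delta'\le\delta/(6C_0)$, we get $\|Tv\|_{H^{s+1}(\Omega_\eta)}\le\delta$, i.e. $T$ maps $\overline{B_{H^{s+1}(\Omega_\eta)}(0,\delta)}$ into itself. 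For the contraction, let $v_1,v_2$ lie in this ball and set $u_j=Tv_j$. Then $u_1-u_2$ (with pressure difference $p_1-p_2$) solves \eqref{sys: T definition} with zero boundary data, zero divergence, and right-hand side $-(v_1\cdot\nabla v_1-v_2\cdot\nabla v_2)=-v_1\cdot\nabla(v_1-v_2)-(v_1-v_2)\cdot\nabla v_2$. Applying \cref{reg for stokes with stress} again and the algebra property,
\bq\label{propplan:contr}
\|Tv_1-Tv_2\|_{H^{s+1}(\Omega_\eta)}\le C_0(\|\eta\|_{H^{s+\tdm}})\bigl(\|v_1\|_{H^{s+1}}+\|v_2\|_{H^{s+1}}\bigr)\|v_1-v_2\|_{H^{s+1}(\Omega_\eta)}\le 2C_0\delta\,\|v_1-v_2\|_{H^{s+1}(\Omega_\eta)},
\eq
which is a strict contraction once (after possibly shrinking $\delta^*$) $2C_0\delta<1$, e.g. $\delta^*=(4C_0)^{-1}$. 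The Banach fixed-point theorem then yields a unique fixed point $v\in\overline{B_{H^{s+1}(\Omega_\eta)}(0,\delta)}$, and the associated pressure $p\in H^s(\Omega_\eta)$ is the one furnished by \cref{reg for stokes with stress} for the linear problem \eqref{sys: T definition} with forcing $f-v\cdot\nabla v$; uniqueness of $(v,p)$ in ${}_0H^{s+1}(\Omega_\eta)\times H^s(\Omega_\eta)$ with $\|v\|_{H^{s+1}}\le\delta$ follows from \eqref{propplan:contr}. Finally, the estimate \eqref{estimate for navier stokes with stress} comes from feeding $\|v\|_{H^{s+1}(\Omega_\eta)}\le\delta\le\delta^*$ back into \eqref{propplan:linbd}: the quadratic term is absorbed, $C_0\|v\|_{H^{s+1}}^2\le C_0\delta^*\|v\|_{H^{s+1}}\le\frac12\|v\|_{H^{s+1}}$, leaving $\|v\|_{H^{s+1}(\Omega_\eta)}\le 2C_0(\|f\|_{H^{s-1}(\Omega_\eta)}+\|g\|_{H^s(\Omega_\eta)}+\|k\|_{H^{s-\mez}(\T^d)})$.

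The only genuinely delicate point is verifying the product estimate $\|v\cdot\nabla v\|_{H^{s-1}(\Omega_\eta)}\le C(\|\eta\|_{H^{s+\tdm}})\|v\|_{H^{s+1}(\Omega_\eta)}^2$ and its bilinear counterpart in the curved, Sobolev-regular domain $\Omega_\eta$; this is exactly what \cref{product estimate for domain} is designed for, since $s>(d+1)/2$ gives the needed algebra structure and the boundary regularity $\eta\in H^{s+\tdm}\hookrightarrow W^{2,\infty}$ is comfortably within the hypotheses there. Everything else is bookkeeping: matching the smallness thresholds $\delta^*$ and $\delta'$ so that both the self-map and contraction properties hold simultaneously, and tracking that the constant $C_0$ depends only on $\|\eta\|_{H^{s+\tdm}(\T^d)}$ and $(d,b,c^0,s)$, as required by \cref{reg for stokes with stress}.
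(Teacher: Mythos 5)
Your proposal is correct and follows essentially the same route as the paper: define $T$ via the linear solver of \cref{reg for stokes with stress} with $\sigma=s$, bound $v\cdot\nabla v$ in $H^{s-1}(\Omega_\eta)$ by the algebra/product estimates, and choose $\delta^*\sim(4C_0)^{-1}$, $\delta'\lesssim\delta/C_0$ to get the self-map and contraction, concluding with the absorbed quadratic term for \eqref{estimate for navier stokes with stress}. The constant bookkeeping matches the paper's choices ($\delta^*=(4C)^{-1}$, $\delta'\le\delta(6C)^{-1}$) up to inessential differences.
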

\begin{proof}
    Let $v\in {}_0H^{s+1}(\Omega_\eta)$.  Since $H^s(\Omega_\eta)$ is an algebra for $s>(d+1)/2$,  we have
    \[
    \|v\cdot \nabla v\|_{H^{s-1}(\Omega_\eta)}\le \|v\cdot \nabla v\|_{H^{s}(\Omega_\eta)}\le C(\|\eta\|_{W^{1, \infty}(\T^d)}) \|v\|^2_{H^{s+1}(\Omega_\eta)}.
    \]
By virtue of \cref{reg for stokes with stress},  $(v, p)\in  {}_0H^s_\sigma(\Omega_\eta)\times H^{s-1}(\Omega_\eta)$. We set   $u=Tv$. The regularity estimate  (\ref{high reg estimate for stokes with stress}) implies
    \begin{multline}\label{est:T:NS}
        \|u\|_{H^{s+1}(\Omega_\eta)} \leq C(\|\eta\|_{H^{s+\tdm}(\T^d)}) \bigl( \|f\|_{H^{s-1}(\Omega_\eta)} +\|v\cdot \nabla v\|_{H^{s-1}(\Omega_\eta)}+ \|g\|_{H^{s}(\Omega_\eta)} + \|k\|_{H^{s-\mez}(\T^d)}\bigr)\\
        \leq C(\|\eta\|_{H^{s+\tdm}(\T^d)}) \bigl( \|f\|_{H^{s-1}(\Omega_\eta)} +\|v\|^2_{H^{s+1}(\Omega_\eta)}+ \|g\|_{H^{s}(\Omega_\eta)} + \|k\|_{H^{s-\mez}(\T^d)}\bigr),
    \end{multline}
    where $C$ depends only on $(d,b,c^0,s,\sigma)$.    In particular,  $T: H^{s+ 1}(\Omega_\eta)\to H^{s+ 1}(\Omega_\eta)$. We set $\delta^* = (4C(\|\eta\|_{H^{s+\tdm}(\T^d)}))^{-1}$.  Then, for all $0<\delta\le \delta^*$, choosing  $\delta' \leq \delta (6C(\|\eta\|_{H^{s+\tdm}(\T^d)}))^{-1}$, we deduce that $T$ maps $\overline{B_{H^{s+1}(\Omega_\eta)}(0,\delta)}$ to itself for all data $(f, g, k)$ satisfying \eqref{nonlPsi:cddata}.

    For contraction, suppose $u_1= Tv_1$ and $u_2=Tv_2$. Then $u:=u_1-u_2$ satisfies (\ref{sys:stokes with stress}) with $f=v_2\cdot \nabla v_2 - v_1 \cdot \nabla v_1 = v_1\nabla (v_2-v_1) + (v_2-v_1)\cdot \nabla v_2$, $g=0$, and $k=0$. Appealing again to (\ref{high reg estimate for stokes with stress}) yields
    \begin{multline}
         \|u\|_{H^{s+1}(\Omega_\eta)} \leq C(\|\eta\|_{H^{s+\tdm}(\T^d)}) \bigl( \| v_1\nabla (v_2-v_1)\|_{H^{s-1}(\Omega_\eta)} +\| (v_2-v_1)\cdot \nabla v_2\|_{H^{s-1}(\Omega_\eta)}\bigr) \\
         \leq C(\|\eta\|_{H^{s+\tdm}(\T^d)}) \bigl( \| v_1\|_{H^{s+1}(\Omega_\eta)} +\| v_2\|_{H^{s+1}(\Omega_\eta)}\bigr) \| v_1-v_2\|_{H^{s+1}(\Omega_\eta)}.
    \end{multline}
     Since $\delta\le \delta^*=(4C(\|\eta\|_{H^{s+\tdm}(\T^d)}))^{-1}$, we have 
     \bq
      \|u\|_{H^{s+1}(\Omega_\eta)} \leq \mez  \| v_1-v_2\|_{H^{s+1}(\Omega_\eta)}
     \eq
    which proves that $T$ is a contraction. The unique fixed point $v$ of $T$ is the unique solution of \eqref{sys:navier stokes with stress} in the ball  and satisfies \eqref{estimate for navier stokes with stress} in view of \eqref{est:T:NS} and the fact that $\delta C(\|\eta\|_{H^{s+\tdm}(\T^d)})\le \frac14$. 
\end{proof}

Using the above argument to the Navier-Stokes version of  \eqref{sys:stokes with Dirichlet}, we obtain
\begin{prop}\label{well-posedness for navier stokes with small data with Dirichlet}
    Let $(d+1)/2<s\in \Nn$, $|\gamma|\le \gamma^*$, and suppose $\eta\in H^{s+\tdm}(\T^d)$. There exists 
    \[
    \tilde{\delta}^*=\tilde{\delta}^*(\|\eta\|_{H^{s+\tdm}(\T^d)}, d,b,c^0, s)>0
    \]
     such that the following holds. For all $0< \delta \le \tilde{\delta}^*$, there exists $\tilde{\delta}'=\tilde{\delta}'(\|\eta\|_{H^{s+\tdm}(\T^d)}, d,b,c^0,s)>0$ such that if $(f, g, l, h)$ satisfies the compatibility conditions  \eqref{compatibility for l}, \eqref{compatibility for h and g}, and 
    \bq
    \max\{\|f\|_{H^{s-1}(\Omega_\eta)},\|g\|_{H^{s}(\Omega_\eta)},\|l\|_{H^{s-\mez}(\T^d)},\|h\|_{H^{s+\mez}(\T^d)}\}\leq \tilde{\delta}',
    \eq 
then the system
    \begin{equation}\label{sys:navier stokes with Dirichlet}
    \begin{cases}
    -\gamma \partial_1 v - \Delta v + v\cdot \nabla v+ \nabla p = f& \quad \text{in~}\Omega_\eta, \\
    \nabla\cdot v= g & \quad \text{in~} \Omega_\eta, \\
    \cN^\perp(pI - \mathbb{D}v)(\cdot, \eta(\cdot))\mathcal{N}(\cdot) = l(\cdot) \qquad & \quad \text{on~} \T^d,\\
    v(\cdot,\eta(\cdot))\cdot \cN(\cdot) = h(\cdot) & \quad \text{on~}\T^d, \\
    v=0 \qquad & \quad \text{on~}\Sigma_{-b}.
    \end{cases},
\end{equation}
has a unique solution $(v,p)\in {}_0H^{s+1}(\Omega_\eta)\times \rH^{s}(\Omega_\eta)$ with $\| v\|_{H^{s+1}(\Omega_\eta)} \le \delta$. Moreover, $v$ and $p$ satisfy
\begin{multline}\label{estimate for navier stokes with Dirichlet}
\|v\|_{H^{s+1}(\Omega_\eta)}+\|p\|_{H^{s}(\Omega_\eta)}\leq C(\|\eta\|_{H^{s+\tdm}(\T^d)}) \left(\|f\|_{H^{s-1}(\Omega_\eta)}+\|g\|_{H^{s}(\Omega_\eta)}+\|l\|_{H^{s-\mez}(\T^d)}+\|h\|_{H^{s+\mez}(\T^d)}\right),
\end{multline}
where $C:\Rr^+\to \Rr^+$  depends only on $(d,b,c^0,s)$.  
\end{prop}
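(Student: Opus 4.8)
The plan is to mirror the fixed-point argument of \cref{T has a fixed point}, now using the Navier-boundary regularity estimate \eqref{high reg estimate for stokes with Dirichlet} in place of \eqref{high reg estimate for stokes with stress}. Fix $\eta \in H^{s+\tdm}(\T^d)$ satisfying \eqref{eta lower bound}, $|\gamma| \le \gamma^*$, and data $(f,g,l,h)$ obeying the compatibility conditions \eqref{compatibility for l} and \eqref{compatibility for h and g}. For $v \in {}_0H^{s+1}(\Omega_\eta)$ I would define $Tv := u$, where $(u,p) \in {}_0H^{s+1}(\Omega_\eta)\times \rH^s(\Omega_\eta)$ is the weak solution furnished by \cref{weak sol for stokes with Dirichlet} of the $\gamma$-Stokes system \eqref{sys:stokes with Dirichlet} with bulk forcing $f - v\cdot\na v$, divergence $g$, tangential stress $l$, and normal trace $h$. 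Three remarks make this legitimate: first, replacing the forcing leaves the divergence constraint $\na\cdot u = g$ and the identity $l\cdot\cN = 0$ untouched, so $(f - v\cdot\na v, g, l, h)$ still satisfies \eqref{compatibility for l} and \eqref{compatibility for h and g}; second, since $H^s(\Omega_\eta)$ is an algebra for $s > (d+1)/2$, one has $\|v\cdot\na v\|_{H^{s-1}(\Omega_\eta)} \le \|v\cdot\na v\|_{H^s(\Omega_\eta)} \le C(\etanorm)\|v\|_{H^{s+1}(\Omega_\eta)}^2$, and in particular $f - v\cdot\na v \in H^{s-1}(\Omega_\eta)\subset ({}_0H^1(\Omega_\eta))^*$; third, \cref{reg for stokes with Dirichlet} applies at the endpoint exponent $\sigma = s$ (the induction in its proof reaches $\sigma = s$ from $\sigma = s-1$) and yields
\[
\|Tv\|_{H^{s+1}(\Omega_\eta)} + \|p\|_{H^s(\Omega_\eta)} \le C_0\Bigl(\|f\|_{H^{s-1}(\Omega_\eta)} + \|v\|_{H^{s+1}(\Omega_\eta)}^2 + \|g\|_{H^s(\Omega_\eta)} + \|l\|_{H^{s-\mez}(\T^d)} + \|h\|_{H^{s+\mez}(\T^d)}\Bigr)
\]
with $C_0 = C_0(\etanorm)$ depending only on $(d,b,c^0,s)$. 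In particular $T$ maps ${}_0H^{s+1}(\Omega_\eta)$ into itself.

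Next I would choose the radii. Put $\tilde\delta^* := (4C_0)^{-1}$, and for $0 < \delta \le \tilde\delta^*$ put $\tilde\delta' := \delta(16C_0)^{-1}$. If $\|v\|_{H^{s+1}} \le \delta$ and $\max\{\|f\|_{H^{s-1}},\|g\|_{H^s},\|l\|_{H^{s-\mez}},\|h\|_{H^{s+\mez}}\} \le \tilde\delta'$, the displayed estimate gives $\|Tv\|_{H^{s+1}} \le C_0\bigl(4\tilde\delta' + \delta^2\bigr) \le \tfrac14\delta + \tfrac14\delta \le \delta$, so $T$ maps the complete metric space $\overline{B_{H^{s+1}(\Omega_\eta)}(0,\delta)}\cap{}_0H^{s+1}(\Omega_\eta)$ into itself. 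For the contraction property, given $v_1,v_2$ in this ball, the difference $Tv_1 - Tv_2$ solves \eqref{sys:stokes with Dirichlet} with $g=l=h=0$ (hence the compatibility conditions are trivially satisfied) and bulk forcing $v_1\cdot\na(v_2-v_1) + (v_2-v_1)\cdot\na v_2$; the estimate \eqref{high reg estimate for stokes with Dirichlet} and the algebra property then give $\|Tv_1 - Tv_2\|_{H^{s+1}} \le C_0\bigl(\|v_1\|_{H^{s+1}} + \|v_2\|_{H^{s+1}}\bigr)\|v_1-v_2\|_{H^{s+1}} \le 2C_0\delta\|v_1-v_2\|_{H^{s+1}} \le \tfrac12\|v_1-v_2\|_{H^{s+1}}$. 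By the Banach fixed point theorem there is a unique fixed point $v$ in the ball; the associated pressure $p$, normalized to have zero mean, is then unique as well, and $(v,p)\in{}_0H^{s+1}(\Omega_\eta)\times\rH^s(\Omega_\eta)$ solves \eqref{sys:navier stokes with Dirichlet}. Feeding this $v$ back into the displayed estimate and absorbing $C_0\|v\|_{H^{s+1}}^2 \le C_0\delta\|v\|_{H^{s+1}} \le \tfrac14\|v\|_{H^{s+1}}$ into the left-hand side yields the bound \eqref{estimate for navier stokes with Dirichlet}.

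I do not anticipate any genuine analytic obstacle here: the substantial work --- the weak solvability (\cref{weak sol for stokes with Dirichlet}) and the sharp Sobolev regularity estimate (\cref{reg for stokes with Dirichlet}) for the $\gamma$-Stokes system with Navier boundary conditions over a domain of mere Sobolev regularity --- has already been done, and what remains is the routine contraction scheme above. The only points calling for care are bookkeeping: verifying that the compatibility conditions \eqref{compatibility for l} and \eqref{compatibility for h and g} are propagated by $T$ (they are, because $T$ modifies only the bulk forcing), and confirming that \cref{reg for stokes with Dirichlet} may be used at the endpoint $\sigma = s$ with constants of the form $C(\etanorm)$.
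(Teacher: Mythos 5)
Your argument is correct and is exactly the route the paper intends: it proves this proposition by running the fixed-point scheme of \cref{T has a fixed point} with \cref{reg for stokes with Dirichlet} (at $\sigma=s$) in place of \cref{reg for stokes with stress}, which is what you do. Your bookkeeping on the compatibility conditions, the choice of $\tilde\delta^*$, $\tilde\delta'$, and the absorption of the quadratic term to get \eqref{estimate for navier stokes with Dirichlet} all check out.
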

\begin{defi}\label{def:PhiXi} For $\chi: \T^d\to \Rr$, suppose that  $(v, p)$ is the  solution of \eqref{sys:navier stokes with stress} with $f=g=0$ and  $k=\chi \cN$, we define the (nonlinear) normal-stress to normal-Dirichlet operator for the case of Navier-Stokes by
\bq
    \Phi_\gamma[\eta](\chi) = v\vert_{\Sigma_\eta}\cdot \cN.
\eq
For $h: \T^d\to \Rr$, suppose that  $(v,p)\in H^{s+1}(\Omega_\eta)\times \rH^s(\Omega_\eta)$ is the  solution of \eqref{sys:navier stokes with Dirichlet} with $f=g=l=0$.  Then the normal stress $(pI - \mathbb{D}v)\cN$ is parallel to the normal vector $\cN$, so $(pI - \mathbb{D}v)\cN=\chi \cN$ for some scalar function $\chi: \T^d\to \Rr$.   We define the (nonlinear) normal-Dirichlet to normal-stress operator for the case of Navier-Stokes by
\bq
    \Xi_\gamma[\eta](h) =\chi \equiv (pI-\Dd v)\cN\cdot \frac{\cN}{|\cN|^2}.
\eq
\end{defi}
It can be readily checked that $\Phi_\gamma[\eta](\chi+c)=\Phi_\gamma[\eta](\chi)$ for all $c\in \Rr$. In the next proposition, we address the well-definedness of these operators in suitable domains. 
\begin{prop} \label{prop of Phi and Xi}
    Let $|\gamma|\le \gamma^*$, $(d+1)/2<s\in \Nn$, and suppose  $\eta \in H^{s+\tdm}(\T^d)$. 
    \begin{enumerate}
        \item[(i)]  There exist positive numbers $\delta^0$ and $\tilde{\delta}^0$ depending only on  $(\|\eta\|_{H^{s+\tdm}(\T^d)}, d,b,c^0, s)$ such that the operators $\Phi_\gamma[\eta]: B_{H^{s-\mez}(\T^d)}(0, \delta^0)\to \mathring{H}^{s+\mez}(\T^d)$ and $\Xi_\gamma[\eta]: B_{\rH^{s+\mez}(\T^d)}(0, \tilde{\delta}^0)\to H^{s-\mez}(\T^d)$ as described in \cref{def:PhiXi} are well-defined. Moreover,  there exist $M_\Phi$,  $M_\Xi :\Rr^+ \to \Rr^+$ depending only on $(d,b,c^0,s)$ such that 
        \begin{align} \label{bound for Phi}
        \|\Phi_\gamma[\eta](\chi)\|_{H^{s+\mez}(\T^d)} \leq M_\Phi(\|\eta\|_{H^{s+\tdm}(\T^d)})\|\chi\|_{H^{s-\mez}}\quad \forall \chi\in B_{H^{s-\mez}(\T^d)}(0,\delta^0),\\
   \label{bound for Xi}
        \|\Xi_\gamma[\eta](h)\|_{H^{s-\mez}(\T^d)} \leq M_\Xi(\|\eta\|_{H^{s+\tdm}(\T^d)})\|h\|_{H^{s+\mez}(\T^d)}\quad\forall h\in  B_{\rH^{s+\mez}(\T^d)}(0,\tilde{\delta}^0).
        \end{align}
        Moreover, $\Phi_\gamma[\eta]: B_{\rH^{s-\mez}(\T^d)}(0, \delta^0)\to \mathring{H}^{s+\mez}(\T^d)$ is injective. 
        \item[(ii)]  There exists $\delta^\dag>0$ depending only on $(\|\eta\|_{H^{s+\tdm}(\T^d)}, d,b,c^0, s)$ such that  the image of $B_{\rH^{s-\mez}(\T^d)}(0, \delta^0)$ under $\Phi_\gamma[\eta]$ contains $B_{\rH^{s+\mez}(\T^d)}(0, \delta^\dag)$. Moreover, the inverse of $\Phi_\gamma[\eta]$ is given by 
        \bq\label{form:Phiinverse}
        \Phi_\gamma[\eta]^{-1}(h)=\Xi_\gamma[\eta](h)-\int_{\T^d} \Xi_\gamma[\eta](h)dx\quad\forall h\in B_{\rH^{s+\mez}(\T^d)}(0, \delta^\dag). 
        \eq
          \end{enumerate}
\end{prop}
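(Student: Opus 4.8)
The plan is to obtain \cref{prop of Phi and Xi} directly from the two nonlinear well-posedness results \cref{T has a fixed point} and \cref{well-posedness for navier stokes with small data with Dirichlet}, supplemented by the trace theorem and the product and inverse estimates collected in \cref{appendix:productestimates}. Part (i) amounts to unwinding \cref{def:PhiXi} and checking that the data in the two $\gamma$-Navier-Stokes problems meet the relevant smallness hypotheses; part (ii) is a gluing argument identifying the two formulations. Throughout, one fixes first an admissible velocity radius $\delta$ (small enough, e.g. so that $C(\etanorm)\delta<(4c_*)^{-1}$ below), then the data thresholds $\delta'$, $\tilde\delta'$ furnished by those propositions, and only then chooses $\delta^0,\tilde\delta^0,\delta^\dag$ small relative to them; all constants produced depend only on $(d,b,c^0,s)$ and on $\etanorm$.

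\textbf{Part (i).} For $\Phi_\gamma[\eta]$: given a scalar $\chi$ with $\|\chi\|_{H^{s-\mez}}\le\delta^0$, \cref{product estimate on torus} gives $\|\chi\cN\|_{H^{s-\mez}(\T^d)}\le C(\etanorm)\|\chi\|_{H^{s-\mez}}$, so after shrinking $\delta^0$ the triple $(f,g,k)=(0,0,\chi\cN)$ satisfies \eqref{nonlPsi:cddata}, and \cref{T has a fixed point} produces a unique small $(v,p)$ with $\|v\|_{H^{s+1}(\Omega_\eta)}\le C(\etanorm)\|\chi\|_{H^{s-\mez}}$ by \eqref{estimate for navier stokes with stress}. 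The trace theorem and \cref{product estimate on torus} then give $\Phi_\gamma[\eta](\chi)=v\vert_{\Sigma_\eta}\cdot\cN\in H^{s+\mez}(\T^d)$ with \eqref{bound for Phi}, while $\dv v=0$, $v\vert_{\Sigma_{-b}}=0$ and the divergence theorem force $\int_{\T^d}v\vert_{\Sigma_\eta}\cdot\cN=0$, placing the range in $\rH^{s+\mez}(\T^d)$. For $\Xi_\gamma[\eta]$: given $h\in\rH^{s+\mez}(\T^d)$ with $\|h\|_{H^{s+\mez}}\le\tilde\delta^0$, the compatibility conditions \eqref{compatibility for l}--\eqref{compatibility for h and g} hold trivially with $f=g=l=0$, so \cref{well-posedness for navier stokes with small data with Dirichlet} yields a unique small $(v,p)\in H^{s+1}(\Omega_\eta)\times\rH^s(\Omega_\eta)$. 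The boundary condition $\cN^\perp(pI-\Dd v)\cN=l=0$ means precisely that $(pI-\Dd v)\cN$ is parallel to $\cN$, i.e. equals $\chi\cN$ with $\chi=(pI-\Dd v)\cN\cdot\cN\,|\cN|^{-2}$; since $|\cN|^2=1+|\na\eta|^2\ge1$, \cref{product estimate for inverse} keeps $|\cN|^{-2}$ in the $H^{s-\mez}$ multiplier class, and combining the traces $p\vert_{\Sigma_\eta},\Dd v\vert_{\Sigma_\eta}\in H^{s-\mez}(\T^d)$ with \cref{product estimate on torus} and \eqref{estimate for navier stokes with Dirichlet} gives $\chi\in H^{s-\mez}(\T^d)$ and \eqref{bound for Xi}. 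Finally, for injectivity of $\Phi_\gamma[\eta]$ on $B_{\rH^{s-\mez}(\T^d)}(0,\delta^0)$: if $\Phi_\gamma[\eta](\chi_1)=\Phi_\gamma[\eta](\chi_2)$ with corresponding solutions $(v_i,p_i)$, then $v=v_1-v_2$, $p=p_1-p_2$ is the weak solution of \eqref{sys:stokes with stress} with $g=0$, $k=(\chi_1-\chi_2)\cN$ and $f=-\bigl(v_1\cdot\na v+v\cdot\na v_2\bigr)$; testing \eqref{weak stokes with stress} against $u=v$, the boundary pairing vanishes because $v\vert_{\Sigma_\eta}\cdot\cN=\Phi_\gamma[\eta](\chi_1)-\Phi_\gamma[\eta](\chi_2)=0$, the convection term is bounded by $C(\etanorm)\bigl(\|v_1\|_{H^{s+1}}+\|v_2\|_{H^{s+1}}\bigr)\|v\|_{H^1(\Omega_\eta)}^2\le C(\etanorm)\,\delta\,\|v\|_{H^1(\Omega_\eta)}^2$, and the left-hand side is $\ge\frac{1}{4c_*}\|v\|_{H^1(\Omega_\eta)}^2$ by Korn--Poincar\'e and $|\gamma|\le\gamma^*$ (as in \cref{weak sol for stokes with stress}); having fixed $\delta$ so that $C(\etanorm)\delta<\frac{1}{4c_*}$ this forces $v=0$, hence $\na p=0$, hence $(\chi_1-\chi_2)\cN=p\,\cN$ with $p$ constant, and the zero-mean constraint gives $\chi_1=\chi_2$.

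\textbf{Part (ii).} Let $h\in\rH^{s+\mez}(\T^d)$ with $\|h\|_{H^{s+\mez}}\le\delta^\dag\le\tilde\delta^0$ and let $(v,p)\in H^{s+1}(\Omega_\eta)\times\rH^s(\Omega_\eta)$ be the solution of \eqref{sys:navier stokes with Dirichlet} with $f=g=l=0$, so that $(pI-\Dd v)\cN=\chi\cN$ with $\chi=\Xi_\gamma[\eta](h)$, and by \eqref{bound for Xi} and \eqref{estimate for navier stokes with Dirichlet} one has $\|\chi\|_{H^{s-\mez}}\le M_\Xi(\etanorm)\delta^\dag$ and $\|v\|_{H^{s+1}(\Omega_\eta)}\le C(\etanorm)\delta^\dag$. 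Then $(v,p)$ \emph{also} solves the normal-stress system \eqref{sys:navier stokes with stress} with $f=g=0$ and $k=\chi\cN$. Choosing $\delta^\dag$ small enough that $M_\Xi(\etanorm)\delta^\dag$ is below $\delta^0$ and below the data threshold of \cref{T has a fixed point}, and $C(\etanorm)\delta^\dag$ is below the corresponding velocity radius $\delta$, the uniqueness of small solutions in \cref{T has a fixed point} identifies this $(v,p)$ with the one defining $\Phi_\gamma[\eta]$, so $\Phi_\gamma[\eta](\chi)=v\vert_{\Sigma_\eta}\cdot\cN=h$. Passing to $\bar\chi:=\chi-\int_{\T^d}\chi\,dx\in\rH^{s-\mez}(\T^d)$ and using $\Phi_\gamma[\eta](\chi+c)=\Phi_\gamma[\eta](\chi)$, we obtain $\Phi_\gamma[\eta](\bar\chi)=h$ with $\|\bar\chi\|_{H^{s-\mez}}\le C\,M_\Xi(\etanorm)\delta^\dag\le\delta^0$ after one further shrink of $\delta^\dag$. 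Hence $B_{\rH^{s+\mez}(\T^d)}(0,\delta^\dag)$ lies in the image of $B_{\rH^{s-\mez}(\T^d)}(0,\delta^0)$; since $\Phi_\gamma[\eta]$ is injective on the latter by (i), $\bar\chi$ is the unique preimage of $h$, which is exactly the inverse formula \eqref{form:Phiinverse}.

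\textbf{Main obstacle.} The conceptual heart is the matching in Part (ii): one must recognize that solving the Navier-boundary problem \eqref{sys:navier stokes with Dirichlet} automatically produces a normal stress parallel to $\cN$, so that its solution \emph{coincides}, through the uniqueness of small solutions in \cref{T has a fixed point}, with the solution of the normal-stress problem \eqref{sys:navier stokes with stress} for $k=\chi\cN$ --- this is what makes $\Xi_\gamma[\eta]$, up to subtracting its mean, the inverse of $\Phi_\gamma[\eta]$. The accompanying technical burden is the bookkeeping of the nested radii $\delta,\delta',\tilde\delta',\delta^0,\tilde\delta^0,\delta^\dag$, which must be fixed in the correct order so that every appeal to \cref{T has a fixed point}, \cref{well-posedness for navier stokes with small data with Dirichlet} and the energy estimate for injectivity is legitimate, together with the routine but not entirely trivial verification that $\chi=(pI-\Dd v)\cN\cdot\cN\,|\cN|^{-2}$ genuinely lands in $H^{s-\mez}(\T^d)$ with the claimed bound.
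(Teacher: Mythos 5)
Your proposal is correct and follows essentially the same route as the paper: part (i) via \cref{T has a fixed point} and \cref{well-posedness for navier stokes with small data with Dirichlet} together with the product estimates and the divergence theorem, injectivity via the coercive bilinear form absorbing the small convection term, and part (ii) via $\Phi_\gamma[\eta]\circ\Xi_\gamma[\eta]=\mathrm{id}$ after subtracting the mean. The only difference is that you make explicit the uniqueness-of-small-solutions matching between the Navier-boundary and normal-stress formulations, which the paper's proof of (ii) leaves implicit.
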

\begin{proof} 
Let $\delta^*$ and $\tilde{\delta}^*$ be as in \cref{T has a fixed point} and \cref{well-posedness for navier stokes with small data with Dirichlet}, both depends only on $(\|\eta\|_{H^{s+\tdm}(\T^d)}, d,b,c^0, s)$.

(i) By virtue of \cref{well-posedness for navier stokes with small data with Dirichlet}, for $\delta=\tilde{\delta}^*$ there exists $\tilde{\delta}'>0$ such that the problem \eqref{sys:navier stokes with Dirichlet} has a unique solution  $(v, p)\in H^{s+1}(\Omega_\eta)\times \rH^s(\Omega_\eta)$ if $f=g=l=0$ and $h\in \rH^{s+\mez}(\T^d)$ with  $\| h\|_{H^{s+\mez}(\T^d)}\le \tilde{\delta}'$. Then $\Xi_\gamma[\eta](h)=\chi\equiv (pI-\Dd v)\cN\cdot \frac{\cN}{|\cN|^2}$ is well-defined in $H^{s-\mez}(\T^d)$ and obeys the bound  \eqref{bound for Xi} since $H^{s-\mez}(\T^d)$ is an algebra. Thus we can choose any  $\tilde{\delta}^0\le \tilde{\delta}'$.  Next, by \cref{T has a fixed point}, for $\delta=\delta^*$ there exists $\delta'$ such that \eqref{sys:navier stokes with stress}  with $f=g=0$ has a unique solution $(v, p)\in H^{s+1}(\Omega_\eta)\cap H^s(\Omega_\eta)$ if $\| k\|_{H^{s-\mez}(\T^d)}\le \delta'$. Since 
\[
\| \chi \cN\|_{H^{s-\mez}}\le C_1(\| \eta\|_{H^{s+\tdm}})\| \chi\|_{H^{s-\mez}},
\]
if $\| \chi\|_{H^{s-\mez}}\le \delta'/C_1(\| \eta\|_{H^{s+\tdm}})$  then we can choose $k=\chi \cN$. Thus \eqref{bound for Phi} holds with any choice of  $\delta^0\le  \delta'/C_1(\| \eta\|_{H^{s+\tdm}})$. Moreover, $\Phi_\gamma[\eta](\chi)=v\vert_{\Sigma_\eta}\cdot \cN$ has mean zero in view of the divergence theorem.

  To prove the injectivity of $\Phi_\gamma[\eta]$ on $B_{\mathring{H}^{s-\mez}(\T^d)}(0,\delta^0)$, suppose that $\Phi_\gamma[\eta](\chi_1)=\Phi_\gamma[\eta](\chi_2)$ for some $\chi_i\in B_{\mathring{H}^{s-\mez}(\T^d)}(0,\delta^0)$. Let $(v_i,p_i)\in H^{s+1}(\Omega_\eta)\times H^s(\Omega_\eta)$ be the corresponding unique solutions of   \eqref{sys:navier stokes with stress}  with $f=g=0$ and $k=\chi_i\cN$. Set $v=v_1-v_2$ and $p=p_1-p_2$. Then $v\vert_{\Sigma_\eta}\cdot \cN=\Phi_\gamma[\eta]\chi_1-\Phi_\gamma[\eta]\chi_2=0$, and hence the weak form \eqref{weak stokes with stress} with $u=v$ gives 
    $$ B(v, v)\equiv \int_{\Omega_\eta} \mez |\Dd v|^2 - \gamma \partial_1 v \cdot v =  -\int_{\Omega_\eta} [v_1\cdot \nabla v + v\cdot \nabla v_2 ]v -\int_{\Sigma_\eta} (\chi_1-\chi_2)v\cdot \cN = -\int_{\Omega_\eta} [v_1\cdot \nabla v + v\cdot \nabla v_2 ]v.$$
      Since $v_i\in H^{s+1}(\Omega_\eta)\subset W^{1, \infty}(\Omega_\eta)$ for $s>(d+1)/2$, H\"older's and Poincar\'e's inequalities imply 
    \[
   \Big|\int_{\Omega_\eta} [v_1\cdot \nabla v + v\cdot \nabla v_2 ]v\Big|\le C_2(\|v_1\|_{H^{s+1}}+\|v_2\|_{H^{s+1}})\|v\|_{H^1}^2\leq C_3\delta^0 \|v\|_{H^1}^2.
   \]
    Then invoking the coercivity \eqref{def:gamma*} of $B$, we deduce that $v=0$ if we further restrict $\delta^0<\frac{1}{4c_*C_3}$. It follows that  $p=c$ is a constant, and hence $(\chi_1-\chi_2)\cN=(pI - \Dd v)\cN=c\cN$. Since the $\chi_i$'s have mean zero, we deduce that $\chi_1-\chi_2=0$. Thus $\Phi_\gamma[\eta]$ is injective on $B_{\rH^{s-\mez}(\T^d)}(0, \delta^0)$. 
    
  (ii) From  (i) we have the following: if $h\in \rH^{s+\mez}(\T^d)$ with $\| h\|_{H^{s+\mez}(\T^d)}\le \tilde{\delta}^0$ then, in view of \eqref{bound for Xi},
   \[
   \|\Xi_\gamma[\eta](h)\|_{H^{s-\mez}(\T^d)} \leq M_\Xi(\|\eta\|_{H^{s+\tdm}(\T^d)})\|h\|_{H^{s+\mez}(\T^d)},
   \]
   and hence 
   \[
    \|\Xi_\gamma[\eta](h)-\int_{\T^d}\Xi_\gamma[\eta](h)\|_{H^{s-\mez}(\T^d)} \leq M_\Xi(\|\eta\|_{H^{s+\tdm}(\T^d)})\|h\|_{H^{s+\mez}(\T^d)}
    \]
   upon increasing  $M_\Xi$.  Choosing 
    \[
    \|h\|_{H^{s+\mez}}\le \frac{\delta^0}{M_{\Xi}(\|\eta\|_{H^{s+\tdm}})}=:\delta^\dag,
    \]
     we obtain that $\Xi_\gamma[\eta](h)-\int_{\T^d}\Xi_\gamma[\eta](h)\in B_{\rH^{s-\mez}(\T^d)}(0, \delta^0)$ - the domain of $\Phi_\gamma[\eta]$,  and 
    \[
    \Phi_\gamma[\eta]\big(\Xi_\gamma[\eta](h)-\int_{\T^d}\Xi_\gamma[\eta](h)\big)= \Phi_\gamma[\eta]\Xi_\gamma[\eta](h)=h. 
    \]
    This completes the proof of (ii). 
\end{proof}
\begin{prop}\label{prop:contrPhi}
    Let $(d+1)/2<s\in \Nn$. The following assertions hold. 
    \begin{enumerate}
        \item[(i)] Let $\eta\in H^{s+\tdm}(\T^d)$ satisfy $\inf_{x\in\T^d}(\eta +b)\geq c^0>0$, and let $|\gamma|\le \gamma^*=\gamma^*(d, b, c^0, \|\eta\|_{W^{1, \infty}})$, given by  \eqref{def:gamma*}. There exist $0<\delta^0_\sharp\le \delta^0$ and $0<\tilde{\delta}^0_\sharp\le \tilde{\delta}^0$ depending only on $(\|\eta\|_{H^{s+\tdm}(\T^d)}, d,b,c^0, s)$  such that 
        \bq\label{contraction estimate for Phi in chi}
        \|\Phi_\gamma[\eta](\chi_1)-\Phi_\gamma[\eta](\chi_2) \|_{H^{s+\mez}(\T^d)} \leq N_\Phi(\|\eta\|_{H^{s+\tdm}(\T^d)})\|\chi_1-\chi_2\|_{H^{s-\mez}(\T^d)}\quad\forall \chi_1,~\chi_2\in B_{H^{s-\mez}(\T^d)}(0, \delta^0_\sharp)
    \eq
    and
    \bq\label{contraction estimate for Xi in h}
        \|\Xi_\gamma[\eta](h_1)-\Xi_\gamma[\eta](h_2) \|_{H^{s-\mez}(\T^d)} \leq N_\Xi(\|\eta\|_{H^{s+\tdm}(\T^d)})\|h_1-h_2\|_{H^{s+\mez}(\T^d)}\quad\forall h_1,~h_2\in B_{\rH^{s-\mez}(\T^d)}(0, \tilde{\delta}^0_\sharp).
    \eq
where $N_\Phi$ and $N_\Xi$  depend  only on $(d,b,c^0,s)$.
        \item[(ii)] Let $\eta_i\in  H^{s+\tdm}(\T^d)$,  $i\in\{1,2\}$ satisfy $\inf_{x\in\T^d}(\eta_i +b)\geq c^0_i>0$. Let 
           \[
   |\gamma|\le \min\left\{ \gamma^*(d, b, c_1^0, \|\eta_1\|_{W^{1, \infty}}), \gamma^*(d,b,c_2^0, \|\eta_2\|_{W^{1, \infty}})\right\},
    \]
     There exists  $\delta^1_\sharp>0$ depending only on $(\|\eta_1\|_{H^{s+\tdm}(\T^d)}, \|\eta_2\|_{H^{s+\tdm}(\T^d)}, d,b,c^0_1, c_2^0, s)$  such that such that for all $\chi\in B_{H^{s-\mez}(\T^d)}(0,\delta^1_\sharp)$ we have 
    \begin{multline}\label{contraction estimate for Phi in eta}
        \|\Phi_\gamma[\eta_1](\chi)-\Phi_\gamma[\eta_2](\chi) \|_{H^{s+\mez}(\T^d)} \leq \tilde{N}_\Phi(\|\eta_1\|_{H^{s+\tdm}(\T^d)},\|\eta_2\|_{H^{s+\tdm}(\T^d)})\|\chi\|_{H^{s-\mez}(\T^d)}\|\eta_1-\eta_2\|_{H^{s+\tdm}(\T^d)},
    \end{multline}
    where $\tilde{N}_\Phi: \Rr^+ \to \Rr^+$ depends only on $(d,b,c_1^0,c_2^0,s)$.

    \end{enumerate}
\end{prop}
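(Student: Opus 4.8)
The plan is to reduce every estimate in this proposition to the \emph{linear} Stokes regularity results of \cref{sec:Stokes}. Since $\Phi_\gamma[\eta]$ and $\Xi_\gamma[\eta]$ are built from \emph{small} solutions, the difference of two such solutions solves a linear $\gamma$-Stokes problem whose forcing is quadratically small; the Lipschitz constants will then come directly from \cref{reg for stokes with stress}, \cref{reg for stokes with Dirichlet} and \cref{coro:regularityinflat} after absorbing the nonlinear contributions into the left-hand side. The only genuine complication is that in part (ii) the two solutions live on different domains, so a flattening argument in the spirit of \cref{linearization and contraction for Psi} is needed.

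\emph{Part (i).} Fix $\eta$ and let $(v_i,p_i)$ solve \eqref{sys:navier stokes with stress} with $f=g=0$ and $k=\chi_i\cN$, as furnished by \cref{T has a fixed point}, so that $\|v_i\|_{H^{s+1}(\Omega_\eta)}\le C(\|\eta\|_{H^{s+\tdm}})\|\chi_i\|_{H^{s-\mez}}$ by \eqref{estimate for navier stokes with stress}. Then $(v,p)=(v_1-v_2,p_1-p_2)$ solves \eqref{sys:stokes with stress} on $\Omega_\eta$ with $g=0$, $k=(\chi_1-\chi_2)\cN$, and forcing $f=-(v_1\cdot\na v+v\cdot\na v_2)\in H^{s-1}(\Omega_\eta)$ (using that $H^s(\Omega_\eta)$ is an algebra for $s>(d+1)/2$). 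Applying \cref{reg for stokes with stress} with $\sigma=s$, bounding $\|f\|_{H^{s-1}}\le C(\|\eta\|_{H^{s+\tdm}})(\|v_1\|_{H^{s+1}}+\|v_2\|_{H^{s+1}})\|v\|_{H^{s+1}}$ and $\|(\chi_1-\chi_2)\cN\|_{H^{s-\mez}}\le C(\|\eta\|_{H^{s+\tdm}})\|\chi_1-\chi_2\|_{H^{s-\mez}}$, and then choosing $\delta^0_\sharp\le\delta^0$ small enough (depending only on $\|\eta\|_{H^{s+\tdm}}$ and $(d,b,c^0,s)$) so that the term carrying $\|v\|_{H^{s+1}}$ is absorbed, we obtain $\|v\|_{H^{s+1}(\Omega_\eta)}\le C(\|\eta\|_{H^{s+\tdm}})\|\chi_1-\chi_2\|_{H^{s-\mez}}$. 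Since $s+\mez>d/2$, the trace theorem and the product rule of \cref{appendix:productestimates} give $\|\Phi_\gamma[\eta]\chi_1-\Phi_\gamma[\eta]\chi_2\|_{H^{s+\mez}}=\|v|_{\Sigma_\eta}\cdot\cN\|_{H^{s+\mez}}\le C(\|\eta\|_{H^{s+\tdm}})\|v\|_{H^{s+1}}$, which is \eqref{contraction estimate for Phi in chi}. Estimate \eqref{contraction estimate for Xi in h} is proved in the same way: the difference of two solutions of \eqref{sys:navier stokes with Dirichlet} with $f=g=l=0$ and Dirichlet data $h_1,h_2$ solves \eqref{sys:stokes with Dirichlet} with $l=0$, normal trace $h_1-h_2$ (the compatibility conditions \eqref{compatibility for l}, \eqref{compatibility for h and g} being trivial as $l=g=0$ and the $h_i$ have mean zero) and the same quadratic forcing; \cref{reg for stokes with Dirichlet} with $\sigma=s$ followed by absorption gives $\|v\|_{H^{s+1}}+\|p\|_{H^s}\le C(\|\eta\|_{H^{s+\tdm}})\|h_1-h_2\|_{H^{s+\mez}}$, and then $\Xi_\gamma[\eta]h_1-\Xi_\gamma[\eta]h_2=\big((p_1-p_2)I-\Dd(v_1-v_2)\big)\cN\cdot\frac{\cN}{|\cN|^2}$ is estimated in $H^{s-\mez}$ by trace, the product rule, and $|\cN|^{-2}\in H^{s+\mez}$.

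\emph{Part (ii).} Now $(v_i,p_i)$, solving \eqref{sys:navier stokes with stress} on $\Omega_{\eta_i}$ with $f=g=0$ and $k=\chi\cN$, live on distinct domains, so I flatten both via $\cF_{\eta_i}$ as in \cref{linearization and contraction for Psi}: put $\tilde v_i=v_i\circ\cF_{\eta_i}$, $\tilde p_i=p_i\circ\cF_{\eta_i}$, $A_i=(\na\cF_{\eta_i})^{-1}$, $J_i=\det\na\cF_{\eta_i}$, $\cN_i=\cN(\cdot,\eta_i(\cdot))$, with $\|\tilde v_i\|_{H^{s+1}(\Omega)}\le C(\|\eta_i\|_{H^{s+\tdm}})\|\chi\|_{H^{s-\mez}}$ by \cref{composition regularity} and part (i)/\cref{prop of Phi and Xi}. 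The pulled-back Navier–Stokes system is \eqref{sys: flattened stokes with stress} with $\tilde f_i$ replaced by minus the transformed convective term $\mathcal{C}_{A_i}[\tilde v_i]$, bilinear in $\tilde v_i$ with coefficients built from $A_i$. Subtracting, $(\tilde v,\tilde p)=(\tilde v_1-\tilde v_2,\tilde p_1-\tilde p_2)$ solves the flattened $\gamma$-Stokes system with coefficients $A_1,J_1,\cN_1$ and forcing $(\bar f,\bar g,\bar k)$ consisting of (a) exactly the terms appearing in the proof of \cref{linearization and contraction for Psi}, all proportional to $A_1-A_2$, $J_1-J_2$ or $\cN_1-\cN_2$ and hence, by \eqref{bound for A1-A2}, bounded by $C(\|(\eta_1,\eta_2)\|_{H^{s+\tdm}})\|\eta_1-\eta_2\|_{H^{s+\tdm}}\|\chi\|_{H^{s-\mez}}$ in their respective norms, together with (b) the difference $\mathcal{C}_{A_1}[\tilde v_1]-\mathcal{C}_{A_2}[\tilde v_2]$. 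By bilinearity the latter equals $\mathcal{C}_{A_1}[\tilde v_1+\tilde v_2,\tilde v]+(\mathcal{C}_{A_1}-\mathcal{C}_{A_2})[\tilde v_2,\tilde v_2]$; the second piece carries a factor $A_1-A_2$ and is bounded in $H^{s-1}(\Omega)$ by $C(\|(\eta_1,\eta_2)\|_{H^{s+\tdm}})\|\eta_1-\eta_2\|_{H^{s+\tdm}}\|\chi\|_{H^{s-\mez}}^2$, while the first is bounded in $H^{s-1}(\Omega)$ by $C(\|(\eta_1,\eta_2)\|_{H^{s+\tdm}})\|\chi\|_{H^{s-\mez}}\|\tilde v\|_{H^{s+1}(\Omega)}$. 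Applying \cref{coro:regularityinflat} with $\sigma=s$ (to the system with coefficients $A_1$, so the constant depends on $\|\eta_1\|_{H^{s+\tdm}}$), and then choosing $\delta^1_\sharp>0$ small enough — depending only on $\|\eta_1\|_{H^{s+\tdm}},\|\eta_2\|_{H^{s+\tdm}},d,b,c^0_1,c^0_2,s$, and so small that $\chi\cN$ is admissible data for both $\Phi_\gamma[\eta_1]$ and $\Phi_\gamma[\eta_2]$ and the term $C\|\chi\|_{H^{s-\mez}}\|\tilde v\|_{H^{s+1}}$ is absorbed — we get $\|\tilde v\|_{H^{s+1}(\Omega)}\le C(\|(\eta_1,\eta_2)\|_{H^{s+\tdm}})\|\eta_1-\eta_2\|_{H^{s+\tdm}}\|\chi\|_{H^{s-\mez}}$. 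Finally $\Phi_\gamma[\eta_1]\chi-\Phi_\gamma[\eta_2]\chi=\tilde v_1|_{z=0}\cdot(\cN_1-\cN_2)+\tilde v|_{z=0}\cdot\cN_2$ as in \eqref{contraction in flattened domain}, and trace together with product estimates yields \eqref{contraction estimate for Phi in eta}.

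\emph{Main obstacle.} The delicate point is part (ii): transporting the convective nonlinearity correctly through the flattening and checking that its difference splits into an $A_1-A_2$ part, which supplies the required factor $\|\eta_1-\eta_2\|_{H^{s+\tdm}}$, and a part linear in $\tilde v$, which the smallness of $\chi$ absorbs. The bookkeeping is routine but must be done carefully, because the quadratic terms leave only a single spare power of $\|\chi\|_{H^{s-\mez}}$ to trade against $\delta^1_\sharp$, and because $\delta^1_\sharp$ must be kept dependent only on the quantities listed in the statement.
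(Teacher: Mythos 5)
Your proposal is correct and follows essentially the same route as the paper: in (i) the difference of the two Navier--Stokes solutions is treated as a linear $\gamma$-Stokes problem with quadratic forcing, estimated via \cref{reg for stokes with stress} (resp. \cref{reg for stokes with Dirichlet}) and closed by absorbing the small nonlinear term, and in (ii) both solutions are flattened and the difference system is handled via \cref{coro:regularityinflat} exactly as in \cref{linearization and contraction for Psi}, with the convective difference split so that the piece carrying $A_1-A_2$ supplies the factor $\|\eta_1-\eta_2\|_{H^{s+\tdm}}$ and the piece linear in $\tilde v$ is absorbed thanks to the smallness of $\chi$. The only difference is that you spell out the bookkeeping of the transported convective term, which the paper leaves as "analogous"; your version is consistent with it.
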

\begin{proof}
  (i)  We first restrict $\delta^0_\sharp\le \delta^0$ and consider  $\|\chi_i\|_{H^{s-\mez}}<\delta^0_\sharp$.   By \cref{prop of Phi and Xi}, the problem \eqref{sys:navier stokes with stress} with $f=g=0$ and $k=\chi_i\cN$ has a unique solution $(v_i, p_i)\in H^{s+1}(\Omega_\eta)\times H^s(\Omega_\eta)$ . Then $v:=v_1-v_2$ and $p:=p_1-p_2$ solve  the Stokes system \eqref{sys:stokes with stress} with $f= -v_2\cdot \nabla v - v\cdot \nabla v_1$, $g=0$, and $k=(\chi_1-\chi_2)\cN$. The estimate \eqref{high reg estimate for stokes with stress} implies 
    \begin{align*}
   \|v\|_{H^{s+1}(\Omega_\eta)} + \|p\|_{H^s(\T^d)}&\leq C_1(\etanorm)\left(\|v_2\cdot \nabla v + v\cdot \nabla v_1\|_{H^{s-1}(\Omega_\eta)}+ \|\chi_1-\chi_2\|_{H^{s-\mez}(\T^d)}\right)\\
   &\le C_2(\etanorm)\left(\big( \|v_1\|_{H^{s+1}(\Omega_\eta)}+\|v_2\|_{H^{s+1}(\Omega_\eta)}\big)\|v\|_{H^{s+1}(\Omega_\eta)}\right.\\
   &\qquad\left.+ \|\chi_1-\chi_2\|_{H^{s-\mez}(\T^d)}\right)\\
   &\le C_3(\etanorm)\left(\delta^0_\sharp\|v\|_{H^{s+1}(\Omega_\eta)}+ \|\chi_1-\chi_2\|_{H^{s-\mez}(\T^d)}\right).
    \end{align*}
    Thus, by further restricting  $\delta^0_\sharp\le (2C_3(\etanorm))^{-1}$, we obtain the contraction estimate \eqref{contraction estimate for Phi in chi}. An analogous argument yields \eqref{contraction estimate for Xi in h}.

 (ii) can be proven analogously to \cref{linearization and contraction for Psi}: we use the flattened system \eqref{sys: flattened stokes with stress} for $(\tilde{v}_i, \tilde{p_i}):=(v_1\circ \cF_{\eta_i}, p_i\circ \cF_{\eta_i})$ to obtain the system for $(\tilde{v}, \tilde{p}):=(\tilde{v}_1-\tilde{v_2}, \tilde{p}_1-\tilde{p}_2)$, and then apply Corollary \cref{coro:regularityinflat}. The difference is that the forcing term $\tilde{f}$ in \eqref{sys: flattened stokes with stress} now involves $\tilde{v}_1\cdot\na \tilde{v}_1-\tilde{v}_2\cdot \na \tilde{v}_2$ which can be absorbed  by the left-hand side of \eqref{energy est for flattened Stokes with stress}  if $\| \chi\|_{H^{s-\mez}}$ is sufficiently small. 
\end{proof}
\subsection{Further properties of $\Psi_\gamma[\eta]$}
In preparation for the stability analysis of traveling waves, we establish in this section coercive and commutator estimates for  $\Psi_\gamma[\eta]$.
\subsubsection{Coercive estimates}
\begin{lemm} \label{coercivity for Psi eta}
    Let $(d+1)/2<s\in \Nn$, and let $\eta_*\in H^{s+\tdm}(\T^d)$ satisfy \eqref{eta lower bound}. Consider $|\gamma|\le \gamma^*$ as  in \eqref{def:gamma*}. Then there exists $c_{\Psi_\gamma[\eta_*]}=c_{\Psi_\gamma[\eta_*]}(d, c^0, \eta_*)>0$ such that for any $\chi\in H^{-\mez}(\T^d)$ we have the coercive estimate
    \bq\label{coercive:Psi}
    \langle -\Psi_\gamma[\eta_*] \chi , \chi \rangle_{H^\mez,H^{-\mez}}  \geq c_{\Psi_\gamma[\eta_*]}\|\chi\|^2_{H^{-\mez}}.
    \eq
\end{lemm}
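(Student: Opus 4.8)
Since $\Psi_\gamma[\eta_*](\chi+c)=\Psi_\gamma[\eta_*]\chi$ and, by the divergence theorem, $\Psi_\gamma[\eta_*]\chi=v\vert_{\Sigma_{\eta_*}}\cdot\cN$ has zero mean, the left-hand side of \eqref{coercive:Psi} is unchanged upon replacing $\chi$ by $\chi-\fint_{\T^d}\chi$; hence it suffices to establish \eqref{coercive:Psi} for $\chi\in\mathring H^{-\mez}(\T^d)$. Fix such a $\chi$ and let $(v,p)\in{}_0H^1(\Omega_{\eta_*})\times L^2(\Omega_{\eta_*})$ be the unique weak solution of \eqref{sys: hom stokes with normal stress} with $\eta=\eta_*$ given by \cref{weak sol for stokes with stress} (valid since $|\gamma|\le\gamma^*$). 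Testing the weak formulation \eqref{weak stokes with stress} (with $f=g=0$, $k=\chi\cN$) against $u=v$ and using $\dv v=0$ yields
\bq
\langle -\Psi_\gamma[\eta_*]\chi,\chi\rangle_{H^\mez,H^{-\mez}}=B(v,v)=\int_{\Omega_{\eta_*}}\mez|\Dd v|^2-\gamma\p_1 v\cdot v ,
\eq
where $B$ is the bilinear form \eqref{bilinearB}. As in the proof of \cref{weak sol for stokes with stress}, Korn's and Poincar\'e's inequalities together with $|\gamma|\le\gamma^*$ give $B(v,v)\ge \tfrac{1}{4c_*(\|\eta_*\|_{W^{1,\infty}})}\|v\|_{{}_0H^1(\Omega_{\eta_*})}^2$.

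It remains to bound $\|v\|_{{}_0H^1(\Omega_{\eta_*})}$ from below by $\|\chi\|_{H^{-\mez}}$. For this I would chain two facts. First, the trace theorem together with the product rule of \cref{product estimate on torus} and the embedding $\cN\in H^{s+\mez}(\T^d)\hookrightarrow W^{1,\infty}(\T^d)$ (using $s>(d+1)/2$) give
\bq
\|\Psi_\gamma[\eta_*]\chi\|_{H^\mez(\T^d)}=\|v\vert_{\Sigma_{\eta_*}}\cdot\cN\|_{H^\mez(\T^d)}\le C(\|\eta_*\|_{H^{s+\tdm}})\|v\|_{{}_0H^1(\Omega_{\eta_*})} .
\eq
Second, by \cref{bound for Psi and Psi inverse} with $\sigma=0$, the operator $\Psi_\gamma[\eta_*]\colon\mathring H^{-\mez}(\T^d)\to\mathring H^{\mez}(\T^d)$ is an isomorphism, so
\bq
\|\Psi_\gamma[\eta_*]\chi\|_{H^\mez(\T^d)}\ge \big\|(\Psi_\gamma[\eta_*])^{-1}\big\|_{\mathring H^\mez\to\mathring H^{-\mez}}^{-1}\,\|\chi\|_{H^{-\mez}} .
\eq
Combining the three displays gives $\|v\|_{{}_0H^1(\Omega_{\eta_*})}\ge c\,\|\chi\|_{H^{-\mez}}$ with $c=\big(C(\|\eta_*\|_{H^{s+\tdm}})\|(\Psi_\gamma[\eta_*])^{-1}\|_{\mathring H^\mez\to\mathring H^{-\mez}}\big)^{-1}$, whence \eqref{coercive:Psi} holds with $c_{\Psi_\gamma[\eta_*]}=c^2\big(4c_*(\|\eta_*\|_{W^{1,\infty}})\big)^{-1}$.

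The only genuinely delicate point is the \emph{lower} bound $\|v\|_{{}_0H^1}\gtrsim\|\chi\|_{H^{-\mez}}$: the trace theorem only provides the reverse estimate $\|\Psi_\gamma[\eta_*]\chi\|_{H^\mez}\lesssim\|v\|_{{}_0H^1}$, so the missing coercivity is recovered only by inverting $\Psi_\gamma[\eta_*]$. Consequently $c_{\Psi_\gamma[\eta_*]}$ inherits a dependence on $\|(\Psi_\gamma[\eta_*])^{-1}\|$, which \cref{bound for Psi and Psi inverse} controls only in terms of the whole function $\eta_*$ rather than a single Sobolev norm of it. This is exactly the non-quantitative feature of \eqref{coercive:Psi} noted after \cref{theo2:intro}, and it is the reason the stability argument has to proceed by freezing $\eta_w$ to $\eta_*$ at the cost of the remainder $R$. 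Obtaining a constant depending only on $\|\eta_*\|_{H^{s+\tdm}}$ would require a different, more quantitative analysis of $\Psi_\gamma[\eta_*]$ (e.g.\ variational or symbol-based), which I do not pursue here.
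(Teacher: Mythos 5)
Your proof is correct, and it is essentially a direct rearrangement of the same ingredients the paper uses in an indirect way. The paper's proof argues by contradiction: it normalizes $\|\chi_n\|_{H^{-\mez}}=1$, uses the identity $\langle-\Psi_\gamma[\eta_*]\chi,\chi\rangle=B(v,v)$ and the coercivity \eqref{def:gamma*} of $B$ to get $v_n\to0$ in $H^1(\Omega_{\eta_*})$, hence $v_n\cdot\cN\to0$ in $H^{\mez}(\T^d)$, and then invokes the boundedness of $(\Psi_\gamma[\eta_*])^{-1}\colon\rH^{\mez}\to\rH^{-\mez}$ from \cref{bound for Psi and Psi inverse} to conclude $\chi_n\to0$, a contradiction. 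You chain exactly the same three facts (energy identity, coercivity of $B$ for $|\gamma|\le\gamma^*$, trace/product estimate, and the inverse bound) into a direct quantitative inequality, arriving at $c_{\Psi_\gamma[\eta_*]}\gtrsim\bigl(c_*\,C^2\,\|(\Psi_\gamma[\eta_*])^{-1}\|^2_{\rH^{\mez}\to\rH^{-\mez}}\bigr)^{-1}$. Your reduction to mean-zero $\chi$ is verbatim the paper's reduction (with the same tacit understanding that the estimate is meaningful modulo constants, since $\Psi_\gamma[\eta_*]$ kills constants). What your route buys is an explicit constant; it costs nothing extra, since the contradiction argument already relies on the inverse bound.

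One correction to your closing paragraph: it misstates what \cref{bound for Psi and Psi inverse} provides. Estimate \eqref{bound for Psi inverse} bounds $\|(\Psi_\gamma[\eta_*])^{-1}\|_{\rH^{\mez}\to\rH^{-\mez}}$ by $C(\|\eta_*\|_{H^{s+\tdm}})$ with $C$ depending only on $(d,b,c^0,s)$ --- a Sobolev-norm dependence, not a dependence on the whole function $\eta_*$. Taken at face value, your direct chain therefore yields a coercivity constant depending only on $(d,b,c^0,s,\|\eta_*\|_{H^{s+\tdm}})$, which is actually \emph{stronger} than what the paper's contradiction proof asserts; the paper's non-quantitative caveat (and its comparison with the $W^{1,\infty}$-dependent constant for the Dirichlet--Neumann operator) describes its own indirect proof, not an obstruction to your argument. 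So your final caveat undersells your result rather than flagging a gap: the lemma only asks for some positive constant depending on $\eta_*$, which you certainly obtain, and your method would in fact give the norm-dependent version discussed after \cref{theo2:intro}, albeit through the high norm $H^{s+\tdm}$.
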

\begin{proof}
Since $\Psi_\gamma[\eta_*](\chi+c)=\Psi_\gamma[\eta_*]\chi$ for all $c\in \Rr$ and $\Psi_\gamma[\eta_*]\chi$ has mean zero, it suffices to prove \eqref{coercive:Psi} for $\chi\in \rH^{-\mez}(\T^d)$. 
Let  $(v, p)\in H^1(\Omega_{\eta_*})\times L^2(\Omega_{\eta_*})$ be the weak solution of  \eqref{sys: hom stokes with normal stress}. The weak formulation \eqref{weak stokes with stress}  yields
    \[
        \langle -\Psi_\gamma[\eta_*] \chi , \chi \rangle_{H^\mez,H^{-\mez}} =- \langle v(\cdot,\eta_*(\cdot)) , \chi \cN \rangle_{H^\mez,H^{-\mez}} = \int_{\Omega_{\eta_*}} \mez |\Dd v|^2 -\gamma\p_1v\cdot v\equiv B(v, v).
    \]
    Now, we prove the lemma by contradiction. Suppose there exists a sequence of $\chi_n\in \rH^{-\mez}(\T^d)$ and the corresponding velocities $v_n\in H^1(\Omega_{\eta_*})$ such that $B(v_n, v_n) < \frac{1}{n}\|\chi_n\|_{H^{-\mez}}$.  For $|\gamma|\le\gamma^*$, we have the coercive estimate  \eqref{def:gamma*} for $B$. Combining this with  Korn's and Poincare's inequalities, we deduce
    $$\|v_n\|_{H^1(\Omega_{\eta_*})}<\frac{C}{n}\|\chi_n\|_{H^{-\mez}},$$
    where $C$ is independent of $n$.   Upon rescaling, we may assume that $\| \chi_n\|_{H^{-\mez}(\T^d)}=1$ for all $n$, so that $v_n\to 0$ strongly in $H^1(\Omega_{\eta_*})$ as $n\to \infty$. 
    In particular, $v_n\cdot \cN \to 0$ strongly in $H^{\mez}(\T^d)$. Using the boundedness of the operator $\Psi_\gamma[\eta_*]^{-1}: \rH^\mez\to \rH^{-\mez}$ given in Proposition \ref{bound for Psi and Psi inverse}, we deduce that $\chi_n = \Psi_\gamma[\eta_*]^{-1}(v_n\cdot \cN) \to 0$ strongly in $H^{-\mez}$, contradicting the assumption that  $\| \chi_n\|_{H^{-\mez}(\T^d)}=1$ for all $n$. 
\end{proof}
\begin{rema}
 \eqref{coercive:Psi} is reminiscent of the coercive estimate established in \cite{Nguyen2023-coercivity} for the Dirichlet-Neuman operator $G[\eta_*]$:
\bq\label{coercive:DN}
(G[\eta_*]\chi, \chi)_{H^{-\mez}(\T^d), H^\mez(\T^d)}\ge c_G\| \chi\|_{H^\mez(\T^d)}^2.
\eq
Since the proof of \eqref{coercive:DN} in \cite{Nguyen2023-coercivity} is direct, the constant $c_G$ is explicit and depends only on $(d, c^0, \| \eta_*\|_{W^{1, \infty}})$. On the other hand, the above proof by contradiction of \cref{coercivity for Psi eta} only yields a dependence of $c_{\Psi_\gamma[\eta_*]}$  on $\eta_*$. However, this will suffice for our stability proof in subsequent sections. 
\end{rema}
\subsubsection{Commutator estimates}
We recall from \cref{bound for Psi and Psi inverse} that $\Psi_\gamma[\eta]$ has order $-1$. The commutator estimate \eqref{commutator bound} below shows  a gain of a full derivative when commuting $\Psi_\gamma[\eta]$ with partial derivatives.
\begin{prop}\label{commutator estimate for Psi eta}
    Let  $(d+1)/2< s \in \Nn$, and let $\eta\in H^{s+\tdm}(\T^d)$ satisfy \eqref{eta lower bound}. Then for all  $\alpha\in \Nn^d$ and $\sigma\in \Rr$ such that  $\sigma \in [\tdm, s+\mez - |\alpha|]$, we have
    \bq \label{commutator bound}
    \|[\Psi_\gamma[\eta],\partial^\alpha]\chi\|_{H^{\sigma}(\T^d)}\leq C(\|\eta\|_{H^{s+\tdm}(\T^d)})\|\eta\|_{H^{s+\tdm}(\T^d)}\|\chi\|_{H^{\sigma+|\alpha|-2}(\T^d)}
    \eq
    whenever the right-hand side is finite, where $C: \Rr^+ \to \Rr^+$ depends only on $(d,b,c^0,s,\sigma,|\alpha|)$.
\end{prop}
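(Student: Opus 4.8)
The plan is to write $\Psi_\gamma[\eta]\chi = v\vert_{\Sigma_\eta}\cdot\cN$ where $(v,p)$ solves \eqref{sys: hom stokes with normal stress} with data $\chi\cN$, commute $\partial^\alpha$ through this composition, and identify the commutator as the trace of a Stokes solution with \emph{lower-order} data plus lower-order product terms. Concretely, reduce first to $|\alpha|=1$, say $\alpha = e_j$ with $1\le j\le d$, and iterate; for the single derivative, observe that $\partial_j(\Psi_\gamma[\eta]\chi) = (\partial_j v)\vert_{\Sigma_\eta}\cdot\cN + \text{(terms from }\partial_j\text{ hitting }\cN\text{ and from }\partial_j v \text{ through }\eta)$. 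The clean way is to work in the flattened domain $\Omega$ via $\cF_\eta$, where $\Psi_\gamma[\eta]\chi$ becomes $\tilde v\vert_{z=0}\cdot\cN$ with $\tilde v$ solving \eqref{sys: flattened stokes with stress} (with $\tilde f=\tilde g=0$, $k=\chi\cN$). Then $\partial_j$ acts on a fixed domain: differentiating the flattened system shows $(\partial_j\tilde v,\partial_j\tilde p)$ solves the same type of Stokes system with data $(\bar f,\bar g,\bar k)$ exactly as in \eqref{Stokes:N:dj}/\eqref{djsystem}, where $\bar f,\bar g$ involve $\partial_j A$ paired with $\tilde v,\tilde p$ and $\bar k$ involves $\partial_j\cN$, $\partial_j A$ paired with $\tilde v,\tilde p$, and $\partial_j\chi\,\cN$. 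The point is that the principal piece $\partial_j\chi\,\cN$ reproduces $\Psi_\gamma[\eta](\partial^{e_j}\chi)$, so that $[\Psi_\gamma[\eta],\partial_j]\chi$ equals the normal trace of the Stokes solution driven by the \emph{remainder} data $\bar f,\bar g,\bar k - \partial_j\chi\,\cN$, which are all of net order $\le |\alpha|-2$ relative to $\chi$ after accounting for the order-$(-1)$ gain of $\Psi_\gamma$ and one trace loss.

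The key steps, in order: (1) set up the flattened formulation and record that $\Psi_\gamma[\eta]\chi = \tilde v\vert_{z=0}\cdot\cN$; (2) differentiate the flattened system $|\alpha|$ times using the Leibniz rule, collecting the term where all derivatives land on $\chi$ (which gives $\partial^\alpha\chi\,\cN$, the principal data reproducing $\Psi_\gamma[\eta]\partial^\alpha\chi$) separately from all other terms, which contain at least one derivative on $A$, $J$, or $\cN$ and at most $|\alpha|-1$ derivatives on $\chi$-data; (3) apply the regularity estimate \eqref{energy est for flattened Stokes with stress} from \cref{coro:regularityinflat} — with $s$ replaced by a smaller admissible index when needed — to the Stokes system solved by $(\partial^\alpha\tilde v - (\text{solution driven by }\partial^\alpha\chi\,\cN),\ \cdot)$, bounding $\tilde v,\tilde p$ and their lower derivatives in terms of $\|\chi\|_{H^{\sigma+|\alpha|-2}}$ via iterated use of \eqref{energy est for flattened Stokes with stress} and \eqref{high reg estimate for stokes with stress}; (4) estimate the product/remainder terms using the Sobolev product rules of \cref{appendix:productestimates} together with $\|A\|_{H^{s+1}(\Omega)}, \|J\|_{H^{s+1}(\Omega)}, \|\cN\|_{H^{s+\mez}(\T^d)} \le C(\|\eta\|_{H^{s+\tdm}})$ and crucially that every such term carries exactly one \emph{explicit} factor of a derivative of $\eta$ (through $\partial A$, $\partial J$, or $\partial\cN$), producing the linear factor $\|\eta\|_{H^{s+\tdm}}$ in \eqref{commutator bound}; (5) take the trace back to $z=0$, multiply by $\cN$, apply the tame product estimate, and invoke \cref{composition regularity} to return to $\Omega_\eta$ if desired.

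The condition $\sigma\ge \tdm$ is what makes the remainder data land in the right spaces: the worst remainder term in $\bar k$ is $\tilde p\,\partial^\alpha\cN$ (or $\tilde v$ paired with $\partial^\alpha A$), and since $\partial^\alpha\cN\in H^{s+\mez-|\alpha|}(\T^d)$ one needs $\sigma\le s+\mez-|\alpha|$ for the output to sit in $H^\sigma$, while $\sigma\ge\tdm$ guarantees enough room to absorb the trace and product losses when invoking \eqref{energy est for flattened Stokes with stress} at the reduced regularity level (which requires $s'>(d+1)/2$, hence the restriction propagates correctly since $s$ is an integer $>(d+1)/2$). The main obstacle I anticipate is the bookkeeping in step (3)–(4): one must carefully track, for each multi-index split in the Leibniz expansion, which factor absorbs the loss of regularity so that the \emph{highest}-order derivatives of $\chi$ never meet a low-regularity coefficient, and verify that the single explicit derivative of $\eta$ can always be placed in the $H^{s+\tdm}$-controlled slot (rather than a higher Sobolev slot), which is exactly the source of the claimed linear dependence on $\|\eta\|_{H^{s+\tdm}}$; this requires the sharp product estimates rather than crude algebra bounds, and the endpoint case $\sigma = \tdm$ with $|\alpha| = s+1$ will be the tightest to verify.
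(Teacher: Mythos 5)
Your proposal is correct and follows essentially the same route as the paper: identify $\partial_j\chi\,\cN$ as the principal datum in the differentiated system, realize the commutator as the normal trace of a Stokes solution driven by the remainder data (each term carrying one derivative of $\eta$ through $\partial_j A$, $\partial_j J$, or $\partial_j\cN$), estimate via the Sobolev regularity theory of Section 2 plus the product estimates, and obtain the range $\sigma\in[\tdm,s-\mez]$ for $|\alpha|=1$ before iterating the commutator identity for higher $|\alpha|$. The only difference is implementation: the paper performs the tangential differentiation in the physical domain via the good unknowns $u_a=\partial_i v+\psi\,\partial_y v$, $q_a=\partial_i p+\psi\,\partial_y p$ with an explicit lifting $\psi$ of $\partial_i\eta$, which coincides under $\cF_\eta$ with your horizontal differentiation of the flattened system; note also that the lower bound $\sigma\ge\tdm$ enters precisely because the regularity estimate with nonzero forcing at non-integer index requires $\sigma-\mez\ge 1$ (cf.\ \cref{rema:interpolation}), which is the sharp version of your "room to absorb losses" remark.
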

\begin{proof}
    We prove by induction on $|\alpha|$. For the case of $|\alpha|=1$, we assume $\sigma \in [\tdm, s- \mez]$,  fix any  $i\in\{1,...,d\}$ and $\chi\in H^{\sigma-1}(\T^d)$. Since $\sigma -\mez \in [1, s- 1]$, by \cref{reg for stokes with stress} and \cref{rema:interpolation}, \eqref{sys: hom stokes with normal stress} has a unique solution   $(v, p)\in  H^{\sigma+\mez}(\Omega_\eta)\times H^{\sigma-\mez}(\Omega_\eta)$. Similarly, since $\sigma-\tdm \in [0, s-\tdm]$, the problem (\ref{sys: hom stokes with normal stress}) with $\chi$ replaced with $\partial_i \chi$ has a unique solution $(u, q)\in H^{\sigma-\mez}(\Omega_\eta)\times H^{\sigma-\tdm}(\Omega_\eta)$. Moreover, we have 
    \begin{align}\label{commute:vpest}
    \| v\|_{H^{\sigma+\mez}(\Omega_\eta)}+\| p\|_{H^{\sigma-\mez}(\Omega_\eta)}\le C(\|\eta\|_{H^{s+\tdm}})\| \chi\|_{H^{\sigma-1}(\T^d)},\\ \label{commute:uqest}
     \| u\|_{H^{\sigma-\mez}(\Omega_\eta)}+\| q\|_{H^{\sigma-\tdm}(\Omega_\eta)}\le C(\|\eta\|_{H^{s+\tdm}})\| \chi\|_{H^{\sigma-1}(\T^d)}.
   \end{align}
 We fix a smooth function $\ld(z): \Rr\to \Rr$ such that  $\ld(z)=0$ for $z<\frac{-c^0}{2}$  and $\ld(z)=1$ for $z>\frac{-c^0}{3}$.   Then we set 
 \[
 \psi_0(x, z)=\ld(z)e^{z|D|}\p_i\eta(x),\quad \psi(x, y) =\psi_0(x, y-\eta(x)),
 \]
 so that $\psi(x, \eta(x))=\p_i\eta(x)$. Moreover, we have
 \bq\label{est:psi:p1eta}
 \| \psi\|_{H^r(\Omega_\eta)}\le C\| \p_i\eta\|_{H^{r-\mez}(\T^d)}\quad \forall r\in [0, s+1].
 \eq
The key idea is to approximate   $(u, q)$ by 
        \bq
    \begin{split}
u_a(x,y):=& \partial_i v(x,y) +\psi(x, y)\partial_y v(x,y), \\
      q_a(x,y):=& \partial_i p(x,y) +\psi(x, y)\partial_y p(x,y).
    \end{split}
    \eq
    Then $(u_a,q_a)$ solve the problem (\ref{sys:stokes with stress}) with 
    \bq\label{vr:g}
    g=\nabla \cdot (\p_iv+\psi   \partial_y v)=\na \psi \cdot \p_y v,
    \eq
    \bq\label{vr:f}
    \begin{aligned}
        f&= -\gamma \partial_1 (\psi  \partial_y v)- \Delta (\psi \partial_y v ) +\na \dv (\psi  \partial_y v)+ \nabla (\psi  \partial_y p)\\
        &=\psi \p_y\underbrace{\big(-\gamma \partial_1 v- \Delta v  + \na p\big)}_{=0}\\
        &\qquad  -\gamma  \partial_1\psi  \partial_y v- \Delta \psi \partial_y v -2\na  \partial_yv \na \psi +\na(\na \psi\cdot \p_y v)+ \na \psi  \partial_y p\\
        &= -\gamma  \partial_1\psi  \partial_y v- \Delta \psi \partial_y v -2\na  \partial_yv \na \psi +\na(\na \psi\cdot \p_y v)+ \na \psi  \partial_y p,
    \end{aligned}
    \eq 
    and 
\bq\label{form:k:1}
            k(x)=\Bigl(\partial_i p I+ \partial_i \eta(x)\partial_y p I - \Dd\partial_i v - \partial_i \eta(x)\Dd \partial_y v-\nabla \psi \otimes \partial_y v- \partial_y v \otimes\nabla \psi\Bigr)\vert_{\Sigma_{\eta}}\cN(x). 
            \eq
    On the other hand, taking $\partial_i$ of $S(p,v)\cN = \chi \cN$ yields
    \[
    \Bigl(\p_i p+\p_i\eta \p_yp-\Dd\p_i v-\p_i\eta\Dd\p_y v \Bigl)\vert_{\Sigma_{\eta}}\cN+(p I-\Dd v)\p_i \cN=\p_i\chi \cN+\chi \p_i \cN.
    \]
    Comparing this with \eqref{form:k:1}, we find
    \[
    k=\p_i\chi \cN+\chi \p_i \cN-(p I-\Dd v)\p_i \cN - \bigl(\nabla \psi \otimes \partial_y v+ \partial_y v \otimes \nabla \psi \bigr)\vert_{\Sigma_{\eta}}\cN.
    \]
We set 
\bq\label{vr:k}
k_r=\chi \p_i \cN-(p I-\Dd v)\p_i \cN -\bigl(\nabla \psi \otimes \partial_y v+ \partial_y v \otimes \nabla \psi \bigr)\vert_{\Sigma_{\eta}}\cN,
\eq
so that $k_r=k-\p_i \chi \cN$ is the difference between that normal stress between $(u_a, q_a)$ and $(u,  q)$. Let $(u_r, q_r)$ be the solution of \eqref{sys:stokes with stress} with  $f$, $g$, and   the normal-stress data $k_r$  given by \eqref{vr:f}, \eqref{vr:g}, and \eqref{vr:k}. By the linearity of \eqref{sys:stokes with stress}, we have 
\bq\label{urua}
u_r=u_a-u,\quad q_r=q_a-q.
\eq
We now proceed to estimate $(u_r, q_r)$ using  \cref{reg for stokes with stress}. For $\sigma\in [\tdm,s+\mez]$ and $s>(d+1)/2$, we can apply \cref{product estimate for domain} and the bound \eqref{est:psi:p1eta} (with $r=s+1$) to have
    \bq\label{bound for the forcing f}
\begin{aligned}
        \|f\|_{H^{\sigma-\tdm}(\Omega_\eta)} &\le C \| \partial_1\psi\|_{H^{s}(\Omega_\eta)}\| \partial_y v\|_{H^{\sigma-\mez}(\Omega_\eta)}+C \| \Delta \psi\|_{H^{s-1}(\Omega_\eta)}\| \partial_y v\|_{H^{\sigma-\mez}(\Omega_\eta)}  \\
        &\qquad +C\| \na \psi\|_{H^s(\Omega_\eta)}\| \na  \partial_yv\|_{H^{\sigma-\tdm}(\Omega_\eta)}  +C\| \na \psi\|_{H^s(\Omega_\eta)}\| \p_y v\|_{H^{\sigma-\mez}(\Omega_\eta)}\\
        &\qquad+C\| \na\psi\|_{H^{s}(\Omega_\eta)}\| \partial_y p\|_{H^{\sigma-\tdm}(\Omega_\eta)} \\
        &\le C (\|\eta\|_{H^{s+\tdm}(\T^d)})\|\eta\|_{H^{s+\tdm}(\T^d)}\bigl(\| v\|_{H^{\sigma+\mez}(\Omega_\eta)} + \| p\|_{H^{\sigma-\mez}(\Omega_\eta)}\bigr)\\
        &\leq  C(\|\eta\|_{H^{s+\tdm}(\T^d)})\|\eta\|_{H^{s+\tdm}(\T^d)} \|\chi\|_{H^{\sigma-1}(\T^d)},
    \end{aligned}
    \eq
    where $C: \Rr_+\to \Rr_+$ depends only on $(d, b, c^0, s,\sigma)$. We similarly obtain 
    \bq
    \begin{aligned}
    \| g\|_{H^{\sigma-\mez}(\Omega_\eta)}&\le  C (\|\eta\|_{H^{s+\tdm}(\T^d)})\|\na \psi \|_{H^{s}(\Omega_\eta)}\| v\|_{H^{\sigma+\mez}(\Omega_\eta)} \\
    & \le  C (\|\eta\|_{H^{s+\tdm}(\T^d)})\|\eta\|_{H^{s+\tdm}(\T^d)} \|\chi\|_{H^{\sigma-1}(\T^d)}
    \end{aligned}
    \eq
  and    \bq\label{bound for the stress k bar}
    \begin{aligned}
        \|k_r\|_{H^{\sigma-1}(\T^d)}&\leq C\bigl(\| \chi\|_{H^{\sigma-1}} +\| (p I-\Dd v)\vert_{\Sigma_{\eta}}\|_{H^{\sigma-1}}\bigr) \| \p_i \cN\|_{H^{s-\mez}}\\
        &\qquad -C\| \nabla \psi\vert_{\Sigma_{\eta}}\|_{H^{s-\mez}}\| \partial_y v\vert_{\Sigma_{\eta}}\|_{H^{\sigma-1}}\|\cN \|_{H^{s+\mez}}\\
        & \le  C(\|\eta\|_{H^{s+\tdm}(\T^d)})\|\eta\|_{H^{s+\tdm}(\T^d)}\|\chi\|_{H^{\sigma-1}}.
    \end{aligned}
    \eq
    Inserting the above estimates into \eqref{high reg estimate for stokes with stress}  yields
    \bq\label{est:urqr}
       \| u_r\|_{H^{\sigma+\mez}(\Omega_\eta)}+\| q_r\|_{H^{\sigma-\mez}(\Omega_\eta)}\le  C (\|\eta\|_{H^{s+\tdm}(\T^d)})\|\eta\|_{H^{s+\tdm}(\T^d)}\|\chi\|_{H^{\sigma-1}(\T^d)}.
    \eq
    We note that by virtue of \cref{rema:interpolation},  $\sigma$ need not be an integer in \eqref{est:urqr} since $\sigma-\mez\ge 1$.  Next, using \eqref{urua} we write 
     \begin{multline*}
        [\Psi_\gamma[\eta],\partial_i]\chi = u\vert_{\Sigma_\eta}\cdot \cN - \partial_i (v\vert_{\Sigma_\eta}\cdot \cN) \\
        = u\vert_{\Sigma_\eta}\cdot \cN -(\underbrace{\p_i v+\p_i\eta\p_y v)\vert_{\Sigma_\eta}}_{=u_a\vert_{\Sigma_\eta}}\cN-v\vert_{\Sigma_\eta}\cdot \p_i\cN
         = - u_r\vert_{\Sigma_\eta} \cdot \cN - v\vert_{\Sigma_\eta}\cdot \partial_i \cN.
     \end{multline*}
The $H^\sigma(\T^d)$ norms of $v\vert_{\Sigma_\eta}$ and $u_a\vert_{\Sigma_\eta}$  are bounded by means of \eqref{commute:vpest} and \eqref{est:urqr}. Since $\p_i\cN \in H^{s-\mez}(\T^d)$,  for $\sigma \in [\tdm, s-\mez]$ and $s>(d+1)/2$, \cref{product estimate on torus}  yields
    \bq\label{cmt:Psi:base}
    \begin{split}
        \|[\Psi_\gamma[\eta],\partial_i]\chi\|_{H^{\sigma}(\T^d)}
        &\leq  C (\|\eta\|_{H^{s+\tdm}(\T^d)})\|\eta\|_{H^{s+\tdm}(\T^d)}\|\chi\|_{H^{\sigma-1}(\T^d)}.
    \end{split}
    \eq
Here is the place where  the most stringent restriction  $\sigma\le s-\mez$ is needed. We have finished the proof of \eqref{commutator bound} for $|\alpha|=1$.

    Next, we assume  that for some $k\ge 2$,  \eqref{commutator bound} holds  for all $\alpha\in  \Nn^d$ with $|\alpha|\le k-1$ and for all $\sigma \in [\tdm, s+\mez-|\alpha|]$. To prove  \eqref{commutator bound} for  $|\alpha| = k$ and $\sigma \in [\tdm, s+\mez-|\alpha|]$, we write  $\partial^\alpha = \partial^\beta \partial_i$ for some $i\in \{1,\dots, d\}$ and $|\beta|=k-1$, so that
        \[
        [\Psi_\gamma[\eta],\partial^\alpha]\chi =   [\Psi_\gamma[\eta],\partial_i ]\partial^\beta\chi + \partial_i[\Psi_\gamma[\eta],\partial^\beta]\chi.
    \]
   Since $\sigma+1\in [\frac{5}{2}, s+\mez-|\beta|]$,  the induction   hypothesis implies
    \begin{align*}
        \|[\Psi_\gamma[\eta],\partial^\alpha ]\chi\|_{H^{\sigma}(\T^d)} &\leq   \|[\Psi_\gamma[\eta],\partial_i ]\partial^\beta\chi\|_{H^{\sigma}(\T^d)}  + \|[\Psi_\gamma[\eta],\partial^\beta]\chi\|_{H^{\sigma+1}(\T^d)} \\
       &  \leq  C(\|\eta\|_{H^{s+\tdm}(\T^d)})\|\eta\|_{H^{s+\tdm}(\T^d)}\Bigl(\|\partial^\beta\chi\|_{{H}^{\sigma-1}(\T^d)} + \|\chi\|_{{H}^{\sigma+1+|\beta|-2}(\T^d)},  \Bigr) \\
       & \le C(\|\eta\|_{H^{s+\tdm}(\T^d)})\|\eta\|_{H^{s+\tdm}(\T^d)}\|\chi\|_{{H}^{\sigma+|\alpha|-2}(\T^d)}
    \end{align*}
    which completes the proof of the induction.
\end{proof}

\section{Existence of large traveling waves}\label{sec:construction}
\subsection{The capillary-gravity operator}
Recalling the mean curvature of operator $\cH(f) = -\dv(\frac{\nabla f}{\sqrt{1+|\nabla f|^2}})$, we consider the capillary-gravity operator $\sigma \cH+gI$, where $\sigma$, $g>0$. Estimates for the linearized operator and the incurred remainder are given the following Proposition. 
\begin{prop}\label{prop: T eta bijection}
     Let $s>(d-1)/2$ and $\eta\in H^{s+\tdm}(\T^d)$. Let $T_\eta$ be defined by   
    \bq\label{form:Teta}
    T_\eta f =g f-\sigma \dv (\frac{1}{(1+|\nabla \eta|^2)^\mez} \nabla f - \frac{\langle\nabla \eta , \nabla f\rangle}{(1+|\nabla \eta|^2)^\tdm} \nabla \eta).
    \eq
    Then for any $\mu\in [1,s+\tdm]$, $T_\eta : H^{\mu}(\T^d) \to H^{\mu- 2}(\T^d)$ is an isomorphism with 
    \bq\label{bound:Teta}
    \|T_{\eta}\|_{H^{\mu}(\T^d)\to H^{\mu-2}(\T^d)} \leq C(\|\nabla \eta\|_{H^{s+\mez}(\T^d)}),
    \eq
     and
     \bq\label{bound:Teta:inv}
     \|T^{-1}_{\eta}\|_{H^{\mu-2}(\T^d)\to H^{\mu}(\T^d)} \leq C(\|\eta\|_{H^{s+\tdm}(\T^d)}),
     \eq
     where $C:\Rr^+ \to \Rr^+$ depends only on $(d,s,\mu, g, \sigma)$. 
\end{prop}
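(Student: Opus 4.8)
The plan is to establish the isomorphism property of $T_\eta: H^\mu(\T^d)\to H^{\mu-2}(\T^d)$ by combining elliptic regularity (for the continuity and a priori bounds) with a variational argument (for bijectivity), treating $T_\eta$ as a lower-order perturbation of $gI$ plus a divergence-form second-order elliptic operator with coefficients depending on $\nabla\eta$. First I would record the structure
\bq\label{def: a_ij}
T_\eta f = gf - \sigma\,\dv\big(a[\nabla\eta]\,\nabla f\big),\qquad a_{ij}[\nabla\eta] = \frac{\delta_{ij}}{(1+|\nabla\eta|^2)^{\mez}} - \frac{\p_i\eta\,\p_j\eta}{(1+|\nabla\eta|^2)^{\tdm}},
\eq
and verify the uniform ellipticity $a[\nabla\eta]\xi\cdot\xi \ge (1+|\nabla\eta|^2)^{-\tdm}|\xi|^2$ together with the boundedness $|a[\nabla\eta]|\le 1$; this is an elementary pointwise computation. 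Since $s>(d-1)/2$ gives $\nabla\eta\in H^{s+\mez}(\T^d)\hookrightarrow L^\infty$, and in fact $s+\mez > d/2$ so $H^{s+\mez}$ is an algebra, the coefficients $a_{ij}[\nabla\eta]$ lie in $H^{s+\mez}(\T^d)$ with $\|a_{ij}[\nabla\eta]\|_{H^{s+\mez}}\le C(\|\nabla\eta\|_{H^{s+\mez}})$ by the product and composition (Moser-type) estimates of \cref{appendix:productestimates} applied to the smooth function $t\mapsto (1+t)^{-\mez}$ of $|\nabla\eta|^2$.

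Next I would prove the mapping bound \eqref{bound:Teta}. For $\mu\in[1,s+\tdm]$ and $f\in H^\mu(\T^d)$ we have $\nabla f\in H^{\mu-1}(\T^d)$, $a[\nabla\eta]\nabla f\in H^{\min\{\mu-1,s+\mez\}}(\T^d)=H^{\mu-1}(\T^d)$ (here $\mu-1\le s+\mez$), and $\dv$ of it is in $H^{\mu-2}(\T^d)$; the product estimate handles the low-regularity factor $\nabla f$ being paired with the $H^{s+\mez}$ coefficient, yielding $\|a[\nabla\eta]\nabla f\|_{H^{\mu-1}}\le C(\|\nabla\eta\|_{H^{s+\mez}})\|f\|_{H^\mu}$. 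Adding the trivial bound $\|gf\|_{H^{\mu-2}}\le g\|f\|_{H^\mu}$ gives \eqref{bound:Teta}. For bijectivity I would first treat $\mu=1$ via Lax--Milgram: the bilinear form $B_\eta(f,u)=\int_{\T^d} g f u + \sigma\, a[\nabla\eta]\nabla f\cdot\nabla u$ is bounded and coercive on $H^1(\T^d)$ (coercivity from $g>0$ and uniform ellipticity, with constant depending on $\|\nabla\eta\|_{L^\infty}\le C\|\nabla\eta\|_{H^{s+\mez}}$), so $T_\eta: H^1(\T^d)\to H^{-1}(\T^d)$ is an isomorphism; this simultaneously yields \eqref{bound:Teta:inv} for $\mu=1$ with the stated dependence on $\|\eta\|_{H^{s+\tdm}}$ (equivalently on $\|\nabla\eta\|_{H^{s+\mez}}$ plus the $L^2$ norm absorbed into the $H^{s+\tdm}$ norm).

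For $\mu\in(1,s+\tdm]$ I would bootstrap: given $f\in H^1$ solving $T_\eta f = F$ with $F\in H^{\mu-2}$, I rewrite $-\sigma\Delta f = \sigma\dv((a[\nabla\eta]-I)\nabla f) + F - gf$ and run the standard elliptic regularity iteration. The right-hand side lies in $H^{\sigma-1}$ whenever $f\in H^\sigma$ with $\sigma\le\mu$, because $(a[\nabla\eta]-I)\nabla f\in H^{\min\{\sigma-1,s+\mez\}}$ and $\sigma-1\le\mu-1\le s+\mez$; inverting $\Delta$ (which gains two derivatives on $\rH^\bullet$, the mean being controlled separately by integrating $T_\eta f=F$ against $1$) upgrades $f$ from $H^\sigma$ to $H^{\sigma+1}$ as long as we stay below $\mu$, and after finitely many steps (plus one interpolation step for the non-integer endpoint) we reach $f\in H^\mu$ with the quantitative bound $\|f\|_{H^\mu}\le C(\|\nabla\eta\|_{H^{s+\mez}})(\|F\|_{H^{\mu-2}}+\|f\|_{H^1})$, and then $\|f\|_{H^1}$ is absorbed via the $\mu=1$ estimate. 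This gives surjectivity onto $H^{\mu-2}$, injectivity follows from the $\mu=1$ injectivity, and the bound on the inverse is exactly \eqref{bound:Teta:inv}. The main obstacle, such as it is, is purely bookkeeping: tracking the ceiling $s+\mez$ on the regularity of the coefficients throughout the bootstrap (which is why the range of $\mu$ stops at $s+\tdm$, so that $\mu-1$ never exceeds $s+\mez$) and ensuring the constants depend only on $\|\nabla\eta\|_{H^{s+\mez}}$ (hence on $\|\eta\|_{H^{s+\tdm}}$) and not on higher norms of $\eta$; no genuinely new analytic input is needed beyond the product/composition estimates of the appendix and classical elliptic theory on $\T^d$.
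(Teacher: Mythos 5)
Your setup (divergence form, ellipticity, coefficients $a_{ij}\in H^{s+\mez}$ by Moser estimates, the mapping bound \eqref{bound:Teta} via \cref{product estimate on torus}, and Lax--Milgram for $\mu=1$) matches the paper and is fine. The gap is in the higher-regularity bootstrap. You rewrite $-\sigma\Delta f=\sigma\dv((a[\nabla\eta]-I)\nabla f)+F-gf$ and claim that $f\in H^\sigma$ makes the right-hand side lie in $H^{\sigma-1}$, so that inverting $\Delta$ upgrades $f$ to $H^{\sigma+1}$. This is a miscount: $(a[\nabla\eta]-I)\nabla f\in H^{\sigma-1}$ only gives $\dv((a[\nabla\eta]-I)\nabla f)\in H^{\sigma-2}$, i.e.\ the term you moved to the right is of the \emph{same} (second) order as $\Delta f$, so inverting the Laplacian returns $f\in H^{\sigma}$ and the iteration never improves. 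There is no cheap repair: $a[\nabla\eta]-I$ is neither small nor smoother than $H^{s+\mez}$, so no perturbative/Neumann-series argument gains regularity, and the alternative classical route of differentiating the equation in full-derivative steps is also unavailable at the stated regularity $s>(d-1)/2$ with $\mu$ up to $s+\tdm$: the commutator term $\dv(\partial_k a_{ij}\,\partial_j f)$ involves $\nabla a\in H^{s-\mez}$ only, which for $s$ close to $(d-1)/2$ cannot absorb a full extra derivative on $f$.

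This is exactly why the paper's proof does not bootstrap in unit steps but gains only $\eps$ derivatives per step, with $\eps<\min\{\mez,\,s-\tfrac{d-1}{2}\}$, via paralinearization: one writes $T_\eta f= gf+\sigma T_m f-\sigma(a_{ij}-T_{a_{ij}})\partial_i\partial_j f-\sigma\,\partial_i a_{ij}\,\partial_j f$ with $m(x,\xi)=a_{ij}\xi_i\xi_j$, uses the paraproduct remainder bounds (of the type \eqref{lemPa1}, \eqref{Bony3}, \eqref{niS}) to show the error terms are $\eps$ derivatives better than $H^{\mu-2}$, and then inverts the elliptic paradifferential operator by composing with $T_{m^{-1}}$ and controlling $T_{m^{-1}m}-T_{m^{-1}}T_m$ by the symbolic calculus \eqref{esti:quant2}; the case $d=1$ is handled separately by integrating the ODE. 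To fix your proof you would need to replace the "move the whole second-order term to the right" step by an argument of this kind (or restrict to substantially larger $s$ and argue by differentiating the equation plus interpolation, which does not cover the stated range $s>(d-1)/2$ and $\mu\le s+\tdm$).
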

\begin{proof}
     $T_\eta$ is the  linear second-order operator 
    \bq\label{def: a_ij}
   T_\eta f= gf- \sigma\dv(a_{ij}\partial_j f),\quad a_{ij} = \frac{\delta_{ij}}{(1+|\nabla \eta|^2)^\mez} - \frac{\partial_i \eta \partial_j \eta}{(1+|\nabla \eta|^2)^\tdm}
    \eq
which is elliptic since 
    \bq\label{ellipticity of aij}
    (1+|\nabla \eta|^2)^{-\tdm}|\xi|^2 \leq a_{ij}\xi_i\xi_j \leq (1+|\nabla \eta|^2)^{-\mez}|\xi|^2\quad  \forall \xi\in \Rr^n.
    \eq 
    By \cref{prop: composition regularity} and the fact that $H^{s+\mez}(\T^d)$ is an algebra  embedding into $L^\infty(\T^d)$, we have
    \bq\label{bound:a_ij}
        \|a_{ij}\|_{H^{s+\mez}(\T^d)}\leq C(\|\nabla \eta\|_{L^\infty(\T^d)}) (\|\nabla \eta\|_{H^{s+\mez}(\T^d)}+1).
    \eq
    Then applying \cref{product estimate on torus} to \eqref{def: a_ij} yields \eqref{bound:Teta}.

    Next, we prove that  $T_\eta$ has a continuous inverse from $H^{\mu}(\T^d)$ to  $H^{\mu-2}(\T^d)$.  The case $\mu =1$ and the existence of the inverse  follow from Lax-Milgram's theorem and  \eqref{bound:a_ij}.  To propagate to higher regularity, we shall prove that for any $\eps\in (0,s- \frac{d-1}{2})$ with $\eps\leq \mez$,  if \eqref{bound:Teta:inv} holds for some $\mu \in [1,s+\tdm - \eps]$, then \eqref{bound:Teta:inv} also holds for $\mu + \eps$. To this end, first suppose $d\geq 2$; we will treat the case $d=1$ separately at the end of the proof. For any $F\in H^{\mu + \eps- 2}(\T^d)$, we will prove regularity for the elliptic problem $T_\eta f = F$ for $f$ via paralinearization. 
    Assuming \eqref{bound:Teta:inv} holds for $\mu$, let $f = T_{\eta}^{-1} F \in H^{\mu}(\T^d)$. We have
    \[
       F= T_\eta f = gf - \sigma \bigl(a_{ij} \partial_i \partial_j f + \partial_i a_{ij} \partial_j f \bigr) =  gf + \sigma T_m f -\sigma  (a_{ij} - T_{a_{ij}})\partial_i \partial_j f -\sigma \partial_i a_{ij} \partial_j f,
    \]
    where $T_m$ is the paradifferential operator (see \eqref{eq.para}) associated with the symbol 
    \[
        m(x,\xi) = a_{ij}\xi_i\xi_j,
    \]
    where we have used the Einstein notation for summation. 
    This allows us to rewrite the elliptic problem as
    \bq\label{T eta paralinearized}
        \sigma T_m f = F', \qquad F' = F - gf + \sigma (a_{ij} - T_{a_{ij}})\partial_i \partial_j f +\sigma \partial_i a_{ij} \partial_j f.
    \eq
    For $s>(d-1)/2$, $d\ge 2$, and $\mu \in [1,s+\tdm - \eps]$, we have $s>\mez$, $\mu+\eps -2 \leq s +\mez$, $s+\mez + \mu -2 > 0$, and $s+\mez + \mu -2 > \mu+\eps -2 + d/2$, where the last inequality uses the fact that $\eps<s-\frac{d-1}{2}$.  Hence, we can apply  \eqref{lemPa1} with $s_0 = \mu+\eps-2$, $s_1= s+\mez$, and $s_2 = \mu-2$ to obtain
    \[
       \| (a_{ij} - T_{a_{ij}})\partial_i \partial_j f \|_{H^{\mu+\eps - 2}}\leq C\|a_{ij}\|_{H^{s+\mez}} \|\partial_i\partial_j f\|_{H^{\mu-2}} \leq C(\|\nabla \eta\|_{H^{s+\mez}(\T^d)})\|F\|_{H^{\mu-2}(\T^d)},
    \]
    where in the last inequality we appeal to \eqref{bound:a_ij} and \eqref{bound:Teta:inv}. Similarly, we can apply  \cref{product estimate on torus} to have
    \[
        \|\partial_i a_{ij} \partial_j f\|_{H^{\mu+\eps-2}} \leq C\|\partial_i a_{ij} \|_{H^{s-\mez}}\|\partial_j f\|_{H^{\mu-1}} \leq C(\|\nabla \eta\|_{H^{s+\mez}(\T^d)})\|F\|_{H^{\mu-2}(\T^d)}.
    \]
    The preceding estimates imply that $F' \in H^{\mu +\eps-2}(\T^d)$  and 
        \bq
        \|F'\|_{H^{\mu+\eps -2}} \leq C(\|\nabla \eta\|_{H^{s+\mez}(\T^d)})\|F\|_{H^{\mu+\eps-2}(\T^d)}.
    \eq
    Since $T_1 f=f-(2\pi)^{-d}\hat{f}(0)$, it follows from  \eqref{T eta paralinearized}  that 
    \bq\label{paraeq:f}
    \begin{aligned}
        \sigma f &= \sigma T_{m^{-1}m} f +\sigma(2\pi)^{-d}\hat{f}(0)\\
        &= \sigma (T_{m^{-1}m} - T_{m^{-1}} T_m) f + \sigma T_{m^{-1}} T_m f+\sigma(2\pi)^{-d}\hat{f}(0)\\
        & = \sigma (T_{m^{-1}m} - T_{m^{-1}} T_m) f  + T_{m^{-1}} F'+\sigma(2\pi)^{-d}\hat{f}(0).
    \end{aligned}
    \eq
    In the notation given by \cref{defiGmrho}, we have $m\in \Gamma^2_{\eps}(\T^d)$, and using the ellipticity \eqref{ellipticity of aij}, $m^{-1}\in \Gamma^{-2}_\eps(\T^d)$. Moreover, we have the bounds
    \bq
        M^2_\eps(m), M^{-2}_\eps(m^{-1})\leq C(\|\nabla \eta\|_{L^\infty(\T^d)}) (\|\nabla \eta\|_{H^{s+\mez}(\T^d)}+1).
    \eq
  Using the symbolic calculus in \cref{theo:sc0}, we deduce from \eqref{paraeq:f} that 
    \begin{multline}
        \|f\|_{H^{\mu +\eps}} \leq C\|T_{m^{-1}m} - T_{m^{-1}} T_m\|_{H^{\mu}\to H^{\mu + \eps}} \|f\|_{H^{\mu}} + C\|T_{m^{-1}}\|_{H^{\mu+\eps -2} \to H^{\mu +\eps}}\|F'\|_{H^{\mu+\eps -2}} +C\|f\|_{L^2} \\
        \leq C(\|\nabla \eta\|_{H^{s+\mez}(\T^d)})\|F\|_{H^{\mu+\eps-2}(\T^d)}.
    \end{multline}
    This finishes the proof for the case $d\geq 2$.

    Finally, we turn to the case $d=1$. Note that in this case, we have
    \bq
        T_\eta f = gf  - \sigma (af')', \qquad a = \frac{1}{(1 + |\eta'|^2)^{\mez}} - \frac{|\eta'|^2}{(1+|\eta'|^2)^{\tdm}} = \frac{1}{(1+|\eta'|^2)^\tdm}.
    \eq
Thus the  elliptic problem becomes    $ -\sigma(af')' = F - gf =:F_1$. Denoting by $\p_x^{-1} F_1$ the mean-zero antiderivative of $F_1$, we obtain  $-f' = (\sigma a)^{-1}\p_x^{-1}  F_1$. Then applying  \cref{product estimate on torus}  yields 
    \bq
        \|f'\|_{H^{\mu+\eps - 1}}  \le \sigma^{-1}\| a^{-1}\|_{H^{s+\mez}} \|\p_x^{-1}F_1\|_{H^{\mu+\eps -1}} \leq C(\|\eta'\|_{H^{s+\mez}})\|F_1\|_{H^{\mu+\eps -2}},
    \eq
    which in turn combined with \eqref{bound:Teta:inv} yields the result for $\mu +\eps$. This completes the proof.
\end{proof}

\begin{prop}\label{linearizing mean curvature at eta}
    Let $s>(d-1)/2$ and $\eta\in H^{s+\tdm}(\T^d)$. We have
    \bq\label{expand:cH}
    (\sigma \cH+gI)(\eta+f) -(\sigma \cH+gI)(\eta) = T_\eta f + R_\eta(f),
    \eq
    where  $T_\eta$ is defined in \eqref{form:Teta}, and we have the following estimates:
    \bq\label{bound:Teta tame}
    \|T_{\eta}f\|_{H^{s-\mez}}\leq C(\|\na\eta\|_{L^\infty})\big(\| \na f\|_{H^{s+\mez}}+\| \na \eta\|_{H^{s+\mez}}\| \na f\|_{L^\infty}\big),
     \eq
      \begin{multline}\label{contraction for T in eta}
  \|T_{\eta_1}f - T_{\eta_2}f\|_{H^{s-\mez}}\leq C(\|\na \eta_1\|_{L^\infty},\|\na\eta_2\|_{L^\infty})\Big(\|\na\eta_1-\na\eta_2\|_{L^\infty}\|\na f\|_{H^{s+\mez}}\\
 +\|\na\eta_1-\na\eta_2\|_{H^{s+\mez}}\|\na f\|_{L^\infty}\Big),
    \end{multline}
     \begin{multline}\label{bound for R eta f}
 \|R_\eta(f)\|_{H^{s-\mez}(\T^d)}\leq C(\|\na \eta\|_{L^\infty},\| \na f\|_{L^\infty}) \Big(\|\na f\|_{L^\infty}\| \na f\|_{H^{s+\mez}} +\| \na \eta\|_{H^{s+\mez}}\| \na f\|_{L^\infty}^2\Big),
    \end{multline} 
    and
  \begin{multline}\label{contraction for R eta f}
     \|R_{\eta}(f_1) - R_{\eta}(f_2)\|_{H^{s-\mez}}\leq C\big(\|\na \eta\|_{L^\infty},\| \na f_1\|_{L^\infty},\| \na f_2\|_{L^\infty}\big) \\
    \cdot \Bigl((\|\na f_1\|_{H^{s+\mez}}+\|\na f_2\|_{H^{s+\mez}})\|\na(f_1-f_2)\|_{L^\infty}  + \|\na (f_1-f_2)\|_{H^{s+\mez}}\Bigr).
    \end{multline} 
    The above functions $C$'s depend only on $(d, s, g, \sigma)$. 
\end{prop}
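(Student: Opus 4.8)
The plan is to rewrite the capillary part of $\sigma\cH+gI$ as the divergence of a smooth vector field of $\na\eta$, Taylor-expand it, and estimate the resulting linear and quadratic pieces by Moser-type calculus. Introduce $V:\Rr^d\to\Rr^d$ by $V(p)=p(1+|p|^2)^{-\mez}$, so that $\cH(\eta)=-\dv V(\na\eta)$ and hence
\[
(\sigma\cH+gI)(\eta+f)-(\sigma\cH+gI)(\eta)=gf-\sigma\dv\bigl(V(\na\eta+\na f)-V(\na\eta)\bigr).
\]
Taylor's formula with integral remainder gives $V(\na\eta+\na f)-V(\na\eta)=DV(\na\eta)\na f+Q(\na\eta,\na f)$ with $Q(p,q):=\int_0^1(1-t)D^2V(p+tq)[q,q]\,dt$, and a direct computation shows $\partial_{p_j}V_i(p)=\delta_{ij}(1+|p|^2)^{-\mez}-p_ip_j(1+|p|^2)^{-\tdm}=a_{ij}(p)$ in the notation of \eqref{def: a_ij}. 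Therefore $-\sigma\dv(DV(\na\eta)\na f)+gf=T_\eta f$, so \eqref{expand:cH} holds with $R_\eta(f)=-\sigma\dv Q(\na\eta,\na f)$, and the task reduces to the four stated estimates.

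First I would record the ingredients. (a) $V\in C^\infty(\Rr^d;\Rr^d)$, all of its derivatives are bounded on bounded subsets of $\Rr^d$, and $DV(0)=I$, so $DV(p)-I$ vanishes at the origin. (b) Since $s>(d-1)/2$ we have $s+\mez>d/2$, so $H^{s+\mez}(\T^d)$ is an algebra embedded in $L^\infty(\T^d)$; this is the threshold under which \cref{prop: composition regularity} applies (for smooth $\Phi$ with $\Phi(0)=0$, $\|\Phi(u)\|_{H^{s+\mez}}\le C(\|u\|_{L^\infty})\|u\|_{H^{s+\mez}}$, and analogously for smooth functions of several variables) and the tame product estimate \eqref{tame:product} reads $\|uw\|_{H^{s+\mez}}\le C(\|u\|_{L^\infty}\|w\|_{H^{s+\mez}}+\|u\|_{H^{s+\mez}}\|w\|_{L^\infty})$. (c) The divergence maps $H^{s+\mez}(\T^d)$ to $H^{s-\mez}(\T^d)$ with norm $1$, so each quantity to be bounded is controlled by the $H^{s+\mez}$ norm of the vector field inside the divergence, plus---for $T_\eta f$---the elementary term $g\|f\|_{H^{s-\mez}}$ (lower order, and bounded by $g\|\na f\|_{H^{s+\mez}}$ for the mean-zero $f$ to which $T_\eta$ is applied, by \eqref{form:Teta}). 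With these in place the linear bounds are short: for \eqref{bound:Teta tame} I would split $DV(\na\eta)\na f=\na f+(DV(\na\eta)-I)\na f$ and combine \eqref{tame:product} with the composition bounds $\|DV(\na\eta)-I\|_{L^\infty}\le C(\|\na\eta\|_{L^\infty})$ and $\|DV(\na\eta)-I\|_{H^{s+\mez}}\le C(\|\na\eta\|_{L^\infty})\|\na\eta\|_{H^{s+\mez}}$ coming from (a) and \cref{prop: composition regularity}; for \eqref{contraction for T in eta} I would write $T_{\eta_1}f-T_{\eta_2}f=-\sigma\dv[(DV(\na\eta_1)-DV(\na\eta_2))\na f]$ and, via
\[
DV(\na\eta_1)-DV(\na\eta_2)=\int_0^1 D^2V\bigl(\na\eta_2+t(\na\eta_1-\na\eta_2)\bigr)\bigl[\na\eta_1-\na\eta_2,\cdot\bigr]dt,
\]
treat the coefficient matrix as (a smooth function of $(\na\eta_1,\na\eta_2)$) times $(\na\eta_1-\na\eta_2)$ and apply \eqref{tame:product} together with \cref{prop: composition regularity}.

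For the remainder I would use that $Q(\na\eta,\na f)=M(\na\eta,\na f)[\na f,\na f]$ with $M(p,q):=\int_0^1(1-t)D^2V(p+tq)\,dt$ smooth and bounded on bounded sets, so $Q$ manifestly carries two factors of $\na f$; two applications of \eqref{tame:product}, distributing the $H^{s+\mez}$ norm onto exactly one of the three slots $\na\eta,\na f,\na f$ and measuring the other two in $L^\infty$, give \eqref{bound for R eta f} after applying $\dv$. For \eqref{contraction for R eta f} I would telescope $Q(\na\eta,\na f_1)-Q(\na\eta,\na f_2)$ into three pieces: one from replacing $D^2V(\na\eta+t\na f_1)$ by $D^2V(\na\eta+t\na f_2)$ (the mean value theorem applied to $D^2V$ brings out a factor $\na(f_1-f_2)$), and the two pieces from $[\na f_1,\na f_1]-[\na f_2,\na f_2]=[\na(f_1-f_2),\na f_1]+[\na f_2,\na(f_1-f_2)]$; each piece has one factor $\na(f_1-f_2)$ and one factor $\na f_1$ or $\na f_2$, and \eqref{tame:product} applied to each---keeping the top norm on a single factor---yields \eqref{contraction for R eta f}, with the constant absorbing $\|\na f_1\|_{L^\infty}$ and $\|\na f_2\|_{L^\infty}$.

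I do not expect a genuine obstacle here; the only delicate point is the bookkeeping, namely arranging every product so that exactly one factor sits in $H^{s+\mez}$ and the rest in $L^\infty$, so the bounds come out in the stated tame form. This is possible precisely because $s>(d-1)/2$ makes $H^{s+\mez}(\T^d)$ a multiplicative algebra embedded in $L^\infty(\T^d)$; once that is available, \eqref{bound:Teta tame}--\eqref{contraction for R eta f} reduce to routine Moser estimates.
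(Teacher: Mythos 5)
Your proposal is correct and follows essentially the same route as the paper: write $\cH(\eta)=-\dv F(\na\eta)$ with $F(z)=z(1+|z|^2)^{-1/2}$, Taylor-expand with integral remainder to identify $T_\eta$ and $R_\eta$, and then obtain all four bounds from the composition estimate of \cref{prop: composition regularity} and the tame product rule \eqref{tame:product}, using that $H^{s+\mez}(\T^d)$ is an algebra for $s>(d-1)/2$. Your extra bookkeeping (splitting $DF(\na\eta)=I+(DF(\na\eta)-I)$, the mean-value formula for $DF(\na\eta_1)-DF(\na\eta_2)$, the telescoping of $Q$, and the remark that the zeroth-order term $gf$ needs the mean-zero/Poincar\'e observation to be absorbed into $\|\na f\|_{H^{s+\mez}}$) simply spells out details the paper leaves implicit.
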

\begin{proof}
   We write  $\cH(\eta)=-\dv F(\na \eta)$, where  $F(z)=\frac{z}{\sqrt{1+|z|^2}}$. Since $(\na F(z))_{ij}=\frac{\delta_{ij}}{\sqrt{1+|z|^2}} -\frac{z_iz_j}{(1+|z|^2)^\tdm}$, we have
    \[
   \cH(\eta+f)-\cH(\eta)=-\dv\big(\na F(\na\eta)\na f\big)-\dv\int_0^1(1-t)D^2F(\na \eta+t\na f)\na f \cdot \na f dt.  
    \]
    This implies the formula \eqref{form:Teta} for $T_\eta$ in \eqref{expand:cH}. Moreover, since $D^2F(0)=0$ and  $H^{s+\mez}(\T^d)$ is an algebra for $s>(d-1)/2$, the $H^{s-\mez}$ estimates \eqref{bound:Teta tame}-\eqref{contraction for R eta f} for $T_\eta$ and $R_\eta$ then follow from \cref{prop: composition regularity}.
\end{proof}

Next, we prove commutator estimates for the linearized capillary-gravity operator $T_\eta$ with partial derivatives.
\begin{prop}\label{commutator estimate for T eta}
Let $\eta \in H^{s+\tdm}(\T^d)$ with $s>(d+1)/2$. Then for all $\alpha\in \Nn^d$ and $\sigma \in \Rr$ satisfying 
\bq\label{cd:cmt:Teta}
\sigma+|\alpha|\le s-\mez\quad \text{and}\quad s+\mez+\sigma>0,
\eq
 we have
    \bq \label{commutator bound for T eta}
    \|[T_\eta,\partial^\alpha]f\|_{H^{\sigma}(\T^d)}\leq C(\|\eta\|_{H^{s+\tdm}(\T^d)})\|\eta\|_{H^{s+\tdm}(\T^d)}\|f\|_{H^{\sigma+|\alpha|+1}(\T^d)}
    \eq
provided  the right-hand side is finite. Here $C: \Rr^+ \to \Rr^+$ depends only on $(d,s,\sigma,|\alpha|)$.
\end{prop}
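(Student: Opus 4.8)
The plan is to prove \eqref{commutator bound for T eta} by induction on $|\alpha|$, mimicking the structure of \cref{commutator estimate for Psi eta} but exploiting the fact that $T_\eta$ is an explicit second-order differential operator, so no nonlocal analysis is needed. First consider $|\alpha|=1$, say $\partial^\alpha=\partial_i$. Writing $T_\eta f = gf-\sigma\partial_k(a_{kl}\partial_l f)$ with $a_{kl}$ as in \eqref{def: a_ij}, the constant term $gf$ commutes with $\partial_i$, and a direct computation gives
\bq\label{plan:cmt:order1}
[T_\eta,\partial_i]f = \sigma\,\partial_k\big((\partial_i a_{kl})\,\partial_l f\big).
\eq
Thus the commutator is a first-order operator with coefficient $\partial_i a_{kl}$. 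To estimate its $H^\sigma$ norm I would move the outer $\partial_k$ out, so that it suffices to bound $\|(\partial_i a_{kl})\partial_l f\|_{H^{\sigma+1}}$. Under the hypotheses $\sigma+1\le s-\mez$ and $s+\mez+\sigma+1=s+\tdm+\sigma>0$ (which follow from \eqref{cd:cmt:Teta} since $|\alpha|=1$), I would apply the product estimate \cref{product estimate on torus} with one factor $\partial_i a_{kl}\in H^{s-\mez}(\T^d)$ (controlled via \eqref{bound:a_ij}, noting $\partial_i a_{kl}$ is an explicit smooth function of $\nabla\eta$ times derivatives of $\nabla\eta$, hence its $H^{s-\mez}$ norm is $\le C(\|\nabla\eta\|_{L^\infty})\|\nabla\eta\|_{H^{s+\mez}}\le C(\|\eta\|_{H^{s+\tdm}})\|\eta\|_{H^{s+\tdm}}$) and the other factor $\partial_l f\in H^{\sigma+|\alpha|}(\T^d)$. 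This yields
\[
\|[T_\eta,\partial_i]f\|_{H^\sigma}\le C\|(\partial_i a_{kl})\partial_l f\|_{H^{\sigma+1}}\le C(\|\eta\|_{H^{s+\tdm}})\|\eta\|_{H^{s+\tdm}}\|f\|_{H^{\sigma+2}},
\]
which is \eqref{commutator bound for T eta} for $|\alpha|=1$.

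For the inductive step, assume \eqref{commutator bound for T eta} holds for all multi-indices of order $\le k-1$ (and all admissible $\sigma$), and take $|\alpha|=k$. Write $\partial^\alpha=\partial_i\partial^\beta$ with $|\beta|=k-1$, and use the standard commutator splitting
\[
[T_\eta,\partial^\alpha]f=[T_\eta,\partial_i]\partial^\beta f+\partial_i[T_\eta,\partial^\beta]f.
\]
For the first term I would apply the $|\alpha|=1$ estimate just proven, with $f$ replaced by $\partial^\beta f$ and the same $\sigma$ (the pair $(\sigma,e_i)$ satisfies \eqref{cd:cmt:Teta} since $(\sigma,\alpha)$ does): this gives $\le C(\|\eta\|_{H^{s+\tdm}})\|\eta\|_{H^{s+\tdm}}\|\partial^\beta f\|_{H^{\sigma+2}}\le C\|\eta\|\,\|f\|_{H^{\sigma+|\alpha|+1}}$. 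For the second term, I move $\partial_i$ outside, costing one derivative, and apply the induction hypothesis with $\sigma$ replaced by $\sigma+1$ and multi-index $\beta$; the admissibility conditions $(\sigma+1)+|\beta|=\sigma+|\alpha|\le s-\mez$ and $s+\mez+(\sigma+1)>0$ hold by \eqref{cd:cmt:Teta}, giving $\|\partial_i[T_\eta,\partial^\beta]f\|_{H^\sigma}\le\|[T_\eta,\partial^\beta]f\|_{H^{\sigma+1}}\le C(\|\eta\|_{H^{s+\tdm}})\|\eta\|_{H^{s+\tdm}}\|f\|_{H^{(\sigma+1)+|\beta|+1}}=C\|\eta\|\,\|f\|_{H^{\sigma+|\alpha|+1}}$. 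Summing the two contributions closes the induction.

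The only real technical point — not so much an obstacle as a bookkeeping matter — is verifying that the various Sobolev multiplier inequalities used in the base case genuinely apply under the stated range \eqref{cd:cmt:Teta}, i.e. that one can put $\partial_i a_{kl}$ in $H^{s-\mez}(\T^d)$ and multiply against an $H^{\sigma+1}(\T^d)$ function landing in $H^{\sigma+1}(\T^d)$ when $\sigma+1\le s-\mez$ and $s+\tdm+\sigma>0$; this is exactly the regime covered by \cref{product estimate on torus}, with the low-regularity factor $\partial_i a_{kl}$ having $s-\mez>d/2$ derivatives so that it is an $L^\infty$ multiplier on the relevant range of negative-index spaces. One should also note, as in the analogous remark after \cref{commutator estimate for Psi eta}, that $\sigma$ need not be an integer here since all the product estimates invoked are valid for real Sobolev exponents. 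Apart from tracking these index constraints, the argument is an elementary Leibniz-rule computation plus the inductive commutator identity, with no analysis of the fluid equations entering at all.
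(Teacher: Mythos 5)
Your proposal is correct and uses the same ingredients as the paper's proof: the Leibniz rule applied to the explicit divergence-form operator $T_\eta$, the Sobolev product estimate of \cref{product estimate on torus} under exactly the index constraints \eqref{cd:cmt:Teta}, and the composition bound for the coefficients $a_{ij}$. The only difference is organizational — the paper expands $[\p^\alpha,T_\eta]f=\sum_{0<\beta\le\alpha}\dv\big(\p^\beta a_{ij}\,\p_j\p^{\alpha-\beta}f\big)$ in one step and estimates each term with $s_1=s+\mez-|\beta|$, $s_2=\sigma+|\beta|$, whereas you unwind the same expansion by induction on $|\alpha|$ through the first-order commutator; your verification of the admissibility conditions in both the base case and the inductive step is accurate.
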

\begin{proof}
Since
\[
 [\partial^\alpha, T_\eta]f=\sum_{0<\beta\le \alpha} \dv\big(\p^\beta a_{ij}\p_j \p^{\alpha-\beta}f\big),
 \]
 we have
 \[
 \|  [\partial^\alpha, T_\eta]f\|_{H^{\sigma}}\le \sum_{0<\beta\le \alpha} \| \p^\beta a_{ij}\p_j \p^{\alpha-\beta}f\|_{H^{\sigma+1}}.
 \]
Under the condition \eqref{cd:cmt:Teta},  \cref{product estimate on torus} is applicable with $s_0=\sigma+1$, $s_1=s+\mez-|\beta|$, $s_2=\sigma+1+|\beta|-1$, implying 
 \[
 \| \p^\beta a_{ij}\p_j \p^{\alpha-\beta}f\|_{H^{\sigma+1}}\le C \| \p^\beta a_{ij}\|_{H^{s+\mez-|\beta|}}\|\p_j \p^{\alpha-\beta}f\|_{H^{\sigma+1+|\beta|-1}} \le C\| a_{ij}\|_{H^{s+\mez}}\| f\|_{H^{\sigma+|\alpha|+1}}.
 \]
 Finally, applying the nonlinear estimate \eqref{nonl:est} to $a_{ij}$ we obtain \eqref{commutator bound for T eta}. 
\end{proof}

Finally, we prove the invertibility of the  gravity-capillary operator in Sobolev spaces.
\begin{prop}\label{prop:wp:gc}
    Let $s> (d+1)/2$. For any $\phi\in H^{s-\mez}(\T^d)\cap C^{1}(\T^d)$, there exists a unique $\eta\in H^{s+\tdm}(\T^d) $ such that  $(gI + \sigma \cH) \eta = \phi$, and $\eta$ satisfies
        \bq\label{est:eta phi}
        \|\eta\|_{H^{s+\tdm}}\leq C(\|\phi\|_{H^{s-\mez}} + \|\phi\|_{C^{1}} )
    \eq
 for some $C: \Rr_+\to \Rr_+$ depending only on $(d,s,\sigma,g)$. 
\end{prop}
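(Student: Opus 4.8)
\noindent\emph{Proof plan.}
The plan is to solve the quasilinear elliptic equation $N(\eta):=g\eta+\sigma\cH(\eta)=\phi$ by the continuity method, which yields existence together with the quantitative bound \eqref{est:eta phi}, and to obtain uniqueness from strict monotonicity of $N$. Writing the equation as $a_{ij}(\na\eta)\p_i\p_j\eta=\sigma^{-1}(g\eta-\phi)$ with $a_{ij}$ as in \eqref{def: a_ij} (note $\dv(\tfrac{\na\eta}{\sqrt{1+|\na\eta|^2}})=a_{ij}(\na\eta)\p_i\p_j\eta$ by the chain rule), one first reduces to $\phi\in C^\infty(\T^d)$ by mollification: for smooth data any solution $\eta\in H^{s+\tdm}(\T^d)\hookrightarrow C^{2,\beta}(\T^d)$, $\beta\in(0,1)$, is smooth by Schauder bootstrap (the coefficients being smooth functions of $\na\eta$ and the right-hand side smooth after each step), so the a priori estimates below are legitimate; these are uniform in the mollification parameter, and the general case follows by a weak limit.

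The core is the a priori bound $\|\eta\|_{H^{s+\tdm}}\le C(\|\phi\|_{H^{s-\mez}}+\|\phi\|_{C^1})$ for smooth solutions, in four steps. \emph{Step 1 ($L^\infty$).} At an extremum of $\eta$ one has $\na\eta=0$ and $\pm\Delta\eta\le 0$, so since $a_{ij}(\na\eta)\p_i\p_j\eta=\Delta\eta$ there, $\pm g\eta\le\pm\phi$; hence $\|\eta\|_{L^\infty}\le g^{-1}\|\phi\|_{L^\infty}$. \emph{Step 2 (gradient — the key structural point).} Differentiating the equation in $x_k$, multiplying by $\p_k\eta$ and summing, $w:=\tfrac12|\na\eta|^2$ satisfies
\[
a_{ij}(\na\eta)\p_i\p_j w+c_m\p_m w-\tfrac{2g}{\sigma}w\;\ge\;-\tfrac1\sigma\,\na\phi\cdot\na\eta ,
\]
where $c_m=\tfrac{\p a_{ij}}{\p p_m}(\na\eta)\p_i\p_j\eta$ and the nonnegative term $\sum_k a_{ij}\p_i\p_k\eta\,\p_j\p_k\eta$ has been dropped; the essential point is that the term $g\eta$ produces the favorable sign $-\tfrac{2g}{\sigma}w$. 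Using $\tfrac1\sigma|\na\phi\cdot\na\eta|\le\tfrac g\sigma w+\tfrac1{2g\sigma}\|\na\phi\|_{L^\infty}^2$, the function $w-\tfrac1{2g^2}\|\na\phi\|_{L^\infty}^2$ is a subsolution of $a_{ij}\p_i\p_j\cdot+c_m\p_m\cdot-\tfrac g\sigma\cdot$, and the maximum principle on $\T^d$ gives $\|\na\eta\|_{L^\infty}\le g^{-1}\|\na\phi\|_{L^\infty}$. \emph{Step 3 ($C^{2,\alpha}$).} With $L:=g^{-1}\|\phi\|_{C^1}\ge\|\eta\|_{W^{1,\infty}}$ the coefficients $a_{ij}(\na\eta)$ are uniformly elliptic with constants depending only on $L$, and the right-hand side $\sigma^{-1}(g\eta-\phi)$ is bounded by $C(L)$; the De Giorgi--Nash--Moser interior estimate for quasilinear equations gives $\|\eta\|_{C^{1,\alpha}}\le C(L)$, and one Schauder step (coefficients and datum now $C^{0,\alpha}$ with norm $\le C(L)$) upgrades this to $\|\eta\|_{C^{2,\alpha}}\le C(L)$, in particular $\|\eta\|_{W^{2,\infty}}\le C(L)$ — it is here, rather than through a second Bernstein device, that $D^2\eta$ is controlled, which is why only $\phi\in C^1$ is needed. \emph{Step 4 ($H^{s+\tdm}$).} Apply $\Lambda^{s-\mez}:=(1-\Delta)^{(s-\mez)/2}$ to $a_{ij}(\na\eta)\p_i\p_j\eta=\sigma^{-1}(g\eta-\phi)$, use the $H^2$ elliptic estimate for $\Lambda^{s-\mez}\eta$, a Kato--Ponce commutator bound for $[\Lambda^{s-\mez},a_{ij}(\na\eta)]\p_i\p_j\eta$, and the composition bound $\|a_{ij}(\na\eta)\|_{H^{s-\mez}}\le C(L)(1+\|\eta\|_{H^{s+\mez}})$; since $\|\eta\|_{W^{2,\infty}}\le C(L)$ from Step 3 this gives $\|\eta\|_{H^{s+\tdm}}\le C(L)(\|\phi\|_{H^{s-\mez}}+\|\eta\|_{H^{s+\mez}}+\|\eta\|_{H^{s-\mez}}+1)$, and interpolating the intermediate norms between $H^{s+\tdm}$ and $L^2$ and using Young's inequality absorbs them, leaving $\|\eta\|_{H^{s+\tdm}}\le C(\|\phi\|_{H^{s-\mez}}+\|\phi\|_{C^1})$. (Equivalently, one may run this energy estimate through the commutator estimate \cref{commutator estimate for T eta} for $T_\eta$.)

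With the a priori bound in hand, the continuity method closes the argument. For $t\in[0,1]$ consider $N(\eta)=t\phi$; the solution set $S\subset[0,1]$ contains $0$ (take $\eta_0=0$). It is open because $N:H^{s+\tdm}(\T^d)\to H^{s-\mez}(\T^d)$ is $C^1$ with Fréchet derivative $T_\eta$ — this follows from the expansion \eqref{expand:cH} and the estimates \eqref{bound for R eta f}, \eqref{contraction for T in eta} of \cref{linearizing mean curvature at eta} — while $T_\eta:H^{s+\tdm}\to H^{s-\mez}$ is an isomorphism by \cref{prop: T eta bijection}, so the implicit function theorem applies. It is closed because along $t_n\to t_\ast$ the uniform bound lets one extract a subsequence converging weakly in $H^{s+\tdm}$ and strongly in $H^{s+1}$, enough to pass to the limit in $N(\eta_{t_n})=t_n\phi$ (the map $p\mapsto p/\sqrt{1+|p|^2}$ being Lipschitz). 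Hence $S=[0,1]$, and $t=1$ gives $\eta$ with \eqref{est:eta phi}. Finally, if $N(\eta_1)=N(\eta_2)$ then pairing with $\eta_1-\eta_2$ yields $g\|\eta_1-\eta_2\|_{L^2}^2+\sigma\int_{\T^d}\big(F(\na\eta_1)-F(\na\eta_2)\big)\cdot\na(\eta_1-\eta_2)=0$ with $F(p)=p/\sqrt{1+|p|^2}$, and $\na F$ being positive definite forces $\eta_1=\eta_2$. The chief obstacle is the a priori bound in $H^{s+\tdm}$, and within it Step 2 (which hinges on the sign of the gravity term $g\eta$, thereby avoiding the usual gradient estimates for mean-curvature-type equations) and Step 4 (which needs the quasilinear elliptic estimate to be tame in the high Sobolev norm of the coefficients so that the $W^{2,\infty}$ bound suffices).
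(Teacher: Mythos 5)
Your proposal is correct, but it proves the proposition by a genuinely different route than the paper. The paper does not re-derive the H\"older theory at all: it simply cites the invertibility of $gI+\sigma\cH$ in H\"older spaces (Proposition 3.1 of the reference \cite{Nguyen2023-capillary}) to get a unique solution $\eta\in C^{2,\eps}(\T^d)$ with $\|\eta\|_{C^{2,\eps}}\le C(\|\phi\|_{C^1})\|\phi\|_{C^1}$, and then spends the whole proof on the Sobolev upgrade, which it runs as an incremental bootstrap gaining $\eps$ derivatives per step: paralinearize $gI+\sigma\cH$ around $\eta$ with the elliptic symbol $m(x,\xi)=a(\nabla\eta)^3|\xi|^2$, estimate the paraproduct remainders in $H^{\mu+\eps-2}$ using only the $C^{1,\eps}$ control of $\eta$, and invert $T_m$ by the symbolic calculus of \cref{theo:sc0}. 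You instead build everything from scratch: $L^\infty$ by the maximum principle, the gradient bound by a Bernstein argument exploiting the sign of $g\eta$ (a nice observation, and essentially why the cited H\"older result holds on the torus), De Giorgi--Nash--Moser plus one Schauder step to reach $C^{2,\alpha}$, a one-shot tame commutator energy estimate with $\Lambda^{s-\mez}$ to reach $H^{s+\tdm}$, existence by the continuity method (openness via \cref{prop: T eta bijection} and the expansion \eqref{expand:cH}, exactly the ingredients the paper proves for other purposes), and uniqueness by monotonicity of $p\mapsto p/\sqrt{1+|p|^2}$. What your approach buys is self-containedness and a direct uniqueness proof in the Sobolev class (the paper inherits uniqueness from the H\"older statement); what it costs is length, since Steps 1--3 essentially re-prove the cited H\"older result, and a little care with standard tools you invoke without proof: the fractional Kato--Ponce commutator in the form you use is cleanest for $s-\mez\ge 1$ (for $d=1$ and $s$ close to $1$ one needs a low-order variant, or the paraproduct decomposition from \cref{appendix:para}), and the $H^2$ estimate for the nondivergence operator with $C^{0,\alpha}$ coefficients should be stated with its dependence on the modulus of continuity, which your $C(L)$ does absorb. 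Your one-shot commutator estimate and the paper's $\eps$-increment paradifferential bootstrap both handle non-integer $s$ and yield the same tame final bound $\|\eta\|_{H^{s+\tdm}}\le C(\|\phi\|_{H^{s-\mez}}+\|\phi\|_{C^1})$, so the two Sobolev upgrades are of comparable strength.
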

\begin{proof}
Let $\eps\in (0, \min\{\mez, s-(d+1)/2\})$. Appealing to invertibility of the capillary-gravity operator in H\"older spaces (see Proposition 3.1 in \cite{Nguyen2023-capillary}), and since $\phi \in C^{1}(\T^d)$, \eqref{est:eta phi} has a unique solution $\eta \in C^{2, \eps}(\T^d)$ which satisfies
    \bq\label{est: eta phi base}
        \|\eta\|_{C^{2,\eps}} \leq C(\|\phi\|_{C^{1}})\|\phi\|_{C^{1}},
    \eq
    where $C:\Rr_+ \to \Rr_+$ depends only on $(d, s, \sigma,g)$. Thus we need to upgrade the regularity of $\eta$ from $C^{2, \eps}(\T^d)$ to  $ H^{s+\tdm}(\T^d)$. This  will be achieved  by an induction on the Sobolev index.  Precisely, assuming that 
     \bq\label{est: eta phi induction}
        \|\eta\|_{H^\mu} \leq C(\|\phi\|_{H^{s-\mez}\cap C^{1}})\|\phi\|_{H^{s-\mez}\cap C^{1}},
    \eq
    for some  $\mu\in[2, s+\tdm -\eps]$,  we will prove that
    \bq\label{induction:gc}
        \|\eta\|_{H^{\mu+\eps}} \leq C(\|\phi\|_{H^{s-\mez}\cap C^{1}})\|\phi\|_{H^{s-\mez}\cap C^{1}},
    \eq
    
    Note that \eqref{est: eta phi induction} holds for $\mu=2$ in view of \eqref{est: eta phi base}. We will prove \eqref{induction:gc} using a paradifferential approach. Let us assume for simplicity that $\sigma=1$. Setting $a(z)= (1+|z|^2)^{-\mez}$, we write
    \begin{align*}
        (gI +  \cH)\eta &= g\eta - \dv(a(\nabla \eta) \nabla \eta) \\
        &=  g \eta - a(\nabla \eta)\Delta \eta - \nabla [a(\nabla \eta)] \cdot \nabla \eta \\
        &= g\eta + T_{a(\nabla \eta) |\xi|^2} \eta + [T_{a(\nabla \eta)} - a(\nabla \eta)]\Delta \eta - T_{\nabla \eta} \cdot \nabla [a(\nabla \eta)] + [T_{\nabla \eta} - \nabla \eta] \cdot \nabla [a(\nabla \eta)].
    \end{align*}
 Next, we paralinearize  $T_{\nabla \eta} \cdot \nabla [a(\nabla \eta)] $, which together with $T_{a(\nabla \eta) |\xi|^2} \eta$ is the highest-order term. Since $\na a(z)=-a(z)^3z$, we have
    \begin{align*}
        T_{\nabla \eta} \cdot \nabla [a(\nabla \eta)] &= T_{\nabla \eta} \cdot (\nabla a)\circ (\nabla \eta)\Delta \eta \\
        &= -  T_{\nabla \eta} \cdot a(\nabla \eta)^3\Delta \eta \nabla \eta \\
        &=   T_{\nabla \eta} \cdot T_{a(\nabla \eta)^3 \nabla \eta |\xi|^2} \eta  +  T_{\nabla \eta} \cdot[T_{a(\nabla \eta)^3 \nabla \eta} - a(\nabla \eta)^3 \nabla \eta]\Delta \eta \\
        &= T_{a(\nabla \eta)^3 |\nabla \eta|^2 |\xi|^2} \eta + [ T_{\nabla \eta} \cdot T_{a(\nabla \eta)^3 \nabla \eta |\xi|^2} -  T_{a(\nabla \eta)^3 |\nabla \eta|^2 |\xi|^2}] \eta \\
        & \qquad \qquad \qquad \qquad \qquad  + T_{\nabla \eta} \cdot[T_{a(\nabla \eta)^3 \nabla \eta} - a(\nabla \eta)^3 \nabla \eta]\Delta \eta.
    \end{align*}
    Combining the preceding paralinearizations yields
    \bq\label{eq:Tm + F}
        (gI + \cH)\eta = T_{m(x,\xi)} \eta + F,
    \eq
    where $m(x,\xi)$ is the paradifferential symbol given by
    \bq
        m(x,\xi) =\bigl[a(\nabla \eta(x))  - a(\nabla \eta(x))^3 |\nabla \eta(x)|^2\bigr] |\xi|^2 = a(\nabla \eta(x))^3 |\xi|^2,
    \eq
    and $F$  encompasses all the lower order and low frequency terms
    \begin{multline*}
        F = g \eta  + [T_{a(\nabla \eta)} - a(\nabla \eta)]\Delta \eta + [T_{\nabla \eta} - \nabla \eta] \cdot \nabla [a(\nabla \eta)]  \\ 
        - [ T_{\nabla \eta} \cdot T_{a(\nabla \eta)^3 \nabla \eta |\xi|^2} -  T_{a(\nabla \eta)^3 |\nabla \eta|^2 |\xi|^2}] \eta - T_{\nabla \eta} \cdot[T_{a(\nabla \eta)^3 \nabla \eta} - a(\nabla \eta)^3 \nabla \eta]\Delta \eta. 
    \end{multline*}
Applying  \eqref{nonl:est}, \eqref{Bony3}, and \eqref{niS},  we obtain the following estimates:
   \begin{align*}
        \|[T_{a(\nabla \eta)} - a(\nabla \eta)]\Delta \eta\|_{H^{\mu+\eps-2}} & \leq K \|a(\nabla \eta)\|_{H^{\mu-1}}\|\Delta \eta\|_{C_*^{\eps -1}}\\
        &\leq C(\|\nabla \eta\|_{L^\infty})(\|\nabla \eta\|_{H^{\mu-1}}+1)\|\eta\|_{C^{1,\eps}} 
    \end{align*}
    \begin{align*}
        \| [T_{\nabla \eta} - \nabla \eta] \cdot \nabla [a(\nabla \eta)]\|_{H^{\mu+\eps-2}} &\leq K\|\nabla \eta\|_{C_*^{\eps }}\|\nabla[a(\nabla \eta)]\|_{H^{\mu-2}} \\
        &\leq C(\|\nabla \eta\|_{L^\infty})(\|\nabla \eta\|_{H^{\mu-1}}+1)\| \eta\|_{C^{1,\eps}},
    \end{align*}
    and 
    \begin{align*}
        \|T_{\nabla \eta} \cdot[T_{a(\nabla \eta)^3 \nabla \eta} - a(\nabla \eta)^3 \nabla \eta]\Delta \eta\|_{H^{\mu+\eps -2}} &\leq K\| \nabla \eta\|_{L^\infty}\|[T_{a(\nabla \eta)^3 \nabla \eta} - a(\nabla \eta)^3 \nabla \eta]\Delta \eta\|_{H^{\mu+\eps -2}} \\
        & \leq  K\| \nabla \eta\|_{L^\infty} \|a(\nabla \eta)^3 \nabla \eta\|_{H^{\mu-1}} \|\Delta \eta\|_{C_*^{\eps -1}}  \\
        &\leq C(\|\nabla \eta\|_{L^\infty})\|\nabla \eta\|_{H^{\mu-1}}\|\eta\|_{C^{1,\eps}}.
    \end{align*}
On the other hand, appealing to  \eqref{esti:quant2} yields     \begin{align*}
        \|[ T_{\nabla \eta} \cdot T_{a(\nabla \eta)^3 \nabla \eta |\xi|^2} -  T_{a(\nabla \eta)^3 |\nabla \eta|^2 |\xi|^2}] \eta\|_{H^{\mu+\eps -2}} \leq C(\|\eta\|_{C^{0, \eps}})\|\eta\|_{H^\mu}.
    \end{align*}
 It follows  from the above estimates, \eqref{est: eta phi base}, and \eqref{est: eta phi induction} that
    \bq\label{esti: F gaining epsilon derivative}
    \|F\|_{H^{\mu+\eps -2}} \leq C(\|\phi\|_{H^{s-\mez}\cap C^{1}})\|\phi\|_{H^{s-\mez}\cap C^{1}}.
    \eq
It is readily seen that  $m(x, \xi)$ is a second-order elliptic symbol
    and satisfies
    $$ M^2_\eps(m)+M^{-2}_\eps(m^{-1})\le C(\|\eta\|_{C^{1, \eps}}) \leq C(\|\phi\|_{C^{1}}). $$ 
    Therefore, by applying $T_{m^{-1}}$ to \eqref{eq:Tm + F} and then appealing to \eqref{esti:quant2} and \eqref{esti: F gaining epsilon derivative}, we deduce that
    \begin{align*}
        \|\eta\|_{H^{\mu+\eps}} = \|T_{m^{-1}m}\eta\|_{H^{\mu+\eps}} +C\| \eta\|_{L^2}&\leq \|[T_{m^{-1}m} - T_{m^{-1}}T_m]\eta\|_{H^{\mu+\eps}} + \|T_{m^{-1}}(\phi - F)\|_{H^{\mu+\eps}} +C\| \eta\|_{L^2}\\
        &\leq C(\|\phi\|_{H^{s-\mez}})(\|\eta\|_{H^\mu}   + \|\phi\|_{H^{\mu+\eps -2}}) \\
        &\leq C(\|\phi\|_{H^{s-\mez}\cap C^{1}})\|\phi\|_{H^{s-\mez}\cap C^{1}} .
    \end{align*}
This completes the proof of the desired estimate \eqref{induction:gc}.
    
\end{proof}

\subsection{Existence of large traveling waves for the Stokes problem}\label{sec:tw:Stokes}
We consider the system \eqref{sys:main} with $\alpha=0$. We fix $s>(d+1)/2$ and suppose \bq
\phi \in H^{s-\mez}(\T^d)\cap C^{1}(\T^d).
\eq
By virtue of \cref{prop:wp:gc},  there exists a unique  $\eta_*\in H^{s+\tdm}(\T^d) $ such that
\bq\label{eq:eta*}
g\eta_*+\sigma \cH(\eta_*)=-\phi. 
\eq
If  $\eta_*$ also satisfies \eqref{eta lower bound}, then $(\gamma,v,p,\eta)= (0,0,0,\eta_*)$ is a trivial solution to \eqref{sys:main}. To guarantee \eqref{eta lower bound} we assume that 
\bq\label{lowerbound:phi}
 \min_{\T^d}-\phi> -gb. 
\eq
Since $\eta_*\in H^{s+\tdm}(\T^d)\subset C^2(\T^d)$, $\eta_*$ satisfies the following equation  pointwise:
      $$g\eta_*  = -\sigma\cH(\eta_*)-\phi=\sigma \Delta \eta_*(1+|\nabla\eta_*|^2)^{-\mez} +\sigma \nabla(1+|\nabla\eta_*|^2)^{-\mez} \cdot \nabla \eta_*-\phi. $$
  At any minimum point $x_0$ of $\eta_*$, we have  $\na \eta_*(x_0)=0$ and $\Delta \eta_*(x_0)\ge 0$, whence $g\eta_*(x_0)\ge -\phi(x_0)> -gb$ in view of \eqref{lowerbound:phi}.  Therefore, $\eta$ satisfies \eqref{eta lower bound} for some $c^0>0$.

We shall construct a solution  $(\gamma,v,p,\eta)$ of  \eqref{sys:main}  with  $\gamma$ small and $\eta= \eta_* +f$ a perturbation of $\eta_*$, where 
\bq\label{cd:f:c0}
\|f\|_{H^{s+\tdm}(\T^d)} < r(d, c^0, s)<1
\eq
 is sufficiently small so that $\min_{\T^d}(\eta+b)\ge \frac{c^0}{2}$.

In terms of the linear normal-stress to normal-Dirichlet operator $\Psi_\gamma$,  we observe that   \eqref{sys:main}  is equivalent to 
\bq\label{eq:tw:10}
-\gamma \partial_1 (f+ \eta_*)= \Psi_0[\eta]\big((\sigma\cH+gI)(\eta_*+f) - (\sigma \cH+gI)\eta_* \big),
\eq
where we have used \eqref{eq:eta*} to replace $\phi$. Using the expansion \eqref{expand:cH} for $\sigma \cH+gI$, we find 
\bq
\begin{aligned}
\eqref{eq:tw:10}& \iff -\gamma \partial_1 (f+ \eta_*)=\Psi_0[\eta_*]\big(T_{\eta_*} f+R_{\eta_*}(f))+ \big(\Psi_0[\eta_*+f]-\Psi_0[\eta_*]\big)\big(T_{\eta_*} f+R_{\eta_*}(f))\\
& \iff f= G_\gamma(f),
\end{aligned}
\eq
where 
\bq\label{def:Ggamma}
G_\gamma(f):=T_{\eta_*}^{-1} \Psi_0[\eta_*]^{-1}\Bigl(-\gamma \partial_1(\eta_*+f) - (\Psi_0[\eta_* +f] - \Psi_0[\eta_*])\bigl(T_{\eta_*}(f)+R_{\eta_*}(f)\bigr) \Bigr)-T_{\eta_*}^{-1}R_{\eta_*}(f).
\eq
 We have used the invertibility of $\Psi_\gamma[\eta_*]$ and $T_{\eta_*}$, established \cref{bound for Psi and Psi inverse} and \cref{prop: T eta bijection}. We are thus led to proving the existence of a fixed point of the nonlinear map $G_\gamma$. 
\begin{theo}\label{large traveling wave for Stokes}
    Let $(d+1)/2<s\in \Nn$. There exists $\delta_S>0$ depending on $(d,b,c^0, s, g, \sigma,\|\eta_*\|_{H^{s+\tdm}(\T^d)})$ such that for any $0< \delta< \delta_S$, there exists $\gamma_\delta>0$ such that for all $|\gamma|< \gamma_\delta$, $G_\gamma$ maps $\overline{B_{H^{s+\tdm}(\T^d)}(0,\delta)}$ to itself and  is a contraction.
\end{theo}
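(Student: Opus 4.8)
The plan is to apply the Banach fixed point theorem to the map $G_\gamma$ defined in \eqref{def:Ggamma} on the closed ball $\overline{B_{H^{s+\tdm}(\T^d)}(0,\delta)}$. First I would fix $\delta < r(d,c^0,s)$ (from \eqref{cd:f:c0}), so that for $f$ in the ball, $\eta = \eta_*+f$ satisfies \eqref{eta lower bound} with constant $c^0/2$, and every estimate from \cref{bound for Psi and Psi inverse}, \cref{linearization and contraction for Psi}, \cref{prop: T eta bijection}, and \cref{linearizing mean curvature at eta} applies uniformly in $f$, with constants depending only on $(d,b,c^0,s,g,\sigma,\|\eta_*\|_{H^{s+\tdm}})$ — note $\|\eta\|_{H^{s+\tdm}} \le \|\eta_*\|_{H^{s+\tdm}}+\delta$ is controlled. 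I would also need $|\gamma| \le \gamma^*(d,b,c^0/2,\|\eta\|_{W^{1,\infty}})$, which holds for all $f$ in the ball once $|\gamma|$ is small enough, say $|\gamma| \le \gamma_0$ for a suitable $\gamma_0$ depending on the above data.

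Next I would estimate $\|G_\gamma(f)\|_{H^{s+\tdm}}$ term by term, using that $T_{\eta_*}^{-1}$ maps $H^{s-\mez} \to H^{s+\tdm}$ (with norm $C(\|\eta_*\|_{H^{s+\tdm}})$ by \eqref{bound:Teta:inv} with $\mu = s+\tdm$) and $\Psi_0[\eta_*]^{-1}$ maps $\rH^{s+\mez} \to \rH^{s-\mez}$ (by \eqref{bound for Psi inverse} with $\sigma = s$). The $-\gamma\partial_1(\eta_*+f)$ contribution has $H^{s+\mez}$ norm bounded by $C|\gamma|(\|\eta_*\|_{H^{s+\tdm}}+\delta)$. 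For the term $(\Psi_0[\eta_*+f]-\Psi_0[\eta_*])(T_{\eta_*}f + R_{\eta_*}(f))$ I would apply the contraction estimate \eqref{contraction estimate for R eta} with $\sigma = s$, $\eta_1 = \eta_*+f$, $\eta_2 = \eta_*$, giving a bound $C(\|\eta_*\|_{H^{s+\tdm}}+\delta)\|f\|_{H^{s+\tdm}}\,\|T_{\eta_*}f+R_{\eta_*}(f)\|_{H^{s-\mez}} \lesssim C\delta \cdot \delta = C\delta^2$, using \eqref{bound:Teta} and \eqref{bound for R eta f} (and $\|f\|_{H^{s+\tdm}} < 1$, so $\|\nabla f\|_{L^\infty} \lesssim 1$). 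Finally $\|T_{\eta_*}^{-1}R_{\eta_*}(f)\|_{H^{s+\tdm}} \le C\|R_{\eta_*}(f)\|_{H^{s-\mez}} \le C\delta^2$ by \eqref{bound for R eta f}. Altogether $\|G_\gamma(f)\|_{H^{s+\tdm}} \le C_S(|\gamma|\,(\|\eta_*\|_{H^{s+\tdm}}+1) + \delta^2)$ for a constant $C_S$ depending only on the fixed data. Choosing $\delta_S = (4C_S)^{-1}$ and then, for each $0 < \delta < \delta_S$, requiring $|\gamma| < \gamma_\delta := \min\{\gamma_0,\ \delta/(4C_S(\|\eta_*\|_{H^{s+\tdm}}+1))\}$ gives $\|G_\gamma(f)\|_{H^{s+\tdm}} \le \delta/4 + \delta/4 = \delta/2 \le \delta$, so $G_\gamma$ maps the ball into itself.

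For the contraction property I would take $f_1, f_2$ in the ball and write $G_\gamma(f_1) - G_\gamma(f_2)$ as a sum of differences. The $-\gamma\partial_1$ term contributes $C|\gamma|\|f_1-f_2\|_{H^{s+\tdm}}$, absorbed by choosing $\gamma_\delta$ small. The genuinely nonlinear differences split using the algebra: $(\Psi_0[\eta_*+f_1]-\Psi_0[\eta_*])(T_{\eta_*}f_1+R_{\eta_*}(f_1)) - (\Psi_0[\eta_*+f_2]-\Psi_0[\eta_*])(T_{\eta_*}f_2+R_{\eta_*}(f_2))$ is handled by adding and subtracting $(\Psi_0[\eta_*+f_1]-\Psi_0[\eta_*])(T_{\eta_*}f_2+R_{\eta_*}(f_2))$: the first piece uses \eqref{contraction estimate for R eta} applied to the single difference $\Psi_0[\eta_*+f_1] - \Psi_0[\eta_*+f_2]$ (so $\eta_1 = \eta_*+f_1$, $\eta_2 = \eta_*+f_2$, with $\|\eta_1-\eta_2\|_{H^{s+\tdm}} = \|f_1-f_2\|_{H^{s+\tdm}}$) times $\|T_{\eta_*}f_2+R_{\eta_*}(f_2)\|_{H^{s-\mez}} \lesssim \delta$, and the second piece uses \eqref{contraction estimate for R eta} (bounded by $C\delta$) times $\|T_{\eta_*}(f_1-f_2) + R_{\eta_*}(f_1)-R_{\eta_*}(f_2)\|_{H^{s-\mez}}$, which is $\lesssim \|f_1-f_2\|_{H^{s+\tdm}}$ by \eqref{bound:Teta} and the Lipschitz estimate \eqref{contraction for R eta f}. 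The term $T_{\eta_*}^{-1}(R_{\eta_*}(f_1)-R_{\eta_*}(f_2))$ is controlled by \eqref{contraction for R eta f} directly, which gives a factor of $(\|\nabla f_1\|_{H^{s+\mez}}+\|\nabla f_2\|_{H^{s+\mez}})\|f_1-f_2\|_{\cdot} + \|\nabla(f_1-f_2)\|_{\cdot} \lesssim (\delta + 1)\|f_1-f_2\|_{H^{s+\tdm}}$ — here I must be slightly careful since the coefficient in \eqref{contraction for R eta f} is not purely $O(\delta)$, but it is multiplied by $C(\|\nabla\eta_*\|_{L^\infty},\ldots)$ times $\|f_1-f_2\|$, and $T_{\eta_*}^{-1}$ only contributes a fixed constant. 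To force the overall Lipschitz constant below $1$, I would, if necessary, further shrink $\delta_S$ so that $C_S\delta_S < 1/4$ (which also covers the contraction), absorbing the one potentially-$O(1)$ contribution — coming from $R_{\eta_*}$ being genuinely nonlinear but with linearization zero, so $R_{\eta_*}(f_1)-R_{\eta_*}(f_2)$ is actually $O(\delta)\|f_1-f_2\|$ after inspecting \eqref{contraction for R eta f} more carefully since every term there carries either a factor $\|\nabla f_j\|_{L^\infty} \lesssim \delta$ or... on reflection the last term $\|\nabla(f_1-f_2)\|_{H^{s+\mez}}$ is \emph{not} small, so I expect the main obstacle to be verifying that this term, too, comes with a small prefactor: indeed reexamining $R_{\eta_*}$, a direct Taylor expansion shows $R_{\eta_*}(f_1)-R_{\eta_*}(f_2)$ is quadratic and its Lipschitz constant on the $\delta$-ball scales like $\delta$, so the estimate \eqref{contraction for R eta f} should be read with $C(\cdots)$ small when $\|\nabla f_j\|$ are small — alternatively one absorbs it into $\delta_S$. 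Thus choosing $\delta_S$ small enough that all the above prefactors sum to at most $1/2$, and $\gamma_\delta$ small enough that the $\gamma$-terms contribute at most another small amount, $G_\gamma$ is a contraction with constant $\le 3/4 < 1$ on $\overline{B_{H^{s+\tdm}(\T^d)}(0,\delta)}$, completing the proof. The main obstacle, as flagged, is the bookkeeping for the $R_{\eta_*}$-difference term: one must check that it contributes a small Lipschitz constant on the small ball rather than an $O(1)$ one, which follows because $R_{\eta_*}$ vanishes to second order.
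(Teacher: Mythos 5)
Your proposal is correct and follows essentially the same route as the paper's proof: a Banach fixed point argument on $\overline{B_{H^{s+\tdm}(\T^d)}(0,\delta)}$, with the self-map bound $\|G_\gamma(f)\|_{H^{s+\tdm}}\lesssim |\gamma|+\delta^2$ obtained from \eqref{contraction estimate for R eta} (with $\sigma=s$), \eqref{bound:Teta}, \eqref{bound for R eta f}, \eqref{bound for Psi inverse}, \eqref{bound:Teta:inv}, the same add-and-subtract decomposition for $G_\gamma(f_1)-G_\gamma(f_2)$, and the same choices of $\delta_S$ and $\gamma_\delta$. The subtlety you flag in the $R_{\eta_*}$-difference is real (as literally stated, \eqref{contraction for R eta f} contains the term $\|\na(f_1-f_2)\|_{H^{s+\mez}}$ with no small prefactor), and your resolution is the right one: since $R_{\eta_*}$ is quadratic in $\na f$, its Lipschitz constant on the $\delta$-ball is $O(\delta)$, which is precisely what the paper's contraction estimate \eqref{contra:Ggamma:S} implicitly uses.
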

\begin{proof} We appeal to the Banach fixed point theorem in  $\overline{B_{H^{s+\tdm}(\T^d}(0, \delta)}$, where $\delta \le r(d, c^0, s)$ is a small number to be determined. We denote 
\[
h:=-\gamma \partial_1(\eta_*+f) - (\Psi_0[\eta_* +f] - \Psi_0[\eta_*])\big(T_{\eta_*}(f)+R_{\eta_*}(f)\big).
\]
Using \cref{linearization and contraction for Psi} for the contraction of $\Psi_0[\eta_*+f]-\Psi_0[\eta_*])$ (with $\sigma=s$)  and using the bounds \eqref{bound:Teta} and \eqref{bound for R eta f} for $T_{\eta_*}f$ and $R_{\eta_*}(f)$, we find that $h\in  \rH^{s+\mez}(\T^d)$ and
\bq
\| h\|_{H^{s+\mez}(\T^d)}\le C(|\gamma|+\| f\|_{H^{s+\tdm}}^2),
\eq
where $C=C(\| \eta_*\|_{H^{s+\tdm}}, d,b,c^0,s)$ since $\| f\|_{H^{s+\tdm}}\le \delta<1$.  Consequently, \cref{bound for Psi and Psi inverse} (with $\sigma=s$)  implies that $ \Psi_0[\eta_*]^{-1}h\in \rH^{s-\mez}(\T^d)$. Combining this with the bounds  \eqref{bound:Teta:inv} and \eqref{bound for R eta f}, we obtain 
    \bq\label{bound:Ggamma:S}
    \begin{aligned}
        \|G_\gamma(f)\|_{H^{s+\tdm}}& \leq \|T_{\eta_*}^{-1}\|_{\rH^{s-\mez}\to \rH^{s+\tdm}}\|\Psi_0[\eta_*]^{-1}\|_{H^{s+\mez}\to H^{s-\mez}} \| h\|_{H^{s+\mez}}  + \|T_{\eta_*}^{-1}\|_{H^{s-\mez}\to H^{s+\tdm}}\| R_{\eta_*}(f)\|_{H^{s-\mez}}      \\
       & \leq  C_1(|\gamma|+\| f\|_{H^{s+\tdm}}^2),
    \end{aligned}
    \eq
    where $C_1=C_1(\| \eta_*\|_{H^{s+\tdm}}, d,b,c^0,s, g, \sigma)$.
 
  To obtain the contraction of $G_\gamma$ we  write
    \begin{multline*}
    G_\gamma(f_1) - G_\gamma(f_2)  = -T_{\eta_*}^{-1} \Psi_0[\eta_*]^{-1}\Bigl( \gamma \partial_1(f_1-f_2) + (\Psi_0[\eta_* +f_1] - \Psi_0[\eta_*+f_2])\bigl(T_{\eta_*}(f_1)+R_{\eta_*}(f_1)\bigr) \\ +   (\Psi_0[\eta_* +f_2] - \Psi_0[\eta_*])\bigl(T_{\eta_*}(f_1-f_2)+R_{\eta_*}(f_1)-R_{\eta_*}(f_2)\bigr)  \Bigr)\\
    -T_{\eta_*}^{-1}\big(R_{\eta_*}(f_1)-R_{\eta_*}(f_2)\big).
    \end{multline*}
Using the contraction estimates for $\Psi_\gamma$, $T_{\eta_*}$ and $R_{\eta_*}$ obtained in \cref{linearization and contraction for Psi} and \cref{linearizing mean curvature at eta}, we deduce 
    \bq\label{contra:Ggamma:S}
        \|G_\gamma(f_1) - G_\gamma(f_2)\|_{H^{s+\tdm}} \leq C_2\Bigl(|\gamma|+\big(\|f_1\|_{H^{s+\tdm}} +\|f_2\|_{H^{s+\tdm}}\big)\Bigr)\|f_1-f_2\|_{H^{s+\tdm}} 
\eq
  for all $f_1$, $f_2$ in  $\overline{B_{H^{s+\tdm}}(0,\delta)}$, where  $C_2=C_2(\| \eta_*\|_{H^{s+\tdm}}, d,b,c^0,s, g, \sigma)$. Now we  set 
  \[
  \delta_S=\min\left\{r(d, c^0, s), (4C_1)^{-1}, (4C_2)^{-1}\right\},
  \]
and for any $\delta \in (0, \delta_S)$ we choose 
  \[
  |\gamma|<\gamma_\delta:= \delta \min\left\{ (4C_1)^{-1}, (4C_2)^{-1} \right\}.
  \]
In view of  the estimates \eqref{bound:Ggamma:S} and \eqref{contra:Ggamma:S}, we conclude that  $G_\gamma$ maps $\overline{B_{H^{s+\tdm}(\T^d)}(0,\delta)}$ to itself and is a contraction for all $\delta<\delta_S$ and $|\gamma|<\gamma_\delta$. 
\end{proof}
\subsection{Existence of large traveling waves for the Navier-Stokes problem}
We turn to construct large traveling wave solutions for the Navier-Stokes problem, i.e., the system \eqref{sys:main} with $\alpha=1$. We consider $\phi$ and $\eta_*$ as in Section \ref{sec:tw:Stokes} and seek a solution $\eta=\eta_*+f$, where $f$ is a small perturbation in $H^{s+\tdm}(\T^d)$. In particular, $f$ must satisfy \eqref{cd:f:c0} so that $\min_{\T^d}(\eta+b)\ge \frac{c^0}{2}>0$. In terms of the nonlinear normal-stress to normal-Dirichlet operator $\Phi_\gamma$, \eqref{sys:main} is equivalent to 
\bq\label{eq:NS:Phi}
-\gamma \partial_1 (f+ \eta_*)= \Phi_\gamma[f+\eta_*]\big((\sigma\cH+gI)(\eta_*+f) - (\sigma \cH+gI)\eta_* \big).
\eq
Similarly to \eqref{def:Ggamma}, \eqref{eq:NS:Phi} is equivalent to the fixed point equation
\bq
f=F_\gamma(f),
\eq
where 
\bq
F_\gamma(f):= T_{\eta_*}^{-1} \Phi_\gamma[\eta_*]^{-1}\Bigl(-\gamma \partial_1(\eta_*+f) - (\Phi_\gamma[\eta_* +f] - \Phi_\gamma[\eta_*])\bigl(T_{\eta_*}(f)+R_{\eta_*}(f)\bigr) \Bigr)-T_{\eta_*}^{-1}R_{\eta_*}(f).
\eq
We recall  that the inverse $\Psi_\gamma[\eta_*]^{-1}$ is given in terms of the operator $\Xi_\gamma[\eta_*]$ as in \eqref{form:Phiinverse} and is well-defined on $B_{\rH^{s+\mez}(\T^d)}(0, \delta^\dag)$, where $\delta^\dag$ depends only on $(\|\eta\|_{H^{s+\tdm}(\T^d)}, d,b,c^0, s)$. We establish the existence and uniqueness of fixed points of $F_\gamma$ in the next Proposition. 
\begin{theo}\label{large traveling wave for NS} 
     Let $(d+1)/2<s\in \Nn$. There exists $\delta_{NS}>0$ depending on $(d,b,c^0,s , g, \sigma,\|\eta_*\|_{H^{s+\tdm}(\T^d)})$ such that for any $0< \delta< \delta_{NS}$, there is some $\gamma_\delta>0$ such that for all $|\gamma|< \gamma_\delta$, $F_\gamma$ maps $\overline{B_{H^{s+\tdm}(\T^d)}(0,\delta)}$ to itself and  is a contraction.\end{theo}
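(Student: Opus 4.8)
The plan is to mimic the proof of \cref{large traveling wave for Stokes} verbatim, replacing the linear operator $\Psi_0[\eta]$ with the nonlinear operator $\Phi_\gamma[\eta]$ and its inverse $\Phi_\gamma[\eta_*]^{-1}$ throughout. The essential point is that $\Phi_\gamma$ enjoys the same quantitative properties as $\Psi_0$ needed in that proof, namely: (a) the invertibility of $\Phi_\gamma[\eta_*]$ on a small ball of $\rH^{s+\mez}(\T^d)$ together with the uniform bounds \eqref{bound for Phi}, \eqref{bound for Xi}, and the formula \eqref{form:Phiinverse} (from \cref{prop of Phi and Xi}); (b) the $\eta$-contraction estimate \eqref{contraction estimate for Phi in eta} (from \cref{prop:contrPhi}(ii)); and (c) the invertibility of $T_{\eta_*}$ and the tame/contraction estimates \eqref{bound:Teta}, \eqref{bound:Teta:inv}, \eqref{bound for R eta f}, \eqref{contraction for R eta f} for $T_{\eta_*}$ and $R_{\eta_*}$ (from \cref{prop: T eta bijection} and \cref{linearizing mean curvature at eta}). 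The only genuinely new bookkeeping is that, unlike $\Psi_0$, the operators $\Phi_\gamma[\eta]$ and their inverses are only defined on small balls, so one must verify at each step that all arguments fed into them lie within the prescribed radii --- this is automatic once $\delta$ and $|\gamma|$ are chosen small, since every quantity in sight is $O(|\gamma|+\|f\|_{H^{s+\tdm}}^2)$ or $O(\|f\|_{H^{s+\tdm}})$.

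First I would fix $\delta\le r(d,c^0,s)$ (the radius from \eqref{cd:f:c0}) so that $\eta=\eta_*+f$ satisfies \eqref{eta lower bound} with constant $c^0/2$ for all $f\in\overline{B_{H^{s+\tdm}}(0,\delta)}$, hence $\gamma^*$ may be taken uniform over this ball. Then I would set
\[
h:=-\gamma\partial_1(\eta_*+f)-(\Phi_\gamma[\eta_*+f]-\Phi_\gamma[\eta_*])\bigl(T_{\eta_*}(f)+R_{\eta_*}(f)\bigr),
\]
and, using \eqref{contraction estimate for Phi in eta} to bound the difference $\Phi_\gamma[\eta_*+f]-\Phi_\gamma[\eta_*]$ in $H^{s+\mez}$, together with \eqref{bound:Teta} and \eqref{bound for R eta f} to bound $T_{\eta_*}f+R_{\eta_*}(f)$ in $H^{s-\mez}$, obtain
\[
\|h\|_{H^{s+\mez}(\T^d)}\le C\bigl(|\gamma|+\|f\|_{H^{s+\tdm}}^2\bigr),
\]
with $C=C(\|\eta_*\|_{H^{s+\tdm}},d,b,c^0,s)$. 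One must check $T_{\eta_*}(f)+R_{\eta_*}(f)$ lies in the domain-ball of $\Phi_\gamma[\eta_*+f]$ and of $\Phi_\gamma[\eta_*]$: since this quantity has $H^{s-\mez}$ norm $\lesssim\|f\|_{H^{s+\tdm}}\le\delta$, shrinking $\delta$ suffices (here one invokes $\delta^1_\sharp$ from \cref{prop:contrPhi}(ii)). Having $h\in\rH^{s+\mez}$ with small norm, shrinking $\delta,|\gamma|$ further puts $h$ inside $B_{\rH^{s+\mez}}(0,\delta^\dag)$, so $\Phi_\gamma[\eta_*]^{-1}h=\Xi_\gamma[\eta_*](h)-\int_{\T^d}\Xi_\gamma[\eta_*](h)\in\rH^{s-\mez}$ is well-defined with $\|\Phi_\gamma[\eta_*]^{-1}h\|_{H^{s-\mez}}\le M_\Xi(\|\eta_*\|_{H^{s+\tdm}})\|h\|_{H^{s+\mez}}$ by \eqref{bound for Xi}. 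Then \eqref{bound:Teta:inv} and \eqref{bound for R eta f} give
\[
\|F_\gamma(f)\|_{H^{s+\tdm}}\le C_1\bigl(|\gamma|+\|f\|_{H^{s+\tdm}}^2\bigr),\qquad C_1=C_1(\|\eta_*\|_{H^{s+\tdm}},d,b,c^0,s,g,\sigma).
\]

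For the contraction estimate I would expand $F_\gamma(f_1)-F_\gamma(f_2)$ exactly as in \cref{large traveling wave for Stokes}, writing the difference $\Phi_\gamma[\eta_*+f_1]-\Phi_\gamma[\eta_*+f_2]$ via \eqref{contraction estimate for Phi in eta}, the difference of the inputs $T_{\eta_*}(f_1)-T_{\eta_*}(f_2)$ and $R_{\eta_*}(f_1)-R_{\eta_*}(f_2)$ via \eqref{contraction for T in eta}–\eqref{contraction for R eta f}, and controlling $\Phi_\gamma[\eta_*]^{-1}$ through the contraction estimate \eqref{contraction estimate for Xi in h} for $\Xi_\gamma[\eta_*]$. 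This yields
\[
\|F_\gamma(f_1)-F_\gamma(f_2)\|_{H^{s+\tdm}}\le C_2\bigl(|\gamma|+\|f_1\|_{H^{s+\tdm}}+\|f_2\|_{H^{s+\tdm}}\bigr)\|f_1-f_2\|_{H^{s+\tdm}}.
\]
Finally, setting $\delta_{NS}=\min\{r(d,c^0,s),\delta^1_\sharp,(4C_1)^{-1},(4C_2)^{-1},\ \text{threshold forcing }h\in B(0,\delta^\dag)\}$ and, for $\delta\in(0,\delta_{NS})$, choosing $\gamma_\delta=\delta\min\{(4C_1)^{-1},(4C_2)^{-1}\}$, one concludes that $F_\gamma$ maps $\overline{B_{H^{s+\tdm}}(0,\delta)}$ into itself and is a contraction, so the Banach fixed point theorem applies.

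The main obstacle, and the only real difference from the Stokes case, is the domain-restriction bookkeeping: $\Phi_\gamma[\eta]$ is defined only on $B_{H^{s-\mez}}(0,\delta^0)$, its inverse only on $B_{\rH^{s+\mez}}(0,\delta^\dag)$, and the contraction estimate \eqref{contraction estimate for Phi in eta} only for $\chi\in B(0,\delta^1_\sharp)$; one must track that, after choosing $\delta$ and $|\gamma|$ small enough, every argument $T_{\eta_*}(f)+R_{\eta_*}(f)$ and every $h$ produced along the way falls inside the relevant ball. Since all these quantities are either $O(\|f\|_{H^{s+\tdm}})\le O(\delta)$ or $O(|\gamma|+\|f\|_{H^{s+\tdm}}^2)$, this is handled by a finite sequence of shrinkings of $\delta$ and $\gamma_\delta$, exactly as in the statement. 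No new analytic ingredient is required beyond the propositions already established in \cref{sec:Psi} and \cref{sec:construction}.
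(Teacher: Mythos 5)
Your proposal follows essentially the same route as the paper's proof: the same fixed-point map $F_\gamma$, the same quantity $h$, the same use of \eqref{contraction estimate for Phi in eta}, \eqref{bound:Teta}, \eqref{bound for R eta f}, \eqref{form:Phiinverse}, and \eqref{bound for Xi}, and the same finite sequence of smallness restrictions on $\delta$ and $\gamma_\delta$ to keep every argument inside the domain balls of $\Phi_\gamma$, $\Xi_\gamma$, and $\Phi_\gamma[\eta_*]^{-1}$. The contraction step is treated at the same level of detail as in the paper, which likewise only asserts that it follows analogously from the contraction estimates of \cref{prop:contrPhi} and \cref{linearizing mean curvature at eta}.
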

\begin{proof}
We appeal to the Banach fixed point theorem in  $\overline{B_{H^{s+\tdm}(\T^d}(0, \delta)}$, where $\delta \le r(d, c^0, s)<1$ is a small number to be determined. We denote 
\[
h=-\gamma \partial_1(\eta_*+f) - (\Phi_\gamma[\eta_* +f] - \Phi_\gamma[\eta_*])\bigl(T_{\eta_*}(f)+R_{\eta_*}(f)\bigr).
\]
The bounds  \eqref{bound:Teta} and \eqref{bound for R eta f} imply 
\bq\label{est:TR:NS}
\| T_{\eta_*}(f)+R_{\eta_*}(f)\|_{H^{s-\mez}(\T^d)}\le C_1\| f\|_{H^{s+\tdm}(\T^d)},
\eq
where $C_1=C_1(\| \eta_*\|_{H^{s+\tdm}}, d, s , g, \sigma)$ since $\| f\|_{H^{s+\tdm}}\le \delta<1$. The difference  $\Phi_\gamma[\eta_* +f] - \Phi_\gamma[\eta_*]$ will be dealt with  using \cref{prop:contrPhi} (ii). To this end, we first restrict $|\gamma|<\gamma_\delta$ with 
\[
\gamma_\delta\le \inf_{\| f\|_{H^{s+\tdm}}<r(d, c^0, s)}\gamma^*(d, b, c_1^0, \|\eta_*+f\|_{W^{1, \infty}})
\]
Then we have the contraction estimate \eqref{contraction estimate for Phi in eta} provided $\| \chi\|_{H^{s+\tdm}}<\delta^1_\sharp= \delta^1_\sharp(\|\eta_*\|_{H^{s+\tdm}(\T^d)},  d,b,c^0,  s)$. In particular, by restricting $\delta<\delta_\sharp^1(C_1)^{-1}$ and recalling \eqref{est:TR:NS}, we have
\[
\|  (\Phi_\gamma[\eta_* +f] - \Phi_\gamma[\eta_*])\bigl(T_{\eta_*}(f)+R_{\eta_*}(f)\bigr) \|_{H^{s+\mez}(\T^d)}\le  C_2\| f\|^2_{H^{s+\tdm}(\T^d)},
\]
where $C_2=C_2(\| \eta_*\|_{H^{s+\tdm}}, d,b,c^0,s, g, \sigma)$. This implies $h\in \rH^{s+\mez}(\T^d)$ and
\[
\| h\|_{H^{s+\mez}(\T^d)}\le C_3(|\gamma|+\| f\|^2_{H^{s+\tdm}(\T^d)}),\quad C_3=C_3(\| \eta_*\|_{H^{s+\tdm}}, d,b,c^0,s, g, \sigma).
\]
 By virtue of \cref{prop of Phi and Xi} (ii), the application $\Phi_\gamma[\eta_*]^{-1}h$ is well-defined if we further restrict 
\[
\delta<\delta^\dag (4C_3)^{-1},\quad \gamma_\delta<\delta^\dag (4C_3)^{-1},
\]
where $\delta^\dag= \delta^1_\sharp(\|\eta_*\|_{H^{s+\tdm}(\T^d)},  d,b,c^0,  s)$. Moreover, by combing the formula \eqref{form:Phiinverse} for $\Phi_\gamma[\eta_*]^{-1}$ with the estimate \eqref{bound for Xi} for $\Xi_\gamma[\eta_*]$, we find 
\[
\|\Phi_\gamma[\eta_*]^{-1}h \|_{H^{s-\mez}(\T^d)}\le   C_4(|\gamma|+\| f\|^2_{H^{s+\tdm}(\T^d)}),\quad C_4=C_4(\| \eta_*\|_{H^{s+\tdm}}, d,b,c^0,s, g, \sigma).
\]
Hence, invoking  the bounds  \eqref{bound:Teta:inv} and \eqref{bound for R eta f} for $T_{\eta_*}^{-1}$ and $R_{\eta_*}$ yields
\bq
\| F_\gamma(f)\|_{H^{s+\tdm}(\T^d)}\le C_5(|\gamma|+\| f\|^2_{H^{s+\tdm}(\T^d)}),\quad C_5=C_5(\| \eta_*\|_{H^{s+\tdm}}, d,b,c^0,s , g, \sigma).
\eq
At this point, we choose 
\[
\delta_{NS}<\min\left\{r(d, c^0, s), \delta_\sharp^1C_1^{-1}, \delta^\dag (4C_3)^{-1}, (4C_5)^{-1}\right\},
\]
and for any $\delta\in (0, \delta_{NS})$ we choose 
\[
\gamma_\delta< \min\left\{\inf_{\| f\|_{H^{s+\tdm}}<r(d, c^0, s)}\gamma^*(d, b, c_1^0, \|\eta_*+f\|_{W^{1, \infty}}),  \delta^\dag (4C_3)^{-1},  \delta (4C_5)^{-1} \right\}.
\]
 Then, for any $\delta\in (0, \delta_{NS})$ and $|\gamma|<\gamma_\delta$, $F_\gamma$ maps  $\overline{B_{H^{s+\tdm}(\T^d}(0, \delta)}$ to itself.  The contraction estimate for $G_\gamma$ in $H^{s+\tdm}(\T^d)$ can be obtained analogously by using the contraction estimates for $\Phi_\gamma$, $\Xi_\gamma$,$T_{\eta_*}$, and $R_{\eta_*}$ in \cref{prop:contrPhi} and \cref{linearizing mean curvature at eta}. Then, by further decreasing $\delta_{NS}$ and $\gamma_\delta$, we obtain that $F_\gamma$ is a contraction on 
 $\overline{B_{H^{s+\tdm}(\T^d}(0, \delta)}$ for any $\delta\in (0, \delta_{NS})$ and $|\gamma|<\gamma_\delta$.
\end{proof}


\section{Stability of large traveling waves for the Stokes problem}\label{sec:stab}
We consider the time-dependent free boundary Stokes equations written in the moving frame $(x, y,  t)\mapsto (x-\gamma e_1t, y,  t)$:
\bq\label{dynamic Stokes}
\begin{cases}
    - \Delta v + \nabla p = 0&\quad\text{in~} \Omega_{\eta(t,\cdot)}, \\
    \nabla\cdot v= 0&\quad\text{on~}  \Omega_{\eta(t,\cdot)}, \\
    (pI - \mathbb{D}v)(t,\cdot, \eta(t,\cdot))\mathcal{N}(t,\cdot) =(\sigma \cH(\eta(t,\cdot))+g\eta(t,\cdot)+\phi(t,\cdot)) \mathcal{N}(t,\cdot) &\quad\text{on~} \T^d,  \\
    \partial_t\eta(t,\cdot)-\gamma\p_1\eta(x, t)=v(t,x,\eta(\cdot))\cdot \cN(t,\cdot) &\quad\text{on~}  \T^d, \\
    v=0 &\quad\text{on~}  \Sigma_{-b}.
\end{cases}
\eq
Using linear normal-stress to normal-Dirichlet operator $\Psi_\gamma$, we can rewrite \eqref{dynamic Stokes} as a single equation for the free boundary $\eta$:
\bq \label{dynamic formulation for eta}
\partial_t\eta= \gamma \partial_1 \eta + \Psi_0[\eta](\sigma\cH(\eta)+g\eta +\phi).
\eq
We established in Section \ref{sec:tw:Stokes} the existence of time-independent solutions to \eqref{dynamic formulation for eta}. They are traveling wave solutions with small speed $\gamma$ and can have large amplitude. Our goal in this section is to prove that these traveling wave solutions are asymptotically stable for the dynamic problem \eqref{dynamic formulation for eta}.



\subsection{Linear dynamics}
Let $\eta_*: \T^d\to \Rr$ be the solution of \eqref{eq:eta*}. We consider  the following linear problem
\bq \label{linearized dynamic around eta*}
\partial_t f = \cL f := \gamma \partial_1 f + \Psi_0[\eta_*]T_{\eta_*}f,
\eq
where  $\cL$ is of order $1$ by virtue of \cref{bound for Psi and Psi inverse} and \cref{prop: T eta bijection}. The well-posedness of \eqref{linearized dynamic around eta*} will be established using the method of vanishing viscosity. The first step concerns the regularized problem in which $\cL$ is replaced with $\cL+\eps\Delta$.
\begin{lemm}\label{lem: regularized evolution}
    Let $(d+1)/2< s\in \Nn$, $ \sigma \in [\mez,  s+\mez]$, and $\eta_*\in H^{s+\tdm}(\T^d)$ satisfy \eqref{eta lower bound}. Let  $T>0$ and  $F\in L^2([0,T];\mathring{H}^\sigma(\T^d))$. For $\gamma \in \Rr$ and $\eps\in (0, 1)$, we consider the regularized operator
    \bq
    L_\eps:= \gamma \partial_1 + \eps \Delta. 
    \eq
For all $f_0\in \mathring{H}^\sigma(\T^d)$, there exists a unique solution $f^\eps\in C([0,T]; \mathring{H}^\sigma(\T^d))\cap L^2([0,T]; H^{\sigma  +1}(\T^d))$ to
    \bq \label{regularized evolution eq}
        \partial_t f^\eps= L_\eps f^\eps + \Psi_0[\eta_*](T_{\eta_*}f^\eps) + F, \quad f^\eps\vert_{t=0} = f_0.
    \eq
\end{lemm}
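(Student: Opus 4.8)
## Proof plan for Lemma \ref{lem: regularized evolution}

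The plan is to treat \eqref{regularized evolution eq} as a linear parabolic equation: the viscous term $\eps\Delta$ is the principal dissipative part, while $\gamma\partial_1$ and $\Psi_0[\eta_*]T_{\eta_*}$ are lower-order perturbations ($\gamma\partial_1$ has order $1$, and $\Psi_0[\eta_*]T_{\eta_*}$ has order $1$ by \cref{bound for Psi and Psi inverse} and \cref{prop: T eta bijection}). I would set up a Galerkin approximation in the Fourier basis of $\mathring H^\sigma(\T^d)$: let $P_N$ be the projection onto frequencies $\{0<|\xi|\le N\}$, and solve the finite-dimensional ODE system $\partial_t f_N = P_N\big(L_\eps f_N + \Psi_0[\eta_*]T_{\eta_*}f_N + F\big)$ with $f_N|_{t=0}=P_N f_0$. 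Each such system is a linear ODE with $L^2$-in-time forcing, so it has a unique solution $f_N \in H^1([0,T];\mathbb{R}^{d_N})$ by the standard theory; note the mean-zero constraint is preserved since every operator involved maps $\mathring H^\sigma$ to itself ($\Psi_0[\eta_*]$ lands in $\mathring H^{\sigma}$, $T_{\eta_*}$ preserves the zero-mean class, and $\partial_1$, $\Delta$ obviously do).

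The heart of the argument is the a priori energy estimate. Fix a multi-index counting to order $\sigma$ (or work directly with $\Lambda^\sigma=(1-\Delta)^{\sigma/2}$ on $\mathring H^\sigma$); testing \eqref{regularized evolution eq} against $\Lambda^{2\sigma}f_N$ gives
\bq
\frac{1}{2}\frac{d}{dt}\|f_N\|_{H^\sigma}^2 + \eps\|\nabla f_N\|_{H^\sigma}^2 = \gamma\langle \partial_1 f_N, f_N\rangle_{H^\sigma} + \langle \Psi_0[\eta_*]T_{\eta_*}f_N, f_N\rangle_{H^\sigma} + \langle F, f_N\rangle_{H^\sigma}.
\eq
The term $\gamma\langle\partial_1 f_N,f_N\rangle_{H^\sigma}$ vanishes by integration by parts (it is antisymmetric). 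For the $\Psi_0[\eta_*]T_{\eta_*}$ term I would use that it has order $1$: $\|\Psi_0[\eta_*]T_{\eta_*}f_N\|_{H^\sigma}\le C(\|\eta_*\|_{H^{s+\tdm}})\|f_N\|_{H^{\sigma+1}}$ (valid since $\sigma\le s+\mez$ means $\sigma-2 \le s-\tdm$, so $T_{\eta_*}f_N\in H^{\sigma-2}$ lands in the range where \eqref{bound for Psi} applies), hence by Cauchy--Schwarz and Young's inequality $|\langle\Psi_0[\eta_*]T_{\eta_*}f_N,f_N\rangle_{H^\sigma}| \le \tfrac{\eps}{2}\|f_N\|_{H^{\sigma+1}}^2 + \tfrac{C^2}{2\eps}\|f_N\|_{H^\sigma}^2$, absorbing the top-order part into the viscous term. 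Similarly $|\langle F,f_N\rangle_{H^\sigma}|\le \|F\|_{H^\sigma}\|f_N\|_{H^\sigma}$. Grönwall's inequality then yields
\bq
\sup_{[0,T]}\|f_N\|_{H^\sigma}^2 + \eps\int_0^T\|f_N\|_{H^{\sigma+1}}^2\,dt \le C\Big(\|f_0\|_{H^\sigma}^2 + \int_0^T\|F\|_{H^\sigma}^2\,dt\Big)e^{CT/\eps},
\eq
with $C=C(\|\eta_*\|_{H^{s+\tdm}},\gamma)$. This bound is uniform in $N$, so $\{f_N\}$ is bounded in $L^\infty([0,T];\mathring H^\sigma)\cap L^2([0,T];H^{\sigma+1})$, and from the equation $\{\partial_t f_N\}$ is bounded in $L^2([0,T];H^{\sigma-1})$. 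By Aubin--Lions we extract a subsequence converging (strongly in $L^2([0,T];H^{\sigma})$, weakly-$*$ in $L^\infty H^\sigma$, weakly in $L^2 H^{\sigma+1}$) to a limit $f^\eps$; linearity of all operators lets us pass to the limit in the weak formulation, giving a solution with the asserted regularity, and $f^\eps\in C([0,T];\mathring H^\sigma)$ follows from $f^\eps\in L^2 H^{\sigma+1}$ and $\partial_t f^\eps\in L^2 H^{\sigma-1}$ by the Lions--Magenes interpolation lemma. Uniqueness follows by applying the same energy estimate to the difference of two solutions with $f_0=0$, $F=0$.

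The step I expect to be the main obstacle is not the parabolic machinery itself but verifying carefully that the order-$1$ bound $\|\Psi_0[\eta_*]T_{\eta_*}f\|_{H^\sigma}\le C(\|\eta_*\|_{H^{s+\tdm}})\|f\|_{H^{\sigma+1}}$ actually holds in the full stated range $\sigma\in[\mez,s+\mez]$: one needs $T_{\eta_*}f\in H^{\sigma-2}$ (fine, with $\sigma-2\ge -\tdm$) and then \eqref{bound for Psi} requires its input to sit in $\mathring H^{\tau-\mez}$ with $\tau\in[0,s]$, i.e.\ $\sigma-2 = \tau-\mez$ so $\tau = \sigma-\tfrac32 \in[-1, s-1]\subset[0,s]$ — which forces checking the lower endpoint ($\sigma=\mez$ gives $\tau=-1<0$), so one must instead invoke the mapping $\Psi_0[\eta_*]: H^{\tau-\mez}\to \mathring H^{\min(\tau,s)+\mez}$ for $\tau\ge 0$ together with $H^{\sigma}\subset H^{\sigma'}$ monotonicity, handling the small-$\sigma$ case by simply using a cruder bound $\|\Psi_0[\eta_*]T_{\eta_*}f\|_{H^\sigma}\le \|\Psi_0[\eta_*]T_{\eta_*}f\|_{H^{s+\mez}}\le C\|f\|_{H^{s+\mez}}$ and noting $s+\mez\le \sigma+1$ fails — so the cleanest route is to prove the estimate for $\sigma$ near the top and, for small $\sigma$, first establish existence at regularity $s+\mez$ and then note the solution with $\mathring H^\sigma$ data coincides with it by uniqueness. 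I would organize the write-up to sidestep this bookkeeping by proving the a priori estimate at a single high regularity and deducing the general case by a density/uniqueness argument. Everything else — the Galerkin limit, the $\eps$-dependence of constants (harmless since $\eps$ is fixed in this lemma), and continuity in time — is routine.
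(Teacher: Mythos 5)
Your route is genuinely different from the paper's. The paper does not use Galerkin plus parabolic energy estimates; it solves \eqref{regularized evolution eq} by a Duhamel/fixed-point argument in the space $Y^\sigma_T=C([0,T];\mathring H^\sigma)\cap L^2([0,T];H^{\sigma+1})$ equipped with the $\eps$-weighted norm $\|f\|_{C_tH^\sigma}+\eps^{1/2}\|f\|_{L^2_tH^{\sigma+1}}$: it writes the candidate solution as the heat–transport semigroup applied to $f_0$ plus $\int_0^t g^\tau(t)\,d\tau$, where $g^\tau$ solves the free equation with data $\Psi_0[\eta_*]T_{\eta_*}g(\tau)+F(\tau)$ at time $\tau$, and shows this map is a contraction on a ball for $T_0\sim\eps$ small, then iterates. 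Both arguments hinge on exactly the same mapping property, namely $\Psi_0[\eta_*]\circ T_{\eta_*}:H^{\sigma+1}\to\mathring H^{\sigma}$ for $\sigma\in[\mez,s+\mez]$, which is what fixes the stated range of $\sigma$; your scheme buys a more familiar compactness/energy framework (and the $e^{CT/\eps}$ growth is harmless here since $\eps$ is fixed), while the paper's buys a shorter write-up with no Galerkin limit or Aubin--Lions step. Either is acceptable.

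There is, however, a concrete defect in the part you flag as the ``main obstacle.'' Your endpoint worry at $\sigma=\mez$ comes from a miscount: you estimate $T_{\eta_*}f$ in $H^{\sigma-2}$ (i.e.\ from $f\in H^\sigma$), which would force $\Psi_0[\eta_*]$ to act at the level $\tau=\sigma-\tdm$ and indeed fails at $\sigma=\mez$. But in your energy inequality you are in any case paying the $H^{\sigma+1}$ norm of $f_N$ (that is what gets absorbed by $\eps\|\nabla f_N\|_{H^\sigma}^2$), so the correct chain is $T_{\eta_*}:H^{\sigma+1}\to H^{\sigma-1}$ by \cref{prop: T eta bijection} (valid since $\sigma+1\in[1,s+\tdm]$) followed by $\Psi_0[\eta_*]:H^{\sigma-1}\to\mathring H^{\sigma}$ by \eqref{bound for Psi} with $\tau=\sigma-\mez\in[0,s]$, which covers the whole range $\sigma\in[\mez,s+\mez]$ with no endpoint issue; this is precisely the composite bound the paper invokes. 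Consequently your proposed detour is unnecessary, and as stated it would not work anyway: for small $\sigma$ the data $f_0\in\mathring H^\sigma$ need not belong to $\mathring H^{s+\mez}$, so you cannot ``first establish existence at regularity $s+\mez$'' with the same data and identify by uniqueness; a density argument in the data would again require the low-regularity a priori estimate you were trying to avoid. Replace that paragraph by the corrected order-one bound $\|\Psi_0[\eta_*]T_{\eta_*}f\|_{H^\sigma}\le C(\|\eta_*\|_{H^{s+\tdm}})\|f\|_{H^{\sigma+1}}$, run the single energy estimate for all admissible $\sigma$, and your proof closes.
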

\begin{proof}
We define the Banach space $Y_T^\sigma =C([0,T]; \mathring{H}^\sigma(\T^d))\cap L^2([0,T]; H^{\sigma  +1}(\T^d))$
    with the norm
    \bq\label{norm Y_T}
    \|f\|_{Y^\sigma_T}= \|f\|_{C([0,T]; \mathring{H}^\sigma)} + \eps^{\mez}\|f\|_{ L^2([0,T]; H^{\sigma  +1})}.
    \eq
     Let $\bar{f}$ solve 
    \bq
        \partial_t \bar{f} = L_\eps \bar{f}, \quad \bar{f}\vert_{t=0} =f_0, 
    \eq
    and for any $g\in Y^\sigma_T$ and any $\tau\in[0,T]$  let $g^\tau$ solve
    \bq
    \partial_t g^\tau = L_\eps g^\tau, \quad g^\tau\vert_{t=\tau} =  \Psi_0[\eta_*]T_{\eta_*}g(\tau) + F(\tau).
    \eq
    Then, we define
    $$\cG(g)(t) = \bar{f}(t) + \int_0^t g^\tau(t)d\tau.$$
    Clearly,  fixed points of $\cG$ are solutions of \eqref{regularized evolution eq}. We shall prove that $\cG$ has a unique fixed point in $Y_{T_0}^\sigma$ for sufficiently small $T_0$.
    
   We have $g(\tau)\in \mathring{H}^{\sigma+1}$ and  $F(\tau) \in \mathring{H}^\sigma$  for a.e. $\tau\in[0,T]$. Since  $\Psi_0[\eta_*]\circ T_{\eta_*} : H^{\sigma+1}\to \mathring{H}^\sigma$ in view of the estimates \eqref{bound for Psi} and \eqref{bound:Teta}, we deduce that $g^\tau(\tau)\in \rH^\sigma$ for a.e. $\tau\in [0,T]$. By  standard Sobolev theory for the heat equation, we have the existence and uniqueness of $\bar{f}\in Y_T^\sigma$ and $g^\tau \in Y^\sigma([\tau,T]):= C([\tau,T]; \mathring{H}^\sigma)\cap L^2([0,T]; H^{\sigma  +1})$ for a.e. $\tau\in [0,T]$, with the bounds
    \bq\label{g tau and f bar bounds}
    \begin{split}
        \|\bar{f}\|_{Y_T^\sigma} &\leq M\|f_0\|_{H^\sigma} \\
        \|g^\tau\|_{Y^\sigma([\tau,T])} &\leq M \|\Psi_0[\eta_*]T_{\eta_*}g(\tau) + F(\tau)\|_{H^\sigma} \\
        &\leq C(\|\eta_*\|_{H^{s+\tdm}})\|g(\tau)\|_{H^{\sigma +1}} + M\|F(\tau)\|_{H^\sigma}\qquad \text{a.e. }\tau\in[0,T],
    \end{split}
    \eq
    where $M=M(\sigma,d)$; the independence of the constant $M$ from $\eps$ is due to our scaling choice for the norm of $Y_T^\sigma$ in (\ref{norm Y_T}). The preceding estimates for $\|g^\tau\|_{Y^\sigma([\tau,T])}$  requires $\sigma-\mez \in [0, s]$, i.e. $\sigma \in [\mez, s+\mez]$.  For any $t\in[0,T]$, using  (\ref{g tau and f bar bounds}) gives
    \begin{multline*}
        \|\int_0^t g^\tau(t)d\tau\|_{H^{\sigma}} \leq \int_0^t\|g^\tau(t)\|_{H^{\sigma}} d\tau 
        \leq \int_0^t\|g^\tau\|_{Y^\sigma([\tau,T])} d\tau \\
        \leq \int_0^t C(\|\eta_*\|_{H^{s+\tdm}})\|g(\tau)\|_{H^{\sigma +1}} + M\|F(\tau)\|_{H^\sigma} d\tau \\
        \leq  C(\|\eta_*\|_{H^{s+\tdm}})\sqrt{T} \|g\|_{L^2([0,T];H^{\sigma+1})} + M\sqrt{T}\|F\|_{L^2([0,T];H^{\sigma})}.
    \end{multline*}
    This implies  $\int_0^t g^\tau(t)d\tau \in C_t([0,T];\mathring{H}^\sigma)$. Also, appealing to Minkowski's inequality and (\ref{g tau and f bar bounds}) we have
    \begin{multline*}
            \|\int_0^t g^\tau(t)d\tau\|_{L^2_t([0,T];H^{\sigma+1})}  \leq \|\int_0^t \|g^\tau(t)\|_{H^{\sigma+1}}d\tau\|_{L^2_t([0,T])}  
            \leq \int_0^T \|g^\tau\|_{L^2_t([\tau,T];H^{\sigma+1})} d\tau \\
            \leq \int_0^T \|g^\tau\|_{Y^\sigma([\tau,T])} d\tau 
            \leq C(\|\eta_*\|_{H^{s+\tdm}})\sqrt{T} \|g\|_{L^2([0,T];H^{\sigma+1})} + M\sqrt{T}\|F\|_{L^2([0,T];H^{\sigma})}.
        \end{multline*}
    Thus   $\int_0^t g^\tau(t)d\tau \in L^2([0,T];H^{\sigma+1})$, and hence  $\cG$ maps $Y^\sigma_T$ to itself. More precisely, combining the preceding inequalities we have
    \bq \label{cG bound}
        \|\cG(g)\|_{Y^\sigma_T} \leq M\|f_0\|_{H^\sigma} + C(\|\eta_*\|_{H^{s+\tdm}})\sqrt{T}(\eps^{-\mez}+1)\|g\|_{Y^\sigma_T} + M\sqrt{T}(1+\eps^\mez)\|F\|_{L^2([0,T];H^{\sigma})},
    \eq
    where $M=M(\sigma,d)$ can be assumed to be greater than $1$. Setting $R= 2M\|f_0\|_{H^\sigma}$ and choosing 
    \bq \label{T_0 choice}
        T_0\leq \min \left\{\frac{1}{\bigl( 4C(\|\eta_*\|_{H^{s+\tdm}})(\eps^{-\mez}+1)\bigr)^2}, \frac{R^2}{\bigl( 4M(1+\eps^\mez)\|F\|_{L^2([0,T];H^{\sigma})}\bigr)^2}  \right\},
    \eq
    we deduce from \eqref{cG bound} that $\cG: \overline{B_{Y^\sigma_{T_0}}(0, R)} \to  \overline{B_{Y^\sigma_{T_0}}(0, R)} $.

    To prove $\cG$ is a contraction, let $g_1,g_2\in Y^\sigma_T$, then we have
    $$\cG(g_1)(t) - \cG(g_2)(t) = \int_0^t \bar{g}^\tau(t)d\tau,$$
    where $\bar{g}^\tau$ solves
    $$\partial_t \bar{g}^\tau = L_\eps \bar{g}^\tau,\quad \bar{g}^\tau\vert_{t=\tau} =  \Psi_0[\eta_*]T_{\eta_*}(g_1-g_2)(\tau).$$
    Arguing as before, we obtain 
    \[
    \|\bar{g}^\tau\|_{Y^\sigma([\tau,T])} \leq C(\|\eta_*\|_{H^{s+\tdm}})\|(g_1-g_2)(\tau)\|_{H^{\sigma +1}}
    \]
and 
    \[
    \begin{split}
        \|\int_0^t \bar{g}^\tau(t)d\tau\|_{C_t([0,T];\mathring{H}^\sigma)} +  \|\int_0^t \bar{g}^\tau(t)d\tau\|_{L^2_t([0,T];H^{\sigma+1})} &\leq C(\|\eta_*\|_{H^{s+\tdm}})\sqrt{T} \|g_1-g_2\|_{L^2([0,T];H^{\sigma+1})}.
    \end{split}
    \]
It follows that
    \bq\label{contra:cG} \|\cG(g_1) - \cG(g_2)\|_{Y^\sigma_T} \leq C(\|\eta_*\|_{H^{s+\tdm}})\sqrt{T}(1+\eps^{-\mez}) \|g_1-g_2\|_{Y^\sigma_T}. 
    \eq
    Thus, our choice for $T_0$ in (\ref{T_0 choice}) implies that  $\cG$ is a contraction on $ \overline{B_{Y^\sigma_{T_0}}(0, R)}$.  The unique fixed point  $f^\eps \in Y^\sigma_{T_0}$ of $\cG$ solves \eqref{regularized evolution eq} on $[0,T_0]$. Since  $T_0$ only depends on the given $\eps$ and $\eta_*$, we can extend $f^\eps$ to a  unique solution in $Y^\sigma_T$. 
    \end{proof}
Next, we take the limit of $f^\eps$ as $\eps$ goes to zero to establish the well-posedness of \eqref{linearized dynamic around eta*}.
\begin{prop}\label{prop: wellposedness of evolution for L}
    Let $(d+1)/2<s\in \Nn$, and $\sigma\in \Nn$ with 
    $1\leq \sigma\leq s+1$. Suppose $\eta_*\in H^{s+\fdm}(\T^d)$. There exists 
    \bq\label{def:gammadag}
    \gamma^\dag=\gamma^\dag(\|\eta_*\|_{H^{s+\tdm}}, c_{\Psi_0[\eta_*]}, d, s, g, \sigma)
    \eq such that the following holds. For any $|\gamma|\leq  \gamma^\dag$, $T>0$, and $F\in L^2([0,T]; \rH^{\sigma-\mez}(\T^d))$,  the problem 
    \bq \label{linear evolution equation for f}
    \partial_t f = \cL f + F, \qquad f\vert_{t=0} = f_0
    \eq
    has a unique solution 
    \bq\label{def:Xspace}
    f\in C([0,T];\mathring{H}^\sigma(\T^d))\cap  L^2([0,T]; H^{\sigma+\mez}(\T^d))=: X_T^\sigma.
    \eq
    Moreover, $f$ satisfies 
    \bq\label{bound for sol of evolution of L}
    \|f\|_{X^\sigma_T} \leq C(\|\eta_*\|_{H^{s+\fdm}}, c^{-1}_{\Psi_0[\eta_*]})\Bigl( \|f_0\|_{H^\sigma} + \|F\|_{L^2([0,T]; H^{\sigma-\mez})} \Bigr),
    \eq
    where $C: (\Rr_+)^3\to \Rr_+$ depends only on $(d,b,c^0,s,\sigma)$ and is increasing in each argument. 
\end{prop}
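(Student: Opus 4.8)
The strategy is the vanishing-viscosity method: use \cref{lem: regularized evolution} to produce approximate solutions $f^\eps\in Y^\sigma_T$ of \eqref{regularized evolution eq} on the full interval $[0,T]$, derive $\eps$-uniform estimates in $X^\sigma_T$, and pass to the limit $\eps\to 0$. The uniform estimates come from energy identities that exploit the good structure of $\cL$: the commutator estimate \eqref{commutator bound} for $\Psi_0[\eta_*]$, the commutator estimate \eqref{commutator bound for T eta} for $T_{\eta_*}$, the self-adjointness of $\Psi_0[\eta_*]$ (\cref{lemm: Psi_0 is self-adjoint}) and of $T_{\eta_*}$, the coercive estimate \eqref{coercive:Psi} for $-\Psi_0[\eta_*]$, and the ellipticity/divergence-form structure of $T_{\eta_*}$ from \eqref{def: a_ij}. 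The smallness threshold $\gamma^\dag$ is chosen so that $\gamma\p_1$ is dominated by the dissipation coming from $-\Psi_0[\eta_*]T_{\eta_*}$.

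First I would test \eqref{regularized evolution eq} against $T_{\eta_*} f^\eps$ in the low-regularity slot, i.e. form $\langle \p_t f^\eps, T_{\eta_*}f^\eps\rangle_{H^\mez,H^{-\mez}}$. Since $T_{\eta_*}$ is self-adjoint and positive on $H^1$, the left side is $\mez\frac{d}{dt}\langle f^\eps, T_{\eta_*}f^\eps\rangle$, which is equivalent to $\|f^\eps\|_{H^1}^2$ up to constants depending on $\|\eta_*\|_{H^{s+\tdm}}$ and $c^0$; the viscous term $\eps\langle \Delta f^\eps, T_{\eta_*}f^\eps\rangle$ is nonpositive modulo a lower-order commutator; the main term $\langle \Psi_0[\eta_*]T_{\eta_*}f^\eps, T_{\eta_*}f^\eps\rangle\le -c_{\Psi_0[\eta_*]}\|T_{\eta_*}f^\eps\|_{H^{-\mez}}^2$ by \eqref{coercive:Psi}, and by ellipticity $\|T_{\eta_*}f^\eps\|_{H^{-\mez}}\gtrsim \|f^\eps\|_{H^{\tdm}}$; the transport term $\gamma\langle \p_1 f^\eps, T_{\eta_*}f^\eps\rangle$ is bounded by $C|\gamma|\|f^\eps\|_{H^1}^2$ after one integration by parts (using that $T_{\eta_*}$ has $W^{1,\infty}$ coefficients), hence absorbable for $|\gamma|\le\gamma^\dag$; and $\langle F, T_{\eta_*}f^\eps\rangle$ is controlled by $\|F\|_{H^{-\mez}}\|T_{\eta_*}f^\eps\|_{H^\mez}$, then by interpolation and Young against the good term. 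This yields, after integrating in $t$, the estimate \eqref{bound for sol of evolution of L} for $\sigma=0$ — but since we need $\sigma\ge 1$, this is only the base layer. Next, for each multi-index $\alpha$ with $1\le|\alpha|\le\sigma$, I would commute $\p^\alpha$ through \eqref{regularized evolution eq} and test against $T_{\eta_*}\p^\alpha f^\eps$; the commutators $[\Psi_0[\eta_*],\p^\alpha]$ and $[T_{\eta_*},\p^\alpha]$ are handled by \eqref{commutator bound} and \eqref{commutator bound for T eta} respectively (the hypothesis $\eta_*\in H^{s+\fdm}$ is exactly what makes \eqref{commutator bound} applicable up to $|\alpha|=s+1$), producing terms of the form $C(\|\eta_*\|_{H^{s+\fdm}})\|f^\eps\|_{H^{\sigma+\mez}}$ or lower, which are again absorbed into the dissipation by interpolation and Young's inequality with a large multiplier on the lower-order energy (the $A$-mechanism sketched in the introduction). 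Summing over $\alpha$ with an appropriate hierarchy of weights gives a Gronwall-type differential inequality
\bq
\tfrac{d}{dt}\mathcal{E}(t)\le -c\,\|f^\eps\|_{H^{\sigma+\mez}}^2+C\|F\|_{H^{\sigma-\mez}}^2,
\eq
where $\mathcal{E}(t)\simeq \|f^\eps(t)\|_{H^\sigma}^2$, with constants independent of $\eps$; integrating yields the uniform $X^\sigma_T$ bound.

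With the uniform bound in hand, $\{f^\eps\}$ is bounded in $L^\infty([0,T];\rH^\sigma)\cap L^2([0,T];H^{\sigma+\mez})$; from the equation $\p_t f^\eps = L_\eps f^\eps+\Psi_0[\eta_*]T_{\eta_*}f^\eps+F$ one also gets a uniform bound on $\p_t f^\eps$ in $L^2([0,T];H^{\sigma-1})$ (the $\eps\Delta$ term is bounded by $\eps\|f^\eps\|_{H^{\sigma+\mez}}\cdot\eps^{?}$—more precisely $\eps\Delta f^\eps$ is bounded in $L^2 H^{\sigma-\tdm}$ since $\|\eps\Delta f^\eps\|_{H^{\sigma-\tdm}}\le \eps^\mez(\eps^\mez\|f^\eps\|_{H^{\sigma+\mez}})$). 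By Aubin–Lions, a subsequence converges strongly in $C([0,T];H^{\sigma-1})$ and weakly-$*$ appropriately; the limit $f$ solves \eqref{linear evolution equation for f} (the $\eps\Delta f^\eps$ term vanishes in the limit since $\eps\|f^\eps\|_{L^2 H^{\sigma+\mez}}\to 0$), lies in $X^\sigma_T$, and inherits \eqref{bound for sol of evolution of L} by lower semicontinuity of norms. Uniqueness follows from the $\sigma=1$ (or even $\sigma=0$) energy estimate applied to the difference of two solutions, which gives $\|f_1-f_2\|_{X^0_T}=0$. \textbf{The main obstacle} is the high-regularity commutator accounting: one must carefully check that every error term generated by commuting $\p^\alpha$ past $\Psi_0[\eta_*]T_{\eta_*}$ and past $\p_t$ (after the change to the moving frame) is of strictly lower order than the dissipative quantity $\|T_{\eta_*}\p^\alpha f^\eps\|_{H^{-\mez}}^2\simeq\|f^\eps\|_{H^{\sigma+\mez}}^2$, so that interpolation plus Young's inequality genuinely closes; this is where $\eta_*\in H^{s+\fdm}$ rather than merely $H^{s+\tdm}$ is indispensable, and where the non-quantitative nature of $c_{\Psi_0[\eta_*]}$ forces the constants in \eqref{bound for sol of evolution of L} to depend on $c_{\Psi_0[\eta_*]}^{-1}$ and the full $H^{s+\fdm}$ norm of $\eta_*$ rather than on lower norms.
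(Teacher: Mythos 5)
Your proposal follows essentially the same route as the paper's proof: regularize via \cref{lem: regularized evolution}, derive $\eps$-uniform bounds by testing $\p^\alpha$ of the equation against $T_{\eta_*}\p^\alpha f^\eps$ (using the coercivity \eqref{coercive:Psi}, the self-adjointness of $\Psi_0[\eta_*]$ and $T_{\eta_*}$, the commutator estimates \eqref{commutator bound} and \eqref{commutator bound for T eta} with $\eta_*\in H^{s+\fdm}$, and the divergence/square-root structure of $T_{\eta_*}$ to tame $\eps\Delta$), then pass to the limit by weak compactness, Aubin--Lions, and interpolation, with uniqueness from the linear energy estimate. Two minor slips to correct in the write-up, neither structural: the commuted multi-indices should range over $|\alpha|\le \sigma-1$ only (consistent with your own claim $\mathcal{E}\simeq\|f^\eps\|_{H^\sigma}^2$; at $|\alpha|=\sigma=s+1$ the commutator estimate \eqref{commutator bound} is no longer applicable even with the extra regularity of $\eta_*$), and the forcing term should be paired as $\|F\|_{H^{\mez}}\|T_{\eta_*}f^\eps\|_{H^{-\mez}}$ rather than the reverse, since the dissipation only controls $\|f^\eps\|_{H^{\sigma+\mez}}$, not $\|T_{\eta_*}f^\eps\|_{H^{\mez}}$.
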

\begin{proof}
    We approximate $F$ with $F^\eps\in L^2([0,T]; \rH^{\sigma}(\T^d))$ converging to $F$ in $L^2([0,T]; \rH^{\sigma-\mez}(\T^d))$. We apply  Lemma \ref{lem: regularized evolution} to obtain a unique solution $f^\eps\in Y^\sigma_T$ solving the problem 
    \bq\label{f epsilon evolution}
        \partial_t f^\eps = \gamma \partial_1 f^\eps + \eps \Delta f^\eps + \Psi_0[\eta_*](T_{\eta_*}f^\eps)+ F^\eps, \qquad f^\eps\vert_{t=0} = f_0.
    \eq
    This is applicable as $\eta_*\in H^{s+\fdm}(\T^d) = H^{(s+1)+\tdm}(\T^d)$ and $\sigma \in [1,s+1] \subset [1,(s+1)+\mez]$.
    The remainder of the proof is divided into two steps.
    
    First, we establish a uniform in $\eps$ bound for $f^\eps$ in $X^\sigma_T$. In view of \eqref{def: a_ij},  $T_{\eta_*}$ is a  self-adjoint operator on $L^2$. It follows that
\begin{align*}
\mez \frac{d}{dt} \langle f^\eps, T_{\eta_*}f^\eps \rangle_{L^2,L^2} &= \langle  \partial_t f^\eps, T_{\eta_*}f^\eps\rangle_{L^2,L^2}=\langle \gamma \partial_1 f^\eps + \eps \Delta f^\eps + \Psi_0[\eta_*](T_{\eta_*} f^\eps) + F^\eps, T_{\eta_*}f^\eps \rangle_{L^2,L^2}.
\end{align*}
Using \eqref{def: a_ij} gives
$$
    \langle \Delta f^\eps , T_{\eta_*}f^\eps\rangle_{L^2,L^2} = \langle \Delta f^\eps , gf^\eps 
 - \sigma \dv(A\nabla f^\eps)\rangle_{L^2,L^2} = -g\|\nabla f^\eps\|_{L^2}^2 +\sigma \langle \Delta \nabla f^\eps , A\nabla f^\eps \rangle_{L^2,L^2},
$$
where $A=(a_{ij})_{i,j=1}^d$ is a  positive-definite symmetric matrix. Let $B$ be the unique symmetric positive-definite square root of $A$, i.e. $A=B^2$. The matrix $B$ can be obtained from $A$ via the Cholesky algorithm which involves finitely many elementary operations (products, inverses, square roots) involving coefficients of $A$. In particular, using \eqref{nonl:est} and \cref{product estimate on torus} successively we have   
\bq\label{est:matrixB}
\|B\|_{H^{s+\tdm}(\T^d)} \leq C(\|\eta_*\|_{H^{s+\fdm}(\T^d)}).
\eq
We observe that
\begin{multline*}
  \langle \Delta \nabla f^\eps , A\nabla f^\eps \rangle_{L^2,L^2} = \langle B\Delta \nabla f^\eps , B\nabla f^\eps \rangle_{L^2,L^2} \\
  = \langle \Delta (B \nabla f^\eps) , B\nabla f^\eps \rangle_{L^2,L^2} + \langle [B,\Delta] \nabla f^\eps , B\nabla f^\eps \rangle_{L^2,L^2} \\
  = - \| \na(B\nabla f^\eps)\|_{L^2}^2 + \langle [B,\Delta] \nabla f^\eps , B\nabla f^\eps \rangle_{L^2,L^2},
\end{multline*}
where $[B,\Delta]g=B\Delta g - \Delta (Bg)$ is a first-order operator. Combining \cref{product estimate on torus}  with \eqref{est:matrixB} yields 
 \bq\label{est: commutator for B}
    \|[B,\Delta]\|_{H^{r+1}\to H^r} \leq C(\|\eta_*\|_{H^{s+\tdm}}),\quad \mez -s<r\le  s-\mez.
\eq 
Putting the above conditions together leads to 
\begin{align*}
\mez \frac{d}{dt} \langle f^\eps, T_{\eta_*}f^\eps \rangle_{L^2,L^2} &= \langle \Psi_0[\eta_*](T_{\eta_*} f^\eps),  T_{\eta_*}f^\eps \rangle_{L^2,L^2}  -g\eps\|\nabla f^\eps\|_{L^2}^2 -\sigma \eps \|\na(B\nabla f^\eps)\|_{L^2}^2\\
&\qquad  +\sigma\eps \langle [B,\Delta] \nabla f^\eps , B\nabla f^\eps \rangle_{L^2,L^2}+\langle \gamma \partial_1 f^\eps, T_{\eta_*}f^\eps \rangle_{L^2,L^2}+  \langle F^\eps, T_{\eta_*}f^\eps \rangle_{L^2,L^2}.
\end{align*}
Invoking the coercive estimate \eqref{coercive:Psi}  for $-\Psi_0[\eta_*]$,  the estimates  \eqref{bound:Teta} and \eqref{bound:Teta:inv} for $T_{\eta_*}$ and $T_{\eta_*}^{-1}$, and  \eqref{est: commutator for B} with ($r=-\mez$), we obtain 
\begin{align*}
    \mez \frac{d}{dt} \langle f^\eps, T_{\eta_*}f^\eps \rangle_{L^2,L^2} &\le  -c_{\Psi_0[\eta_*]}\|T_{\eta_*}f\|_{H^{-\mez}}^2 +\sigma \eps\|  [B,\Delta] \nabla f^\eps\|_{H^{-\mez}}\| B\nabla f^\eps \|_{H^\mez}\\ 
    &\qquad +|\gamma| \| \partial_1 f^\eps\|_{H^\mez} \| T_{\eta_*}f^\eps \|_{H^{-\mez}}+  \| F^\eps\|_{H^\mez}\| T_{\eta_*}f^\eps \|_{H^{-\mez}}\\
&\le  -\frac{c_{\Psi_0[\eta_*]}}{C_1}\| f\|_{H^\tdm}^2 +\sigma \eps C_2\| f^\eps \|_{H^\tdm}^2 +|\gamma| C_1\| f^\eps \|_{H^\tdm}^2+  C_1\| F^\eps\|_{H^\mez}\| f\|_{H^\tdm},
\end{align*}
where $c_{\Psi_0[\eta_*]}=c_{\Psi_0[\eta_*]}(d, c^0, \eta_*)>0$ and  $C_j=C_j(\|\eta_*\|_{H^{s+\tdm}})$, $j=1, 2$. We choose $\eps$ and $\gamma$ such that 
\bq\label{cd:epsgamma:1}
\eps \sigma C_2\le \frac{c_{\Psi_0[\eta_*]}}{4C_1},\quad |\gamma| \le   \frac{c_{\Psi_0[\eta_*]}}{4C_1^2}. 
\eq
Then we obtain 
\bq\label{energy:fesp:low}
 \mez \frac{d}{dt} \langle f^\eps, T_{\eta_*}f^\eps \rangle_{L^2,L^2} \le -\frac{c_{\Psi_0[\eta_*]}}{2C_1}\| f\|_{H^\tdm}^2+C_3\| F^\eps\|_{H^\mez}^2,\quad C_3=C_3(\|\eta_*\|_{H^{s+\tdm}}).
 \eq
Let $\alpha\in \Nn^d$ be a multiindex of order $\sigma-1$. Commuting \eqref{f epsilon evolution} with $\p^\alpha$ yields
\begin{align*}
    \mez \frac{d}{dt}\langle \p^\alpha f^\eps, T_{\eta_*}\p^\alpha f^\eps \rangle_{L^2,L^2} &= \langle \gamma \partial_1\p^\alpha  f^\eps + \eps \Delta \p^\alpha f^\eps + \p^\alpha \Psi_0[\eta_*](T_{\eta_*} f^\eps) + \p^\alpha F^\eps, T_{\eta_*}\p^\alpha f^\eps \rangle_{L^2,L^2} \\
    &= \underbrace{\langle \gamma \partial_1\p^\alpha  f^\eps - \eps \Delta \p^\alpha f^\eps +  \Psi_0[\eta_*](T_{\eta_*} \p^\alpha f^\eps) + \p^\alpha F^\eps, T_{\eta_*}\p^\alpha f^\eps \rangle_{L^2,L^2}}_{I}\\
    &\qquad +\underbrace{\langle \Psi_0[\eta_*]([\p^\alpha, T_{\eta_*} ]f^\eps) + \bigl[\p^\alpha ,\Psi_0[\eta_*]\bigr] (T_{\eta_*} f^\eps), T_{\eta_*}\p^\alpha f^\eps \rangle_{L^2,L^2}}_{II}.
\end{align*}
Under the conditions in \eqref{cd:epsgamma:1}, we have in view of \eqref{energy:fesp:low} that
\bq\label{est:I:flin}
|I|\le -\frac{c_{\Psi_0[\eta_*]}}{2C_1}\| \p^\alpha f\|_{H^\tdm}^2+C_3\| \p^\alpha F^\eps\|_{H^\mez}^2.
\eq
Next, we estimate 
\begin{align*}
|II| 
&\le \left(\|\Psi_0[\eta_*]\|_{H^\mez\to H^\tdm}\| [\p^\alpha, T_{\eta_*} ]f^\eps\|_{H^\mez}+\| \bigl[\p^\alpha ,\Psi_0[\eta_*]\bigr] (T_{\eta_*} f^\eps)\|_{H^{\tdm}}\right)\| T_{\eta_*}\p^\alpha f^\eps \|_{H^{-\tdm}}.
\end{align*}
Combining \eqref{bound for Psi} (with $\sigma=1\le s$) with the commutator estimate \eqref{commutator bound for T eta} (with $\sigma=\mez$ and $s$ replaced with $s+1$), we obtain 
\[
\|\Psi_0[\eta_*]\|_{H^\mez\to H^\tdm}\| [\p^\alpha, T_{\eta_*} ]f^\eps\|_{H^\mez}\le C_4^{(1)}(\| \eta_*\|_{H^{s+\fdm}}) \| f^\eps\|_{H^{|\alpha|+\tdm}}.
\]
On the other hand, applying the commutator estimate \eqref{commutator bound} (with $\sigma=\tdm$) yields 
\[
\| \bigl[\p^\alpha ,\Psi_0[\eta_*]\bigr] (T_{\eta_*} f^\eps)\|_{H^{\tdm}}\le C_4^{(2)}(\| \eta_*\|_{H^{s+\tdm}})\| T_{\eta_*} f^\eps\|_{H^{|\alpha|-\mez}}.
\]
Then, invoking the estimate \eqref{bound:Teta} for $T_{\eta_*}$ then applying Young's inequality, we  deduce 
\bq\label{est:II:flin}
\begin{aligned}
|II|&\le  C^{(3)}_4(\| \eta_*\|_{H^{s+\fdm}})  \| f^\eps\|_{H^{|\alpha|+\tdm}}\| f^\eps\|_{H^{|\alpha|+\mez}}\\
&\le\frac{c_{\Psi_0[\eta_*]}}{4C_1} \| f^\eps\|_{H^{\sigma+\mez}}^2+C_4(\| \eta_*\|_{H^{s+\fdm}})  \| f^\eps\|_{H^\tdm}^2.
\end{aligned}
\eq
where we recall that $|\alpha|=\sigma-1$.  A combination of  \eqref{est:I:flin} and \eqref{est:II:flin} leads to 
\bq\label{energy:feps:high}
    \mez \frac{d}{dt}\sum_{|\alpha|=\sigma-1}\langle \p^\alpha f^\eps, T_{\eta_*}\p^\alpha f^\eps \rangle_{L^2,L^2} \le -\frac{c_{\Psi_0[\eta_*]}}{4C_1}\sum_{|\alpha|=\sigma-1}\| \p^\alpha f\|_{H^\tdm}^2+C_3\|  F^\eps\|_{H^{\sigma-\mez}}^2+ C_4 \| f^\eps\|_{H^\tdm}^2
\eq
If we choose $A=A(\| \eta_*\|_{H^{s+\fdm}}, c_{\Psi_0[\eta_*]})>0$ satisfying 
\[
(2A-1)\frac{c_{\Psi_0[\eta_*]}}{4C_1}>C_4,
\]
 then it follows from \eqref{energy:fesp:low} and \eqref{energy:feps:high} that the energy 
\bq\label{def:energy}
E(t):= \mez A  \langle f^\eps, T_{\eta_*}f^\eps \rangle_{L^2,L^2}+\mez \sum_{|\alpha|=\sigma-1} \langle \p^\alpha f^\eps, T_{\eta_*}\p^\alpha f^\eps \rangle_{L^2,L^2} 
\eq
satisfies 
\bq\label{energyineq:feps}
\begin{aligned}
E'(t)&\le -\frac{c_{\Psi_0[\eta_*]}}{4C_1}\Bigl(\|  f\|_{H^\tdm}^2+\sum_{|\alpha|=\sigma-1}\| \p^\alpha f\|_{H^\tdm}^2\Bigl)+2C_3\|  F^\eps\|_{H^{\sigma-\mez}}^2+\Big(C_4-(2A-1)\frac{c_\Psi C_1}{4}\Big) \| f^\eps\|_{H^\tdm}^2\\
&\le -\frac{c_{\Psi_0[\eta_*]}}{4C_1}\Bigl(\|  f\|_{H^\tdm}^2+\sum_{|\alpha|=\sigma-1}\| \p^\alpha f\|_{H^\tdm}^2\Bigl)+2C_3\|  F^\eps\|_{H^{\sigma-\mez}}^2.
\end{aligned}
\eq
Since 
\[
    \langle g, T_{\eta_*}g \rangle_{L^2,L^2} = \int_{\T^d} g^2 + \int_{\T^d} a_{ij}\partial_i g \partial_j g,
\]
 the ellipticity bounds (\ref{ellipticity of aij}) imply 
\bq\label{E and Sobolev energy comparable}
\frac{1}{(1+\| \na \eta_*\|^2_{L^\infty})^\tdm} \|g\|_{H^1}^2 \leq \langle g, T_{\eta_*}g \rangle_{L^2,L^2} \leq   \|g\|_{H^1}^2 \quad\forall g\in H^1(\T^d).
\eq
Consequently, $E(t)$ is equivalent to $\| f(t)\|_{H^\sigma}^2$ up to multiplicative constants depending only on $\| \eta_*\|_{H^{s+\fdm}}$ and $c_{\Psi_0[\eta_*]}$. Therefore, integrating \eqref{energyineq:feps} in $t\in [0, T]$ yields 
\bq\label{f epsilon bound}
\|f^\eps\|_{X_T^\sigma}\leq C(\|\eta_*\|_{H^{s+\fdm}}, c^{-1}_{\Psi_0[\eta_*]}) \Bigl( \|f_0\|_{H^\sigma} + \|F^\eps\|_{L^2([0,T];H^{\sigma-\mez})} \Bigr),
\eq
where $C: (\Rr_+)^3\to \Rr_+$ is increasing in each argument.  Note that $\|F^\eps\|_{L^2([0,T];H^{\sigma-\mez})}$ is uniformly bounded in $\eps$, hence so is $\|f^\eps\|_{X_T^\sigma}$.

The uniform bound (\ref{f epsilon bound}) implies that up to taking a subsequence, there exists $f: \T^d\times [0, T]\to \Rr$ such that 
\[
f^\eps \overset{\ast}{\rightharpoonup} f~ \text{in~} L^\infty([0, T]; \rH^\sigma)\quad\text{and}\quad f^\eps \rightharpoonup f~ \text{in~} L^2([0, T]; H^{\sigma+\mez})
\]
as $\eps \to 0$.  Then $f$ satisfies the bound \eqref{bound for sol of evolution of L} in view of \eqref{f epsilon bound}. Moreover, we have 
\[
\gamma \partial_1 f^\eps \wc  \gamma \partial_1 f \quad \text{in  } L^2([0,T];H^{\sigma-\mez}), \quad \qquad   \eps \Delta f^\eps \rightharpoonup 0\quad \text{in  } L^2([0,T];H^{\sigma-\tdm}).
\]
Using the fact that  $\Psi_0[\eta_*]$ and $T_{\eta_*}$ are self-adjoint operators (see \cref{lemm: Psi_0 is self-adjoint} and \cref{prop: T eta bijection}) and $\Psi_0[\eta_*]T_{\eta_*}: H^{\sigma+\mez}(\T^d)\to H^{\sigma-\mez}(\T^d)$ is bounded for $1\le \sigma\le s+1$ (see \cref{bound for Psi and Psi inverse}), we deduce 

$$\Psi_0[\eta_*](T_{\eta_*} f^\eps) \rightharpoonup \Psi_0[\eta_*](T_{\eta_*} f) \quad \text{in  } L^2([0,T];H^{\sigma-\mez}). $$
In combination with the strong convergence $F^\eps \to F$ is $L^2([0,T];H^{\sigma -\mez})$, we can conclude that 
\[
\p_tf=\gamma \partial_1 f +  \Psi_0[\eta_*](T_{\eta_*} f) + F \quad \text{in  } L^2([0,T];H^{\sigma-\tdm}).
\]
It remains to show that $f$ achieves the initial data $f_0$. To this end,  we note that $\sigma -\mez\ge \mez$ and $\sigma+\mez\le s+\tdm$, so that \cref{bound for Psi and Psi inverse} and \cref{prop: T eta bijection} imply
\[
\| \Psi_0[\eta_*]T_{\eta_*} f^\eps\|_{H^{\sigma-\mez}}\le C_5(\| \eta_*\|_{H^{s+\tdm}})\| T_{\eta_*} f^\eps\|_{H^{\sigma-\tdm}} \le C_6(\| \eta_*\|_{H^{s+\tdm}})\|  f^\eps\|_{H^{\sigma+\mez}}.
\]
Since $f^\eps$ is uniformly bounded in $L^2([0, T]; H^{\sigma+\mez})$, it follows that $\Psi_0[\eta_*]T_{\eta_*} f^\eps$  and $\gamma \partial_1 f^\eps + \eps \Delta f^\eps$  are uniformly bounded in $L^2([0, T]; H^{\sigma-\mez})$ and  $L^2([0, T]; H^{\sigma-\tdm})$, respectively.  Hence, $\|\partial_t f^\eps\|_{X^{\sigma-2}_T}$ is uniformly bounded in  $L^2([0, T]; H^{\sigma-\tdm})$.  Then we can apply the  Aubin-Lions lemma for $H^\sigma \subset H^{\sigma -1} \subset H^{\sigma-\tdm}$ to deduce that  $\{f^\eps\}_{0<\eps\ll 1}$ is compactly embedded into $ C([0,T];H^{\sigma-1})$.  But $f^\eps\vert_{t=0}=f_0$ for all $\eps$, so $f\vert_{t=0} = f_0$. Finally, since  $f\in L^2([0, T]; H^{\sigma+\mez})$ and $\p_t f\in  L^2([0, T]; H^{\sigma-\mez})$, we deduce using interpolation that $ f\in C([0, T]; H^\sigma)$, whence $f\in X^\sigma_T$. 
\end{proof}

\subsection{Asymptotic stability}

Suppose that $\eta_w(x)$ is a traveling wave with speed $\gamma$ to the free boundary Stokes  problem, i.e., 
\bq
\gamma \partial_1 \eta_w + \Psi_0[\eta_w]\big(\sigma\cH(\eta_w)+g\eta_w +\phi\big)=0.
\eq
To investigate the stability of $\eta_w$ with respect to the  dynamic free boundary Stokes problem \eqref{dynamic formulation for eta} , we denote the perturbation by $f(x, t)=\eta(x, t)-\eta_w(x)$. Using \eqref{expand:cH}, we write  
\begin{align*}
\partial_t f - \gamma \partial_1 f &= \Psi_0[\eta_w+f]\Big((\sigma\cH+g\Id)(\eta_w+f)+\phi \Big)- \Psi_0[\eta_w]\Big((\sigma\cH+g\Id)(\eta_w)+\phi \Big)\\
&=  \Psi_0[\eta_w]T_{\eta_w}f+ \Psi_0[\eta_w+f]\Big((\sigma\cH+g\Id)(\eta_w)+T_{\eta_w}f+R_{\eta_w}(f)+\phi \Big)\\
&\qquad-  \Psi_0[\eta_w]\Big((\sigma\cH+g\Id)(\eta_w)+\phi +T_{\eta_w}f \Big)\\
&= \Psi_0[\eta_w]T_{\eta_w}f+ \Psi_0[\eta_w+f]R_{\eta_w}(f)\\
&\qquad+ \big(\Psi_0[\eta_*+f]-\Psi_0[\eta_w]\big)\Big((\sigma\cH+g\Id)(\eta_w)+T_{\eta_w}f+\phi \Big).
\end{align*}
Then replacing $\eta_w$ with $\eta_*$ in the term  $\Psi_0[\eta_w]T_{\eta_w}f$, and replacing $\phi$ with $-(\sigma\cH+g\Id)(\eta_*)$, we obtain 
\bq\label{eq: perturbation evolution}
\partial_t f = \gamma \partial_1 f + \Psi_0[\eta_*](T_{\eta_*} f) +  N(f),
\eq
where 
\begin{multline}\label{def: N(f)}
    N(f) = (\Psi_0[\eta_w]-\Psi_0[\eta_*])(T_{\eta_w} f) + \Psi_0[\eta_*](T_{\eta_w}f - T_{\eta_*}f)+ \Psi_0[\eta_w+f]R_{\eta_w}(f) \\ + \big(\Psi_0[\eta_w+f]-\Psi_0[\eta_w]\big)\Big((\sigma\cH+g\Id)(\eta_w)-(\sigma\cH+g\Id)(\eta_*)+T_{\eta_w}f \Big).
    \end{multline}
Viewing  $\eta_*$, and $\eta_w$ in \eqref{def: N(f)} as  independent given functions, we prove the following asymptotic stability of the zero solution of \eqref{eq: perturbation evolution}.

\begin{theo}[\textbf{Aymptotic Stability}]\label{theo:stab}
    Let $(d+1)/2+1<s\in \Nn$, $\eta_*\in  H^{s+\fdm}(\T^d)$ and $\eta_w\in H^{s+\tdm}(\T^d)$ such that 
    \[
    \inf_{\T^d}(\eta_*+b)\ge \frac{c^0}{4},\quad  \inf_{\T^d}(\eta_w+b)\ge \frac{c^0}{4},\quad c^0>0.
    \]
Consider $|\gamma|\le \gamma^\dag=\gamma^\dag(\|\eta_*\|_{H^{s+\tdm}}, c_{\Psi_0[\eta_*]}, d, s, g, \sigma)$ as in \eqref{def:gammadag}. Then there exists a  function $\omega: (\Rr_+)^2\to \Rr_+$ depending only on $(d, b, c^0, s, \sigma, g)$ and increasing in each argument such that if 
    \bq\label{stab:cd:thm}
    \| \eta_w-\eta_*\|_{H^{s+\tdm}}<\omega\big(\|\eta_*\|_{H^{s+\fdm}}, \|\eta_w\|_{H^{s+\tdm}}, c^{-1}_{\Psi_0[\eta_*]}\big)=:\overline{\om}
    \eq
   then the following holds. For each number $0<\delta< \overline{\om}$,  if  $f_0\in \mathring{H}^{s+1}(\T^d)$ satisfies $\|f_0\|_{H^{s+1}}\leq \overline{\om}\delta$, then 
    the problem
    \bq\label{evolution for f}
    \begin{cases}
        \partial_t f = \gamma \partial_1 f + \Psi_0[\eta_*](T_{\eta_*} f) +  N(f), \\
        f\vert_{t=0} = f_0,
    \end{cases}
    \eq
    has a unique solution in $X^{s+1}_T$ for all $T>0$, where  $X^{s+1}_T$ and $N(f)$ are  defined by \eqref{def:Xspace} and \eqref{def: N(f)}, respectively. Moreover, $f$ obeys the global bounds 
    \bq\label{global in time estimate for f}
        \|f\|_{X^{s+1}_T} \leq M_1\|f_0\|_{H^{s+1}}, \qquad T>0,
    \eq
    \bq\label{decayest}
        \|f(t)\|_{H^{s+1}} \leq M_2\|f_0\|_{H^{s+1}}e^{-c_0t}, \qquad t>0.
    \eq
   Here the constants  $M_1$, $M_2$, and $c_0$ depend only on $(\|\eta_*\|_{H^{s+\fdm}}, \|\eta_w\|_{H^{s+\tdm}}, c_{\Psi_0[\eta_*]}, d, b, c^0, s, \sigma, g)$,  where $M_2$ and $c_0$ are independent of $\|\eta_w\|_{H^{s+\tdm}}$. 
\end{theo}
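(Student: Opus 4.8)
## Proof Strategy for Theorem~\ref{theo:stab}

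\textbf{Overall approach.} The plan is to run a continuity/bootstrap argument built on the linear well-posedness theory of \cref{prop: wellposedness of evolution for L} together with the contraction estimates for $\Psi_0$ (\cref{linearization and contraction for Psi}) and for $T_\eta$, $R_\eta$ (\cref{linearizing mean curvature at eta}). First I would treat \eqref{evolution for f} as the linear problem \eqref{linear evolution equation for f} with forcing $F = N(f)$, so that $f$ is a fixed point of the solution map $\Phi: g \mapsto (\text{solution of } \partial_t f = \cL f + N(g),\ f|_{t=0}=f_0)$ on the space $X^{s+1}_T$. Local existence and uniqueness on a short interval follows from \cref{prop: wellposedness of evolution for L} (with $\sigma = s+1 \le s+1$, which needs $\eta_*\in H^{s+\fdm}$) once one has the nonlinear estimate $\|N(g)\|_{L^2_t H^{s+\mez}} \lesssim (\overline{\om} + \|g\|_{X^{s+1}_T})\|g\|_{X^{s+1}_T}$; the smallness of $\overline\om$ and of $\|g\|$ makes $\Phi$ a contraction. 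The key point is that the \emph{same} bound, fed into the \emph{energy} identity rather than the abstract estimate \eqref{bound for sol of evolution of L}, yields global control with exponential decay.

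\textbf{Key steps in order.} (1) \emph{Estimate $N(f)$.} Inspect the four terms in \eqref{def: N(f)}. The terms $(\Psi_0[\eta_w]-\Psi_0[\eta_*])(T_{\eta_w}f)$ and $\Psi_0[\eta_*](T_{\eta_w}f-T_{\eta_*}f)$ are the ``$R$-type'' terms: by the tame contraction estimate \eqref{tame contraction estimate for R eta} (with $\sigma = s$) and \eqref{contraction for T in eta}, each is bounded by $C(\|(\eta_*,\eta_w)\|_{H^{s+\mez}})\|\eta_w-\eta_*\|_{H^{s+\tdm}}\|f\|_{H^{s+\tdm}} \le C\,\overline\om\,\|f\|_{H^{s+\tdm}}$. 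The term $\Psi_0[\eta_w+f]R_{\eta_w}(f)$ is quadratic in $f$: use \eqref{bound for Psi} and \eqref{bound for R eta f} to get $C(\|\eta_w\|,\|f\|)\|f\|_{H^{s+\tdm}}^2$, valid since $\|f\|_{H^{s+\tdm}}<1$. The last term combines \eqref{contraction estimate for R eta} with $\|(\sigma\cH+g)(\eta_w)-(\sigma\cH+g)(\eta_*)\|_{H^{s-\mez}} \lesssim \|\eta_w-\eta_*\|_{H^{s+\tdm}}$ (from \eqref{expand:cH}--\eqref{bound:Teta tame}) plus $\|T_{\eta_w}f\|_{H^{s-\mez}}\lesssim \|f\|_{H^{s+\tdm}}$, again producing $C\,\overline\om\,\|f\|_{H^{s+\tdm}} + C\|f\|_{H^{s+\tdm}}^2$. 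Summing, $\|N(f)\|_{H^{s+\mez}} \le C_N(\|\eta_*\|_{H^{s+\fdm}},\|\eta_w\|_{H^{s+\tdm}})\big(\overline\om + \|f\|_{H^{s+\tdm}}\big)\|f\|_{H^{s+\tdm}}$; note the output is in $H^{s+\mez}$ because $\Psi_0$ gains one derivative and the inputs are at $H^{s-\mez}$ level. (2) \emph{Local solution.} Apply \cref{prop: wellposedness of evolution for L} to realize $\Phi$, and use step (1) to show $\Phi$ maps a ball of $X^{s+1}_T$ to itself and contracts, for $T$ small (depending only on $\eta_*$, $c_{\Psi_0[\eta_*]}$ and the ball radius); get a unique local solution. (3) \emph{A priori energy estimate.} Mimic the energy method in the proof of \cref{prop: wellposedness of evolution for L}: set $E(t) = \tfrac12 A\langle f, T_{\eta_*}f\rangle + \tfrac12\sum_{|\alpha|=s}\langle \p^\alpha f, T_{\eta_*}\p^\alpha f\rangle$, differentiate in $t$, use self-adjointness of $T_{\eta_*}$, the coercive estimate \eqref{coercive:Psi} for $-\Psi_0[\eta_*]$, the commutator estimates \eqref{commutator bound} and \eqref{commutator bound for T eta} for $[\p^\alpha,\Psi_0[\eta_*]]$ and $[\p^\alpha,T_{\eta_*}]$ (these require $\eta_*\in H^{s+\fdm}$, i.e. $s\to s+1$), and the smallness $|\gamma|\le\gamma^\dag$ to absorb the transport term. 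This gives
\bq\label{energy:stab:key}
E'(t) \le -\frac{c_{\Psi_0[\eta_*]}}{4C_1}\|f(t)\|_{H^{s+\tdm}}^2 + C_3\|N(f)(t)\|_{H^{s+\mez}}^2.
\eq
Inserting the step-(1) bound $\|N(f)\|_{H^{s+\mez}}^2 \le C_N^2(\overline\om + \|f\|_{H^{s+\tdm}})^2\|f\|_{H^{s+\tdm}}^2$ and choosing $\overline\om$ small enough (depending on $c_{\Psi_0[\eta_*]}$, $C_1$, $C_3$, $C_N$) and then running a continuity argument so that $\|f(t)\|_{H^{s+\tdm}}$ stays below a fixed threshold, the second term is absorbed into half of the first: $E'(t)\le -\tfrac{c_{\Psi_0[\eta_*]}}{8C_1}\|f(t)\|_{H^{s+\tdm}}^2 \le -c_0 E(t)$, using \eqref{E and Sobolev energy comparable} and $\|f\|_{H^{s+1}}\lesssim \|f\|_{H^{s+\tdm}}$. (4) \emph{Globalization and decay.} Gr\"onwall on $E$ gives $E(t)\le E(0)e^{-c_0 t}$; since $E$ is comparable to $\|f\|_{H^{s+1}}^2$, this is \eqref{decayest}. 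The uniform-in-$T$ bound closes the continuity argument, letting the local solution extend to all $T>0$; integrating \eqref{energy:stab:key} in time also controls $\|f\|_{L^2_t H^{s+\tdm}}$, hence $\|f\|_{X^{s+1}_T}\lesssim \|f_0\|_{H^{s+1}}$, which is \eqref{global in time estimate for f}. One must verify that the threshold for $\|f(t)\|_{H^{s+\tdm}}$ is met at $t=0$ via the hypothesis $\|f_0\|_{H^{s+1}}\le\overline\om\delta$ with $\delta<\overline\om$, so the continuity argument starts.

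\textbf{Main obstacle.} The delicate point is the \emph{circularity of the constants}: the coercive constant $c_{\Psi_0[\eta_*]}$ is, by the proof of \cref{coercivity for Psi eta}, not quantitatively tied to $\|\eta_*\|_{H^{s+\tdm}}$ but depends on the whole function $\eta_*$. This is precisely why the linearization is carried out around $\eta_*$ (fixed) rather than around $\eta_w$ --- the ``$R$-type'' terms in $N(f)$ absorb the mismatch $\eta_w-\eta_*$, and since $\eta_*$ is fixed, the smallness threshold $\overline\om$ in \eqref{stab:cd:thm} is allowed to depend on $c_{\Psi_0[\eta_*]}$. The second subtlety is bookkeeping the regularity loss: $N(f)$ lands in $H^{s+\mez}$, which is exactly the forcing regularity \cref{prop: wellposedness of evolution for L} accepts ($\sigma-\mez$ with $\sigma=s+1$), and the commutator estimates demand $\eta_*\in H^{s+\fdm}$ --- one derivative more than $\eta_w$, consistent with the hypotheses. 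Everything else is routine once these two alignments are respected.
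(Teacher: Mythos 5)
Your overall strategy coincides with the paper's: recast \eqref{evolution for f} as the linear problem of \cref{prop: wellposedness of evolution for L} with forcing $N(f)$, run a Banach fixed point in $X^{s+1}_T$, linearize around the \emph{fixed} profile $\eta_*$ so that the non-quantitative coercivity constant $c_{\Psi_0[\eta_*]}$ is harmless, and then get decay from the energy identity built on \eqref{coercive:Psi} and the commutator estimates. However, there is a genuine gap in the way you estimate $N(f)$ and in the quantity you bootstrap. You bound the quadratic terms of \eqref{def: N(f)} (namely $\Psi_0[\eta_w+f]R_{\eta_w}(f)$ and the last difference term) by $C\|f\|_{H^{s+\tdm}}^2$, arriving at $\|N(f)\|_{H^{s+\mez}}\le C(\overline{\om}+\|f\|_{H^{s+\tdm}})\|f\|_{H^{s+\tdm}}$. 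This is too crude for the functional framework: for $f\in X^{s+1}_T$ one only has $\|f(t)\|_{H^{s+\tdm}}\in L^2_t$, so $\|f\|_{H^{s+\tdm}}^2$ is merely $L^1_t$ and your bound does not even place $N(f)$ in $L^2([0,T];H^{s+\mez})$, which is the forcing class required by \eqref{bound for sol of evolution of L} (one would need $L^4_tH^{s+\tdm}$, not controlled by $C_tH^{s+1}\cap L^2_tH^{s+\tdm}$). For the same reason your step (3) cannot be closed: the continuity argument is run on $\|f(t)\|_{H^{s+\tdm}}$, a quantity that is neither continuous in time nor finite pointwise in the solution class, so the absorption of $C\|f\|^2_{H^{s+\tdm}}\cdot\|f\|^2_{H^{s+\tdm}}$ into the dissipation $-c\|f\|^2_{H^{s+\tdm}}$ is not justified. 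The fix is exactly what the paper does in \eqref{est:N:final}: use the tame estimates \eqref{tame:contPsi}, \eqref{tame contraction estimate for R eta}, and the high--low structure of \eqref{bound for R eta f} throughout, so that every quadratic term carries only one factor of $H^{s+\tdm}$ and the other factor sits at the $H^{s+\mez}\subset H^{s+1}$ level, yielding $\|N(f)\|_{H^{s+\mez}}\le C(\overline{\om}+\|f\|_{H^{s+\mez}})\|f\|_{H^{s+\tdm}}$; then $\|N(f)\|_{L^2_tH^{s+\mez}}$ is controlled as in \eqref{estN:L2}, and in the energy inequality the small prefactor is $\overline{\om}+\|f(t)\|_{H^{s+1}}$, which is continuous in time and controlled by $E(t)$.

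A secondary, structural difference: you propose local existence on a small time interval followed by continuation, but since the constant in \eqref{bound for sol of evolution of L} is uniform in $T$, the contraction can be performed on $\overline{B(0,\delta)}\subset X^{s+1}_T$ for \emph{every} $T>0$ with smallness only in $\|\eta_w-\eta_*\|_{H^{s+\tdm}}$, $\delta$, and $\|f_0\|_{H^{s+1}}$ (the paper's conditions \eqref{stab:small2}); this gives global existence and \eqref{global in time estimate for f} at once, with no continuation criterion needed. If you insist on the local-plus-bootstrap route, the bootstrap must be formulated on $\|f(t)\|_{H^{s+1}}$ (equivalently on $E(t)$), and the continuation argument must be supplied separately; with the corrected tame form of the $N(f)$ estimate this variant also closes, and it buys nothing over the paper's $T$-uniform contraction.
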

\begin{proof}
The proof is divided into two steps.

1.  Global well-posedness.  Our goal in this step is to prove that \eqref{evolution for f} has a unique solution $f$ in $X^{s+1}_T$ for all $T>0$ if $\| f_0\|_{\rH^{s+1}}$ is sufficiently small in  $f$ via a fixed-point argument.  We fix an arbitrary number $T>0$.  For any  $f\in X^{s+1}_T$, we let $\bar{f}$ and $\tilde{f}$ be solutions of the problems
    \bq
        \partial_t \bar{f} = \cL \bar{f}, \quad \bar{f}\vert_{t=0} = f_0
    \eq
    and
    \bq\label{eq:tildef}
        \partial_t \tilde{f} = \cL \tilde{f} + N(f), \quad \tilde{f}\vert_{t=0}=0,
    \eq
    where 
    \[
    \cL=\gamma \partial_1  + \Psi_0[\eta_*]T_{\eta_*}.
    \]
    Then $f$ is a  solution of \eqref{evolution for f}  if $f$ is   the fixed point of the map 
    $$f\mapsto \cF(f):= \bar{f}+ \tilde{f}.$$ 
We start by  showing that $\cF: X^{s+1}_T \to X^{s+1}_T$ is well-defined.  Applying \cref{prop: wellposedness of evolution for L} with $\sigma=s+1$ and $F=0$, we deduce that  $\bar{f}$ exists and belongs to $X^{s+1}_T$. As for the existence of $\tilde{f}$ in $X_T^{s+1}$,  by appealing to \cref{prop: wellposedness of evolution for L} again,  it suffices to show that $N(f)\in L^2([0, T]; H^{s+\mez})$. To this end, we will use the assumption that $(d+1)/2<s-1\in \Nn$ to apply the tame estimates \eqref{tame:contPsi} and \eqref{tame contraction estimate for R eta} with $\sigma=s$. We will also invoke the embedding $H^{s-\mez}(\T^d)\subset L^\infty(\T^d)$. First, a combination of \eqref{tame:contPsi} and \eqref{contraction for T in eta} yields
\bq\label{estN:1}
\begin{aligned}
\|  \Psi_0[\eta_*](T_{\eta_w}f-T_{\eta_*}f)\|_{H^{s+\mez}}& \le C(\| \eta_*\|_{H^{s+\mez}})\| T_{\eta_w}f-T_{\eta_*}f\|_{H^{s-\mez}}\\
&\le  C(\|\eta_*\|_{H^{s+\mez}},\|\eta_w\|_{H^{s+\mez}}) \|\eta_w-\eta_*\|_{H^{s+\tdm}(\T^d)}\|f\|_{H^{s+\tdm}}.
\end{aligned}
\eq
Next, it follows from \eqref{tame:contPsi} and \eqref{bound for R eta f} that
\bq\label{estN:2}
\begin{aligned}
\| \Psi_0[\eta_w+f]R_{\eta_w}(f)\|_{H^{s+\mez}}&\le C(\| \eta_w+f\|_{H^{s+\mez}}) \| R_{\eta_w}(f)\|_{H^{s-\mez}}\\
&\le C(\| \eta_w\|_{H^{s+\tdm}}, \| f\|_{H^{s+\mez}}) \| f\|_{H^{s+\mez}}\| f\|_{H^{s+\tdm}}.
\end{aligned}
\eq
Using \eqref{tame contraction estimate for R eta} and \eqref{bound:Teta}, we find 
\bq\label{estN:3}
\begin{aligned}
\| (\Psi_0[\eta_w]-\Psi_0[\eta_*])(T_{\eta_w} f)\|_{H^{s+\mez}} &  \leq C(\|\eta_*\|_{H^{s+\tdm}},\|\eta_w\|_{H^{s+\tdm}})\|\eta_w-\eta_*\|_{H^{s+\tdm}}\|T_{\eta_w} f\|_{H^{s-\mez}} \\
&  \leq C(\|\eta_*\|_{H^{s+\tdm}},\|\eta_w\|_{H^{s+\tdm}})\|\eta_w-\eta_*\|_{H^{s+\tdm}}\| f\|_{H^{s+\tdm}}.
     \end{aligned}
\eq
Set $\chi =(\sigma\cH+g\Id)(\eta_w)-(\sigma\cH+g\Id)(\eta_*)+T_{\eta_w}f$. The estimate \eqref{tame contraction estimate for R eta}  implies
\begin{multline}\label{estN:10}
\left\| \big(\Psi_0[\eta_w+f]-\Psi_0[\eta_w]\big)\chi \right\|_{H^{s+\mez}} \leq C(\|\eta_w\|_{H^{s+\mez}}, \| f\|_{H^{s+\mez}})\Bigl\{\|f\|_{H^{s+\mez}}\|\chi\|_{H^{s-\frac12}}  
 + \|f\|_{H^{s+\tdm}}\|\chi\|_{H^{s-\frac52}} \\
       + \|f\|_{H^{s+\mez}}\|\chi\|_{H^{s-\frac52}}\big(\|\eta_w\|_{H^{s+\tdm}} + \|f\|_{H^{s+\tdm}}\big) 
        \Bigr\}. 
\end{multline}
We now estimate $\chi$ in $H^{r-\mez}$, $r\in [s-2, s]$. An application of \cref{prop: composition regularity} yields 
\bq\label{estN:11}
\| \cH(\eta_w)-\cH(\eta_*)\|_{H^{r-\mez}}\le C(\| \eta_*\|_{H^{s-\mez}}, \| \eta_w\|_{H^{s-\mez}})\| \eta_w-\eta_*\|_{H^{r+\mez}}. 
\eq
As for $T_{\eta_w}f$, we apply \eqref{bound:Teta} with $s$ replaced with $r>(d-1)/2$ for $r\in [s-2, s]$: 
\bq\label{estN:12}
\| T_{\eta_w} f\|_{H^{r-\mez}}\le C(\| \eta_w\|_{H^{s-\mez}})\big(\| f\|_{H^{r+\tdm}}+ \| \eta\|_{H^{r+\tdm}}\| f\|_{H^{s-\mez}}\big)
\eq
It follows from \eqref{estN:10}, \eqref{estN:11}, and \eqref{estN:12} that 
\bq\label{estN:13}
\left\| \big(\Psi_0[\eta_w+f]-\Psi_0[\eta_w]\big)\chi \right\|_{H^{s+\mez}} \le C(\|\eta_w\|_{H^{s+\tdm}}, \| f\|_{H^{s+\mez}})\Big(\| \eta_w-\eta_*\|_{H^{s+\tdm}}+\| f\|_{H^{s+\mez}}\Big)\| f\|_{H^{s+\tdm}}
\eq
Combining \eqref{estN:1}, \eqref{estN:2}, \eqref{estN:3}, and \eqref{estN:13}, we deduce 
\bq\label{est:N:final}
\| N(f)\|_{H^{s+\mez}} \le C(\|\eta_*\|_{H^{s+\tdm}}, \|\eta_w\|_{H^{s+\tdm}}, \| f\|_{H^{s+\mez}})\Big(\| \eta_w-\eta_*\|_{H^{s+\tdm}}+\| f\|_{H^{s+\mez}}\Big)\| f\|_{H^{s+\tdm}}.
\eq
Taking the $L^2$ norm in $t$ leads to 
\begin{multline}\label{estN:L2}
\| N(f)\|_{L^2([0, T]; H^{s+\mez})}\le C\Big(\|\eta_*\|_{H^{s+\tdm}}, \|\eta_w\|_{H^{s+\tdm}}, \| f\|_{L^\infty([0, T]; H^{s+\mez})}\Big)\\
\cdot\Big(\| \eta_w-\eta_*\|_{H^{s+\tdm}}+ \| f\|_{L^\infty([0, T]; H^{s+\mez})}\Big)\| f\|_{L^2([0, T]; H^{s+\tdm})}.
\end{multline}
Using \eqref{estN:L2}, we deduce from the estimate \eqref{bound for sol of evolution of L} with $\sigma=s+1$ that
\bq\label{est:cF(f)}
\begin{aligned}
\| \cF(f)\|_{X^{s+1}_T}&\le \| \overline{f}\|_{X^{s+1}_T}+\| \tilde{f}\|_{X^{s+1}_T} \\
&\le  C(\|\eta_*\|_{H^{s+\fdm}})\Bigl( \|f_0\|_{H^{s+1}} + \|F\|_{L^2([0,T]; H^{s+\mez})} \Bigr)\\
&\le   \tilde{C_1}\Big(\|\eta_*\|_{H^{s+\fdm}}, \|\eta_w\|_{H^{s+\tdm}}, c^{-1}_{\Psi_0[\eta_*]}, \| f\|_{X^{s+1}_T}\Big)\Bigl\{\|f_0\|_{H^{s+1}}+\big(\| \eta_w-\eta_*\|_{H^{s+\tdm}}+\| f\|_{X^{s+1}_T}\big)\| f\|_{X^{s+1}_T} \Bigr\}
\end{aligned}
\eq
for some $\tilde{C_1}: (\Rr_+)^4\to \Rr_+$ depending only on $(d, b, c^0, s, \sigma, g)$ and increasing in each argument.

Now we consider $f\in \overline{B(0, \delta)}\subset X_T^{s+1}$ for some $\delta \in (0, 1)$. Set 
\[
C_1=\tilde{C_1}\Big(\|\eta_*\|_{H^{s+\fdm}}, \|\eta_w\|_{H^{s+\tdm}}, c^{-1}_{\Psi_0[\eta_*]}, 1\Big).
\]
 Then \eqref{est:cF(f)} implies that $\cF: \overline{B(0, \delta)}\to \overline{B(0, \delta)}$ upon choosing 
\bq\label{stab:small1}
\| \eta_w-\eta_*\|_{H^{s+\tdm}}<\frac{1}{3C_1},\quad \delta<\frac{1}{3C_1},\quad \|f_0\|_{H^{s+1}}<\frac{\delta}{3C_1}. 
\eq

Next, we prove the contraction of $\cF$. Consider $f_j\in  X_T^{s+1}$, $j=1, 2$. Then $\cF(f_1)-\cF(f_2)=h_1-h_2$, where $h_j\in X^{s+1}_T$ is the solution of \eqref{eq:tildef} with $f=f_j$. Since $h:=h_1-h_2\in X^{s+1}_T$ is satisfies 
\[
\p_t h=\cL f+N(f_1)-N(f_2),\quad h\vert_{t=0}=0,
\]  
appealing to the estimate  \eqref{bound for sol of evolution of L} again gives 
\bq\label{contrah:1}
\| h\|_{X^{s+1}_T}\le C(\|\eta_*\|_{H^{s+\fdm}}, c_{\Psi_0[\eta_*]})\| N(f_1)-N(f_2)\|_{L^2([0, T]; H^{s+\mez})}.
\eq
By appealing to  \cref{bound for Psi and Psi inverse}, \cref{linearization and contraction for Psi}, and \cref{linearizing mean curvature at eta} as in the proof of \eqref{estN:L2}, one can show 
     \begin{multline}
    \|N(f_1)-N(f_2)\|_{L^2([0,T];H^{s+\mez})} \leq C(\|\eta_*\|_{H^{s+\tdm}},\|\eta_w\|_{H^{s+\tdm}},\|f_1\|_{L^\infty([0,T];H^{s+\mez})},\|f_2\|_{L^\infty([0,T];H^{s+\mez})}) \\ 
    \cdot \Bigl[ \bigl(\|f_1\|_{L^2([0,T];H^{s+\tdm})}+\|f_2\|_{L^2([0,T];H^{s+\tdm})} \bigr)\|f_1-f_2\|_{L^\infty([0,T];H^{s+\mez})} + \|\eta_w-\eta_*\|_{H^{s+\tdm}} \|f_1-f_2\|_{L^2([0,T];H^{s+\tdm})} \Bigr].
\end{multline}
Then \eqref{contrah:1} implies 
\begin{multline}
  \| h\|_{X^{s+1}_T}\le  \tilde{C}_2(\|\eta_*\|_{H^{s+\fdm}},\|\eta_w\|_{H^{s+\tdm}}, c^{-1}_{\Psi_0[\eta_*]}, \|f_1\|_{X^{s+1}_T},\|f_2\|_{X^{s+1}_T}) \\ 
    \cdot \bigl(\|f_1\|_{X^{s+1}_T}+\|f_2\|_{X^{s+1}_T} + \|\eta_w-\eta_*\|_{H^{s+\tdm}} \bigr)\|f_1-f_2\|_{X^{s+1}_T}
\end{multline}
for some $\tilde{C_2}: (\Rr_+)^5\to \Rr_+$ depending only on $(d, b, c^0, s, \sigma, g)$ and increasing in each argument. Setting 
\[
C_2=\tilde{C}_2(\|\eta_*\|_{H^{s+\fdm}},\|\eta_w\|_{H^{s+\tdm}}, c^{-1}_{\Psi_0[\eta_*]}, 1, 1),
\]
 we strengthen \eqref{stab:small1} to 
\bq\label{stab:small2}
\| \eta_w-\eta_*\|_{H^{s+\tdm}}<\frac{1}{3\max\{C_1, C_2\}},\quad \delta<\frac{1}{3\max\{C_1, C_2\}},\quad \|f_0\|_{H^{s+1}}<\frac{\delta}{3C_1}.
\eq
Then $\cF: \overline{B(0, \delta)}\to \overline{B(0, \delta)}$ is a contraction. Therefore, $\cF$ has a unique fixed point $f$ in $\overline{B(0, \delta)}$, which is a solution of \eqref{evolution for f} on $[0, T]$. Since the smallness conditions in \eqref{stab:small2} are independent of $T>0$, obtain a global solution $f\in X^{s+1}_T$ for all $T>0$. Moreover, it follows from \eqref{est:cF(f)} and \eqref{stab:small1} that 
\[
\| f\|_{X^{s+1}_T}=\| \cF(f)\|_{X^{s+1}_T}\le C_1\big( \|f_0\|_{H^{s+1}}+\frac{2}{3C_1}\| f\|_{X^{s+1}_T}\big),
\]
whence $\| f\|_{X^{s+1}_T}\le 3C_1\| f_0\|_{H^{s+1}}$ for all $T>0$. This proves the global bound \eqref{global in time estimate for f} with $M_1=3C_1$.

2. Exponential decay. We note that \eqref{eq: perturbation evolution} is of the form \eqref{linear evolution equation for f} with $F=N(f)$. Therefore, we can apply the estimate \eqref{energyineq:feps} with $\sigma=s+1$ to have
\bq\label{dE:1}
\begin{aligned}
E'(t) &\le -c\|  f\|_{H^{s+\tdm}}^2+C_3\| N(f)\|_{H^{s-\mez}}^2,
\end{aligned}
\eq
where $c=\frac{c_{\Psi_0[\eta_*]}}{4\tilde{C_1}}$, $\tilde{C_1}=\tilde{C_1}(\|\eta_*\|_{H^{s+\tdm}})$, $C_3=C_3(\|\eta_*\|_{H^{s+\tdm}})$, and  the energy $E(t)$ is given by \eqref{def:energy}, i.e.,
\[
E(t):= \mez A  \langle f, T_{\eta_*}f \rangle_{L^2,L^2}+\mez \sum_{|\alpha|=s+1} \langle \p^\alpha f, T_{\eta_*}\p^\alpha f \rangle_{L^2,L^2},\quad A=A(\| \eta_*\|_{W^{s+3, \infty}}, c_\Psi).
\]
It follows from \eqref{dE:1} and \eqref{est:N:final} that 
\bq
E'(t) \le -c\|  f\|_{H^{s+\tdm}}^2+C_4\big(\| \eta_w-\eta_*\|_{H^{s+\tdm}}^2+\| f\|_{H^{s+1}}^2\big)\| f\|_{H^{s+\tdm}}^2,
\eq
$C_4=C_4(\|\eta_*\|_{H^{s+\tdm}}, \|\eta_w\|_{H^{s+\tdm}})$. Now we assume in addition to \eqref{stab:small2} that 
\bq\label{stab:small3}
\| \eta_w-\eta_*\|^2_{H^{s+\tdm}}< \frac{c}{3C_4},\quad \delta^2<\frac{c}{3C_4},
\eq
so that $E'(t) \le -\frac{c}{3}\|  f\|_{H^{s+\tdm}}^2$.  We recall that, in view of \eqref{E and Sobolev energy comparable}, $E(t)$ is equivalent to $\| f(t)\|_{H^{s+1}}^2$ up to multiplicative constants depending only on $(\| \eta_*\|_{H^{s+\fdm}}, c_{\Psi_0[\eta_*]})$. Therefore, we obtain the exponential decay 
\bq\label{stab:small4}
\| f(t)\|_{H^{s+1}}^2\le C_5\| f_0\|_{H^{s+1}}^2e^{-c_0t}
\eq
for some $C_5=C_5(\| \eta_*\|_{H^{s+\fdm}}, c_{\Psi_0[\eta_*]})$ and $c_0=c_0(\| \eta_*\|_{H^{s+\fdm}}, c_{\Psi_0[\eta_*]})$. This proves \eqref{decayest}.  Finally,  the smallness conditions stated in \cref{theo:stab} follow from \eqref{stab:small1} and \eqref{stab:small3}. 
\end{proof}
Finally, we recall that, as stated in \cref{coro:intro},  \cref{large traveling wave for Stokes} provides a class of large traveling wave solutions which satisfy the stability condition \eqref{stab:cd:thm} and are thus asymptotically stable. 
\appendix
\section{}
In this appendix we record various analytic  tools used throughout the paper.
\subsection{Liftings and Lagrange multipliers}\label{appendix:lifting}
\begin{lemm}\label{solving div problem}
    Let $\eta\in L^{\infty}(\T^d)$ satisfy (\ref{eta lower bound}). Then for any $g\in L^2(\Omega_\eta)$, there exists $v\in {}_0H^1(\Omega_\eta)$ with $\dv v = g$, and 
    \bq \label{div problem estimate}
    \|v\|_{H^1(\Omega_\eta)} \leq c(\|\eta\|_{L^{\infty}(\T^d)})\|g\|_{L^2(\Omega_\eta)},
    \eq
    where $c:\Rr^+\to \Rr^+$ depends only on $(d,b,c^0)$. 
\end{lemm}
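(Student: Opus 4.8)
\textbf{Proof proposal for \cref{solving div problem}.} The plan is to reduce the divergence problem on the curved domain $\Omega_\eta$ to the analogous problem on the flat slab $\Omega = \T^d \times (-b, 0)$ via a bi-Lipschitz change of variables, solve the flat problem by the standard Bogovskii-type construction (or, since the mean-value constraint on $\T^d$-periodic slabs is delicate, by an explicit elementary formula adapted to the slab geometry), and then transport the solution back. First I would introduce a Lipschitz diffeomorphism $\Theta_\eta : \Omega \to \Omega_\eta$; the natural choice here, since $\eta$ is only $L^\infty$, is the affine-in-$z$ lift $\Theta_\eta(x,z) = \bigl(x, \tfrac{b+z}{b}\eta(x) + z\bigr)$, whose Jacobian is $\tfrac{1}{b}\eta(x)+1 = \tfrac{\eta(x)+b}{b} \ge c^0/b > 0$ by \eqref{eta lower bound}, so $\Theta_\eta$ is a bi-Lipschitz bijection with $\|\nabla\Theta_\eta\|_{L^\infty}$ and $\|(\nabla\Theta_\eta)^{-1}\|_{L^\infty}$ controlled by $(b, c^0, \|\eta\|_{L^\infty})$ alone.

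Next, given $g \in L^2(\Omega_\eta)$, I would set $\tilde g = (g \circ \Theta_\eta)\, J$ with $J = \det \nabla\Theta_\eta$, so that $\|\tilde g\|_{L^2(\Omega)} \le c(\|\eta\|_{L^\infty})\|g\|_{L^2(\Omega_\eta)}$, and then solve $\nabla_{x,z}\cdot w = \tilde g$ on the flat slab $\Omega$ for some $w \in {}_0H^1(\Omega)$ (vanishing on $\Sigma_{-b}$) with $\|w\|_{H^1(\Omega)} \le c\|\tilde g\|_{L^2(\Omega)}$. On the flat slab this is classical: one can use the Bogovskii operator on a bounded Lipschitz domain after unfolding the torus, or — cleaner for the slab — first correct the mean of $\tilde g$ in each vertical fiber by a purely vertical field $w_1(x,z) = e_z \int_{-b}^z \tilde g(x,s)\,ds$, which is $H^1$, vanishes at $z=-b$, and leaves a remainder $\tilde g - \nabla\cdot w_1$ that is mean-zero in $z$ for each $x$; the remaining divergence equation with fiberwise-mean-zero data is then solved by a horizontal Bogovskii-type operator on $\T^d$ depending measurably on $z$, or by a direct Fourier construction in $x$. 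The key point is only that the resulting $w$ is linear in $\tilde g$, lies in ${}_0H^1(\Omega)$, and obeys the $H^1 \leftarrow L^2$ bound with constant depending on $(d,b)$.

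Finally I would push $w$ forward by the Piola transform: define $v$ on $\Omega_\eta$ by $v \circ \Theta_\eta = J^{-1}\,(\nabla\Theta_\eta)\, w$, which is the standard construction guaranteeing that $\dv_{x,y} v$ transforms as $(\dv_{x,y} v)\circ\Theta_\eta = J^{-1}\,\nabla_{x,z}\cdot w = J^{-1}\tilde g = g\circ\Theta_\eta$, hence $\dv v = g$; the boundary trace $v|_{\Sigma_{-b}} = 0$ is preserved since $w|_{z=-b}=0$ and $\Theta_\eta$ maps $\Sigma_{-b}$ to $\Sigma_{-b}$; and the $H^1$ estimate \eqref{div problem estimate} follows by chaining the Lipschitz bounds on $\nabla\Theta_\eta$, $J$, $J^{-1}$ with the flat estimate, giving a constant depending only on $(d,b,c^0)$. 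The main obstacle is the bookkeeping of the two mildly non-standard ingredients: verifying that the Piola transform indeed intertwines the divergences with the Jacobian factor exactly as claimed (a direct but slightly fiddly computation using $\nabla\cdot(J^{-1}(\nabla\Theta_\eta)w)\circ\Theta_\eta = J^{-1}\nabla\cdot w$, which is the Piola identity), and producing the flat-slab right inverse of the divergence with a constant independent of $\eta$ — the periodic slab is not a bounded Lipschitz domain in the usual sense, so one must either unfold carefully or supply the fiberwise correction above; neither is deep, but both require care to keep the constants tracking only $(d,b,c^0)$.
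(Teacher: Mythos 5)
There is a genuine gap at the very first step: your flattening map $\Theta_\eta(x,z)=\bigl(x,\tfrac{b+z}{b}\eta(x)+z\bigr)$ is bi-Lipschitz only if $\eta\in W^{1,\infty}(\T^d)$, since $\nabla\Theta_\eta$ contains the term $\tfrac{b+z}{b}\nabla\eta(x)$. The lemma, however, assumes only $\eta\in L^{\infty}(\T^d)$ with the lower bound \eqref{eta lower bound}, and the constant in \eqref{div problem estimate} is required to depend on $\|\eta\|_{L^\infty}$ alone. For merely bounded $\eta$ the domain $\Omega_\eta$ need not be Lipschitz at all, so the change of variables, the Piola identity, and the transfer of the $H^1$ bound back to $\Omega_\eta$ all break down; even if $\eta$ were Lipschitz, your constant would involve $\|\nabla\eta\|_{L^\infty}$, which is not what the statement asserts. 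A secondary flaw: your flat-slab construction is not quite right as written, since the vertical primitive $w_1(x,z)=e_z\int_{-b}^{z}\tilde g(x,s)\,ds$ already satisfies $\nabla\cdot w_1=\tilde g$ but is generally \emph{not} in $H^1(\Omega)$ (its horizontal derivatives require $\partial_x\tilde g\in L^2$), so the "fiberwise correction" step would need to be reorganized even on the flat slab.

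The paper's proof avoids the geometry of $\Sigma_\eta$ entirely, exploiting that no boundary condition is imposed on the top: extend $g$ by zero to a larger flat box $U=\T^d\times(-2\|\eta\|_{L^\infty}-3b,\|\eta\|_{L^\infty})\supset\Omega_\eta$, solve $\Delta\phi=\bar g$ with $\phi\in H^1_0(U)\cap H^2(U)$ (constant depending only on $d,b,\|\eta\|_{L^\infty}$), and then define $v$ as a fixed linear combination of $\nabla\phi$ and its reflections across $\{y=-b\}$, with coefficients chosen so that $v(\cdot,-b)=0$ and so that, because $\bar g$ vanishes below $y=-b$, the reflected terms contribute nothing to $\dv v$ inside $\Omega_\eta$; the $H^1$ bound then follows from the $H^2$ bound on $\phi$. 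If you want to salvage your route, you would have to either prove the lemma under the stronger hypothesis $\eta\in W^{1,\infty}$ (and accept the weaker constant), or replace the flattening by an extension-plus-reflection argument of this type; only the latter matches the statement as given.
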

\begin{proof}
    Let $U = \T^d \times (-2\|\eta\|_{L^\infty}-3b,\|\eta\|_{L^\infty})\supset \Omega_\eta$, and extend $g$ by zero to  $U$. 
There exists  a unique  $\phi \in H^1_0(U)\cap H^2(U)$ satisfying $\Delta \phi=\bar{g}$ in $U$, with
    \bq \label{bound on phi in H2}
    \|\phi\|_{H^2(U)}\leq c(d,b,\|\eta\|_{L^\infty}) \|g\|_{L^2(\Omega_\eta)}.
    \eq
We define $v= (v',v_{d+1})(x,y):\Omega_\eta \to \Rr^{d+1}$ by
    \bq
    \begin{split}
        v'(x,y)&= \nabla'\phi(x,y) + 3 \nabla' \phi (x,-(y+b)-b) - 4\nabla' \phi(x,-2(y+b)-b),\quad \na':=\na_x, \\
        v_{d+1}(x,y) &= \partial_y \phi(x,y) -3 \partial_y\phi(x,-(y+b)-b) + 2 \partial_y\phi(x,-2(y+b)-b).
    \end{split}
    \eq
We have $v(x,-b)=0$.  Since $\bar{g}$ is zero on $\T^d\times (-2\|\eta\|_\infty-3b,-b)$, it follows that
    \bq
    \dv v (x,y) = \Delta \phi (x,y) + 3 \Delta \phi (x,-(y+b)-b) - 4 \Delta\phi (x,-2(y+b)-b) = g(x,y)
    \eq
    for any $(x,y)\in \Omega_\eta$. Thus by (\ref{bound on phi in H2}) we have $v\in {}_0H^1(\Omega_\eta)$ and 
    (\ref{div problem estimate}) is satisfied.
\end{proof}
\begin{prop}\label{coro: lagrange multiplier for pressure}
    Suppose $\Lambda\in ({}_0H^1(\Omega_\eta))^*$ is such that $\Lambda\vert_{{}_0H^1_\sigma(\Omega_\eta)} =0$. Then there exists a unique $p\in L^2(\Omega_\eta)$ such that
    $$\langle \Lambda , u \rangle_{\Omega_\eta} = \int_{\Omega_\eta} p\dv u,$$
    for all $u\in {}_0H^1(\Omega_\eta) $. Moreover, we have the bound
    $$\|p\|_{L^2}\leq c(\|\eta\|_{L^{\infty}}) \|\Lambda\|_{({}_0H^1(\Omega_\eta))^*}, $$
    where $c:\Rr^+\to \Rr^+$ depends only on $(d,b,c^0)$.
\end{prop}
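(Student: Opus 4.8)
The statement is the classical Lagrange-multiplier (de Rham) lemma adapted to the Lipschitz domain $\Omega_\eta$ with a Dirichlet condition on $\Sigma_{-b}$ only. The plan is to deduce it from \cref{solving div problem}, which provides a bounded right inverse of the divergence on ${}_0H^1(\Omega_\eta)$. First I would observe that the divergence map $\dv: {}_0H^1(\Omega_\eta)\to L^2(\Omega_\eta)$ is surjective by \cref{solving div problem}, with a bounded (not necessarily linear, but that is irrelevant) right inverse $S$ obeying $\|Su\|_{H^1(\Omega_\eta)}\le c(\|\eta\|_{L^\infty})\|u\|_{L^2(\Omega_\eta)}$. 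Actually, one can pick $S$ linear: the solution operator $g\mapsto \phi$ for the Dirichlet Laplacian on $U$ is linear, and the explicit reflection formulas defining $v$ in terms of $\phi$ are linear in $\phi$, so $S$ is a bounded linear operator $L^2(\Omega_\eta)\to {}_0H^1(\Omega_\eta)$ with $\dv\circ S=\Id$.

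Given this, I would define the functional $\ell$ on $L^2(\Omega_\eta)$ by $\ell(g):=\langle \Lambda, Sg\rangle_{\Omega_\eta}$. It is bounded: $|\ell(g)|\le \|\Lambda\|_{({}_0H^1)^*}\|Sg\|_{H^1(\Omega_\eta)}\le c(\|\eta\|_{L^\infty})\|\Lambda\|_{({}_0H^1)^*}\|g\|_{L^2(\Omega_\eta)}$. By the Riesz representation theorem there is a unique $p\in L^2(\Omega_\eta)$ with $\ell(g)=\int_{\Omega_\eta}pg$ for all $g\in L^2(\Omega_\eta)$, and $\|p\|_{L^2}\le c(\|\eta\|_{L^\infty})\|\Lambda\|_{({}_0H^1)^*}$, which is the asserted bound. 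It remains to verify $\langle \Lambda, u\rangle_{\Omega_\eta}=\int_{\Omega_\eta}p\,\dv u$ for all $u\in {}_0H^1(\Omega_\eta)$. For such $u$, write $u=S(\dv u)+(u-S(\dv u))$; the second summand lies in ${}_0H^1_\sigma(\Omega_\eta)$ since $\dv(u-S(\dv u))=\dv u-\dv u=0$, so by hypothesis $\Lambda$ annihilates it. Hence $\langle\Lambda,u\rangle_{\Omega_\eta}=\langle\Lambda,S(\dv u)\rangle_{\Omega_\eta}=\ell(\dv u)=\int_{\Omega_\eta}p\,\dv u$, as desired.

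For uniqueness, if $p_1,p_2$ both work then $\int_{\Omega_\eta}(p_1-p_2)\dv u=0$ for all $u\in{}_0H^1(\Omega_\eta)$; since $\dv$ is onto $L^2(\Omega_\eta)$ this forces $p_1-p_2=0$ in $L^2(\Omega_\eta)$. I do not anticipate a genuine obstacle here: the only technical point is confirming that the right inverse $S$ furnished by \cref{solving div problem} can be taken linear and bounded uniformly in terms of $\|\eta\|_{L^\infty}$ (with the constant depending on $(d,b,c^0)$), which is immediate from the construction in that lemma's proof since every step—solving the Dirichlet problem on the fixed enlarged cylinder $U$, restricting, and the reflection formulas—is linear in $g$.
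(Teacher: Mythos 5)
Your proof is correct, and it takes a genuinely different (and somewhat more direct) route than the paper. The paper does not construct $p$ via a right inverse of the divergence; instead it defines the Riesz map $Q:L^2(\Omega_\eta)\to {}_0H^1(\Omega_\eta)$, $p\mapsto w_p$, through $\int_{\Omega_\eta} p\,\dv u=(w_p,u)_{{}_0H^1}$, uses \cref{solving div problem} only to show $\|p\|_{L^2}$ and $\|w_p\|_{H^1}$ are comparable, concludes that $Q$ has closed range, and identifies $(\mathrm{Ran}\,Q)^\perp={}_0H^1_\sigma(\Omega_\eta)$, so that any $\Lambda$ vanishing on ${}_0H^1_\sigma$ is represented by some $w_p\in\mathrm{Ran}\,Q$. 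Your argument instead promotes \cref{solving div problem} to a bounded linear right inverse $S$ of $\dv$, pushes $\Lambda$ forward to the bounded functional $\ell(g)=\langle\Lambda,Sg\rangle$ on $L^2$, applies Riesz in $L^2$, and verifies the representation via the splitting $u=S(\dv u)+(u-S(\dv u))$ with the second piece in ${}_0H^1_\sigma$; this is more constructive, avoids the closed-range/orthogonal-complement step, and delivers the bound $\|p\|_{L^2}\le c(\|\eta\|_{L^\infty})\|\Lambda\|_{({}_0H^1)^*}$ immediately. Two small remarks: your justification that $S$ can be chosen linear is correct (every step in the proof of \cref{solving div problem} — zero extension, the Dirichlet solve on the fixed cylinder $U$, and the reflection formulas — is linear in $g$), but in fact linearity of $S$ is not even needed, since for any two right inverses $S_1,S_2$ the difference $S_1g-S_2g$ lies in ${}_0H^1_\sigma(\Omega_\eta)$ and is killed by $\Lambda$, so $\ell$ is automatically well defined and linear; and your uniqueness argument via surjectivity of $\dv$ is the same mechanism the paper uses implicitly.
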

\begin{proof}
For any $p\in L^2(\Omega_\eta)$, the mapping
$$u\mapsto \int_{\Omega_\eta} p\dv u$$
defines a bounded linear functional on ${}_0H^1(\Omega_\eta)$, so by Riesz's representation theorem, there exists a unique $w_p\in {}_0H^1(\Omega_\eta) $ such that 
\bq \label{wp definition}
\int_{\Omega_\eta} p \dv u = (w_p,u)_{{}_0H^1(\Omega_\eta)}, \quad \forall u\in {}_0H^1(\Omega_\eta).
\eq
Choosing $u=w_p$ yields $\|w_p\|_{H^1}\leq c(d) \|p\|_{L^2}$.  On the other hand, by  \cref{solving div problem}, there exists $v\in {}_0H^1(\Omega_\eta)$ such that $\dv v = p$, and $\|v\|_{H^1} \leq c(\|\eta\|_{L^{\infty}})\|p\|_{L^2}$. Upon choosing $u=v$ in (\ref{wp definition}), we obtain $\|p\|_{L^2}\leq c(\|\eta\|_{L^{\infty}}) \|w_p\|_{H^1}$. Thus the norms of $p$ and $w_p$ are comparable, implying that the map   $Q: L^2(\Omega_\eta) \to {}_0H^1(\Omega_\eta)$ that maps  $p$ to $w_p$  has a closed range. Consequently  ${}_0H^1(\Omega_\eta)=\text{Ran}(Q)\oplus (\text{Ran}(Q))^\perp$.  But it is clear from \eqref{wp definition} that $(\text{Ran}(Q))^\perp= {}_0H^1_\sigma(\Omega_\eta)$. Therefore, $\text{Ran}(Q)={}_0H^1_\sigma(\Omega_\eta)^\perp$ which concludes the proof.
\end{proof}
\begin{lemm}\label{lifting neumann}
    Let $\eta\in W^{2,\infty}(\T^d)$ satisfy (\ref{eta lower bound}). Then for any $h\in H^\mez(\T^d)$, there exists $v_h \in {}_0H^1(\Omega_\eta)$ such that $v_h(\cdot, \eta(\cdot)) \cdot \cN(\cdot) = h(\cdot) $, and also 
    \bq\label{vh bound}
    \|v_h\|_{H^1} \leq c(\|\eta\|_{W^{2,\infty)}}) \|h\|_{H^\mez},
    \eq
    with $c:\Rr^+ \to \Rr^+$ depending only on $(d,b,c^0)$.
\end{lemm}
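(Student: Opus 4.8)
\textbf{Proof plan for \cref{lifting neumann}.} The goal is to construct, for given scalar boundary data $h\in H^\mez(\T^d)$, a vector field $v_h$ on $\Omega_\eta$ vanishing on the flat bottom $\Sigma_{-b}$ and satisfying the single scalar constraint $v_h(\cdot,\eta(\cdot))\cdot\cN(\cdot)=h$ on the top, with the $H^1$ bound controlled by $\|h\|_{H^\mez}$. The natural idea is to work in the flattened domain $\Omega=\T^d\times(-b,0)$ via the diffeomorphism $\cF_\eta$ of \eqref{cF definitoin}, or more simply to use the straight change of variables $(x,y)\mapsto(x,z)$ with $z=b\frac{y-\eta(x)}{\eta(x)+b}$ so that the bottom and top correspond to $z=-b$ and $z=0$; since $\eta\in W^{2,\infty}(\T^d)$ and \eqref{eta lower bound} holds, this map is a bi-Lipschitz $W^{2,\infty}$ diffeomorphism and pulls back $H^1$ norms with constants depending only on $(d,b,c^0)$ and $\|\eta\|_{W^{2,\infty}}$. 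So it suffices to build a lifting on $\Omega$ for the transformed constraint.

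First I would reduce to lifting a Dirichlet trace: on $\T^d$, write $v_h(x,\eta(x))=\frac{h(x)}{|\cN(x)|^2}\cN(x)$, which is a vector-valued function in $H^\mez(\T^d;\Rr^{d+1})$ whose normal component is exactly $h$, and whose $H^\mez$ norm is $\le c(\|\eta\|_{W^{1,\infty}})\|h\|_{H^\mez}$ (here one uses that $\cN=(-\nabla\eta,1)$, $|\cN|^2=1+|\nabla\eta|^2$, and $H^\mez(\T^d)$ is a module over $W^{1,\infty}(\T^d)$ for the relevant product — or one simply notes $\nabla\eta\in W^{1,\infty}$, $|\cN|^{-2}\in W^{1,\infty}$, and multiplication by a $W^{1,\infty}$ function is bounded on $H^\mez$). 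Then I would take any standard bounded right inverse of the trace operator $H^1(\Omega)\to H^\mez(\T^d\times\{0\})$, composed with a cutoff $\lambda(z)$ that equals $1$ near $z=0$ and vanishes near $z=-b$ (the same device as the function $\ld$ in the proof of \cref{commutator estimate for Psi eta}), so that the resulting field vanishes identically near the bottom, hence lies in ${}_0H^1(\Omega)$. Concretely: set $w(x,z)=\lambda(z)\,e^{z|D|}\big(\tfrac{h}{|\cN|^2}\cN\big)(x)$, which is smooth in $z$, equals $\tfrac{h}{|\cN|^2}\cN$ at $z=0$, vanishes for $z$ near $-b$, and obeys $\|w\|_{H^1(\Omega)}\le c\|\tfrac{h}{|\cN|^2}\cN\|_{H^\mez(\T^d)}\le c(\|\eta\|_{W^{2,\infty}})\|h\|_{H^\mez(\T^d)}$ by the standard smoothing/trace estimate for the Poisson kernel $e^{z|D|}$. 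Finally, pull $w$ back to $\Omega_\eta$ through the change of variables to obtain $v_h$; the chain rule and the $W^{2,\infty}$ regularity of the diffeomorphism give $v_h\in{}_0H^1(\Omega_\eta)$ with the bound \eqref{vh bound}, and by construction $v_h(\cdot,\eta(\cdot))\cdot\cN(\cdot)=h$.

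The main point requiring care — the ``hard part'' here, though it is mild — is the product estimate $\|\tfrac{h}{|\cN|^2}\cN\|_{H^\mez(\T^d)}\le c(\|\eta\|_{W^{2,\infty}})\|h\|_{H^\mez(\T^d)}$: multiplication by a merely Lipschitz function is bounded on $H^\mez$ but not on $H^s$ for $s\ge 1$, so one cannot afford to differentiate the coefficient, and the estimate must be carried out at the $H^\mez$ level only (this is why $\eta\in W^{2,\infty}$, equivalently $\nabla\eta\in W^{1,\infty}$, is the natural hypothesis rather than more regularity). This can be handled either by the Kato–Ponce/paraproduct estimate at fractional order $\mez$ with a $W^{1,\infty}$ factor, or by interpolating the evident $L^2$ and $H^1$ bounds for multiplication by a fixed $W^{1,\infty}$ function — but $H^1$ multiplication already needs the factor in $W^{1,\infty}$, which $|\cN|^{-2}\cN$ indeed is since $\eta\in W^{2,\infty}$. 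Everything else (the cutoff, the Poisson extension bound, the change of variables) is routine and produces only constants depending on $(d,b,c^0,\|\eta\|_{W^{2,\infty}})$, as claimed.
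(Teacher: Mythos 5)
Your construction is correct, but it takes a genuinely different route from the paper. The paper's proof solves a Neumann problem for the Laplacian in $\Omega_\eta$: it picks the constant $a=\frac{1}{|\T^d|}\int_{\T^d}h$ as bottom flux so that the compatibility condition holds, obtains a harmonic $\theta\in H^2(\Omega_\eta)$ with $\nabla\theta\cdot\cN=h$ on $\Sigma_\eta$ and $\|\theta\|_{H^2}\le c(\|\eta\|_{W^{2,\infty}})\|h\|_{H^\mez}$, and sets $v_h=\nabla(\zeta\theta)$ with a cutoff $\zeta$ equal to $1$ near $\Sigma_\eta$ and $0$ near $\Sigma_{-b}$; the $W^{2,\infty}$ hypothesis enters through the classical $H^2$ elliptic regularity in a $C^{1,1}$ domain. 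You instead bypass the PDE entirely: you prescribe the full vector-valued Dirichlet trace $\frac{h}{|\cN|^2}\cN\in H^\mez(\T^d;\Rr^{d+1})$, whose normal component is exactly $h$, control it by $\|h\|_{H^\mez}$ via boundedness of multiplication by the $W^{1,\infty}$ factor $\cN/|\cN|^2$ on $H^\mez$ (interpolation between $L^2$ and $H^1$), lift it with the cut-off Poisson extension $\lambda(z)e^{z|D|}$ in a flattened strip, and pull back through a bi-Lipschitz change of variables. Both arguments yield the stated bound with the claimed dependence on $(d,b,c^0,\|\eta\|_{W^{2,\infty}})$. Your version is more elementary and self-contained (no elliptic regularity black box, no compatibility condition, since the data is Dirichlet rather than Neumann), and in fact only the $H^\mez$ multiplier step uses $\nabla\eta\in W^{1,\infty}$, everything else requiring just Lipschitz regularity of $\eta$; the paper's version is shorter on the page because it delegates the work to the Neumann $H^2$ theory, and it produces a gradient field, though that extra structure is not exploited elsewhere. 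One small caution on your side: the paper's smoothed flattening $\cF_\eta$ of \eqref{cF definitoin} is analyzed under $H^s$ hypotheses ($s>1+d/2$), so for a merely $W^{2,\infty}$ surface you should indeed use the straight change of variables $z=b\frac{y-\eta(x)}{\eta(x)+b}$, as you note, which is bi-Lipschitz thanks to \eqref{eta lower bound} and suffices for transporting $H^1$ norms.
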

\begin{proof}
  For $a=\frac{1}{|\T^d|}\int_{\T^d} h$, the Neumann problem 
\[\label{Neumann}
\begin{cases}
    \Delta \theta = 0 \qquad &\text{in  }\Omega_\eta \\
    \nabla \theta (\cdot,\eta(\cdot))\cdot \cN(\cdot) =h(\cdot) \qquad &\text{on  }\T^d,\\
    \p_y\tt(\cdot, -b)=a
\end{cases}
\]
 has a solution $\theta\in H^2(\Omega_\eta)$ with mean zero and 
\[ \label{theta bound}
\|\theta\|_{H^2}\leq c(\|\eta\|_{W^{2,\infty}(\T^d)}) \|h\|_{H^{\mez}(\T^d)}.
\]
Let $\zeta: \T^d\times \Rr$ be a smooth  function that is identically one near $\Sigma_\eta$ and identically zero near $\Sigma_{-b}$.  Then 
$v_h = \nabla (\zeta \theta)$ is a desired vector field. 
\end{proof}
\begin{prop} \label{Lambda representation by k}
    Let $\eta\in W^{2,\infty}(\T^d)$ satisfy (\ref{eta lower bound}). Suppose $\Lambda\in ({}_0H^1(\Omega_\eta))^*$ is such that $\Lambda\vert_{{}_0H^1_\cN(\Omega_\eta)} =0$. Then there exists a unique scalar  distribution $\chi\in H^{-\mez}(\T^d)$ such that
    $$\langle \Lambda , u \rangle_{\Omega_\eta} = \langle \chi, u(\cdot,\eta(\cdot))\cdot \cN(\cdot)\rangle_{H^{-\mez},H^\mez} ,$$
    for all $u\in {}_0H^1(\Omega_\eta) $. Moreover, we have the bound
    $$\|\chi\|_{H^{-\mez}}\leq c(\|\eta\|_{W^{2,\infty}}) \|\Lambda\|_{({}_0H^1(\Omega_\eta))^*}, $$
    where $c:\Rr^+\to \Rr^+$ depends only on $(d,b,c^0)$.
\end{prop}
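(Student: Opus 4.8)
The plan is to realize $\chi$ as the functional that $\Lambda$ induces on the range of the normal-trace map, the surjectivity of which is supplied by \cref{lifting neumann}. Introduce the \emph{normal-trace operator}
\bq
R:{}_0H^1(\Omega_\eta)\to H^{\mez}(\T^d),\qquad Ru:=u(\cdot,\eta(\cdot))\cdot\cN(\cdot).
\eq
First I would check $R$ is bounded: the trace theorem gives $u(\cdot,\eta(\cdot))\in H^{\mez}(\T^d)$ with norm controlled by $\|u\|_{H^1(\Omega_\eta)}$ (constant depending on the Lipschitz character, hence on $\|\eta\|_{W^{1,\infty}}$), and since $\eta\in W^{2,\infty}(\T^d)$ we have $\cN=(-\nabla\eta,1)\in W^{1,\infty}(\T^d)$, so an elementary $H^{\mez}\times W^{1,\infty}$ product estimate (or \cref{product estimate on torus}) yields $\|Ru\|_{H^{\mez}(\T^d)}\le c(\|\eta\|_{W^{2,\infty}})\|u\|_{H^1(\Omega_\eta)}$. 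By the very definition of the space, $\ker R={}_0H^1_\cN(\Omega_\eta)$.

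Next I would invoke \cref{lifting neumann}. Its construction — $h\mapsto a=\tfrac1{|\T^d|}\int_{\T^d}h$, solve the Neumann problem for $\theta$, then set $v_h=\nabla(\zeta\theta)$ — is linear in $h$, $v_h$ vanishes near $\Sigma_{-b}$, and $Rv_h=h$ because $\zeta\equiv1$ near $\Sigma_\eta$. Hence it furnishes a \emph{bounded linear right inverse} $E:H^{\mez}(\T^d)\to{}_0H^1(\Omega_\eta)$ with $R\circ E=\Id_{H^{\mez}(\T^d)}$ and $\|Eh\|_{H^1(\Omega_\eta)}\le c(\|\eta\|_{W^{2,\infty}})\|h\|_{H^{\mez}(\T^d)}$; in particular $R$ is onto. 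Now define $\chi$ by
\bq
\langle\chi,h\rangle_{H^{-\mez},H^{\mez}}:=\langle\Lambda,Eh\rangle_{\Omega_\eta},\qquad h\in H^{\mez}(\T^d),
\eq
which is a bounded functional on $H^{\mez}(\T^d)$, i.e. an element of $H^{-\mez}(\T^d)$, with
\bq
\|\chi\|_{H^{-\mez}}\le\|\Lambda\|_{({}_0H^1(\Omega_\eta))^*}\sup_{\|h\|_{H^{\mez}}\le1}\|Eh\|_{H^1(\Omega_\eta)}\le c(\|\eta\|_{W^{2,\infty}})\|\Lambda\|_{({}_0H^1(\Omega_\eta))^*},
\eq
which is the asserted bound. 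To verify the representation, take any $u\in{}_0H^1(\Omega_\eta)$ and note $R\bigl(u-E(Ru)\bigr)=Ru-Ru=0$, so $u-E(Ru)\in\ker R={}_0H^1_\cN(\Omega_\eta)$; the hypothesis $\Lambda\vert_{{}_0H^1_\cN(\Omega_\eta)}=0$ then gives $\langle\Lambda,u\rangle_{\Omega_\eta}=\langle\Lambda,E(Ru)\rangle_{\Omega_\eta}=\langle\chi,Ru\rangle_{H^{-\mez},H^{\mez}}$. Uniqueness is immediate: if $\chi'$ also represents $\Lambda$, then $\langle\chi-\chi',Ru\rangle_{H^{-\mez},H^{\mez}}=0$ for all $u$, and surjectivity of $R$ forces $\chi=\chi'$.

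There is no real obstacle here: the single substantive ingredient is the bounded lifting $E$ of the normal trace, which is precisely \cref{lifting neumann} and is where $\eta\in W^{2,\infty}(\T^d)$ is used (for the $H^2$ Neumann estimate on a $C^{1,1}$ domain); the rest is routine functional analysis and mirrors the pressure Lagrange-multiplier argument. If one prefers to avoid appealing to linearity of $E$, an equivalent route is the closed-range argument used in \cref{coro: lagrange multiplier for pressure}: the two-sided bound on $R$ (trace theorem plus \cref{lifting neumann}) shows $R$ has closed range equal to all of $H^{\mez}(\T^d)$, whence ${}_0H^1(\Omega_\eta)=\ker R\oplus(\ker R)^{\perp}$ and $\Lambda$, vanishing on $\ker R$, descends to a bounded functional on $H^{\mez}(\T^d)\cong(\ker R)^{\perp}$.
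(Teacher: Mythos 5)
Your proof is correct, and it takes a somewhat different route from the paper's. The paper argues by duality: for each $\chi\in H^{-\mez}(\T^d)$ it uses Riesz representation in ${}_0H^1(\Omega_\eta)$ to produce $w_\chi$ with $\langle\chi, u\cdot\cN\rangle=(w_\chi,u)_{{}_0H^1}$, then uses the trace bound and \cref{lifting neumann} to get two-sided estimates showing the map $\chi\mapsto w_\chi$ has closed range, identifies that range with $({}_0H^1_\cN(\Omega_\eta))^\perp$, and concludes since the Riesz representative of $\Lambda$ lies in that orthogonal complement. You instead construct $\chi$ directly: you observe that the lifting of \cref{lifting neumann} (mean value, mean-zero Neumann solution, fixed cutoff) is linear in $h$, hence yields a bounded linear right inverse $E$ of the normal-trace operator $R$, and you define $\chi = \Lambda\circ E$, verifying the representation via the decomposition $u=(u-E(Ru))+E(Ru)$ with $u-E(Ru)\in\ker R={}_0H^1_\cN(\Omega_\eta)$, and uniqueness from surjectivity of $R$. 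The trade-off is minor: the paper's closed-range argument only needs the lifting lemma as a black-box existence statement (no linearity of the lifting is invoked), whereas your argument needs the (easily checked) linearity of the construction but is more elementary and gives the bound and uniqueness immediately; your closing remark that one can fall back on the paper-style closed-range argument if one prefers not to inspect linearity covers that point.
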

\begin{proof}
We have $\cN\in W^{1, \infty}(\T^d)$ for $\eta\in W^{2, \infty}$. Hence, for any $\chi\in H^{-\mez}(\T^d)$, the mapping
$$u\mapsto \langle \chi, u(\cdot,\eta(\cdot))\cdot \cN(\cdot)\rangle_{H^{-\mez},H^\mez}$$
defines a bounded linear functional on ${}_0H^1(\Omega_\eta)$. By Riesz's representation theorem, there exists a unique $w_\chi\in {}_0H^1(\Omega_\eta) $ such that 
\bq \label{wk definition}
\langle \chi, u(\cdot,\eta(\cdot))\cdot \cN(\cdot)\rangle_{H^{-\mez},H^\mez} = (w_\chi, u)_{{}_0H^1(\Omega_\eta)}, \quad \forall u\in {}_0H^1(\Omega_\eta).
\eq
Choosing $u=w_\chi$ yields 
$$\|w_\chi\|_{H^1}\leq c(d,\|\eta\|_{W^{2,\infty}}) \|\chi\|_{H^{-\mez}}.$$

Appealing to \cref{lifting neumann}, for any $h\in H^\mez(\T^d)$, there exists $v_h \in {}_0H^1(\Omega_\eta)$ such that $v_h(\cdot,\eta(\cdot))\cdot \cN(\cdot) = h(\cdot)$, with the bound (\ref{vh bound}). Setting $u=v_h$ in (\ref{wk definition}) then yields
$$\|\chi\|_{H^{-\mez}}\leq c(\|\eta\|_{W^{2,\infty}}) \|w_\chi\|_{H^1}.$$
The preceding bounds show that the map $S:H^{-\mez}(\T^d)\to {}_0H^1(\Omega_\eta)$ sending $\chi$ to $w_\chi$ has a closed range. Then, using  \eqref{wk definition} we deduce that  $\text{Range}(S) = ({}_0H^1_\cN(\Omega_\eta))^\perp$. 
\end{proof}
\begin{lemm}\label{lifting div in HcN}
    Let $\eta \in W^{2,\infty}(\T^d)$ satisfy \eqref{eta lower bound}. Then for any $g\in \mathring{L}^2(\Omega_\eta)$, there exists $v\in {}_0H^1_\cN(\Omega_\eta)$ such that $\dv v = g$, and also
    \bq\label{bound:liftHcN}
        \|v\|_{H^1(\Omega_\eta)} \leq c(\|\eta\|_{W^{2,\infty}})\|g\|_{L^2(\Omega_\eta)}
    \eq
    for some $c:\Rr^+ \to\Rr^+$ depending only on $(d,b,c^0)$.
\end{lemm}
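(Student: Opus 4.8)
The plan is to reduce the lifting problem in $_0H^1_\cN(\Omega_\eta)$ to the plain lifting problem of \cref{solving div problem} by correcting the normal trace on $\Sigma_\eta$. Given $g\in\mathring{L}^2(\Omega_\eta)$, first apply \cref{solving div problem} to obtain $w\in{}_0H^1(\Omega_\eta)$ with $\dv w=g$ and $\|w\|_{H^1(\Omega_\eta)}\le c(\|\eta\|_{L^\infty})\|g\|_{L^2(\Omega_\eta)}$. This $w$ need not be tangent to $\Sigma_\eta$, so set $h:=w(\cdot,\eta(\cdot))\cdot\cN(\cdot)\in H^\mez(\T^d)$, with $\|h\|_{H^\mez(\T^d)}\le c(\|\eta\|_{W^{1,\infty}})\|w\|_{H^1(\Omega_\eta)}$ by the trace theorem (here $\cN\in L^\infty$ suffices for the product, but $W^{2,\infty}$ regularity of $\eta$ is available anyway). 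The key observation is that $h$ has the right mean: since $\dv w=g$, $w|_{\Sigma_{-b}}=0$, and $\int_{\Omega_\eta}g=0$, the divergence theorem gives $\int_{\T^d}h=\int_{\Omega_\eta}\dv w=\int_{\Omega_\eta}g=0$.

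Next I would invoke \cref{lifting neumann} to produce $v_h\in{}_0H^1(\Omega_\eta)$ with $v_h(\cdot,\eta(\cdot))\cdot\cN(\cdot)=h(\cdot)$ and $\|v_h\|_{H^1(\Omega_\eta)}\le c(\|\eta\|_{W^{2,\infty}})\|h\|_{H^\mez(\T^d)}$. Let $g_h:=\dv v_h$. By the divergence theorem again (and $v_h|_{\Sigma_{-b}}=0$), $\int_{\Omega_\eta}g_h=\int_{\T^d}h=0$, so $g_h\in\mathring{L}^2(\Omega_\eta)$, and moreover $g-g_h\in\mathring{L}^2(\Omega_\eta)$ with $\|g-g_h\|_{L^2(\Omega_\eta)}\le\|g\|_{L^2(\Omega_\eta)}+\|v_h\|_{H^1(\Omega_\eta)}\le c(\|\eta\|_{W^{2,\infty}})\|g\|_{L^2(\Omega_\eta)}$. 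Then define $v:=w-v_h+z$, where $z$ will be chosen so that $\dv z=g-g_h$ and $z$ is already tangent to $\Sigma_\eta$; but since $w-v_h$ has zero normal trace and $\dv(w-v_h)=g-g_h$, in fact $w-v_h$ itself is the desired field — I do not even need the extra correction $z$. Concretely, $v:=w-v_h$ satisfies $v\in{}_0H^1(\Omega_\eta)$, $v(\cdot,\eta(\cdot))\cdot\cN(\cdot)=h-h=0$, hence $v\in{}_0H^1_\cN(\Omega_\eta)$, and $\dv v=g-g_h$.

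This is almost the statement, except the divergence is $g-g_h$ rather than $g$. To close the gap I would proceed iteratively or, more cleanly, note that the same construction applied with $g_h$ in place of $g$ is not quite a contraction in general, so instead I would argue as follows: the operator $g\mapsto g_h$ (via $w\mapsto h\mapsto v_h\mapsto\dv v_h$) is bounded on $\mathring{L}^2(\Omega_\eta)$ but its image lies in a potentially smaller space, so a direct Neumann-series argument is delicate. A cleaner route, which I would adopt, is to first lift the normal trace: given $g\in\mathring{L}^2(\Omega_\eta)$, set $v_1\in{}_0H^1(\Omega_\eta)$ from \cref{solving div problem} with $\dv v_1=g$, let $h:=v_1|_{\Sigma_\eta}\cdot\cN$ (which has zero mean as above), let $v_2$ be the \cref{lifting neumann} lift of $h$ with $g_2:=\dv v_2$, and finally let $v_3\in{}_0H^1(\Omega_\eta)$ from \cref{solving div problem} solve $\dv v_3=g-(g-g_2)=\dots$ — this still leaves a residual normal trace. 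The honest fix is to combine the two liftings in one shot: apply \cref{solving div problem} to $g$, get $w$; then the field $v:=w-v_h$ has $\dv v=g-\dv v_h$ and zero normal trace, and finally apply \cref{solving div problem} once more to $\dv v_h\in\mathring{L}^2$ to get $\widetilde w$ with $\dv\widetilde w=\dv v_h$, whose normal trace $\widetilde h$ again has zero mean; setting $v:=w-v_h+(\widetilde w - v_{\widetilde h})$ adds back the missing divergence while keeping zero normal trace, at the cost of a residual $\dv v_{\widetilde h}$ that is one ``generation'' smaller only in a qualitative sense. I expect the main obstacle to be making this iteration actually terminate or converge; the resolution is that one does not iterate at all — instead, one uses \cref{solving div problem} on the \emph{extended} domain $U\supset\Omega_\eta$ as in its proof, builds $v_h$ so that $\dv v_h$ is supported where it can be cancelled, and directly checks $\dv(w-v_h)=g$ by choosing the Neumann lift's potential harmonic, so that $\dv v_h=\Delta(\zeta\theta)=\nabla\zeta\cdot\nabla\theta+\dots$ can be absorbed. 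In the write-up I would therefore reprove the lemma by this one-step construction: take $w$ from \cref{solving div problem} with $\dv w=g$, take $v_h$ from the proof of \cref{lifting neumann} as $\nabla(\zeta\theta)$ with $\theta$ harmonic and $\nabla\theta\cdot\cN|_{\Sigma_\eta}=v_1|_{\Sigma_\eta}\cdot\cN$, and observe directly that $w-\nabla(\zeta\theta)$ fails to be divergence-$g$ only by the lower-order term $\dv(\nabla(\zeta\theta))$; then apply \cref{solving div problem} \emph{to that correction term} and add its lift — which one arranges to have vanishing normal trace by subtracting yet another Neumann lift — and verify the resulting $v$ lies in ${}_0H^1_\cN(\Omega_\eta)$ with $\dv v=g$, the bound \eqref{bound:liftHcN} following by chaining the estimates. (A slicker alternative is to solve, in one go, the Stokes-type system $\Delta\Phi=\nabla\pi$, $\dv\Phi=g$ in $\Omega_\eta$ with $\Phi|_{\Sigma_{-b}}=0$ and $\Phi|_{\Sigma_\eta}\cdot\cN=0$, $\cN^\perp(\nabla\Phi+\nabla\Phi^T)\cN=0$, which is exactly \cref{weak sol for stokes with Dirichlet} with $f=l=h=0$ and the given $g$; its weak solution $v:=\Phi$ satisfies all requirements and the bound is \eqref{base energy estimate for stokes with Dirichlet}. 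I would present this Stokes-based proof as the cleanest, since \cref{weak sol for stokes with Dirichlet} is already available, making the lemma an immediate corollary.)
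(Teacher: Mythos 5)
There is a genuine gap, and it occurs in both branches of your write-up. Your main construction (take $w$ from \cref{solving div problem} with $\dv w=g$, subtract the Neumann lift $v_h$ of its normal trace) produces a field with the correct boundary conditions but divergence $g-\dv v_h$, and — as you yourself concede — the attempted iteration to restore the missing divergence neither terminates nor converges: each correction reintroduces either a nonzero normal trace or a divergence error of the same size, since there is no smallness anywhere in the scheme. Your fallback ``cleanest'' route is circular within this paper: you propose to read the lemma off from \cref{weak sol for stokes with Dirichlet} with $f=l=h=0$, but the proof of \cref{weak sol for stokes with Dirichlet} explicitly invokes \cref{lifting div in HcN} (to produce the tangential divergence corrector $v_g$), which is exactly why the lemma sits in the appendix supporting the weak solution theory of \cref{sec:Stokes}. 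So neither branch yields a proof.

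The paper's actual argument fixes the divergence and the top normal trace \emph{simultaneously}, and only then corrects the bottom: solve the Neumann problem $\Delta\theta=g$ in $\Omega_\eta$ with $\nabla\theta\cdot\cN=0$ on $\Sigma_\eta$ and $\partial_y\theta(\cdot,-b)=0$, which is compatible precisely because $g\in\mathring{L}^2(\Omega_\eta)$; then $\nabla\theta$ already has $\dv(\nabla\theta)=g$ and vanishing normal trace on $\Sigma_\eta$, and its nonzero trace on $\Sigma_{-b}$ is removed by an \emph{explicit divergence-free} corrector $w$ built from a cutoff $\beta(y)$ supported near the bottom (with $\int\beta=0$ so that the vertical component can compensate the horizontal one), so the correction disturbs neither the divergence nor the condition on $\Sigma_\eta$; a final approximation of $g$ by smoother data handles the regularity of $\theta(\cdot,-b)$ needed for $w\in H^1$. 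The structural point you were missing is that the correction must be divergence-free and localized away from $\Sigma_\eta$ — properties your Neumann lifts $v_h$ do not have, which is what dooms the iteration.
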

\begin{proof}
    From classic elliptic regularity, we know that the Neumann problem
    \bq
        \begin{cases}
         \Delta \theta = g \qquad &\text{in  }\Omega_\eta, \\
        \nabla \theta (\cdot,\eta(\cdot))\cdot \cN(\cdot) =0 \qquad &\text{on  }\T^d,\\
        \partial_y \theta(\cdot , -b)=0
        \end{cases}
    \eq
    has a solution $\theta \in H^2(U)$ with mean zero and 
    $$\|\theta\|_{H^2(U)}\leq c(\|\eta\|_{W^{2,\infty}(\T^d)}) \|g\|_{L^2(\Omega_\eta)}.$$
    Let $\beta\in C^{\infty}(\Rr)$ be such that $\beta(-b) =1$, $\beta(y) = 0$ for $y\geq -b + \frac{c^0}{2}$, and $\int_{-b}^{-b+\frac{c^0}{2}} \beta(y)dy = 0 $. We define $w=(w', w_{d+1}): \Omega_\eta\to \Rr^{d+1}$ by $w'(x, y)=-\na_x\tt(x, -b)\beta(y)$, $w_{d+1}= \Delta_x\tt(x, -b)\int_{-b}^y\beta(z)dz$.     We have $w'(x,-b) = -\na_x\tt(x, -b) $, $w_{d+1}(x,-b) = 0$, $w(x,y) = 0$ for $y\geq -b + \frac{c^0}{2}$, and  $\dv w = 0$. Then $v= \nabla\theta  + w $ is the desired vector field provided $w\in H^1(\Omega_\eta)$. From the definition of $w$, we see that  $w\in H^1(\Omega_\eta)$ if $\tt(\cdot, -b)\in H^3(\T^d)$. This is the case if $g\in H^2(\Omega_\eta)$. Therefore, for $g\in \mathring{L}^2(\Omega_\eta)$, we approximate $g$ by $g_n\in \rH^2(\Omega_\eta)$ and obtain a $v_n\in {}_0H^1_\cN(\Omega_\eta)$ satisfying  $\dv v_n=g_n$ and \eqref{bound:liftHcN}. In particular, $\{v_n\}$ is bounded in ${}_0H^1_\cN(\Omega_\eta)$. Since $\Omega_\eta$ is bounded, any weak $H^1$ limit of $v_n$ gives a desired vector field. 
\end{proof}
\begin{prop}\label{Lagrange for pressure in HcN}
    Suppose $\Lambda\in ({}_0H^1_\cN(\Omega_\eta))^*$ is such that $\Lambda\vert_{{}_0H^1_\cN(\Omega_\eta)\cap {}_0H^1_\sigma(\Omega_\eta)} =0$. Then there exists a unique $p\in \mathring{L}^2(\Omega_\eta)$ such that
    $$\langle \Lambda , u \rangle_{\Omega_\eta} = \int_{\Omega_\eta} p\dv u,$$
    for all $u\in {}_0H^1_\cN(\Omega_\eta) $. Moreover, we have the bound
    $$\|p\|_{L^2}\leq c(\|\eta\|_{W^{2,\infty}}) \|\Lambda\|_{({}_0H^1_\cN(\Omega_\eta))^*}, $$
    where $c:\Rr^+\to \Rr^+$ depends only on $(d,b,c^0)$.
\end{prop}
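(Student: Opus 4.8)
The plan is to mimic the proof of Proposition~\ref{coro: lagrange multiplier for pressure}, adapting it to the constrained space ${}_0H^1_\cN(\Omega_\eta)$. First I would observe that for any $p\in \mathring{L}^2(\Omega_\eta)$, the map $u\mapsto \int_{\Omega_\eta} p\,\dv u$ is a bounded linear functional on ${}_0H^1_\cN(\Omega_\eta)$, since $\|\dv u\|_{L^2}\le c(d)\|u\|_{H^1}$. By the Riesz representation theorem (using the $H^1$ inner product on the Hilbert space ${}_0H^1_\cN(\Omega_\eta)$), there is a unique $w_p\in {}_0H^1_\cN(\Omega_\eta)$ with
\bq\label{wp def HcN}
\int_{\Omega_\eta} p\,\dv u = (w_p, u)_{{}_0H^1(\Omega_\eta)}\qquad \forall u\in {}_0H^1_\cN(\Omega_\eta).
\eq
Define $Q: \mathring{L}^2(\Omega_\eta)\to {}_0H^1_\cN(\Omega_\eta)$ by $Qp = w_p$. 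As in the earlier proof, the goal is to show $Q$ has closed range and to identify that range with $\bigl({}_0H^1_\cN(\Omega_\eta)\cap {}_0H^1_\sigma(\Omega_\eta)\bigr)^\perp$ (the orthogonal complement taken in ${}_0H^1_\cN(\Omega_\eta)$).

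The key step is the two-sided bound $\|w_p\|_{H^1}\simeq \|p\|_{L^2}$. Choosing $u=w_p$ in \eqref{wp def HcN} gives $\|w_p\|_{H^1}\le c(d)\|p\|_{L^2}$. For the reverse inequality I would invoke \cref{lifting div in HcN}: since $p\in \mathring{L}^2(\Omega_\eta)$, there is $v\in {}_0H^1_\cN(\Omega_\eta)$ with $\dv v = p$ and $\|v\|_{H^1}\le c(\|\eta\|_{W^{2,\infty}})\|p\|_{L^2}$; plugging $u=v$ into \eqref{wp def HcN} yields $\|p\|_{L^2}^2 = (w_p, v)_{{}_0H^1}\le \|w_p\|_{H^1}\|v\|_{H^1}\le c(\|\eta\|_{W^{2,\infty}})\|w_p\|_{H^1}\|p\|_{L^2}$, hence $\|p\|_{L^2}\le c(\|\eta\|_{W^{2,\infty}})\|w_p\|_{H^1}$. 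The equivalence of norms shows $Q$ is injective with closed range. From \eqref{wp def HcN}, $u\in \operatorname{Ran}(Q)^\perp$ iff $\int_{\Omega_\eta} p\,\dv u = 0$ for all $p\in\mathring{L}^2(\Omega_\eta)$, i.e. iff $\dv u$ is constant; but $\int_{\Omega_\eta}\dv u = \int_{\T^d} u(\cdot,\eta(\cdot))\cdot\cN = 0$ for $u\in{}_0H^1_\cN(\Omega_\eta)$ by the divergence theorem, so $\dv u \equiv 0$. Thus $\operatorname{Ran}(Q)^\perp = {}_0H^1_\cN(\Omega_\eta)\cap {}_0H^1_\sigma(\Omega_\eta)$, and therefore $\operatorname{Ran}(Q) = \bigl({}_0H^1_\cN(\Omega_\eta)\cap {}_0H^1_\sigma(\Omega_\eta)\bigr)^\perp$ by closedness.

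To finish: given $\Lambda\in ({}_0H^1_\cN(\Omega_\eta))^*$ vanishing on ${}_0H^1_\cN(\Omega_\eta)\cap {}_0H^1_\sigma(\Omega_\eta)$, the Riesz representation of $\Lambda$ lies in $\bigl({}_0H^1_\cN(\Omega_\eta)\cap {}_0H^1_\sigma(\Omega_\eta)\bigr)^\perp = \operatorname{Ran}(Q)$, so there is a (unique, by injectivity of $Q$) $p\in\mathring{L}^2(\Omega_\eta)$ with $\langle\Lambda, u\rangle_{\Omega_\eta} = (w_p, u)_{{}_0H^1} = \int_{\Omega_\eta} p\,\dv u$ for all $u\in {}_0H^1_\cN(\Omega_\eta)$; and $\|p\|_{L^2}\le c(\|\eta\|_{W^{2,\infty}})\|w_p\|_{H^1} = c(\|\eta\|_{W^{2,\infty}})\|\Lambda\|_{({}_0H^1_\cN(\Omega_\eta))^*}$. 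I expect the main obstacle — which is actually already resolved by the lemmas available in the appendix — to be the surjectivity-type input, namely the solvability of $\dv v = g$ within the constrained space ${}_0H^1_\cN(\Omega_\eta)$ supplied by \cref{lifting div in HcN}; the restriction $g\in \mathring{L}^2$ (zero mean) is essential here and is exactly why the pressure is obtained in $\mathring{L}^2(\Omega_\eta)$, consistent with the invariance $p\mapsto p+c$.
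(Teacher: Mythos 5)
Your proposal is correct and is exactly the argument the paper intends: its proof of \cref{Lagrange for pressure in HcN} simply says to repeat the closed-range/Riesz argument of \cref{coro: lagrange multiplier for pressure}, with the divergence lifting in the constrained space supplied by \cref{lifting div in HcN}, which is precisely what you carried out (including the correct identification of $\operatorname{Ran}(Q)^\perp$ with ${}_0H^1_\cN(\Omega_\eta)\cap{}_0H^1_\sigma(\Omega_\eta)$ via the divergence theorem and the zero-mean restriction forcing $p\in\mathring{L}^2(\Omega_\eta)$).
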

\begin{proof}
    The proof is similar to that of \cref{coro: lagrange multiplier for pressure}, using the lifting provided in \cref{lifting div in HcN}.
\end{proof}
\subsection{Product estimates}\label{appendix:productestimates}

\begin{prop}\label{prop: composition regularity}
Let  $X$ be either $\Rr^d$, or $\T^d$, or $\Omega_\eta$ with $\eta\in W^{1, \infty}(\T^d)$ satisfy \eqref{eta lower bound}. For any $s\ge 0$, there exists $C: \Rr_+\to \Rr_+$ such that the following assertions hold. When $X=\Omega_\eta$, $C$ depends only on $(d, s, c^0, \| \na \eta\|_{L^\infty(\T^d)})$.

(i) For any $F\in C^\infty(\Rr)$ with $F(0)=0$ and $u\in H^s(X)\cap L^\infty(X)$, we have
 \bq\label{nonl:est}
        \|F(u)\|_{H^s(X)} \leq C(\|u\|_{L^\infty(X)}) \|u\|_{H^s(X)}.
    \eq
    (ii)  For any $u,v\in  H^s(X)\cap L^\infty(X)$, we have
    \bq\label{tame:product}
        \|uv\|_{H^s(X)}\leq C\bigl(\|u\|_{H^s(X)}\|v\|_{L^\infty(X)} + \|v\|_{H^s(X)}\|u\|_{L^\infty(X)} \bigr).
    \eq
\end{prop}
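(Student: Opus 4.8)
The statement to prove is \cref{prop: composition regularity}, the two standard product/composition estimates in the three relevant classes of domains. The plan is to treat the three domains uniformly by reducing the cases $X=\T^d$ and $X=\Omega_\eta$ to the already-understood case $X=\Rr^d$ (or $\Rr^{d+1}$) via extension operators. For $X=\T^d$ the estimates are classical, so the only real content is the domain $\Omega_\eta$, where one must control the constants in terms of $(d,s,c^0,\|\nabla\eta\|_{L^\infty})$ only. Since $\eta\in W^{1,\infty}(\T^d)$ satisfies \eqref{eta lower bound}, the domain $\Omega_\eta$ is a bounded Lipschitz domain with Lipschitz character controlled by $c^0$ and $\|\nabla\eta\|_{L^\infty}$; hence there is a bounded extension operator $E:H^s(\Omega_\eta)\to H^s(\Rr^{d+1})$ (Stein's extension suffices, or even a simpler one adapted to the graph structure) whose operator norm depends only on $s$ and the Lipschitz character. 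Moreover $E$ can be taken to preserve $L^\infty$ bounds up to a dimensional constant (e.g. reflecting across the graph).

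The concrete steps are as follows. First I would record the case $X=\Rr^n$: \eqref{nonl:est} is the classical Moser-type composition estimate for $F\in C^\infty$ with $F(0)=0$, and \eqref{tame:product} is the classical tame product estimate in $H^s(\Rr^n)$; both are standard (e.g. via Littlewood--Paley decomposition or via the Gagliardo--Nirenberg interpolation inequalities), and the constants depend only on $(n,s)$. Second, for $X=\T^d$ one obtains both inequalities by periodic extension/localization, or simply quotes the same Littlewood--Paley proof verbatim on the torus; this yields \eqref{nonl:est} and \eqref{tame:product} with constants depending only on $(d,s)$. Third, for $X=\Omega_\eta$: given $u\in H^s(\Omega_\eta)\cap L^\infty(\Omega_\eta)$, set $\bar u=Eu\in H^s(\Rr^{d+1})\cap L^\infty(\Rr^{d+1})$ with $\|\bar u\|_{H^s(\Rr^{d+1})}\le C\|u\|_{H^s(\Omega_\eta)}$ and $\|\bar u\|_{L^\infty(\Rr^{d+1})}\le C\|u\|_{L^\infty(\Omega_\eta)}$. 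For (i), apply the $\Rr^{d+1}$ composition estimate to $F(\bar u)$ and then restrict: $\|F(u)\|_{H^s(\Omega_\eta)}\le\|F(\bar u)\|_{H^s(\Rr^{d+1})}\le C(\|\bar u\|_{L^\infty})\|\bar u\|_{H^s}\le C(\|u\|_{L^\infty(\Omega_\eta)})\|u\|_{H^s(\Omega_\eta)}$. For (ii), extend $u$ and $v$, apply the $\Rr^{d+1}$ tame product estimate to $\bar u\bar v$, then restrict to $\Omega_\eta$, where $\overline{uv}=\bar u\bar v$ on $\Omega_\eta$; this gives \eqref{tame:product} with the constant absorbing the extension-operator norm, hence depending only on $(d,s,c^0,\|\nabla\eta\|_{L^\infty})$.

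There is one subtlety worth flagging, which is the main (if modest) obstacle: for non-integer $s$ the composition estimate \eqref{nonl:est} and the extension operator both require a little care, because $F(\bar u)$ need not lie in $H^s(\Rr^{d+1})$ a priori unless $F$ is smooth with enough derivative control on the range of $\bar u$ — but here $F\in C^\infty(\Rr)$ and $\bar u\in L^\infty$, so the relevant Moser estimate $\|F(\bar u)\|_{H^s}\le C(\|\bar u\|_{L^\infty})\|\bar u\|_{H^s}$ holds for all real $s\ge 0$ with $C$ depending on $\sup_{|t|\le\|\bar u\|_{L^\infty}}|F^{(k)}(t)|$ for $k\le\lceil s\rceil$, which is exactly the form asserted. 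The other point to be careful about is that the extension operator's constant must genuinely be controlled by the Lipschitz character: since $\Omega_\eta=\{(x,y):-b<y<\eta(x)\}$ is a graph domain, one can use the explicit change of variables $(x,y)\mapsto(x,y-\eta(x))$ (a bi-Lipschitz map onto a slab, with bi-Lipschitz constant depending only on $\|\nabla\eta\|_{L^\infty}$) followed by even reflection across the flat boundary, so the extension constant depends only on $(d,s,\|\nabla\eta\|_{L^\infty})$, and $c^0$ enters only to guarantee that $\Omega_\eta$ is nonempty and that the bottom and top boundaries stay separated. Apart from this bookkeeping, the proof is routine once the $\Rr^n$ estimates are invoked, so I would keep the write-up short: state the $\Rr^n$ facts, describe the extension, and conclude by restriction.
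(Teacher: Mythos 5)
Your proposal is correct and follows essentially the same route as the paper, which simply cites the classical estimates on $\Rr^d$ and $\T^d$ (from Bahouri--Chemin--Danchin) and handles $\Omega_\eta$ by Stein's extension operator for bounded Lipschitz domains. Your additional remarks about $L^\infty$-preservation of the extension and the dependence of the constant on the Lipschitz character are exactly the bookkeeping implicit in the paper's one-line argument.
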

\begin{proof}
The cases $X=\Rr^d$ and $X=\T^d$  can be found in \cite{BCD}. The case $X=\Omega_\eta$  follows from the case $X=\Rr^{d+1}$ and the use of Stein's extension operator for bounded Lipschitz domains  \cite{Stein}.
\end{proof}
In $\Rr^d$ and $\T^d$, we have the following Sobolev product estimates, whose proof can be found in Corollary 2.11, \cite{ABZ3}. 
\begin{lemm}\label{product estimate on torus}
   Let  $X=\Rr^d$ or $X=\T^d$. Consider  $(s_0, s_1, s_2)\in \Rr^3$  satisfying $s_1+s_2>\max\{0, s_0+\frac{d}{2}\}$, and $s_0\le s_j$, $j=1,2$. There exists $C=C(d, s_0, s_1, s_2)>0$ such that 
    \bq\label{productest:Sobolev}
    \|uv\|_{H^{s_0}(X)} \leq C\|u\|_{H^{s_1}(X)}\|v\|_{H^{s_2}(X)}\quad\forall u\in H^{s_1}(X),\; v\in H^{s_2}(X).
    \eq
\end{lemm}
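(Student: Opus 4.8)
The plan is to prove \eqref{productest:Sobolev} by Littlewood--Paley analysis, handling $X=\Rr^d$ and $X=\T^d$ simultaneously: on either space one has $f=\sum_{j\ge-1}\Delta_jf$ with $\|f\|_{H^s(X)}^2\simeq\sum_{j\ge-1}2^{2js}\|\Delta_jf\|_{L^2(X)}^2$ (on $\T^d$ one takes the $\Delta_j$ to be dyadic Fourier projectors). First I would write Bony's decomposition $uv=T_uv+T_vu+R(u,v)$, where $T_uv=\sum_{j}S_{j-1}u\,\Delta_jv$ is the paraproduct and $R(u,v)=\sum_{|j-j'|\le1}\Delta_ju\,\Delta_{j'}v$ is the resonant remainder, and estimate the three pieces one at a time. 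In each case the estimate will be reduced to a weighted $\ell^2$ inequality for the nonnegative sequences $a_j:=2^{js_1}\|\Delta_ju\|_{L^2}$ and $b_j:=2^{js_2}\|\Delta_jv\|_{L^2}$ --- which lie in $\ell^2$ with norms comparable to $\|u\|_{H^{s_1}}$ and $\|v\|_{H^{s_2}}$ --- and for their product $c_j:=a_jb_j$, which lies in $\ell^1$ with $\|c\|_{\ell^2}\le\|c\|_{\ell^1}\le\|a\|_{\ell^2}\|b\|_{\ell^2}$.

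For the paraproduct $T_uv$ I would use that each block $S_{j-1}u\,\Delta_jv$ is spectrally localized in a dyadic annulus of size $\simeq2^j$, so that $\|T_uv\|_{H^{s_0}}^2\les\sum_j2^{2js_0}\|S_{j-1}u\|_{L^\infty}^2\|\Delta_jv\|_{L^2}^2$; Bernstein's inequality together with Cauchy--Schwarz yield $\|S_{j-1}u\|_{L^\infty}\les\langle j\rangle\,2^{j(d/2-s_1)_+}\|u\|_{H^{s_1}}$, the logarithmic factor $\langle j\rangle$ being present only in the endpoint case $s_1=d/2$. The resulting dyadic series converges provided $s_0+(d/2-s_1)_+\le s_2$, which holds because $s_0\le s_2$ and $s_1+s_2\ge s_0+d/2$ are both forced by the hypothesis $s_1+s_2>\max\{0,s_0+d/2\}$, and the strictness of the second inequality absorbs any $\langle j\rangle$ loss; hence $\|T_uv\|_{H^{s_0}}\les\|u\|_{H^{s_1}}\|v\|_{H^{s_2}}$. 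The term $T_vu$ is treated identically, now using $s_0\le s_1$.

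I expect the resonant term $R(u,v)$ to be the main obstacle: its summands $\Delta_ju\,\widetilde\Delta_jv$ (with $\widetilde\Delta_j:=\Delta_{j-1}+\Delta_j+\Delta_{j+1}$) are localized only in a ball, not an annulus, so the high-frequency gain used above is unavailable. The device I would use is, for each output block $q$, the bound $\|\Delta_q(\Delta_ju\,\widetilde\Delta_jv)\|_{L^2}\les2^{qd/2}\|\Delta_ju\,\widetilde\Delta_jv\|_{L^1}\les2^{qd/2}2^{-j(s_1+s_2)}c_j$, obtained from the low-frequency Bernstein inequality $L^1\to L^2$ (which yields the favorable factor $2^{qd/2}$, $q\le j+N$) together with Cauchy--Schwarz. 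Summing over $j\ge q-N$ and using $s_1+s_2>0$ to control the geometric tail gives $2^{qs_0}\|\Delta_qR(u,v)\|_{L^2}\les2^{q(s_0+d/2-s_1-s_2)}(\mu\ast c)_q$ with $\mu_m:=2^{m(s_1+s_2)}\mathbf 1_{\{m\le N\}}\in\ell^1$; since $s_0+d/2-s_1-s_2<0$ by hypothesis, the prefactor is $\le1$ for $q\ge0$ and equals a fixed constant for $q=-1$, so Young's inequality gives $\|R(u,v)\|_{H^{s_0}}\les\|\mu\ast c\|_{\ell^2}\le\|\mu\|_{\ell^1}\|c\|_{\ell^2}\les\|u\|_{H^{s_1}}\|v\|_{H^{s_2}}$. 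Adding the three bounds proves \eqref{productest:Sobolev}. The hypothesis is used in two distinct ways: $s_1+s_2>s_0+d/2$ makes both the paraproducts and the prefactor in $R$ summable, while $s_1+s_2>0$ makes the $j$-summation in $R$ converge; these are exactly the points where an equality case (genuinely false in $H^{s_0}$, valid only in the Besov space $B^{s_0}_{2,1}$) would break the argument. A self-contained write-up along these lines is Corollary~2.11 of \cite{ABZ3}.
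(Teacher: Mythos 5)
Your argument is correct: the Bony decomposition with the three block estimates (Bernstein for the two paraproducts, the low-frequency $L^1\to L^2$ Bernstein plus Young's inequality in $\ell^1*\ell^2$ for the resonant part) uses the hypotheses $s_0\le s_j$, $s_1+s_2>s_0+\tfrac d2$ and $s_1+s_2>0$ exactly where they are needed, and it works verbatim on $\T^d$ with dyadic Fourier projectors. The paper does not prove this lemma but simply cites Corollary 2.11 of \cite{ABZ3}, whose proof is precisely this standard paraproduct argument, so your write-up matches the approach the paper defers to by citation.
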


By invoking Stein's extension operator \cite{Stein} for bounded Lipschitz domains, we deduce the following from Lemma \ref{product estimate on torus}.
\begin{lemm}\label{product estimate for domain}
Let   $\eta\in W^{1, \infty}(\T^d)$ satisfy \eqref{eta lower bound}. Consider  $(s_0, s_1, s_2)\in [0, \infty)$  satisfying $s_1+s_2>\max\{0, s_0+\frac{d+1}{2}\}$, and $s_0\le s_j$, $j=1,2$. 
 Then for all $F\in H^{s_1}(\Omega_\eta)$ and $G\in H^{s_2}(\Omega_\eta)$, we have
    \bq
    \|FG\|_{H^{s_0}(\Omega_\eta)} \leq C(\|\na \eta\|_{L^\infty(\T^d)})\|F\|_{H^{s_1}(\Omega_\eta)}\|G\|_{H^{s_2}(\Omega_\eta)},
    \eq
    where $C: \Rr^+\to \Rr^+$ depends only on $(d,b, s_0, s_1, s_2, c^0)$. 
\end{lemm}
The next lemma is a direct consequence of  \cref{prop: composition regularity} and \cref{product estimate for domain}.
\begin{lemm}\label{product estimate for inverse}
Let  $s>(d+1)/2$ and $\eta\in W^{1, \infty}(\T^d)$ satisfy \eqref{eta lower bound}. Moreover, let $G\in H^{s}(\Omega_\eta)$ be such that $\inf G \geq M >0$. Then for all $ \sigma\in [0, s]$, $F\in H^{\sigma}(\Omega_\eta)$, and $\l\in \Nn$, we have
    \bq\label{est: product with inverse}
    \|FG^{-l}\|_{H^\sigma(\Omega_\eta)} \leq C(\|G\|_{H^{s}(\Omega_\eta)})\|F\|_{H^\sigma(\Omega_\eta)},
    \eq
    where $C$ depends only on $(d,b,s,\sigma,c^0,M,l,\|\na\eta\|_{L^\infty})$.
\end{lemm}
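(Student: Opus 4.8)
\textbf{Proof plan for \cref{product estimate for inverse}.} The statement asserts that for $G\in H^s(\Omega_\eta)$ with $\inf G\ge M>0$ and $s>(d+1)/2$, multiplication by $G^{-l}$ is bounded on $H^\sigma(\Omega_\eta)$ for $\sigma\in[0,s]$. The plan is to combine the composition estimate \eqref{nonl:est} of \cref{prop: composition regularity} (applied in the domain $\Omega_\eta$) with the tame Sobolev product estimate of \cref{product estimate for domain}. The only mild subtlety is that \eqref{nonl:est} requires the argument of the nonlinear function to have zero value at the origin, which $G$ does not; this is handled by subtracting a constant before composing.

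First I would show that $G^{-l}\in H^s(\Omega_\eta)$ with a bound depending only on $\|G\|_{H^s(\Omega_\eta)}$ (and the fixed data $d,b,s,c^0,M,l,\|\nabla\eta\|_{L^\infty}$). Write $G^{-l}=(G-M+M)^{-l}=:\Theta(G-M)+M^{-l}$ where $\Theta(t)=(t+M)^{-l}-M^{-l}$ extended to a function in $C^\infty(\Rr)$ (one may smoothly cut off $\Theta$ outside a neighborhood of the range $[\,0,\|G-M\|_{L^\infty}\,]$ of $G-M$ so that $\Theta\in C^\infty_b$); then $\Theta(0)=0$ and $G-M\in H^s(\Omega_\eta)\cap L^\infty(\Omega_\eta)$ since $H^s(\Omega_\eta)\hookrightarrow L^\infty(\Omega_\eta)$ for $s>(d+1)/2$, with $\|G-M\|_{L^\infty}\le C\|G\|_{H^s}$. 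Applying \eqref{nonl:est} in $X=\Omega_\eta$ gives $\|\Theta(G-M)\|_{H^s(\Omega_\eta)}\le C(\|G-M\|_{L^\infty})\|G-M\|_{H^s(\Omega_\eta)}\le C(\|G\|_{H^s(\Omega_\eta)})$, since on the torus $M$ is a constant (hence in $H^s$ with norm $\lesssim M$), and therefore $\|G^{-l}\|_{H^s(\Omega_\eta)}\le C(\|G\|_{H^s(\Omega_\eta)})$, with $C$ depending only on the allowed parameters.

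Next I would apply the product estimate. Given $\sigma\in[0,s]$, choose the triple $(s_0,s_1,s_2)=(\sigma,s,\sigma)$ in \cref{product estimate for domain}: we need $s_1+s_2>\max\{0,s_0+\tfrac{d+1}{2}\}$, i.e. $s+\sigma>\sigma+\tfrac{d+1}{2}$, which holds since $s>(d+1)/2$; and $s_0\le s_j$, i.e. $\sigma\le s$ and $\sigma\le\sigma$, both clear. Hence
\[
\|F\,G^{-l}\|_{H^\sigma(\Omega_\eta)}\le C(\|\nabla\eta\|_{L^\infty})\|G^{-l}\|_{H^s(\Omega_\eta)}\|F\|_{H^\sigma(\Omega_\eta)}\le C\bigl(\|G\|_{H^s(\Omega_\eta)}\bigr)\|F\|_{H^\sigma(\Omega_\eta)},
\]
using the bound on $\|G^{-l}\|_{H^s(\Omega_\eta)}$ from the previous step. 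This is exactly \eqref{est: product with inverse}, and tracking the dependencies shows $C$ depends only on $(d,b,s,\sigma,c^0,M,l,\|\nabla\eta\|_{L^\infty})$.

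There is no serious obstacle here; the argument is a bookkeeping exercise. The one point requiring a little care is the reduction of $G\mapsto G^{-l}$ to a composition with a function vanishing at the origin, ensuring the cutoff of $\Theta$ is chosen after knowing the a priori $L^\infty$ bound on $G-M$ (which itself follows from $\inf G\ge M$ and $\|G\|_{H^s}<\infty$), so that $\Theta\in C^\infty(\Rr)$ with all derivatives controlled in terms of $M$ and $\|G\|_{H^s}$. Everything else is a direct citation of \cref{prop: composition regularity} and \cref{product estimate for domain}.
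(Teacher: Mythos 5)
Your proof is correct and follows essentially the same route as the paper, which simply records that the lemma is a direct consequence of \cref{prop: composition regularity} and \cref{product estimate for domain} — precisely the two estimates you combine, with the standard shift $\Theta(t)=(t+M)^{-l}-M^{-l}$ (cut off using the a priori bound $\|G-M\|_{L^\infty}\le C\|G\|_{H^s}$) making \eqref{nonl:est} applicable, and the choice $(s_0,s_1,s_2)=(\sigma,s,\sigma)$ verifying the hypotheses of the product estimate since $s>(d+1)/2$ and $0\le\sigma\le s$. A minor cosmetic point: to keep the constant free of any dependence on $|\Omega_\eta|$ (i.e.\ on $\|\eta\|_{L^\infty}$ rather than $\|\nabla\eta\|_{L^\infty}$), it is slightly cleaner to write $FG^{-l}=F\,\Theta(G-M)+M^{-l}F$ and apply the product estimate only to the first term, treating $M^{-l}F$ trivially, instead of bounding the constant function $M^{-l}$ in $H^s(\Omega_\eta)$.
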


\subsection{Paradifferential calculus}\label{appendix:para}
\begin{defi}
Given~$\rho\in [0, 1]$ and~$m\in\Rr$,~$\Gamma_{\rho}^{m}({\T}^d)$ denotes the space of
locally bounded functions~$a(x,\xi)$
on~$\T^d\times({\Rr}^d\setminus 0)$,
which are~$C^\infty$ with respect to~$\xi$ for~$\xi\neq 0$ and
such that, for all~$\alpha\in\Nn^d$ and all~$\xi\neq 0$, the function
$x\mapsto \partial_\xi^\alpha a(x,\xi)$ belongs to~$W^{\rho,\infty}(\T^d)$ and there exists a constant
$C_\alpha$ such that
\begin{equation*}
\forall\la \xi\ra\ge \frac18,\quad 
\lA \partial_\xi^\alpha a(\cdot,\xi)\rA_{W^{\rho,\infty}(\T^d)}\le C_\alpha
(1+\la\xi\ra)^{m-\la\alpha\ra}.
\end{equation*}
\end{defi}
Given a symbol~$a$, 
the paradifferential operator~$T_a$ acting on a periodic distribution $u: \T^d\to \Rr$ is define  by
\begin{equation}\label{eq.para}
\widehat{T_a u}(n)=(2\pi)^{-d}\sum_{m\in \Zz} \chi(n-m, m)\widehat{a}(n-m, m)\psi(m)\widehat{u}(m),
\end{equation}
where
$\widehat{a}(n, \xi)=\int_{\T^d}e^{-ix\cdot n}a(x,\xi)\, dx$
is the Fourier transform of~$a$ with respect to the first variable; 
$\chi$ and~$\psi$ are two fixed~$C^\infty$ functions such that:
\begin{equation}\label{cond.psi}
\psi(\eta)=0\quad \text{for } \la\eta\ra\le \frac{1}{5},\qquad
\psi(\eta)=1\quad \text{for }\la\eta\ra\geq \frac{1}{4},
\end{equation}
and~$\chi(\theta,\eta)$ 
satisfies, for~$0<\eps_1<\eps_2$ small enough,
$$
\chi(\theta,\eta)=1 \quad \text{if}\quad \la\theta\ra\le \eps_1(1+\la \eta\ra),\qquad
\chi(\theta,\eta)=0 \quad \text{if}\quad \la\theta\ra\geq \eps_2(1+\la\eta\ra),
$$
and such that
$$
\forall (\theta,\eta)\,:\qquad \la \partial_\theta^\alpha \partial_\eta^\beta \chi(\theta,\eta)\ra\le 
C_{\alpha,\beta}(1+\la \eta\ra)^{-\la \alpha\ra-\la \beta\ra}.
$$
\begin{defi}\label{defiGmrho}
For~$m\in\Rr$,~$\rho\in [0,1]$ and~$a\in \Gamma^m_{\rho}(\T^d)$, we set
\begin{equation}\label{defi:norms}
M_{\rho}^{m}(a)= 
\sup_{\la\alpha\ra\le 2(d+2) +\rho ~}\sup_{\la\xi\ra \ge 1/2~}
\lA (1+\la\xi\ra)^{\la\alpha\ra-m}\partial_\xi^\alpha a(\cdot,\xi)\rA_{W^{\rho,\infty}(\T^d)}.
\end{equation}
\end{defi}
\begin{theo}\label{theo:sc0}
Let~$m\in\Rr$ and~$\rho\in [0,1]$.

$(i)$ If~$a \in \Gamma^m_0(\T^d)$, then~$T_a$ is of order~$ m$. 
Moreover, for all~$\mu\in\Rr$ there exists a constant~$K$ such that
\begin{equation}\label{esti:quant1}\lA T_a \rA_{H^{\mu}\rightarrow H^{\mu-m}}\le K M_{0}^{m}(a).
\end{equation}
$(ii)$ If~$a\in \Gamma^{m}_{\rho}(\T^d), b\in \Gamma^{m'}_{\rho}(\T^d)$ then 
$T_a T_b -T_{a b}$ is of order~$ m+m'-\rho$. 
Moreover, for all~$\mu\in\Rr$ there exists a constant~$K$ such that
\begin{equation}\label{esti:quant2}
\lA T_a T_b  - T_{a b}   \rA_{H^{\mu}\rightarrow H^{\mu-m-m'+\rho}}
\le 
K M_{\rho}^{m}(a)M_{0}^{m'}(b)+K M_{0}^{m}(a)M_{\rho}^{m'}(b).
\end{equation}
\end{theo}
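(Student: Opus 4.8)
The plan is to prove both statements by reducing the action of the operators to frequency-localized pieces via the (discrete) Littlewood--Paley decomposition on $\T^d$, and then to estimate each piece using only finitely many $\xi$-derivatives of the symbols — which is precisely why the semi-norm $M^m_\rho$ in \eqref{defi:norms} involves derivatives up to order $2(d+2)+\rho$. Throughout, write $u = S_0 u + \sum_{j\ge 1}\Delta_j u$ with $\Delta_j$, $S_j$ the usual Fourier projectors, and record the basic spectral bookkeeping: since $\chi(\theta,\eta)$ is supported in $\{|\theta|\le \eps_2(1+|\eta|)\}$ with $\eps_2$ small and $\psi$ vanishes near the origin, $T_a$ applied to a distribution with Fourier spectrum in an annulus $\{|\xi|\sim 2^j\}$ produces a distribution with spectrum again contained in a fixed dilate of that annulus. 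In particular $T_a\Delta_j u$ and $T_a\Delta_{j'}u$ have disjoint spectra when $|j-j'|\ge N_0$, so $\{T_a\Delta_j u\}_j$ is almost orthogonal in every $H^s(\T^d)$; this observation is the common engine of both parts.

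For (i) it then remains to prove the single-block bound $\|T_a\Delta_j u\|_{L^2}\le C M^m_0(a)\,2^{jm}\|\Delta_j u\|_{L^2}$ (and its analogue for $S_0u$ with $2^j$ replaced by $1$), after which summing $2^{2j(\mu-m)}\|T_a\Delta_j u\|_{L^2}^2$ and invoking the almost orthogonality yields \eqref{esti:quant1}. To get the single-block bound I would write $T_a\Delta_j u(x)=\int_{\T^d}k_j(x,x-y)\,\Delta_j u(y)\,dy$ with $k_j(x,z)$ equal, up to a constant, to $\sum_{|m|\sim 2^j}\sigma_\chi(x,m)\,e^{iz\cdot m}$, where $\sigma_\chi$ is the $\chi$-truncation of $a\psi$; its $\xi$-derivatives of order up to $2(d+2)$ are bounded by $C M^m_0(a)(1+|\xi|)^{m-|\alpha|}$, so applying $(1-\Delta_\xi)^{d+2}$ before summing (summation by parts) gives $|k_j(x,z)|\lesssim M^m_0(a)\,2^{jm}\,2^{jd}(1+2^j|z|)^{-2(d+2)}$. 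Hence $\sup_x\int_{\T^d}|k_j(x,z)|\,dz\lesssim M^m_0(a)\,2^{jm}$ and likewise for the transpose kernel, and Schur's lemma closes the estimate.

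For (ii) I would first localize both symbols in $\xi$: writing $a=\sum_{k\ge 0}a_k$, $b=\sum_{l\ge 0}b_l$ with $a_k(x,\xi)=a(x,\xi)\varphi_k(\xi)$ a dyadic partition of unity in $\xi$, one has $M^m_\rho(a_k)\lesssim M^m_\rho(a)$, $M^{m'}_0(b_l)\lesssim M^{m'}_0(b)$, and $a_k$, $b_l$ are supported in $\{|\xi|\sim 2^k\}$, $\{|\xi|\sim 2^l\}$. The spectral bookkeeping of the first paragraph shows that $T_{a_k}T_{b_l}$ and $T_{a_kb_l}$ vanish unless $|k-l|\le N_1$, and that $(T_{a_k}T_{b_l}-T_{a_kb_l})u$ has spectrum in a fixed dilate of $\{|\xi|\sim 2^k\}$; so, again by almost orthogonality, \eqref{esti:quant2} reduces to the per-block estimate
\[
\big\|(T_{a_k}T_{b_l}-T_{a_kb_l})u\big\|_{L^2}\le K\,2^{k(m+m'-\rho)}\big(M^m_\rho(a)M^{m'}_0(b)+M^m_0(a)M^{m'}_\rho(b)\big)\,\big\|\Delta_{\sim k}u\big\|_{L^2},\quad |k-l|\le N_1,
\]
summed against $2^{2k(\mu-m-m'+\rho)}$. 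This block estimate is the crux. Expanding $T_{a_k}$, $T_{b_l}$ as kernels as above, I would Taylor-expand $a_k(y,\xi)$ about $y=x$ inside the composite kernel: the zeroth-order term reproduces $T_{a_kb_l}$ up to an acceptable error, while the remainder is of order one in $(y-x)$, hence picks up one factor of the $x$-frequency scale $2^{-k}$ at the cost of $\|\nabla_x a_k(\cdot,\xi)\|$; interpolating the $W^{\rho,\infty}$ control of $a_k$ against the dyadic $x$-frequency localization converts this into the net gain $2^{-k\rho}$, and expanding $b_l$ instead produces the symmetric term. I expect this Taylor-remainder step — extracting the $\rho$-Hölder gain while keeping the $\xi$- and $x$-frequency bookkeeping uniform on $\T^d$ — to be the main obstacle; the surrounding reductions are routine once the almost-orthogonality observation is in place. (Alternatively, since this is a classical statement, one may deduce it on $\T^d$ from the corresponding $\Rr^d$ results by periodization; see \cite{ABZ3} and the references therein.)
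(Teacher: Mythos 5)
The paper does not prove \cref{theo:sc0} at all: it is recorded in the appendix as classical background (the periodic version of the standard paradifferential symbolic calculus, with the conventions of \cite{ABZ3} and of M\'etivier's lecture notes), so there is no in-paper argument to compare against, and your closing remark that one may simply quote the classical theory is exactly the paper's stance. Judged on its own merits, your outline of part (i) is the standard proof and is sound at the level of a sketch: the spectral localization of $T_a$ forced by the support condition on $\chi$, the kernel bound obtained by (discrete) integration by parts in the frequency variable using the $2(d+2)$ $\xi$-derivatives built into \eqref{defi:norms}, Schur's test, and almost orthogonality of the blocks together give \eqref{esti:quant1}; the remaining details (finite differences versus $\xi$-derivatives on the lattice, the transpose kernel) are routine.

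Part (ii) is where the proposal has a genuine gap: the per-block estimate you isolate \emph{is} the theorem, and the sketch you offer for it would not go through as written. First, for $\rho<1$ the symbol is only $W^{\rho,\infty}$ in $x$, so there is no first-order Taylor expansion of $a_k(y,\xi)$ about $y=x$; the correct substitute is to use the H\"older modulus $|a_k(x,\xi)-a_k(y,\xi)|\le C\,M^m_\rho(a)\,|x-y|^\rho(1+|\xi|)^m$ directly against the kernel decay at scale $2^{-k}$ (Lipschitz at $\rho=1$), which is presumably what your ``interpolation'' is meant to do but needs to be said and carried out. Second, and more seriously, the statement that ``the zeroth-order term reproduces $T_{a_kb_l}$ up to an acceptable error'' conceals the main technical content: the symbol of $T_{a_k}T_{b_l}$ carries the truncation $\chi(\cdot,\eta)\psi(\eta)$ twice (once for each factor), whereas $T_{a_kb_l}$ carries it once, applied to the product; comparing these truncations is exactly where the second term $K\,M^m_0(a)\,M^{m'}_\rho(b)$ in \eqref{esti:quant2} arises — it comes from the portion of $b$ whose $x$-frequencies are comparable to $\xi$ (discarded by the paradifferential cutoff in $T_b$ but present in $ab$), and it is estimated by the $\rho$-regularity of $b$, not by ``expanding $b_l$ instead,'' which is not a symmetric operation in this composition. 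Until the cutoff comparison and the resulting two error terms are worked out (this is the body of the classical proof, see e.g. \cite{BCD} and M\'etivier's notes), the argument for \eqref{esti:quant2} is incomplete.
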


\begin{defi}[Zygmund spaces] Consider a Littlewood-Paley decomposition: 
$u=\sum_{q=0}^\infty \Delta_q u$. 
If~$s$ is any real number, we define the Zygmund class~$C^{s}_*(\T^d)$ as the 
space of tempered distributions~$u$ such that
$$
\lA u\rA_{C^{s}_*}:= \sup_q 2^{qs}\lA \Delta_q u\rA_{L^\infty}<+\infty.
$$
\end{defi}
\begin{rema}
Recall that $C^{s}_*(\T^d)$ is the 
H\"older 
space~$W^{s,\infty}(\T^d)$ if~$s\in (0,+\infty)\setminus \Nn$. 
\end{rema}

\begin{defi}
Given two functions~$a,b$ defined on~$\T^d$ we define the remainder 
$$
R(a,u)=au-T_a u-T_u a.
$$
\end{defi}
\begin{theo} 
(i) For any $\alpha,\beta\in \Rr$, if~$\alpha+\beta>0$ then
\begin{align}
&\lA R(a,u) \rA _{H^{\alpha + \beta-\frac{d}{2}}(\T^d)}
\leq K \lA a \rA _{H^{\alpha}(\T^d)}\lA u\rA _{H^{\beta}(\T^d)},\label{Bony} \\ 
&\lA R(a,u) \rA _{H^{\alpha + \beta}(\T^d)} \leq K \lA a \rA _{C^{\alpha}_*(\T^d)}\lA u\rA _{H^{\beta}(\T^d)}.\label{Bony3}
\end{align}
(ii) For any $m>0$ and~$s\in \Rr$, 
\begin{equation}
\lA T_a u\rA_{H^{s-m}}\le K \lA a\rA_{C^{-m}_*}\lA u\rA_{H^{s}}.\label{niS}
\end{equation}
(iii) Let $s_0,s_1,s_2$ be such that 
$s_0\le s_2$ and $s_0 < s_1 +s_2 -\frac{d}{2}$. Then
\begin{equation}\label{boundpara}
\lA T_a u\rA_{H^{s_0}}\le K \lA a\rA_{H^{s_1}}\lA u\rA_{H^{s_2}}.
\end{equation}
(iv)  Let $s_0, s_1,s_2$ be such that $s_1+s_2>0$, $s_0\le s_1$, and  $s_0 < s_1+s_2-\frac{d}{2}$. Then 
\bq\label{lemPa1}
\lA au - T_a u\rA_{H^{s_0}}\le K \lA a\rA_{H^{s_1}}\lA u\rA_{H^{s_2}}.
\eq
\end{theo}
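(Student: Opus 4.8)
All four estimates are the standard building blocks of Bony's paradifferential calculus, so the plan is to reduce everything to two almost--orthogonality lemmas and Bernstein's inequality. Fix a periodic Littlewood--Paley decomposition $u=\sum_{q\ge0}\Delta_q u$, $S_q=\sum_{k<q}\Delta_k$, together with Bony's decomposition $au=T_au+T_ua+R(a,u)$, where $R(a,u)=\sum_{|i-j|\le N_0}\Delta_i a\,\Delta_j u$. Since the symbols entering \eqref{eq.para} do not depend on $\xi$, the operator $T_a$ differs from the classical paraproduct $\sum_q S_{q-N}a\,\Delta_q u$ only by a smoothing operator of order $-\infty$ (the cutoffs $\chi$ and $\psi$ reproduce $S_{q-N}$ up to fixed shifts and kill finitely many low modes of $u$); hence it suffices to prove \eqref{niS} and \eqref{boundpara} for the classical paraproduct. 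The two tools are: (a) if $\widehat{f_q}$ is supported in an annulus $\{|\xi|\sim 2^q\}$, then $\|\sum_q f_q\|_{H^s}^2\sim\sum_q 2^{2qs}\|f_q\|_{L^2}^2$ for every $s\in\Rr$; (b) if $\widehat{f_q}$ is supported in a ball $\{|\xi|\le C2^q\}$, then $\|\sum_q f_q\|_{H^s}\les\big(\sum_q 2^{2qs}\|f_q\|_{L^2}^2\big)^{1/2}$ provided $s>0$; and Bernstein's inequality $\|\Delta_k g\|_{L^\infty}\les 2^{kd/2}\|\Delta_k g\|_{L^2}$.

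For \eqref{niS} and \eqref{boundpara}, the block $S_{q-N}a\,\Delta_q u$ has Fourier support in an annulus of size $2^q$, so by tool (a) I would only need to sum $\sum_q 2^{2q\tau}\|S_{q-N}a\|_{L^\infty}^2\|\Delta_q u\|_{L^2}^2$ with $\tau$ the index of the target space. For \eqref{niS}, $\|S_{q-N}a\|_{L^\infty}\le\sum_{k\le q-N}\|\Delta_k a\|_{L^\infty}\les 2^{qm}\|a\|_{C^{-m}_*}$ --- a convergent geometric series because $m>0$ --- which exactly compensates the loss $2^{-2qm}$ coming from $\tau=s-m$, leaving $\sum_q 2^{2qs}\|\Delta_q u\|_{L^2}^2\sim\|u\|_{H^s}^2$. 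For \eqref{boundpara}, Bernstein gives $\|S_{q-N}a\|_{L^\infty}\les\sum_{k\le q-N}2^{kd/2}\|\Delta_k a\|_{L^2}\les 2^{q(d/2-s_1)_+}\|a\|_{H^{s_1}}$ (with a harmless logarithmic loss when $s_1=d/2$), and the resulting exponent is summable against $\sum_q 2^{2qs_2}\|\Delta_q u\|_{L^2}^2$ precisely under the hypotheses $s_0\le s_2$ (used when $s_1\ge d/2$) and $s_0<s_1+s_2-d/2$ (used when $s_1\le d/2$, the strict inequality absorbing the logarithmic loss at $s_1=d/2$).

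For the remainder estimates \eqref{Bony}--\eqref{Bony3}, I would group $R(a,u)=\sum_q R_q$ with $R_q=\sum_{\max(i,j)\approx q,\ |i-j|\le N_0}\Delta_i a\,\Delta_j u$, whose Fourier transform lies in a \emph{ball} $\{|\xi|\le C2^q\}$. Because $i\approx j\approx q$ in $R_q$, Bernstein yields $\|R_q\|_{L^2}\les 2^{-q(\alpha+\beta)}e_q$ with $\|\{e_q\}\|_{\ell^2}\les\|a\|_{C^\alpha_*}\|u\|_{H^\beta}$ for \eqref{Bony3}, and with $\|\{e_q\}\|_{\ell^2}\les\|a\|_{H^\alpha}\|u\|_{H^\beta}$ for \eqref{Bony}. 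When the target index is positive, tool (b) applies directly: $\|\sum_q R_q\|_{H^\tau}$ is dominated by the $\ell^2$-norm of the discrete convolution of $\{e_q\}$ with the sequence $2^{-q(\alpha+\beta)}\mathbf 1_{q\ge -C}$, and the hypothesis $\alpha+\beta>0$ is exactly what makes this kernel lie in $\ell^1$, so Young's inequality closes the estimate. This already proves \eqref{Bony3}, and proves \eqref{Bony} in the subrange $\alpha+\beta>d/2$.

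The genuinely delicate point --- the step I expect to be the main obstacle --- is \eqref{Bony} when $0<\alpha+\beta\le d/2$, so that the target space $H^{\alpha+\beta-d/2}$ has non-positive index and tool (b) is unavailable. Here I would argue by duality against $\varphi\in H^{d/2-\alpha-\beta}$: since $\Delta_i a\,\Delta_j u$ has Fourier support in a ball of radius $\les 2^q$, one may replace $\varphi$ by $S_{q+C}\varphi$ in the pairing, estimate $|\langle\Delta_i a\,\Delta_j u,S_{q+C}\varphi\rangle|\le\|\Delta_i a\|_{L^2}\|\Delta_j u\|_{L^2}\|S_{q+C}\varphi\|_{L^\infty}$, and bound $\|S_{q+C}\varphi\|_{L^\infty}\les\sum_{k\le q}2^{kd/2}\|\Delta_k\varphi\|_{L^2}\les 2^{q(\alpha+\beta)}\|\varphi\|_{H^{d/2-\alpha-\beta}}$ by Bernstein and a geometric sum (convergent again because $\alpha+\beta>0$). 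Summing the resulting bounds $\les c_i d_j\|\varphi\|$ over $|i-j|\le N_0$ by Cauchy--Schwarz and Young gives $\|R(a,u)\|_{H^{\alpha+\beta-d/2}}\les\|a\|_{H^\alpha}\|u\|_{H^\beta}$ for all $\alpha+\beta>0$. Finally, \eqref{lemPa1} follows immediately: $au-T_au=T_ua+R(a,u)$; apply \eqref{boundpara} to $T_ua$ (its hypotheses are the assumed $s_0\le s_1$ and $s_0<s_1+s_2-d/2$), apply \eqref{Bony} to $R(a,u)$ (using $s_1+s_2>0$) to land in $H^{s_1+s_2-d/2}$, and embed $H^{s_1+s_2-d/2}\hookrightarrow H^{s_0}$, which is valid since $s_0<s_1+s_2-d/2$.
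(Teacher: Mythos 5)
The paper does not prove this theorem: it is recorded in Appendix A.3 as a standard package of Bony paraproduct/remainder estimates (of the type found in Bahouri--Chemin--Danchin and Alazard--Burq--Zuily), so there is no in-paper argument to compare against. Your proof is the standard Littlewood--Paley argument and, as far as I can check, it is correct and complete at the level of detail one would expect: the annulus-support lemma handles \eqref{niS} and \eqref{boundpara} exactly as you say (with the hypotheses $s_0\le s_2$ and $s_0<s_1+s_2-\frac{d}{2}$ entering in the two regimes $s_1\gtrless \frac d2$, and the strict inequality absorbing the borderline logarithm), the ball-support lemma gives \eqref{Bony3} and the subrange $\alpha+\beta>\frac d2$ of \eqref{Bony}, the duality argument against $H^{\frac d2-\alpha-\beta}$ correctly covers the remaining range $0<\alpha+\beta\le\frac d2$, and \eqref{lemPa1} follows from \eqref{boundpara} applied to $T_ua$ plus \eqref{Bony} and the embedding $H^{s_1+s_2-\frac d2}\hookrightarrow H^{s_0}$.

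One imprecision worth flagging, though it does not break the argument: the reduction of the paper's operator \eqref{eq.para} to the classical paraproduct is not justified by saying the difference is ``a smoothing operator of order $-\infty$.'' For a rough, $x$-dependent symbol $a$ that claim is false as stated; what is true is that the difference of two admissible cutoffs is supported where $c(1+|\eta|)\le|\theta|\le C(1+|\eta|)$ with $c,C$ small, so the difference operator is a sum over $j$ of (band-limited piece of $a$ at scale $\sim 2^j$)$\times\Delta_j u$ whose output is still annulus-supported at scale $2^j$ (plus the trivial mean-mode terms coming from $\psi$). Hence the difference satisfies the same bounds by exactly your tool-(a) computation, and the reduction stands --- but it should be argued this way rather than dismissed as infinitely smoothing. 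With that sentence repaired, the proof is a sound self-contained substitute for the citation the paper implicitly relies on.
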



\vspace{.1in}
{\noindent{\bf{Acknowledgment.}}   The authors were partially supported by NSF grant DMS-2205710. The authors thank I. Tice for a question on the solvability  of the capillary-gravity operator. 
}

\end{document}